\documentclass[a4paper]{article}
\usepackage{amsthm,amsmath,amssymb,bbm}

\usepackage{geometry}
\geometry{a4paper, top=25mm, left=20mm, 		right=20mm, bottom=25mm,
headsep=0mm, footskip=10mm}

\newtheorem{assumption}{Assumption}
\newtheorem{theorem}{Theorem}
\newtheorem{example}{Example}
\newtheorem{lemma}{Lemma}
\newtheorem{remark}{Remark}
\newtheorem{prop}{Proposition}

\newtheorem{cor}{Corollary}
\newtheorem{definition}{Definition}

\def\lfhook#1{\setbox0=\hbox{#1}{\ooalign{\hidewidth
    \lower1.5ex\hbox{'}\hidewidth\crcr\unhbox0}}}

\newcommand{\R}{\mathbb{R}}
\newcommand{\N}{\mathbb{N}}
\newcommand{\E}{\mathbb{E}}
\renewcommand{\P}{\mathbb{P}}

\newcommand{\citationand}{\&}

\title{A mild It\^{o} formula
for SPDEs}

\author{Giuseppe 
Da Prato$^1$,
Arnulf Jentzen$^2$ 
and 
Michael R\"{o}ckner$^{3}$
\bigskip
\\
\small{$^1$Scuola Normale Superiore di Pisa, 
56126 Pisa, Italy,
e-mail: 
g.daprato@sns.it}
\smallskip
\\
\small{$^2$Program in Applied 
and Computational Mathematics, 
Princeton University,}
\\
\small{Princeton, 
NJ 08544-1000, 
USA, 
e-mail:
ajentzen@math.princeton.edu}
\smallskip
\\
\small{$^3$Faculty 
of Mathematics, 
Bielefeld University,
33501 Bielefeld, Germany,} 
\\
\small{e-mail: 
roeckner@math.uni-bielefeld.de;
Department
of Mathematics and} 
\\
\small{Statistics,
Purdue University,
West Lafayette, 
IN 47907-2067, USA,}
\\
\small{e-mail: 
roeckner@math.purdue.edu}}

\begin{document}

\maketitle

\begin{abstract}
This article introduces 
a certain class of stochastic 
processes, which we suggest
to call mild It\^{o} processes, 
and a new
- somehow mild - 
It\^{o} type formula 
for such processes.
Examples of mild
It\^{o} processes are
mild solutions 
of stochastic
partial differential equations
(SPDEs) and their numerical 
approximation
processes. 
\end{abstract}

\tableofcontents

%

\section{Introduction}

The following setting
is considered in this introductory
section.
Let 
$
  ( H, \left\| \cdot \right\|_H,
    \left< \cdot, \cdot \right>_H
  )
$,
$
  ( U, \left\| \cdot \right\|_U,
    \left< \cdot, \cdot \right>_U
  )
$
and
$
  ( V, \left\| \cdot \right\|_V,
    \left< \cdot, \cdot \right>_V
  )
$
be separable real
Hilbert spaces,
let
$ 
  \left( \Omega, \mathcal{F}, \P \right) 
$
be a probability space
with a normal filtration
$
  ( \mathcal{F}_t )_{ t \in [0,\infty) } 
$,
let
$
  ( W_t )_{ t \in [0,\infty) }
$
be a cylindrical
standard
$ ( \mathcal{F}_t )_{ t \in [0,\infty) } 
$-Wiener process on $ U $,
let
$ A \colon D(A) \subset H \to H $
be a generator of an analytic
semigroup and let
$
  \alpha \in 
  ( - 1, 0]
$,
$ 
  \beta \in 
  ( - \frac{ 1 }{ 2 }, 0]
$,
$
  \eta \in [0,\infty)
$
be real numbers
such that
$
  \eta - A 
$
is bijective and
positive.
Moreover,
to simplify the notation
define
$
  \| v \|_{ H_r }
:=
  \| ( \eta - A )^r v \|_H
$
for all
$ v \in H_r := D( ( \eta - A )^r ) $ 
and all $ r \in \R $
and
let
$
  F \colon H \to H_{ \alpha }
$
and
$
  B \colon H \to 
  HS( U, H_{ \beta } )
$
be globally Lipschitz
continuous functions
and let
$
  X \colon [0,\infty) 
  \times \Omega \to H
$
be an adapted stochastic
process with continuous sample
paths satisfying
$
  \sup_{ s \in [0,t] }
  \E[
    \| X_s \|_H^p
  ]
  < \infty
$
and
\begin{equation}
\label{eq:SPDE.intro}
  X_t
=
  e^{ A t } X_0
+
  \int_0^t
  e^{ A (t - s) }
  F( X_s ) \, ds
+
  \int_0^t
  e^{ A (t - s) }
  B( X_s ) \, dW_s
\end{equation}
$ \P $-a.s.\ for 
all $ t, p \in [0,\infty) $.
The stochastic process 
$
  X \colon [0,\infty) \times 
  \Omega \to H
$
is thus a {\it mild solution}
of the stochastic
partial differential equation (SPDE) \eqref{eq:SPDE.intro}
(SPDEs have been extremely 
intensively studied in the last decades;
see, e.g., 
the books 
\cite{r90,dz92,GreckschTudor1996,DaPratoZabczyk1996,cr99,DaPrato2004,SanzSole2005,c07,PeszatZabczyk2007,Kotelenez2008,Holdenetal2009}
and lecture notes \cite{Walsh1986,KallianpurXiong1995,KrylovRoecknerZabczyk1999,PrevotRoeckner2007,AlbeverioFlandoliSinai2008,Dalangetal2009,Hairer2009}
and the references therein).
A simple example of this framework
is the following setting:
If 
$ H = U = L^2( (0,1), \R ) $
is the Hilbert space of equivalence
classes of Lebesgue square integrable
functions, 
if $ A \colon D(A) \subset H \to H $
is the Laplacian with Dirichlet
boundary conditions on $ (0,1) $
and if 
$ 
  \big( F( v ) \big)(x)
  = f( x, v(x) )
$
and
$ 
  \big( B( v ) u \big)(x)
  = f( x, v(x) ) \cdot u(x)
$
for all $ x \in (0,1) $,
$ u, v \in H $ where
$
  f, b \colon (0,1) \times \R \to \R
$
are continuously differentiable
functions with globally bounded
derivatives, then the above
framework is fulfilled with 
$ \alpha = \eta = 0 $
and
$ 
  \beta \in 
  (- \frac{ 1 }{ 2 }, - \frac{ 1 }{ 4 } 
  )
$
and \eqref{eq:SPDE.intro}
reduces to
the SPDE
\begin{equation}
\label{eq:SPDE.intro.ex}
  d X_t(x)
=
  \left[
    \tfrac{ \partial^2 }{
      \partial x^2
    }
    X_t(x)
    + 
    f(x, X_t(x) )
  \right] 
  dt
  +
  b(x, X_t(x) ) \, dW_t(x)
\end{equation}
with
$
  X_t(0) = X_t(1) = 0
$
for 
$ x \in (0,1) $ and
$ t \in [0,\infty) $.
Further examples of the
above described framework 
and existence and uniqueness
results for \eqref{eq:SPDE.intro}
can, 
e.g., be found
in 
Da Prato \& Zabczyk~\cite{dz92,DaPratoZabczyk1996},
Brze\'{z}niak~\cite{b97b}
(see Theorem~4.3 in \cite{b97b}),
Van Neerven, Veraar \& 
Weis~\cite{vvw08}
(see Theorem~6.2 and
Section~10 in \cite{vvw08})
and in the references
therein.


Our aim is to derive an It\^{o}
type formula for the
solution process $ X $
of the SPDE~\eqref{eq:SPDE.intro}.
Let us briefly review some
related It\^{o} type formula results 
from the literature.
First, note that if
$ \alpha = \beta = 0 $ 
and if the mild solution process
$ X $
of the SPDE~\eqref{eq:SPDE.intro}
is also a $ D(A) $-valued strong
solution of the 
SPDE~\eqref{eq:SPDE.intro}, 
then
the standard It\^{o} formula
(see It\^{o}~\cite{Ito1951})
in infinite dimensions
can be applied to $ X $.
More precisely, in that case,
Theorem~2.4 in Brze\'{z}niak,
Van Neerven, Veraar 
\citationand\ 
Weis~\cite{bvvw08}
implies
\begin{equation}
\label{eq:standardito.intro}
\begin{split}
  \varphi( X_t )
& =
  \varphi( X_{ t_0 } )
+
  \int_{ t_0 }^t
  \varphi'( X_s ) 
  \left[
    A X_s + F(X_s)
  \right]
  ds
+
  \int_{ t_0 }^t
  \varphi'( X_s ) 
  B( X_s ) \,
  dW_s
\\ & \quad +
  \frac{ 1 }{ 2 }
  \sum_{ j \in \mathcal{J} }
  \int_{ t_0 }^t
  \varphi''( X_s ) 
  \big(
    B( X_s ) g_j , 
    B( X_s ) g_j
  \big)
  \,
  ds
\end{split}
\end{equation}
$ \P $-a.s.\ for all
$ t_0, t \in [0,\infty) $
with $ t_0 \leq t $
and all twice continuously
Fr\'{e}chet differentiable
functions
$ \varphi \in C^2( H, V ) $
where
$
  \mathcal{J}
$
is a set and
$
  ( g_j )_{ j \in \mathcal{J} } 
  \subset U
$
is an arbitrary orthonormal basis
of $ U $.
The case where
$ X $ is not $ D(A) $-valued
and thus not 
a strong solution 
of \eqref{eq:SPDE.intro}
is more subtle.
There are a few
results in the literature
in this direction.
First, 
in the case
$ \alpha \geq - \frac{ 1 }{ 2 } $
and
$ \beta = 0 $
(i.e., $ B $ maps
from $ H $ to $ HS( U, H ) $),
in the case where
$ 
  A \colon D(A) \subset H
  \to H 
$
is self-adjoined
and in the case of the special
test function
$
  \varphi(v) =
  \| v \|^2_H
$
for all $ v \in H $,
\eqref{eq:standardito.intro}
can be 
generalized and 
then reads as
\begin{equation}
\label{eq:itosquare.intro}
\begin{split}
  \left\| 
    X_t 
  \right\|^2_H
& =
  \left\| 
    X_{ t_0 }
  \right\|^2_H
+
  2
  \int_{ t_0 }^t
  \left< X_s, 
    A X_s 
  \right>_H
  ds
+
  2
  \int_{ t_0 }^t
  \left< X_s, 
    F(X_s)
  \right>_H
  ds
+
  2
  \int_{ t_0 }^t
  \left< X_s, 
    B( X_s ) \,
    dW_s
  \right>_H
\\ & \quad +
  \int_{ t_0 }^t
  \left\|
    B( X_s ) 
  \right\|^2_{ HS( U, H ) }
  ds
\end{split}
\end{equation}
$ \P $-a.s.\ for all
$ t_0, t \in [0,\infty) $
with $ t_0 \leq t $;
see 
Pardoux's
pioneering work \cite{p72,Pardoux1975,Pardoux1975Thesis}
and see, e.g., also
\cite{kr79,gk81,gk8182,p87,op89,rrw07,PrevotRoeckner2007}
for generalizations and reviews of
this It\^{o} formula for the squared
norm (the above mentioned results 
from the literature
consider slightly different frameworks
and, in particular, often allow 
$ A $ to be nonlinear too).
Note that in that case
$ X $ enjoys values in
$
  H_{ 1 / 2 }
  =
  D\big( 
    ( \eta - A )^{ 1 / 2 } 
  \big)
$
(see Theorem~4.2 in 
Kruse \citationand\ 
Larsson~\cite{KruseLarsson2011})
and therefore,
the integral
$
  \int_0^t
  \left< X_s, 
    A X_s 
  \right>_H
  ds
  :=
  \eta
  \int_0^t
  \left\| X_s \right\|_H^2 
  ds 
  -
  \int_0^t
  \| 
    (\eta - A)^{ \frac{ 1 }{ 2 } } X_s 
  \|_H^2
  \, ds 
$
in \eqref{eq:itosquare.intro}
is well defined.
Formula~\eqref{eq:itosquare.intro}
is a crucial ingredient in
the {\it variational approach}
for SPDEs
(see the monographs
\cite{Pardoux1975Thesis,kr79,r90,PrevotRoeckner2007}).
Formula~\eqref{eq:itosquare.intro}
is an It\^{o} formula for
possibly non-strong
solutions of SPDEs
in the case of the special
test function
$
  \left\| \cdot \right\|_H^2
$.
There are also a few
results in the literature
which establish It\^{o}
type formulas for possible
non-strong solutions
of SPDEs
for more general test functions
than the squared norm
$
  \left\| \cdot \right\|_H^2
$;
see \cite{p87,z06,gnt05,l07,lt08,l09b}.
In Theorem~5.1
in Pardoux~\cite{p87},
formula~\eqref{eq:itosquare.intro}
is generalized to a special
class of test functions which have
similar topological properties
as the function
$
  \left\| \cdot \right\|_H^2
$.
In Zambotti~\cite{z06}, the standard
It\^{o} formula is applied to 
regularized versions of the solution
process of the stochastic
heat equation with
additive noise and then the
limit of these regularized
It\^{o} formulas is made sense through 
a suitable renormalization term
that appears in the resulting formula.
In Gradinaru, Nourdin
\citationand\ Tindel~\cite{gnt05},
Malliavin calculus and a 
Skorokhod integral is used 
to prove an It\^{o} type formula
for the solution of the 
stochastic heat equation with 
additive noise
(see also 
Leon \citationand\ Tindel~\cite{lt08}
for a related It\^{o} formula 
result for the
stochastic heat equation with additive fractional
noise).
In Lanconelli~\cite{l07},
a Wick product is used to
formulate an It\^{o} type formula
for the solution process
of the stochastic heat equation
with additive noise
and the relation between the
formulas in \cite{z06,gnt05}
is analyzed
(see Section~3 in \cite{l07}
and see also 
Lanconelli~\cite{l09b}
for some consequences
of this Wick produckt It\^{o}
type formula for the stochastic
heat equation with additive noise).

In general it is 
not
clear how and whether 
\eqref{eq:standardito.intro}
can be generalized
to the case where
$ 
  X \colon [0,\infty) \times \Omega \to H 
$
is not a $ D(A) $-valued
strong solution of
\eqref{eq:SPDE.intro}.
This article suggests a different
approach for deriving an It\^{o}
formula for solutions of 
\eqref{eq:SPDE.intro}.
We
do not aim for a suitable
generalization of 
\eqref{eq:standardito.intro}
to the case of 
non-strong solutions but
instead we suggest
a somehow different It\^{o}
type formula for 
\eqref{eq:SPDE.intro}
which naturally holds
for \eqref{eq:SPDE.intro}
in its full generality
for all smooth
test functions.
More precisely, 
we establish
in Corollary~\ref{cor:ito}
in Subsection~\ref{sec:solutionSPDE}
below
the identity
\begin{equation}
\label{eq:mildito_intro}
\begin{split}
  \varphi( X_t )
 & =
  \varphi( e^{ A( t - t_0 ) } X_{ t_0 } )
  +
  \int_{ t_0 }^t
  \varphi'( e^{ A( t - s ) } X_s ) \,
  e^{ A( t - s ) }
  F( X_s ) \, ds
  +
  \int_{ t_0 }^t
  \varphi'( e^{ A( t - s ) } X_s ) \,
  e^{ A( t - s ) }
  B( X_s ) \, dW_s
\\ & \quad+
  \frac{1}{2}
  \sum_{ j \in \mathcal{J} }
  \int_{ t_0 }^t
  \varphi''( e^{ A( t - s ) } X_s )
  \left(
    e^{ A( t - s ) }
    B( X_s ) g_j,
    e^{ A( t - s ) }
    B( X_s ) g_j
  \right) ds
\end{split}
\end{equation}
$ \mathbb{P} $-a.s.\ for 
all $ t_0, t \in [0,\infty) $
with $ t_0 \leq t $
and all
$ 
  \varphi \in 
  \cup_{ 
    r < 
    \min( \alpha + 1, \beta + 1 / 2 ) 
  }
  C^2( H_r, V ) 
  \supset
  C^2( H, V )
$.
Corollary~\ref{cor:ito}
also ensures that all terms
in \eqref{eq:mildito_intro}
are well defined 
(see 
\eqref{eq:well1d}--\eqref{eq:well3d}
in Subsection~\ref{sec:solutionSPDE}).
In the case of
\eqref{eq:SPDE.intro.ex},
natural examples for the
test functions 
$ 
  \varphi \in 
  \cup_{ 
    r < 
    \min( \alpha + 1, \beta + 1 / 2 ) 
  }
  C^2( H_r, V ) 
$
in \eqref{eq:mildito_intro}
are Nemytskii operators
and nonlinear 
integral operators
such as 
$
  H_r \ni
  v \mapsto
  \int_0^1 \psi( x, v(x) ) \, dx
  \in \R
$
for any 
$ 
  0 < r < 
  \min( \alpha + 1 , \beta + 1 / 2 ) 
$
and any smooth function
$ \psi \colon (0,1) \times \R \to \R $
with globally bounded
derivatives.
In the special case 
$ 
  \varphi = id_H \colon
  H \ni v \mapsto v
  \in H = V
$,
equation~\eqref{eq:mildito_intro}
reduces to 
the variant of constants
formula~\eqref{eq:SPDE.intro}
and in that sense,
\eqref{eq:mildito_intro}
is somehow a {\it mild It\^{o}  
formula}.
In the deterministic case
$ B \equiv 0 $,
equation~\eqref{eq:mildito_intro}
is somehow a 
{\it mild chain rule};
see Example~\ref{ex:2}
in Section~\ref{sec:mildito} below
for more details.
The identity~\eqref{eq:mildito_intro}
can be generalized to a much larger
class of stochastic processes
than solution processes of 
the SPDE~\eqref{eq:SPDE.intro}.
To be more precise,
in Definition~\ref{propdef}
in Subsection~\ref{sec:mildprocesses}
a class of stochastic processes
which exhibit a similar algebraic
structure as \eqref{eq:SPDE.intro}
is introduced and referred
as 
{\it mild It\^{o} processes}.
Examples of mild It\^{o} processes
are solution processes of
SPDEs such as \eqref{eq:SPDE.intro}
(see Subsection~\ref{sec:solutionSPDE})
as well as their numerical
approximation processes
(see Subsection~\ref{sec:numerics}).
The identity~\eqref{eq:mildito_intro}
is then a special case of 
equation~\eqref{eq:itoformel_start}
in Theorem~\ref{thm:ito}
below
in which a mild It\^{o} formula
for mild It\^{o} processes
is established.

Let us outline how
\eqref{eq:mildito_intro}
and Theorem~\ref{thm:ito}
respectively are established.
A central idea in the proof
of \eqref{eq:mildito_intro}
is to consider a suitable
transformation of the
solution process 
$ 
  X \colon [0,\infty) \times \Omega
  \to H
$
of the 
SPDE~\eqref{eq:SPDE.intro}.
The transformed stochastic process
is then a standard It\^{o} process
to which the standard 
It\^{o} formula 
(see 
\eqref{eq:standardito.intro})
can be applied.
Relating then the transformed
stochastic process in an
appropriate way to the original
solution process 
$ 
  X \colon [0,\infty) \times
  \Omega \rightarrow H
$
of the SPDE~\eqref{eq:SPDE.intro}
finally results 
in the mild It\^{o} 
formula~\eqref{eq:mildito_intro}.
Two types of transformations
are well suited for this job.
One possibility is, roughly speaking,
to multiply the solution process
$ X $ of the 
SPDE~\eqref{eq:SPDE.intro}
by $ e^{ - A t } $, $ t \in [0,\infty) $,
where
$ e^{ - A t } $, $ t \in [0,\infty) $,
has to be understood in an 
appropriate large Hilbert space
(see Subsection~\ref{sec:mildito}
below for details).
In that sense the transformed stochastic
process becomes rougher
than the solution process $ X $
of the SPDE~\eqref{eq:SPDE.intro}.
This transformation 
has been suggested in
Teichmann~\cite{t09}
and
Filipovi{\'c}, Tappe 
\citationand\ 
Teichmann~\cite{ftt10}
(see also 
Hausenblas 
\citationand\  
Seidler~\cite{hs01,hs08}).
The other possible transformation
goes into the other direction
and, roughly speaking,
consists of
multiplying the solution
process $ X $ of the
SPDE~\eqref{eq:SPDE.intro}
by
$ e^{ A ( T - t ) } $,
$ t \in [0,T] $,
for some large value $ T \in (0,\infty) $.
This transformation is based on an idea 
in 
Conus 
\citationand\ 
Dalang~\cite{cd08}
and
Conus~\cite{c08}
(see also 
Debussche 
\citationand\  
Printems~\cite{dp09},
Lindner \citationand\ 
Schilling~\cite{ls10}
and 
Kov{\'a}cs, Larsson
\citationand\  
Lindgren~\cite{kll11}).
The second transformation,
which makes the transformed process
smoother than the  
solution process $ X $ of the
SPDE~\eqref{eq:SPDE.intro},
turns out to be more flexible and allows
us to prove Theorem~\ref{thm:ito}
in its full generality.
For more details on the proofs
of \eqref{eq:mildito_intro} and 
Theorem~\ref{thm:ito}
respectively, the reader
is referred to
Subsection~\ref{sec:mildito}
below.

In the remainder of this
introductory section, a few 
consequences
of the mild Ito 
formula~\eqref{eq:mildito_intro}
and its generalization in 
Theorem~\ref{thm:ito}
are illustrated.
For this 
let
$
  X^x \colon [0,\infty)
  \times \Omega \to H
$,
$ x \in H $,
be a family of 
adapted stochastic processes
with continuous sample paths
satisfying
$
  X_t
=
  e^{ A t } x
+
  \int_0^t
  e^{ A (t - s) }
  F( X_s^x ) \, ds
+
  \int_0^t
  B( X_s^x ) \, dW_s
$
$ \P $-a.s.\ for 
all $ t \in [0,\infty) $
and all $ x \in H $
(see, e.g., 
Theorem~4.3 in 
Brze\'{z}niak~\cite{b97b}
or 
Theorem~6.2 in
Van Neerven, Veraar \& 
Weis~\cite{vvw08} 
for the up to indistinguishability
unqiue existence of such processes).
Then 
for every 
$ 
  r \in 
  \big(
    - \infty, 
    \min( \alpha + 1, \beta + 1 / 2 ) 
  \big)
$
and every
at most polynomially
growing continuous function
$
  \varphi \in C( H_r, V )
$
define 
the continuous
function
$ 
  u_{ \varphi } \colon
  [0,\infty) \times H_r
  \to V
$
through
$ 
  u_{ \varphi }(t,x) :=
  \E\big[
    \varphi( X^x_t )
  \big]
$
for all 
$ (t,x) \in [0,\infty) \times H_r $.
Under the assumption that
$ \alpha = \beta = 0 $
and that
$ F $ and $ B $ are three times continuously
Fr\'{e}chet differentiable with globally
bounded derivatives,
the functions
$
  u_{ \varphi } \colon [0,\infty) 
  \times H \to V
$,
$ 
  \varphi \colon H \to V
$
twice continuous Fr\'{e}chet differentiable
with globally bounded derivatives,
are strict solutions of 
the infinite dimensional
Kolmogorov 
partial differential
equation (PDE)
$
  \tfrac{ \partial }{
    \partial t
  }
  u_{ \varphi }(t,x)
  =
  (L u_{ \varphi })(t,x)
$
with
$
  u_{ \varphi }(0,x) = \varphi(x)
$
for 
$
  (t,x) \in (0,\infty)
  \times H_1
$
where 
$ 
  L \colon C^2( H, V ) 
  \to
  C( H_1, V ) 
$
is defined through
\begin{equation}
  (L \varphi)(x)
  :=
  \frac{ 1 }{ 2 }
  \text{Tr}\Big(
  \big( B(x) \big)^{ \! * }
  \varphi''( x ) \,
  B(x)
  \Big)
  +
  \varphi'( x )
  \big[ A x + F(x) \big]
\end{equation}
for all $ x \in H_1 = D(A) $
and all 
$ 
  \varphi 
  \in C^2( H, V )
$
(see
Theorem~7.5.1
in 
Da Prato \citationand\ 
Zabczyk~\cite{dz02}).
Infinite dimensional Kolmogorov
equation have
been intensively investigated
in the last two decades
(see, e.g., the monographs
\cite{MaRoeckner1992,c01,dz02b,DaPrato2004}
and articles
\cite{Zabczyk1999,Roeckner1999,RoecknerSobol2006,DaPrato2007}
and the references mentioned therein).
We prove here 
that
the functions
$ 
  u_{ \varphi } \colon [0,\infty)
  \times H \to V
$, 
$ \varphi $ sufficiently smooth, 
also solve 
another kind of 
Kolmogorov equation.
More precisely,
from 
\eqref{eq:mildito_intro}
we derive 
in 
Subsection~\ref{sec:kolmogorov}
below
(see 
\eqref{eq:mildP2})
the identity
\begin{equation}
\label{eq:mildKolmogorov.intro}
  u_{ \varphi }(t,x)
=
  \varphi( e^{ A t } x )
+
  \int_{ 0 }^t
  u_{ 
    L_{ t - s }( \varphi )
  }(s,x)
  \, ds
\end{equation}
for all 
$ (t,x) \in (0,\infty) \times H $
and all 
$ 
  \varphi \in
  \cup_{ 
    r < 
    \min( \alpha + 1, \beta + 1 / 2 )
  }
  C^2( H_r, V )
$
with at most polynomially growing
derivatives
where
$ 
  L_t \colon 
  \cup_{ 
    r < 
    \min( \alpha + 1, \beta + 1 / 2 )
  }
  C^2( H_r, V )
  \to 
  C( H, V)
$, 
$ t \in (0,\infty) $,
is a family of bounded linear 
operators defined through
\begin{equation}
\begin{split}
  \big( 
    L_{ t } \varphi
  \big)( x )
  :=
  \frac{ 1 }{ 2 }
  \text{Tr}\Big(
  \big( e^{ A t } B(x) \big)^{ \! * }
  \varphi''( e^{ A t } x ) \,
  e^{ A t } B(x)
  \Big)
  +
  \varphi'( e^{ A t } x )
  \, e^{ A t } F(x)
\end{split}
\end{equation}
for all
$ x \in H $,
$ 
  \varphi \in 
  \cup_{ 
    r < 
    \min( \alpha + 1, \beta + 1 / 2 )
  }
  C^2( H_r, V )
$, 
$ t \in (0,\infty) $.
Equation~\eqref{eq:mildKolmogorov.intro} 
is somehow a {\it mild Kolmogorov 
backward equation}.
From 
\eqref{eq:mildKolmogorov.intro}
we derive new regularity
properties 
of solutions of second-order
PDEs in Hilbert spaces.
More precisely,
using \eqref{eq:mildKolmogorov.intro}
we establish 
in Theorem~\ref{thm:continuity}
below the existence
of real numbers
$ 
  c_{ \delta, \rho, q, T } 
  \in [0,\infty) 
$,
$
  \delta, \rho, q, T \in [0,\infty)
$,
such that the 
{\it regularity estimate}
\begin{equation}
\label{eq:Schauder}
  \sup_{ 
    x \in 
    H_{ \delta } 
  } 
  \!
  \left(
  \,
  \frac{
  \left\|
    u_{ \varphi }(t, x)
  \right\|_V
  }{
    \left( 
      1 + 
      \| x 
      \|_{
        H_\delta 
      }
    \right)^{ (q + 2) }
  }
  \,
  \right)
\leq
  c_{ \delta, \rho, q, T }
  \cdot
  \| \varphi \|_{ t, q }^{ \delta, \rho }
\end{equation}
holds for all 
$ t \in (0,T] $,
$ 
  \varphi \in
  C^2( 
    H_{ \rho }
    , 
    V 
  )
$
with 
$
  \sup_{ x \in H_{ \rho } }
  \frac{ 
    \| \varphi''(x) \|_{
      L^{ (2) }( H_{ \rho }, V )
    }
  }{ 
    ( 1 + \| x \|_{ H_{ \rho } } )^q
  }
  < \infty
$,
$
  q, \delta, T \in [ 0, \infty )
$,
$
  \rho \in 
  [ 
    0, 
    \min( 
      \alpha + 1, 
      \beta + \frac{ 1 }{ 2 } 
    )
  )
$
where
\begin{equation}
\begin{split}
&
  \| \varphi \|_{ t, q }^{ \delta, \rho }
  :=
\\ & 
  \sup_{ x \in H_{ \delta } }
  \left[
  \frac{
    \left\| 
      \varphi( e^{ A t } x )
    \right\|_V
  }{
    \left(
      1 + \| x \|_{ H_{ \delta } }
    \right)^{ (q + 2) }
  }
  \right]
  +
  \int_0^t
  \left( t - s \right)^{ 
    \min( \delta - \rho, 0 )
  }
  \sup_{ 
    x \in H_{ \rho }
  }
  \left[
  \frac{
    \left\| 
      (K_t \varphi)'( x )
    \right\|_{ 
      L( H_{ \alpha }, V ) 
    }
  }{
    \left(
      1 + \| x \|_{ H_{ \rho } }
    \right)^{ ( q + 1 ) }
  }
  +
  \frac{
    \left\| 
      ( K_t \varphi )''( x )
    \right\|_{ 
      L^{(2)}( H_{ \beta }, V ) 
    }
  }{
    \left(
      1 + \| x \|_{ H_{ \rho } }
    \right)^{ q }
  }
  \right] 
  ds
\end{split}
\end{equation}
for all
$ 
  \varphi \in
  C^2( 
    H_{ \rho }
    , 
    V 
  )
$
with 
$
  \sup_{ x \in H_{ \rho } }
  \frac{ 
    \| \varphi''(x) \|_{
      L^{ (2) }( H_{ \rho }, V )
    }
  }{ 
    ( 1 + \| x \|_{ H_{ \rho } } )^q
  }
  < \infty
$,
$
  t, q, \delta \in [ 0, \infty )
$,
$
  \rho \in 
  [ 
    0, 
    \min( 
      \alpha + 1, 
      \beta + \frac{ 1 }{ 2 } 
    )
  )
$
and where
$ K_t \colon C( H_1, V ) \to C( H, V ) $,
$ t \in (0,\infty) $,
is defined through
$
  (K_t \varphi)(x) = \varphi( e^{ A t } x )
$
for all 
$ x \in H $,
$ t \in (0,\infty) $.
The constants 
$  
  c_{ \delta, \rho, q, T }
$,
$
  \delta, \rho, q, T \in [0,\infty)
$,
appearing
in \eqref{eq:Schauder}
are described explicitly
in Theorem~\ref{thm:continuity}
below.
Next a direct consequence
of the regularity 
estimate~\eqref{eq:Schauder}
is presented.
For this let 
$ 
  ( 
    C^2_{ Lip }( H, \R ) ,
    \left\| \cdot 
    \right\|_{ C^2_{ Lip }( H, \R ) }
  )
$
be the real Banach space of all twice
continuously differentiable 
globally Lipschitz continuous
real valued functions on $ H $
with globally Lipschitz 
continuous derivatives 
(see \eqref{eq:defCnLip} in
Subsection~\ref{sec:setting}
for details).
Moreover, 
for every $ t \in (0,\infty) $
let
$ 
  ( 
    \mathcal{G}_t( H, \R ) ,
    \left\| \cdot 
    \right\|_{ \mathcal{G}_t( H, \R ) }
  )
$
be the completion of
the normed real vector
space
$ 
  ( 
    C^2_{ Lip }( H, \R ) ,
    \left\| \cdot 
    \right\|_{ t, 0 }^{ 0, 0 }
  )
$.
Then consider the mapping
$
  \mathcal{I} \colon
  \{
    \mu \colon \mathcal{B}(H)
    \to [0,1]
    \text{ probability measure}
    \colon
    \int \| x \|_H \, \mu( dx ) < \infty
  \}
  \to
  (
    C^2_{ Lip }( H, \R )
  )'
$
given by
$
  \big(
    \mathcal{I}( \mu )  
  \big)( \varphi )
  =
  \int \varphi( x ) \, \mu( dx )
$
for all
$ 
  \varphi \in 
  C^2_{ Lip }( H, \R )
$
and all probability measures
$
  \mu \colon \mathcal{B}(H)
  \to [0,1]
$
with
$
  \int \| x \|_H \, \mu( dx ) < \infty
$.
Lemma~\ref{lem:embedding} below
proves that $ \mathcal{I} $
is injective, that is,
$ \mathcal{I} $ embeds the
probability measures with finite
first absolute moments into
linear forms on $ C^2_{ Lip }( H, \R ) $.
Next note that
$
  \mathcal{I}( \P_{ X_t } )
  \in 
  (
    C^2_{ Lip }( H, \R )
  )'
$
for all $ t \in [0,\infty) $
where
$
  \P_{ X_t }[ A ]
  =
  \P\big[ X_t \in A \big]
$
for all
$ A \in \mathcal{B}(H) $,
$ t \in [0,\infty) $
are the probability measures
of the solution process
$ X_t $, $ t \in [0,\infty) $,
of the SPDE~\eqref{eq:SPDE.intro}.
From \eqref{eq:Schauder} we then
infer 
for every $ t \in (0,\infty) $
that
$
  \mathcal{I}( \P_{ X_t } )
  \in 
  (
    C^2_{ Lip }( H, \R )
  )'
$
is not only in
$ 
  (
    C^2_{ Lip }( H, \R )
  )'
$
but in the
smaller space
$
  (
    \mathcal{G}_t( H, \R )
  )'
$
too
(the embedding
$
  (
    \mathcal{G}_t( H, \R )
  )'
  \subset
  (
    C^2_{ Lip }( H, \R )
  )'
$
continuously is proved
in Lemma~\ref{lem:norm} 
below).
We thus have established
more {\it regularity of the
probability measures}
$ \P_{ X_t } $,
$ t \in (0,\infty) $,
of the solution process
of the SPDE~\eqref{eq:SPDE.intro}.

Another application
of the regularity 
estimate~\eqref{eq:Schauder}
and the
mild Kolmogorov backward
equation~\eqref{eq:mildKolmogorov.intro}
is the analysis
of continuity properties of solutions of 
second-order PDEs in Hilbert spaces
(see, e.g., the books 
\cite{MaRoeckner1992,c01,dz02b,DaPrato2004}).
More precisely,
Corollary~\ref{cor:temporal}
in Section~\ref{sec:weakregularity}
below proves that
there exist real
number 
$ 
  c_{ r, \delta, \rho, T } \in [0,\infty) 
$,
$
  r, \delta, \rho, T \in [0,\infty)
$,
such that
\begin{equation}
\label{eq:Hoelder.intro}
  \left\|
    u_{ \varphi }(t_1, x_1) -
    u_{ \varphi }(t_2, x_2)
  \right\|_V
  \leq
  \left[
    \tfrac{
      c_{ r, \delta, \rho, T } 
      \,
      ( 
        1 
        + 
        \| x_1 
        \|_{
          H_{ \delta }
        }^3 
        + 
        \| x_2
        \|_{
          H_{ \delta }
        }^3 
      )
      \,
      \| \varphi \|_{
        C^2_{ Lip }( H_{ \rho }, V )
      }
    }{
      \left| 
        \min( t_1, t_2 ) 
      \right|^{
        \max( r + \rho - \delta, 0 )
      }    
    }
  \right]
  \Big[
    \left| t_1 - t_2 \right|^{
      r
    }
    +
    \left\| x_1 - x_2 \right\|_{ 
      H_{ \delta } 
    }
  \Big]
\end{equation}
for all
$ 
  t_1, t_2 \in (0,T] 
$,
$ 
  x_1, x_2 \in D( (-A)^{ \delta } ) 
$,
$ 
  \varphi \in 
  C^2_{ Lip }( H_{ \rho }, V)
$,
$ 
  r \in 
  [0, 1+ \alpha - \rho ) \cap
  [0, 1+ 2 \beta - 2 \rho ) 
$,
$ 
  \delta, T \in [0, \infty)
$,
$
  \rho \in 
  [ 
    0, 
    \min( 
      \alpha + 1, 
      \beta + \frac{ 1 }{ 2 } 
    )
  )
$.
Inequality~\eqref{eq:Hoelder.intro} 
thus proves 
H\"{o}lder continuity of 
the solutions 
$ 
  u_{ \varphi } \colon 
  [0,\infty) \times H \to V
$,
$ \varphi \in C^2_{ Lip }( H, V ) $,
of second-order Kolmogorov PDEs
in infinite dimensions.
In particular,
in the case of the
example 
SPDE~\eqref{eq:SPDE.intro.ex},
inequality~\eqref{eq:Hoelder.intro}
ensures that
\begin{equation}
\label{eq:Hoelder2.intro}
  \sup_{ 
    \substack{
      t_1, t_2 \in [0,T]
    \\
      t_1 < t_2
    }
  }
  \left(
  \frac{
    |t_1|^{ r } \,
    \big\|
    \mathbb{E}\big[ 
      \varphi( X_{ t_1 } )
    \big]
    -
    \mathbb{E}\big[ 
      \varphi( X_{ t_2 } )
    \big]
    \big\|_V
  }{
    | t_1 - t_2 |^r 
  } 
  \right)
  < \infty
\end{equation}
for all
$ 
  \varphi \in C^2_{ Lip }( H, V ) 
$,
$ T \in (0,\infty) $
and all
$ 
  r \in 
  [0, \frac{ 1 }{ 2 } ) 
$.
Results in the literature imply
that \eqref{eq:Hoelder2.intro}
holds for all
$ r \in [ 0, \frac{ 1 }{ 4 } ] $.
More formally, 
in the case of 
the SPDE~\eqref{eq:SPDE.intro.ex},
we get
from the 
global Lipschitz continuity of 
$ \varphi $
that
\begin{equation}
\label{eq:strong.intro}
\begin{split}
&
  \sup_{ 
    \substack{
      t_1, t_2 \in [0,T]
    \\
      t_1 < t_2
    }
  }
  \frac{
    |t_1|^{ r } \,
    \big\|
    \mathbb{E}\big[ 
      \varphi( X_{ t_1 } )
    \big]
    -
    \mathbb{E}\big[ 
      \varphi( X_{ t_2 } )
    \big]
    \big\|_V
  }{
    | t_1 - t_2 |^r 
  } 
\leq
  \left[
    \sup_{ 
      \substack{
        x, y \in H
      \\
        x \neq y
      }
    } 
    \tfrac{
      \| \varphi(x) - \varphi(y) \|_V
    }{
      \| x - y \|_H
    }
  \right]
  \left[
  \sup_{ 
    \substack{
      t_1, t_2 \in [0,T]
    \\
      t_1 < t_2
    }
  }
  \frac{
    |t_1|^{ r } \,
    \mathbb{E}\big[ 
      \|
        X_{ t_1 }
        -
        X_{ t_2 } 
      \|_{ H }
    \big]
  }{
    | t_1 - t_2 |^r 
  } 
  \right]
  < \infty
\end{split}
\end{equation}
for all
$ 
  \varphi \in C^2_{ Lip }( H, V ) 
$,
$ T \in (0,\infty) $
and all
$ 
  r \in 
  [0, \frac{ 1 }{ 4 }]
$
where the second factor
on the right hand side
of \eqref{eq:strong.intro}
is finite due to 
Theorem~6.3 in 
Van Neerven, Veraar \& 
Weiss~\cite{vvw08}
for the case $ r \in [0,\frac{1}{4}) $
and due to
Corollaries~A.16 and A.35 in
\cite{jk12}
for the case $ r = \frac{ 1 }{ 4 } $
(see also 
Brze\'{z}niak~\cite{b97b},
Kruse \citationand\ 
Larsson~\cite{KruseLarsson2011},
Van Neerven, Veraar \& 
Weiss~\cite{vvw11}
for related results).
This shows that
regularity results in the literature
ensure 
that \eqref{eq:Hoelder2.intro}
holds for all
$ r \in [ 0, \frac{ 1 }{ 4 } ] $.
Up to our best knowledge,
this is the first result in
the literature which
establishes
that \eqref{eq:Hoelder2.intro}
also holds in the regime
$ r \in (\frac{1}{4}, \frac{1}{2}) $.

A further application
of the regularity 
estimate~\eqref{eq:Schauder}
and the
mild Kolmogorov backward
equation~\eqref{eq:mildKolmogorov.intro}
is the weak 
approximation of SPDEs.
Let us illstrate this in the case of
spectral Galerkin projections
for the example
SPDE~\eqref{eq:SPDE.intro.ex}.
More precisely,
in the case of the 
SPDE~\eqref{eq:SPDE.intro.ex},
Corollary~\ref{cor:galerkin}
in Section~\ref{sec:weakregularity}
implies 
that
there exist real
numbers 
$ C_{ r, T } \in [0,\infty) $,
$ r, T \in [0,\infty) $,
such that
\begin{equation}
\label{eq:weaknumeric_intro}
  \left\|
  \E\big[
    \varphi\big( X_T \big)
  \big]
  -
  \E\big[
    \varphi\big( P_N( X_T ) 
    \big) 
  \big]
  \right\|_V
\leq
  \frac{
    C_{ r, T } 
    \left\|
      \varphi
    \right\|_{ C^2_{ Lip }( H, V ) }
  }{
    N^r
  }
\end{equation}
for all $ N \in \mathbb{N} $,
$ T \in (0,\infty) $,
$
  \varphi \in 
  C^2_{ Lip }( H, V )
$
and all
$ 
  r \in [0, 1)
$
where
$ P_N \in L(H) $,
$ N \in \N $,
are spectral Galerkin
projections defined by
$
  ( P_N v)(x)
:=
  \sum_{ n = 1 }^N
  2 \sin( n \pi x )
  \int_0^1
  v(y) \, \sin( n \pi y )
  \, dy
$
for all 
$ x \in (0,1) $,
$ 
  v \in H = L^2( (0,1), \R )
$
and all $ N \in \N $.
Inequality~\eqref{eq:weaknumeric_intro}
and
Corollary~\ref{cor:galerkin}
respectively
are a straightforward consequence
of the regularity 
estimate~\eqref{eq:Schauder}
(see Section~\ref{sec:weakregularity}
for details).
In the case of the
stochastic heat equation with
additive noise
$ f(x,y) = 0 $
and $ b(x,y) = 1 $
for all $ x \in (0,1) $,
$ y \in \R $
in \eqref{eq:SPDE.intro},
inequality~\eqref{eq:weaknumeric_intro}
follows 
for all $ r \in [0,1) $
from the results in
\cite{dp09,ls10,kll11,KovacsLarssonLindgren2012}
at least in the case of bounded test
functions
(see also \cite{h03b,dd06,gkl09,h10c,d11,Brehier2012,Kruse2012} 
for further numerical
weak approximation results for SPDEs).
In addition, in the general
setting of the 
SPDE~\eqref{eq:SPDE.intro},
it is well know that
inequality~\eqref{eq:weaknumeric_intro}
holds for all $ r \in [0,\frac{1}{2}] $.
Indeed, 
in that case,
we get from
the global
Lipschitz continuity of
$ \varphi $ that
\begin{equation}  
\begin{split}
&
  \left\|
  \E\big[
    \varphi\big( X_T \big)
  \big]
  -
  \E\big[
    \varphi\big( P_N( X_T ) 
    \big) 
  \big]
  \right\|_V
\leq
  \left\|
    \varphi
  \right\|_{
    C^2_{ Lip }( H, V )
  }
  \E\big[
  \|
    ( I - P_N )
    X_T 
  \|_H
  \big]
\\ & \leq
  \left\|
    \varphi
  \right\|_{
    C^2_{ Lip }( H, V )
  }
  \E\big[
  \|
    X_T 
  \|_{ H_{ 1 / 4 } }
  \big]
  \|
    ( I - P_N )
    ( \eta - A )^{ - 1 / 4 }
  \|_{ L(H) }
\leq
  \frac{
    \left\|
      \varphi
    \right\|_{
      C^2_{ Lip }( H, V )
    }
    \E\big[
    \|
      X_T 
    \|_{ H_{ 1 / 4 } }
    \big]
  }{
    \sqrt{ N \pi }
  }
  < \infty
\end{split}
\end{equation}
for all 
$ N \in \mathbb{N} $,
$ T \in (0,\infty) $
and all
$
  \varphi \in 
  C^2_{ Lip }( H, V )
$
where finiteness of
$
    \E\big[
    \|
      X_T 
    \|_{ H_{ 1 / 4 } }
    \big]
$
for all $ T \in (0,\infty) $
follows, e.g., 
from 
Theorem~5.1 in \cite{jk12}
(see also 
Kruse \citationand\ 
Larsson~\cite{KruseLarsson2011}
and
Van Neerven, Veraar
\citationand\ 
Weiss~\cite{vvw11}
and the references therein
for similar results).
This shows that regularity results
from the literature ensure that
\eqref{eq:weaknumeric_intro}
holds for all
$ r \in [0,\frac{1}{2}] $.
The present article
proves that 
\eqref{eq:weaknumeric_intro}
also holds for all
$ r \in [0,1) $.
Observe that 
\eqref{eq:weaknumeric_intro}
estimates the weak
approximation error
of spatial spectral
Galerkin projections
only instead of more complicated
spatial approximations
(see also
Corollary~\ref{cor:galerkin}
in Section~\ref{sec:weakregularity}
below for a generalization
of \eqref{eq:weaknumeric_intro})
and also the time interval 
and the noise are not discretized 
in \eqref{eq:weaknumeric_intro}.
We believe that the mild
Kolmogorov backward 
equation~\eqref{eq:mildKolmogorov.intro} 
can also be used to solve
these more complicated
weak numerical approximation 
problems for SPDEs
and plan to develop these results
in a future publication.

Another application
of the mild It\^{o}
formula~\eqref{eq:mildito_intro}
and the mild
Kolmogorov backward 
equation~\eqref{eq:mildKolmogorov.intro}
respectively
are the derivation of strong
and weak stochastic Taylor
expansions of solutions
of SPDEs.
Details can be found 
in Subsection~\ref{sec:tay}
below.
These Taylor expansions can
then be used to derive 
higher order numerical schemes
for SPDEs.
In Subsection~\ref{sec:mil}
below this is illustrated
in the case of Milstein
scheme for SPDEs.

\subsubsection*{Acknowledgements}

We thank Josef
Teichmann for his instructive
presentations on the method of
the moving frame at the conference
``Rough paths in interaction''
at the Institut Henri Poincar\'{e}.
In addition, we thank Daniel
Conus for pointing out his instructive
thesis to us at the conference
``Recent Advances in the Numerical
Approximation of Stochastic Partial
Differential Equations'' at
the Illinois Institute of Technology. 
Moreover, we are grateful to
Raphael Kruse, Stig Larsson and
Jan van Neerven for 
their very helpful comments.
Finally, we gratefully acknowledge
Meihua~Yang for fruitful discussions
on the analysis of weak temporal regularity 
of solutions of SPDEs
(see Corollary~\ref{cor:temporal})
and its consequences in
numerical analysis.

This work has been 
partially supported 
by the Collaborative 
Research Centre $701$
``Spectral Structures 
and Topological Methods 
in Mathematics'',
by the
International Graduate 
School ``Stochastics and Real 
World Models'', 
by the research
project ``Numerical solutions
of stochastic differential equations
with non-globally Lipschitz continuous
coefficients'' (all funded by the German
Research Foundation)
and by the 
BiBoS Research Center.
The support of Issac Newton
Institute for Mathematical
Sciences in Cambridge is
also gratefully acknowledged
where part of this was done
during the special semester
on ``Stochastic Partial Differential
Equations''.

\section{Mild stochastic
calculus}
\label{sec:mildcalc}
Throughout this section 
assume that the
following setting is fulfilled.
Let $ \mathbbm{I} \subset [0,\infty) $
be a 
closed and convex subset
of $ [0,\infty) $
with nonempty interior,
let 
$ \left( \Omega, \mathcal{F}, \mathbb{P} \right) $ 
be a probability space 
with a normal filtration
$ ( \mathcal{F}_t )_{ t \in \mathbbm{I} } $
and let
$ 
  \big( 
    \check{H}, 
    \left< \cdot , \cdot \right>_{ \check{H} }, 
    \left\| \cdot \right\|_{ \check{H} }
  \big) 
$,
$ 
  \big( 
    \tilde{H}, 
    \left< \cdot , \cdot \right>_{ \tilde{H} }, 
    \left\| \cdot \right\|_{ \tilde{H} }
  \big) 
$,
$ 
  \big( 
    \hat{H}, 
    \left< \cdot , \cdot \right>_{ \hat{H} }, 
    \left\| \cdot \right\|_{ \hat{H} }
  \big) 
$
and
$ 
  \left( 
    U, 
    \left< \cdot , \cdot \right>_U, 
    \left\| \cdot \right\|_U 
  \right) 
$ 
be separable
$\mathbb{R}$-Hilbert spaces
with 
$ 
  \check{H}
  \subset
  \tilde{H} 
  \subset 
  \hat{H} 
$
continuously and densely.
In addition, let 
$ 
  Q \colon U \rightarrow U 
$ be 
a bounded nonnegative symmetric linear
operator and 
let 
$ \left( W_t \right)_{ t \in \mathbbm{I} }
$
be a cylindrical $ Q $-Wiener 
process with respect 
to 
$ 
  ( \mathcal{F}_t )_{ t \in \mathbbm{I} } 
$.
Moreover, by
$ 
  \left( 
    U_0, 
    \left< \cdot , \cdot \right>_{ U_0 }, 
    \left\| \cdot \right\|_{ U_0 }
  \right) 
$ 
the $ \mathbb{R} $-Hilbert space 
with
$ U_0 = Q^{ 1/2 }( U ) $
and
$ 
  \| u \|_{ U_0 } 
  = \| Q^{ - 1/2 }( u ) \|_U 
$ 
for all $ u \in U_0 $
is denoted
(see, for example, 
Proposition~2.5.2 in 
Pr\'{e}v\^{o}t 
\citationand\
R\"{o}ckner~\cite{PrevotRoeckner2007}).
Here and below 
$
  S^{ - 1 } \colon
  \text{im}(S) \subset U
  \rightarrow U
$
denotes the pseudo inverse
of a bounded linear
operator 
$ 
  S \colon U \rightarrow U \in L(U) 
$
(see, e.g., Appendix~C in \cite{PrevotRoeckner2007}).
In addition, let
$ 
  i_v \colon 
  L( \hat{H}, \check{H} )
  \rightarrow \check{H}
  \in 
  L\big( 
    L( \hat{H}, \check{H} ), \check{H} 
  \big)
$,
$ v \in \hat{H} $,
be a family of bounded
linear operators defined
through
$ i_v A = A v $
for all
$ A \in L( \hat{H}, \check{H} ) $ and all
$ v \in \hat{H} $.
Then by 
$
  \mathcal{S}( \hat{H}, \check{H} ) 
  := 
  \sigma( 
    \,
    \cup_{ v \in \hat{H} } \,
    i_v^{-1}( \mathcal{B}( \check{H} ) )
    \,
  ) 
=
\sigma( 
  \{
    i_v^{-1}( \mathcal{A} )
    \subset L( \hat{H}, \check{H} )
    \colon
    v \in \hat{H},
    \mathcal{A} \in 
    \mathcal{B}( \check{H} ) 
  \}
) 
$
the strong sigma algebra on
$ L( \hat{H}, \check{H} ) $ 
is denoted
(see, for instance,
Section~1.2 in 
Da Prato
\citationand\ Zabczyk~\cite{dz92}).
Finally, let
$ \angle \subset \mathbbm{I}^2 $
be a set defined
through
$
  \angle :=
  \left\{ 
    (t_1, t_2) \in \mathbbm{I}^2 \colon
    t_1 < t_2
  \right\}
$
and let
$ \tau \in \mathbbm{I} $
be defined throught
$ \tau := \inf( \mathbbm{I} )
$.

\subsection{Mild stochastic processes}
\label{sec:mildprocesses}

\begin{definition}[Mild
It\^{o} process]
\label{propdef}
Let 
$ 
  S \colon 
  \angle
  \rightarrow 
  L( \hat{H}, \check{H} )
$
be a
$ 
\mathcal{B}( \angle ) 
$/$
\mathcal{S}( \hat{H}, \check{H} ) 
$-measurable
mapping satisfying
$
  S_{ t_2, t_3 }
  S_{ t_1, t_2 }
=
  S_{ t_1, t_3 }
$
for all $ t_1, t_2, t_3
\in \mathbbm{I} $
with $ t_1 < t_2 < t_3 $.
Additionally,
let 
$ 
  Y \colon \mathbbm{I}
  \times \Omega
  \rightarrow \hat{H} 
$
and
$ 
  Z \colon \mathbbm{I}
  \times \Omega
  \rightarrow HS( U_0, \hat{H} )
$
be two predictable
stochastic processes
with
$
    \int_{ \tau }^{ t }
    \left\| S_{ s, t } Y_s
    \right\|_{ \check{H} }
    ds < \infty
$
$ \mathbb{P} $-a.s.\ and
$
    \int_{ \tau }^{ t }
    \left\| S_{ s, t } Z_s
    \right\|_{ HS( U_0, \check{H} ) }^2
    ds < \infty
$
$ \mathbb{P} $-a.s.\ for 
all $ t \in \mathbbm{I} $.
Then a predictable stochastic
process
$ 
  X \colon \mathbbm{I} \times \Omega
  \rightarrow \tilde{H}
$
satisfying
\begin{equation}
\label{eq:mildito}
  X_t 
= 
  S_{ \tau, t } \,
  X_{ \tau }
  +
  \int_{ \tau }^t
    S_{ s, t } \,
    Y_s
  \, ds
  +
  \int_{ \tau }^t
    S_{ s, t } \,
    Z_s
  \, dW_s
\end{equation}
$ \mathbb{P} $-a.s.\ 
for all 
$ 
  t \in \mathbbm{I} 
  \cap ( \tau, \infty )
$
is called 
a \underline{mild It\^{o} process}
(with \underline{semigroup} $S$,
\underline{mild drift} $ Y $
and \underline{mild diffusion} $ Z $).
\end{definition}
Note that if
$ 
  ( 
    \check{H}, 
    \left< \cdot , \cdot \right>_{ \check{H} }, 
    \left\| \cdot \right\|_{ \check{H} }
  ) 
  =
  ( 
    \hat{H}, 
    \left< \cdot , \cdot \right>_{ \hat{H} }, 
    \left\| \cdot \right\|_{ \hat{H} }
  ) 
$
and if the semigroup
$  
  S \colon \angle \rightarrow 
  L( \hat{H} ) 
$
satisfies
$ S_{ t_1, t_2 } = I $
for all $ (t_1, t_2) \in \angle $,
then the mild It\^{o} 
process~\eqref{eq:mildito}
is nothing else but a standard
It\^{o} process.
(Throughout this article the terminology 
``standard It\^{o} process'' instead
of simply ``It\^{o} process''
is used in order to distinguish
more clearly from the above 
notion of a mild It\^{o} process.)
Every standard It\^{o} process
is thus a mild It\^{o} process too.
However, a mild It\^{o} process
is, in general, 
not a standard It\^{o} process
(see Section~\ref{sec:appl}
for some examples).
The concept of a mild It\^{o} process
in Definition~\ref{propdef}
thus generalizes the concept
of a standard It\^{o} process.
In concrete examples of mild
It\^{o} processes it will be crucial
that the semigroup
$ 
  S \colon
  \angle \rightarrow
  L( \hat{H}, \check{H} )
$
in Definition~\ref{propdef}
depends explicitly on 
both variables
$ t_1 $ and $ t_2 $
with $ (t_1, t_2) \in \angle $
instead of on
$ t_2 - t_1 $ only
(see Subsection~\ref{sec:numerics}
for details).
The assumptions
$
    \int_{ \tau }^{ t }
    \left\| S_{ s, t } Y_s
    \right\|_{ \check{H} }
    ds < \infty
$
$ \mathbb{P} $-a.s.\ and
$
    \int_{ \tau }^{ t }
    \left\| S_{ s, t } Z_s
    \right\|_{ HS( U_0, \check{H}) }^2
    ds < \infty
$
$ \mathbb{P} $-a.s.\ for 
all $ t \in \mathbbm{I} $
in Definition~\ref{propdef}
ensure that both
the deterministic and the
stochastic integral 
in \eqref{eq:mildito}
are well defined.
In the next step an immediate
consequence of
Definition~\ref{propdef}
is presented.

\begin{prop}
\label{propsimple}
Let
$ 
  X \colon \mathbbm{I} \times \Omega
  \rightarrow \tilde{H}
$
be a mild It\^{o} process
with semigroup 
$ 
  S \colon \angle
  \rightarrow L( \hat{H}, \check{H} )
$,
mild drift 
$ 
  Y \colon \mathbbm{I} \times \Omega
  \rightarrow \hat{H}
$ 
and mild 
diffusion 
$ 
  Z \colon \mathbbm{I} \times \Omega
  \rightarrow HS( U_0, \hat{H} )
$.
Then
\begin{equation}
\label{eq:itovolterra}
  X_{ t_2 }
= 
  S_{ t_1, t_2 } \,
  X_{ t_1 }
  +
  \int_{t_1}^{t_2}
    S_{ s, t_2 } \,
    Y_s
  \, ds
  +
  \int_{t_1}^{t_2}
    S_{ s, t_2 } \,
    Z_s
  \, dW_s
\end{equation}
$ \mathbb{P} $-a.s.\ 
for all 
$ t_1, t_2 \in \mathbbm{I} $
with $ t_1 < t_2 $.
\end{prop}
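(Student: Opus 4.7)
The plan is to start from the defining mild Itô representation \eqref{eq:mildito} written at time $t_2$, split each of the three integrals at $t_1$, and then exploit the semigroup property $S_{t_2,t_3}S_{t_1,t_2}=S_{t_1,t_3}$ to factor out $S_{t_1,t_2}$ and recognize the remaining bracket as $X_{t_1}$. It is enough to treat the case $\tau<t_1<t_2$; the boundary case $t_1=\tau$ is exactly \eqref{eq:mildito}.

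More concretely, applying \eqref{eq:mildito} at time $t_2$ and decomposing $\int_\tau^{t_2}=\int_\tau^{t_1}+\int_{t_1}^{t_2}$ for both the Bochner and the stochastic integral yields
\begin{equation*}
  X_{t_2}
  = S_{\tau,t_2}X_\tau
  + \int_\tau^{t_1} S_{s,t_2}Y_s\,ds
  + \int_{t_1}^{t_2} S_{s,t_2}Y_s\,ds
  + \int_\tau^{t_1} S_{s,t_2}Z_s\,dW_s
  + \int_{t_1}^{t_2} S_{s,t_2}Z_s\,dW_s
  \quad \mathbb{P}\text{-a.s.}
\end{equation*}
For $s\in[\tau,t_1]$ the semigroup identity gives $S_{s,t_2}=S_{t_1,t_2}\,S_{s,t_1}\in L(\hat H,\check H)$, and also $S_{\tau,t_2}X_\tau=S_{t_1,t_2}S_{\tau,t_1}X_\tau$. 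The goal is then to pull the deterministic operator $S_{t_1,t_2}\in L(\hat H,\check H)$ out of the three terms on $[\tau,t_1]$, so that the bracketed expression becomes $S_{\tau,t_1}X_\tau+\int_\tau^{t_1}S_{s,t_1}Y_s\,ds+\int_\tau^{t_1}S_{s,t_1}Z_s\,dW_s$, which equals $X_{t_1}$ $\mathbb{P}$-a.s.\ by \eqref{eq:mildito}.

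The only non-cosmetic step is the interchange of $S_{t_1,t_2}$ with the Bochner and Itô integrals. For the Bochner integral this is the standard fact that a bounded linear operator commutes with the integral once integrability of the integrand (in $\check H$) is established; the $\mathbb{P}$-a.s.\ finiteness of $\int_\tau^t\|S_{s,t}Y_s\|_{\check H}\,ds$ together with $\|S_{s,t_2}Y_s\|_{\check H}=\|S_{t_1,t_2}S_{s,t_1}Y_s\|_{\check H}\leq\|S_{t_1,t_2}\|_{L(\hat H,\check H)}\,\|S_{s,t_1}Y_s\|_{\hat H}$ (and analogous bounds) supplies this. For the stochastic integral one uses the analogous commutation property for Itô integrals in Hilbert space with respect to a cylindrical $Q$-Wiener process, whose hypothesis is the square-integrability condition $\int_\tau^t\|S_{s,t}Z_s\|_{HS(U_0,\check H)}^2\,ds<\infty$ $\mathbb{P}$-a.s.\ imposed in Definition~\ref{propdef}.

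The only substantive obstacle is therefore this interchange of the bounded linear operator $S_{t_1,t_2}$ with the stochastic integral, which must be justified carefully because the integrands take values in $\hat H$ while $S_{s,t_2}Z_s$ takes values in $\check H$. This is a standard property of the Hilbert-space Itô integral (see, e.g., \cite{PrevotRoeckner2007,DaPratoZabczyk1996}), but it is the one place where the measurability assumption on $S$ and the $HS(U_0,\check H)$-integrability hypothesis really enter. Once these commutation results are invoked, the proof collapses to the algebraic rearrangement outlined above.
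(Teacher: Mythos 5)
Your proof is correct. The paper itself disposes of Proposition~\ref{propsimple} in one line, as a consequence of the mild It\^{o} formula: taking $V=\check H$ and $\varphi=\mathrm{id}$ in Theorem~\ref{thm:ito} (so that $\partial_1\varphi=0$, $\partial_2\varphi=I$ and $\partial_2^2\varphi=0$) yields \eqref{eq:itovolterra} at once, the resulting identity holding in $\check H$ and hence in $\tilde H$. Your argument instead proves the identity directly from Definition~\ref{propdef}, and in substance it reproduces exactly the key \emph{Relation} step \eqref{eq:fact} inside the paper's proof of Theorem~\ref{thm:ito}: there the authors show $\bar X^{t}_s=S_{s,t}X_s$ by the same factorization you use, namely splitting the integrals at the intermediate time, writing $S_{u,t}=S_{s,t}S_{u,s}$ for $u<s<t$, and pulling the fixed bounded operator $S_{s,t}\in L(\hat H,\check H)$ out of the Bochner and It\^{o} integrals. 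So your route is more elementary and self-contained — no test functions, no second derivatives, no appeal to the standard It\^{o} formula — while the paper's route is economical only because Theorem~\ref{thm:ito} is proved anyway. You also correctly isolate the one nontrivial analytic point, the commutation of $S_{t_1,t_2}$ with the Hilbert-space stochastic integral; the $\mathbb{P}$-a.s.\ integrability hypotheses of Definition~\ref{propdef} (together with the continuity of the embedding $\check H\subset\hat H$, which makes $S_{t_1,t_2}$ restricted to $\check H$ a bounded operator on $\check H$) are precisely what is needed to invoke the standard commutation result for bounded operators and stochastic integrals (see, e.g., \cite{PrevotRoeckner2007}).
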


Proposition~\ref{propsimple}
follows directly from
Theorem~\ref{thm:ito}
below.
Obviously, 
equation~\eqref{eq:itovolterra}
in Proposition~\ref{propsimple}
generalizes
equation~\eqref{eq:mildito}
in the definition
of a mild It\^{o} process.
Let us complete this subsection
on mild It\^{o} processes
with the notion of a certain subclass of
mild It\^{o} processes.

\begin{definition}[Mild
martingale]
A
mild It\^{o} process
$ 
  X \colon \mathbbm{I} \times \Omega
  \rightarrow \tilde{H}
$
with semigroup 
$ 
  S \colon \angle
  \rightarrow L( \hat{H}, \check{H} )
$,
mild drift 
$ 
  Y \colon \mathbbm{I} \times \Omega
  \rightarrow \hat{H}
$ 
and mild 
diffusion 
$ 
  Z \colon \mathbbm{I} \times \Omega
  \rightarrow HS( U_0, \hat{H} )
$
satisfying
$
\mathbb{E}\big[ \| X_t \|_{ \tilde{H} }
\big] < \infty 
$
for all $ t \in \mathbbm{I} $
is called a
\underline{mild martingale}
(with respect to the filtration
$ 
  \mathcal{F}_t,
  t \in \mathbbm{I},
$
and with respect to
the semigroup $ S $) if
\begin{equation}
\label{eq:mildmartingale}
  \mathbb{E}\!\left[
    X_{ t_2 } 
  \big| 
    \mathcal{F}_{ t_1 }
  \right]
  =
  S_{ t_1, t_2 } \,
  X_{ t_1 }
\end{equation}
$ \mathbb{P} $-a.s.\ for 
all $ t_1, t_2 \in \mathbbm{I} $
with $ t_1 < t_2 $.
\end{definition}

\begin{prop}[Stochastic convolutions]
\label{propmartingale}
Let
$ 
  X \colon \mathbbm{I} \times \Omega
  \rightarrow \tilde{H}
$
be a mild It\^{o} process
with semigroup 
$ 
  S \colon 
  \angle
  \rightarrow L( \hat{H}, \check{H} )
$,
mild drift 
$ 
  Y \colon \mathbbm{I} \times \Omega
  \rightarrow \hat{H}
$ 
and mild 
diffusion 
$ 
  Z \colon \mathbbm{I} \times \Omega
  \rightarrow HS( U_0, \hat{H} )
$
satisfying
$
\mathbb{E}\big[ 
  \| X_t \|_{ \tilde{H} }^2
\big] < \infty 
$
for all $ t \in \mathbbm{I} $.
If $ \mathbb{P}\big[ Y_t = 0 \big] = 1 $
for all $ t \in \mathbbm{I} $, then
$ X $ is a mild martingale
with respect to the
filtration $ \mathcal{F}_t $,
$ t \in \mathbbm{I} $, and
with respect to
the semigroup $ S $.
\end{prop}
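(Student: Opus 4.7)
The plan is to reduce the claim to the mild Volterra identity of Proposition \ref{propsimple} and to the classical fact that Hilbert-space-valued stochastic integrals with $L^2$-integrands are martingales. Fix $t_1, t_2 \in \mathbbm{I}$ with $t_1 < t_2$. Proposition \ref{propsimple} yields, $\mathbb{P}$-a.s.,
\begin{equation*}
  X_{t_2}
  =
  S_{t_1,t_2}\,X_{t_1}
  +
  \int_{t_1}^{t_2} S_{s,t_2}\,Y_s\,ds
  +
  \int_{t_1}^{t_2} S_{s,t_2}\,Z_s\,dW_s .
\end{equation*}
The first step is to dispose of the drift term: the hypothesis $\mathbb{P}[Y_t = 0] = 1$ for every $t \in \mathbbm{I}$ combined with the joint measurability of $Y$ and Fubini's theorem shows $Y_s(\omega) = 0$ for $(ds \otimes d\mathbb{P})$-a.e.\ $(s,\omega)$, so $S_{s,t_2}Y_s = 0$ in $\check{H}$ $(ds \otimes d\mathbb{P})$-almost everywhere, and the Bochner integral vanishes $\mathbb{P}$-a.s.

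The second step is to take conditional expectations given $\mathcal{F}_{t_1}$ in the resulting identity
\begin{equation*}
  X_{t_2} - S_{t_1,t_2}\,X_{t_1}
  =
  \int_{t_1}^{t_2} S_{s,t_2}\,Z_s\,dW_s .
\end{equation*}
Since $X_{t_1}$ is $\mathcal{F}_{t_1}$-measurable and $S_{t_1,t_2} \in L(\hat{H},\check{H})$ is deterministic, the product $S_{t_1,t_2}X_{t_1}$ is $\mathcal{F}_{t_1}$-measurable; the continuous embeddings $\check{H}\subset\tilde{H}\subset\hat{H}$ together with $\mathbb{E}[\|X_{t_1}\|_{\tilde{H}}^2]<\infty$ make it integrable in $\tilde{H}$. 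Hence it pulls out of the conditional expectation, and the entire assertion \eqref{eq:mildmartingale} reduces to showing that the right-hand stochastic integral has zero conditional expectation given $\mathcal{F}_{t_1}$.

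The third step is the martingale property of the stochastic integral. From the displayed identity and the standing hypothesis $\mathbb{E}[\|X_t\|_{\tilde{H}}^2]<\infty$ for all $t$, the left-hand side lies in $L^2(\Omega;\check{H})$ (using continuity of the embedding $\check{H}\hookrightarrow\tilde{H}$ and boundedness of $S_{t_1,t_2}\colon\hat{H}\to\check{H}$). By It\^{o}'s isometry in the cylindrical $Q$-Wiener setting this forces
\begin{equation*}
  \mathbb{E}\!\int_{t_1}^{t_2}
  \big\| S_{s,t_2}Z_s \big\|_{HS(U_0,\check{H})}^{2}\,ds
  <\infty,
\end{equation*}
so $s\mapsto \mathbbm{1}_{(t_1,t_2]}(s)\,S_{s,t_2}Z_s$ is an honest $L^2$-integrand and its stochastic integral is an $L^2$-martingale in $\check{H}$, giving the desired $\mathbb{E}[\,\cdot\mid \mathcal{F}_{t_1}]=0$.

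The main obstacle I expect is the third step: the definition only guarantees $\mathbb{P}$-a.s.\ finiteness of $\int_{\tau}^{t}\|S_{s,t}Z_s\|_{HS(U_0,\check H)}^2\,ds$, which in general yields only a local martingale. The upgrade to a true $L^2$-martingale must come from the second-moment hypothesis on $X$ itself, via the rearranged Volterra identity and It\^{o}'s isometry as above; this interplay between the integrability of $X$ and the integrability of $S_{\cdot,t_2}Z$ is the crux of the argument, while the drift elimination and the pulling out of $S_{t_1,t_2}X_{t_1}$ are routine.
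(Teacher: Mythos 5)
Your overall route is exactly the paper's: apply Proposition~\ref{propsimple}, drop the drift term using $\mathbb{P}[Y_t=0]=1$, and then argue that the remaining stochastic convolution has vanishing conditional expectation given $\mathcal{F}_{t_1}$. Your first two steps are fine. The problem is the justification of the third. You write that square-integrability of $X_{t_2}-S_{t_1,t_2}X_{t_1}$ ``forces'', via It\^{o}'s isometry, that $\mathbb{E}\int_{t_1}^{t_2}\|S_{s,t_2}Z_s\|^2_{HS(U_0,\check H)}\,ds<\infty$. That implication is not valid. It\^{o}'s isometry is an identity available only after one knows the integrand lies in $L^2(ds\otimes d\mathbb{P})$; for an integrand that is merely $\mathbb{P}$-a.s.\ square-integrable in $s$ (which is all Definition~\ref{propdef} provides), the stochastic integral $M_u=\int_{t_1}^u S_{s,t_2}Z_s\,dW_s$ is a continuous local martingale, and along a localizing sequence $(\tau_n)$ one gets $\mathbb{E}\|M_{t_2\wedge\tau_n}\|^2=\mathbb{E}\int_{t_1}^{t_2\wedge\tau_n}\|S_{s,t_2}Z_s\|^2\,ds$, whence by Fatou only $\mathbb{E}\|M_{t_2}\|^2\le\mathbb{E}\int_{t_1}^{t_2}\|S_{s,t_2}Z_s\|^2\,ds$ --- the inequality points the wrong way. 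Square-integrability of the terminal value of a continuous local martingale does not upgrade it to a true martingale: the reciprocal of a three-dimensional Bessel process is itself a stochastic integral against Brownian motion whose one-dimensional marginals all have finite second moments, yet it is a strict local martingale with $\mathbb{E}[\langle M\rangle_t]=\infty$. A second, smaller slip: since $\check H\subset\tilde H$ continuously one has $\|\cdot\|_{\tilde H}\le c\,\|\cdot\|_{\check H}$, so the hypothesis $\mathbb{E}\|X_t\|^2_{\tilde H}<\infty$ places $X_{t_2}-S_{t_1,t_2}X_{t_1}$ in $L^2(\Omega;\tilde H)$, not in $L^2(\Omega;\check H)$ as you assert; working with the $\tilde H$-valued quadratic variation instead does not remove the obstruction above.

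In fairness, you have correctly located the crux --- you even flag it yourself --- and the paper's own proof consists of the single assertion that the convolution identity ``and the assumption $\mathbb{E}[\|X_t\|^2_{\tilde H}]<\infty$ imply'' the martingale property, i.e., it makes the same jump with no justification at all. To genuinely close the gap one needs something beyond what you (or the paper) use: for instance the stronger integrability $\mathbb{E}\int_{\tau}^{t}\|S_{s,t}Z_s\|^2_{HS(U_0,\check H)}\,ds<\infty$ for all $t\in\mathbbm{I}$, or a uniform-integrability argument for the stopped family $(M_{t_2\wedge\tau_n})_{n}$; neither follows from second moments of $X$ alone by the isometry argument you propose.
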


\begin{proof}[Proof
of Propostion~\ref{propmartingale}]
Propostion~\ref{propsimple}
yields
\begin{equation}
\label{eq:convolution}
  X_{ t_2 }
  =
  S_{ t_1, t_2 } \, X_{ t_1 }
  +
  \int_{ t_1 }^{ t_2 }
  S_{ s, t_2 }
  \, Z_s \, dW_s
\end{equation}
$ \mathbb{P} $-a.s.\ for 
all $ t_1, t_2 \in \mathbbm{I} $
with $ t_1 < t_2 $.
Equation~\eqref{eq:convolution}
and the assumption
$
  \mathbb{E}\big[
    \| X_t \|_{ \tilde{H} }^2
  \big]
  < \infty 
$
for all $ t \in \mathbbm{I} $
imply
equation~\eqref{eq:mildmartingale}.
The proof of Proposition~\ref{propmartingale}
is thus completed.
\end{proof}

\subsection{Mild It\^{o} formula}
\label{sec:mildito}

Let $ \mathcal{J} $ be a set and let
$ g_j \in U_0 $, 
$ j \in \mathcal{J} $,
be an arbitrary orthonormal basis
of the $ \mathbb{R} $-Hilbert
space
$ \left( 
  U_0, 
  \left< \cdot, \cdot 
  \right>_{ U_0 }, 
  \left\| \cdot
  \right\|_{ U_0 } 
\right) $.
For an
$ \mathbb{R} $-vector space
$ ( V, 
\left\| \cdot \right\|_V ) 
$ 
and a function 
$ 
  \varphi \in 
  C^{ 1, 2 }( 
    \mathbbm{I} \times \check{H},
    V 
  )
$
we denote by
$
  \partial_1 \varphi
  \in 
  C( \mathbbm{I} \times \check{H}, V )
$,
$
  \partial_2 \varphi
  \in 
  C( \mathbbm{I} \times \check{H}, 
    L(\check{H}, V) 
  )
$
and 
$
  \partial_2^2 \varphi
  \in 
  C( \mathbbm{I} \times \check{H}, 
    L^{(2)}( \check{H}, V) 
  )
$
the partial 
Fr\'{e}chet 
derivatives of $ \varphi $
given by
$
  ( \partial_1 \varphi)(t,x)
  =
  \big(
    \tfrac{ \partial \varphi }{ \partial t }
  \big)( t, x )
$,
$
  ( \partial_2 \varphi)(t,x)
  =
  \big(
    \tfrac{ \partial \varphi }{ \partial x }
  \big)( t, x )
$
and 
$
  ( \partial_2^2 \varphi)(t,x)
  =
  \big(
    \tfrac{ \partial^2 \varphi }{ \partial x^2 }
  \big)( t, x )
$
for all
$ t \in \mathbbm{I} $
and all 
$ x \in \check{H} $.
\begin{theorem}[Mild
It{\^o} formula]
\label{thm:ito}
Let 
$ 
  X \colon \mathbbm{I} \times \Omega
  \rightarrow \tilde{H} 
$ 
be a mild It{\^o} process
with semigroup 
$ 
  S \colon \angle
  \rightarrow L( \hat{H}, \check{H} )
$,
mild drift 
$ 
  Y \colon \mathbbm{I} \times
  \Omega \rightarrow \hat{H} 
$
and mild 
diffusion
$ 
  Z \colon \mathbbm{I} \times
  \Omega \rightarrow 
  HS(U_0,\hat{H}) 
$
and let $ ( V, 
\left< \cdot, \cdot \right>_V, 
\left\| \cdot \right\|_V ) $ 
be a separable 
$ \mathbb{R} $-Hilbert
space.
Then
\begin{equation}
\label{eq:well1}
  \mathbb{P}\!\left[
    \int_{ t_0 }^t
    \left\|
      ( \partial_2 \varphi)( s, S_{ s, t } X_s )
      S_{ s, t } Y_s
    \right\|_V
    +
    \left\|
      ( \partial_2 \varphi)( s, S_{ s, t } X_s )
      S_{ s, t } Z_s
    \right\|_{ HS(U_0, V ) }^2 
    ds
    < \infty 
  \right] = 1,
\end{equation}
\begin{equation}
\label{eq:well2}
  \mathbb{P}\!\left[
    \int_{ t_0 }^t
    \left\|
      ( \partial_1 \varphi)( s, X_s )
    \right\|_V
    +
    \|
      (\partial^2_2 \varphi)( s, S_{ s, t } X_s )
    \|_{ L^{(2)}( \check{H}, V ) } \,
    \|
      S_{ s, t } Z_s
    \|_{ HS(U_0, \check{H} ) }^2 \,
    ds
    < \infty 
  \right] = 1
\end{equation}
and
\begin{equation}
\label{eq:itoformel_start}
\begin{split}
&
  \varphi( t, X_t )
 =
  \varphi( t_0,
    S_{ t_0, t } 
    X_{ t_0 } 
  )
  +
  \int_{ t_0 }^t
  (\partial_1
  \varphi)( s, 
    S_{ s, t } 
    X_s 
  ) \, ds
  +
  \int_{ t_0 }^t
  (\partial_2
  \varphi)( s, 
    S_{ s, t } 
    X_s 
  ) \,
  S_{ s, t } \, 
  Y_s \, ds
\\&\quad+
  \int_{ t_0 }^t
  (\partial_2
  \varphi)( s,
    S_{ s, t } 
    X_s 
  ) \,
  S_{ s, t } \, 
  Z_s \, dW_s
  +
  \frac{1}{2}
  \sum_{ j \in \mathcal{J} }
  \int_{ t_0 }^t
  ( \partial_2^2 \varphi )( s,
    S_{ s, t } 
    X_s 
  )
  \left(
    S_{ s, t } 
    Z_s g_j,
    S_{ s, t } 
    Z_s g_j
  \right) ds
\end{split}
\end{equation}
$ \mathbb{P} $-a.s.\ for 
all $ t_0, t \in \mathbbm{I} $
with $ t_0 < t $
and all
$ 
  \varphi \in C^{1,2}( 
    \mathbb{I} \times \check{H}, V 
  )
$. 
\end{theorem}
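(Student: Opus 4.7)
The strategy is to fix $t \in \mathbbm{I} \cap (\tau, \infty)$ and $t_0 \in \mathbbm{I}$ with $t_0 < t$ and to reduce the mild It\^{o} setting to the standard one by transporting $X$ forward to the terminal time $t$ through the semigroup $S$. Concretely, I would introduce the $\check{H}$-valued auxiliary process $\tilde{X}_s := S_{s,t} X_s$ for $s \in [t_0, t)$. Proposition~\ref{propsimple} applied on the interval $[s,t]$ gives
\begin{equation*}
X_t \;=\; S_{s,t}\, X_s \;+\; \int_s^t S_{r,t}\, Y_r \, dr \;+\; \int_s^t S_{r,t}\, Z_r \, dW_r
\qquad \mathbb{P}\text{-a.s.,}
\end{equation*}
so subtracting the analogous identity at $s = t_0$ yields
\begin{equation*}
\tilde{X}_s \;=\; \tilde{X}_{t_0} \;+\; \int_{t_0}^s S_{r,t}\, Y_r \, dr \;+\; \int_{t_0}^s S_{r,t}\, Z_r \, dW_r,
\qquad s \in [t_0, t).
\end{equation*}
For this fixed $t$, the process $\tilde{X}$ is therefore an honest \emph{standard} It\^{o} process in $\check{H}$ with drift $(S_{s,t} Y_s)_s$ and diffusion $(S_{s,t} Z_s)_s$; the integrability conditions for both are precisely those imposed in Definition~\ref{propdef}.

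Next I would extend $\tilde{X}$ continuously up to $s = t$ and identify $\tilde{X}_t = X_t$. Since both $\|S_{r,t} Y_r\|_{\check{H}}$ and $\|S_{r,t} Z_r\|_{HS(U_0,\check{H})}^2$ are $\mathbb{P}$-a.s.\ integrable on $[t_0,t]$, the two integrals above are continuous $\check{H}$-valued processes on the closed interval $[t_0, t]$, so the limit $\tilde{X}_{t^-}$ exists in $\check{H}$ almost surely. Letting $s \uparrow t$ in the first display forces both integrals on its right-hand side to vanish in $\check{H}$ (the deterministic part by dominated convergence, the stochastic part by the path-continuity of the stochastic integral on $[t_0,t]$, which in turn follows from the $\mathbb{P}$-a.s.\ integrability of $\|S_{r,t} Z_r\|_{HS(U_0,\check{H})}^2$ via standard localization), which yields $\tilde{X}_{t^-} = X_t$ $\mathbb{P}$-a.s. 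With this identification in hand, I would apply the standard Hilbert-space It\^{o} formula in $\check{H}$ (for example Theorem~2.4 in Brze\'{z}niak, Van Neerven, Veraar \citationand\ Weis~\cite{bvvw08}, already used in \eqref{eq:standardito.intro}) to $\varphi \in C^{1,2}(\mathbbm{I} \times \check{H}, V)$ evaluated along $(s, \tilde{X}_s)$ on $[t_0,t]$. Because $\tilde{X}_{t_0} = S_{t_0,t} X_{t_0}$ and $\tilde{X}_t = X_t$, the five terms produced by that formula reproduce the right-hand side of \eqref{eq:itoformel_start} line by line; the well-definedness conditions \eqref{eq:well1}--\eqref{eq:well2} are exactly the finiteness requirements needed for the standard formula and follow from the continuity of $s \mapsto (s, \tilde{X}_s) \in \mathbbm{I}\times\check{H}$ on the compact interval $[t_0,t]$ combined with the local boundedness of $\partial_1 \varphi$, $\partial_2 \varphi$ and $\partial_2^2 \varphi$.

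The principal technical obstacle is the boundary step $\tilde{X}_{t^-} = X_t$: the semigroup $S$ is only given on $\angle = \{t_1 < t_2\}$ with no canonical value on the diagonal, so the identification has to be deduced purely from the integrability hypotheses of Definition~\ref{propdef}. A cleaner way to organize the argument is to first apply the standard It\^{o} formula on $[t_0, t - \varepsilon]$ for each small $\varepsilon > 0$, where no boundary subtlety arises, and then pass to the limit $\varepsilon \downarrow 0$, verifying pointwise convergence of each of the five terms in \eqref{eq:itoformel_start}: the non-stochastic integrals converge by $\mathbb{P}$-a.s.\ dominated convergence, while the stochastic integral converges in probability by the standard continuity of the stochastic integral in its upper limit. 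This avoids defining $S_{t,t}$ explicitly and reduces the entire proof to the standard It\^{o} formula together with the elementary identity of Proposition~\ref{propsimple}.
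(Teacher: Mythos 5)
Your proposal is correct and is essentially the paper's own general proof: you freeze the terminal time $t$, pass to the transformed process $s \mapsto S_{s,t} X_s$, observe that for fixed $t$ this is a standard It\^{o} process with drift $S_{s,t} Y_s$ and diffusion $S_{s,t} Z_s$, apply the standard Hilbert-space It\^{o} formula, and substitute back — this is exactly the Conus--Dalang-type transformation \eqref{eq:semiX}/\eqref{eq:fact} used in the paper. Two small remarks. First, invoking Proposition~\ref{propsimple} is circular as the paper is organized (it is stated to follow from Theorem~\ref{thm:ito}); you should instead derive the identity $S_{s,t}X_s = S_{\tau,t}X_\tau + \int_\tau^s S_{r,t}Y_r\,dr + \int_\tau^s S_{r,t}Z_r\,dW_r$ directly from \eqref{eq:mildito} by pulling the bounded operator $S_{s,t}$ inside the integrals and using $S_{t_2,t_3}S_{t_1,t_2}=S_{t_1,t_3}$, which is precisely the computation \eqref{eq:fact}. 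Second, the endpoint difficulty you spend effort on disappears if, as in the paper, you take the integral expression $\bar{X}^t_u = S_{\tau,t}X_\tau + \int_\tau^u S_{r,t}Y_r\,dr + \int_\tau^u S_{r,t}Z_r\,dW_r$ as the \emph{definition} of the auxiliary process on the closed interval $[\tau,t]$: then $\bar{X}^t_t = X_t$ holds by the very definition of a mild It\^{o} process, and the relation $\bar{X}^t_s = S_{s,t}X_s$ for $s<t$ is only needed to rewrite the integrands (for which the a.e.\ identification via Lemma~\ref{lem:easy} suffices), so no limit $s\uparrow t$ is required.
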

Note that \eqref{eq:well1}
and \eqref{eq:well2}
ensure that the 
possibly infinite sum
and all integrals in 
\eqref{eq:itoformel_start} 
are well defined.
Indeed, 
equation~\eqref{eq:well2}
implies
\begin{equation}
\begin{split}
\lefteqn{
  \sum_{ j \in \mathcal{J} }
  \int_{ t_0 }^t
  \left\|
    (\partial^2_2 \varphi)( s,
      S_{ s, t } 
      X_s 
    )
    \!\left(
      S_{ s, t } 
      Z_s g_j,
      S_{ s, t } 
      Z_s g_j
    \right) 
  \right\|_{ V }
  ds
}
\\ & \leq
  \int_{ t_0 }^t
  \left\|
    (\partial^2_2 \varphi)( s, 
      S_{ s, t } 
      X_s 
    )
  \right\|_{ L^{(2)}(\check{H}, V) }
  \left(
  \sum\nolimits_{ j \in \mathcal{J} }
  \left\|
    S_{ s, t } 
    Z_s g_j
  \right\|_{ \check{H} }^2
  \right)
  ds
\\ & =
  \int_{ t_0 }^t
  \left\|
    (\partial^2_2 \varphi)( s, 
      S_{ s, t } 
      X_s 
    )
  \right\|_{ L^{(2)}( \check{H}, V ) }
  \left\|
    S_{ s, t } 
    Z_s 
  \right\|_{ HS( U_0, \check{H} ) }^2
  ds
  < \infty
\end{split}
\end{equation}
$ \mathbb{P} $-a.s.\ for 
all $ t_0, t \in \mathbbm{I} $
with $ t_0 < t $
and all
$
  \varphi
  \in C^{ 1, 2 }( 
    \mathbbm{I} \times 
    \check{H}, V
  )
$.
Moreover, note that
the mild It\^{o} 
formula~\eqref{eq:itoformel_start}
is independent of the
particular choice
of the orthonormal
basis $ g_j \in U_0 $,
$ j \in \mathcal{J} $,
of $ U_0 $.

In the next step a certain
flow property of the mild
It\^{o} formula~\eqref{eq:itoformel_start}
is observed. To be more precise,
the mild It\^{o} formula~\eqref{eq:itoformel_start}
on the time
interval $ [\hat{t}, t] $
applied to the test function
$ \varphi( s, v ) $, 
$ s \in [\hat{t}, t] $,
$ v \in \check{H} $,
reads as
\begin{equation}
\label{eq:sgprop1}
\begin{split}
&
  \varphi( t, X_t )
 =
  \varphi( 
    \hat{t}, S_{ \hat{t}, t } 
    X_{ \hat{t} } 
  )
  +
  \int_{ \hat{t} }^t
  (\partial_1 \varphi)( s,
    S_{ s, t } 
    X_s 
  ) \,
  ds
  +
  \int_{ \hat{t} }^t
  (\partial_2 \varphi)( s,
    S_{ s, t } 
    X_s 
  ) \,
  S_{ s, t } \, 
  Y_s \, ds
\\&\quad+
  \int_{ \hat{t} }^t
  (\partial_2 \varphi)( s,
    S_{ s, t } 
    X_s 
  ) \,
  S_{ s, t } \, 
  Z_s \, dW_s
  +
  \frac{1}{2}
  \sum_{ j \in \mathcal{J} }
  \int_{ \hat{t} }^t
  (\partial_2^2 \varphi)( s,
    S_{ s, t } 
    X_s 
  )
  \left(
    S_{ s, t } 
    Z_s g_j,
    S_{ s, t } 
    Z_s g_j
  \right) ds
\end{split}
\end{equation}
$ \mathbb{P} $-a.s.\ for 
all $ \hat{t}, t \in \mathbbm{I} $
with $ \hat{t} < t $
and all 
$ 
  \varphi \in 
  C^{1,2}( 
    \mathbbm{I} \times 
    \check{H}, 
    V 
  ) 
$.
Moreover, observe that
the mild It\^{o}
formula~\eqref{eq:itoformel_start}
on the time interval
$ [t_0,\hat{t}] $
applied to the test function
$ \varphi( s, S_{ \hat{t}, t } v ) $,
$ s \in [t_0,\hat{t}] $,
$ v \in \check{H} $,
reads as
\begin{equation}
\label{eq:sgprop2}
\begin{split}
&
  \varphi( \hat{t}, S_{ \hat{t}, t } 
  X_{ \hat{t} } )
 =
  \varphi( 
    t_0, 
    S_{ t_0, t } 
    X_{ t_0 } 
  )
  +
  \int_{ t_0 }^{ \hat{t} }
  (\partial_1 \varphi)( s,
    S_{ s, t } 
    X_s 
  ) \,
  ds
  +
  \int_{ t_0 }^{ \hat{t} }
  (\partial_2 \varphi)( s,
    S_{ s, t } 
    X_s 
  ) \,
  S_{ s, t } \, 
  Y_s \, ds
\\&\quad+
  \int_{ t_0 }^{ \hat{t} }
  (\partial_2 \varphi)( s,
    S_{ s, t } 
    X_s 
  ) \,
  S_{ s, t } \, 
  Z_s \, dW_s
  +
  \frac{ 1 }{ 2 }
  \sum_{ j \in \mathcal{J} }
  \int_{ t_0 }^{ \hat{t} }
  (\partial_2^2 \varphi)( s,
    S_{ s, t } 
    X_s 
  )
  \left(
    S_{ s, t } 
    Z_s g_j,
    S_{ s, t } 
    Z_s g_j
  \right) ds
\end{split}
\end{equation}
$ \mathbb{P} $-a.s.\ for 
all $ t_0, \hat{t}, t \in \mathbbm{I} $
with $ t_0 < \hat{t} < t $
and all 
$ 
  \varphi \in 
  C^{1,2}( 
    \mathbbm{I} \times 
    \check{H}, 
    V 
  ) 
$.
Putting \eqref{eq:sgprop2}
into \eqref{eq:sgprop1} 
then results in the mild It\^{o}
formula~\eqref{eq:itoformel_start}
on the time interval $ [t_0,t] $
for $ t_0, t \in \mathbbm{I} $
with $ t_0 < t $.
If the test function 
$
  ( \varphi(t, x) )_{
    t \in \mathbbm{I},
    x \in \check{H}
  } 
  \in
  C^{1,2}(
    \mathbbm{I} \times \check{H}, V
  )
$
in the mild It\^{o} 
formula~\eqref{eq:itoformel_start}
does not depend on 
$ t \in \mathbbm{I} $, then the
mild It\^{o} formula in Theorem~\ref{thm:ito}
reads as follows.
\begin{cor}
\label{cor:itoauto}
Let 
$ 
  X \colon \mathbbm{I} \times \Omega
  \rightarrow \tilde{H} 
$ 
be a mild It{\^o} process
with semigroup 
$ 
  S \colon \angle
  \rightarrow 
  L( \hat{H}, \check{H} )
$,
mild drift 
$ 
  Y \colon \mathbbm{I} \times
  \Omega \rightarrow \hat{H} 
$
and mild 
diffusion
$ 
  Z \colon \mathbbm{I} \times
  \Omega \rightarrow 
  HS(U_0,\hat{H}) 
$
and let $ ( V, 
\left< \cdot, \cdot \right>_V, 
\left\| \cdot \right\|_V ) $ 
be a separable 
$ \mathbb{R} $-Hilbert
space.
Then
\begin{equation}
\label{eq:well1NON}
  \mathbb{P}\!\left[
    \int_{ t_0 }^t
    \left\|
      \varphi'( S_{ s, t } X_s )
      S_{ s, t } Y_s
    \right\|_V
    +
    \left\|
      \varphi'( S_{ s, t } X_s )
      S_{ s, t } Z_s
    \right\|_{ HS(U_0, V ) }^2 
    ds
    < \infty 
  \right] = 1,
\end{equation}
\begin{equation}
\label{eq:well2NON}
  \mathbb{P}\!\left[
    \int_{ t_0 }^t
    \|
      \varphi''( S_{ s, t } X_s )
    \|_{ L^{(2)}( \check{H}, V ) } \,
    \|
      S_{ s, t } Z_s
    \|_{ HS(U_0, \check{H} ) }^2 \,
    ds
    < \infty 
  \right] = 1
\end{equation}
and
\begin{align}
\label{eq:itoformel_startNON}
  \varphi( X_t )
&=
  \varphi( 
    S_{ t_0, t } 
    X_{ t_0 } 
  )
  +
  \int_{ t_0 }^t
  \varphi'(  
    S_{ s, t } 
    X_s 
  ) \,
  S_{ s, t } \, 
  Y_s \, ds
  +
  \int_{ t_0 }^t
  \varphi'( 
    S_{ s, t } 
    X_s 
  ) \,
  S_{ s, t } \, 
  Z_s \, dW_s
\nonumber
\\&\quad+
  \frac{1}{2}
  \sum_{ j \in \mathcal{J} }
  \int_{ t_0 }^t
  \varphi''( 
    S_{ s, t } 
    X_s 
  )
  \left(
    S_{ s, t } 
    Z_s g_j,
    S_{ s, t } 
    Z_s g_j
  \right) ds
\end{align}
$ \mathbb{P} $-a.s.\ for 
all 
$ t_0, t \in \mathbbm{I} $
with $ t_0 < t $
and all
$ 
  \varphi \in C^{2}( 
    \check{H}, V 
  )
$. 
\end{cor}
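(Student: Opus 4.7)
The plan is to deduce Corollary~\ref{cor:itoauto} as an immediate specialization of Theorem~\ref{thm:ito} to time-independent test functions. Given $\varphi \in C^2(\check{H}, V)$, I would define the extension $\tilde{\varphi} \colon \mathbbm{I} \times \check{H} \to V$ by setting $\tilde{\varphi}(t,x) := \varphi(x)$ for all $t \in \mathbbm{I}$ and $x \in \check{H}$. Since $\tilde{\varphi}$ is constant in its first argument and agrees with $\varphi$ in its second argument, it is straightforward to see that $\tilde{\varphi} \in C^{1,2}(\mathbbm{I} \times \check{H}, V)$ with $(\partial_1 \tilde{\varphi})(t,x) = 0$, $(\partial_2 \tilde{\varphi})(t,x) = \varphi'(x)$ and $(\partial_2^2 \tilde{\varphi})(t,x) = \varphi''(x)$ for all $(t,x) \in \mathbbm{I} \times \check{H}$.

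With this observation in hand, the well-definedness assertions \eqref{eq:well1NON} and \eqref{eq:well2NON} are exactly \eqref{eq:well1} and \eqref{eq:well2} applied to $\tilde{\varphi}$, using that $\|(\partial_1 \tilde{\varphi})(s, X_s)\|_V = 0$ drops out of \eqref{eq:well2}. Similarly, plugging $\tilde{\varphi}$ into the mild Itô formula \eqref{eq:itoformel_start} causes the $\int_{t_0}^t (\partial_1 \tilde{\varphi})(s, S_{s,t} X_s)\, ds$ term to vanish, while $(\partial_2 \tilde{\varphi})$ and $(\partial_2^2 \tilde{\varphi})$ reduce to $\varphi'$ and $\varphi''$ respectively; collecting terms gives exactly \eqref{eq:itoformel_startNON}.

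There is essentially no obstacle in this proof: the argument is a direct substitution, and the main thing to check is that the extension $\tilde{\varphi}$ genuinely belongs to $C^{1,2}(\mathbbm{I} \times \check{H}, V)$, which is immediate from the joint continuity of $\varphi$, $\varphi'$ and $\varphi''$ together with the triviality of the $t$-dependence. Hence the entire proof reduces to writing one or two lines: ``Apply Theorem~\ref{thm:ito} with the test function $(\mathbbm{I} \times \check{H}) \ni (t,x) \mapsto \varphi(x) \in V$.''
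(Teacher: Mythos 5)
Your proposal is correct and is exactly the argument the paper intends: the corollary is stated immediately after the remark that \eqref{eq:itoformel_start} specializes when the test function does not depend on $t \in \mathbbm{I}$, i.e.\ one applies Theorem~\ref{thm:ito} to $(t,x) \mapsto \varphi(x)$, for which $\partial_1$ vanishes and $\partial_2$, $\partial_2^2$ reduce to $\varphi'$, $\varphi''$. Nothing further is needed.
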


Let us illustrate
Theorem~\ref{thm:ito}
and Corollary~\ref{cor:itoauto}
by two simple examples.
The first one is a mild 
version of the stochastic 
integration by parts 
formula (see, e.g., Corollary~2.6
in \cite{bvvw08}).

\begin{example}[Mild stochastic
integration
by parts]
\label{ex:1}
Let
$ 
  ( 
    \check{\mathcal{H}}, 
    \left< \cdot , \cdot 
    \right>_{ \check{\mathcal{H}} }, 
    \left\| \cdot \right\|_{      
      \check{\mathcal{H}} 
    }
  ) 
$,
$ 
  ( 
    \tilde{\mathcal{H}}, 
    \left< \cdot , \cdot 
    \right>_{ \tilde{\mathcal{H}} }, 
    \left\| \cdot 
    \right\|_{ \tilde{\mathcal{H}} }
  ) 
$,
$ 
  ( 
    \hat{\mathcal{H}}, 
    \left< \cdot , \cdot 
    \right>_{ \hat{\mathcal{H}} }, 
    \left\| \cdot \right\|_{ \hat{\mathcal{H}} }
  ) 
$,
$ 
  ( 
    \mathcal{U}, 
    \left< \cdot , \cdot \right>_\mathcal{U}, 
    \left\| \cdot \right\|_{ \mathcal{U} } 
  ) 
$
and
$ 
  ( V, 
  \left< \cdot, \cdot \right>_V, 
  \left\| \cdot \right\|_V ) 
$ 
be separable
$\mathbb{R}$-Hilbert spaces
with 
$ 
  \check{\mathcal{H}} 
  \subset
  \tilde{\mathcal{H}} 
  \subset 
  \hat{\mathcal{H}} 
$
continuously and densely, 
let 
$ 
  X \colon \mathbbm{I} \times \Omega
  \rightarrow \tilde{H} 
$ 
be a mild It{\^o} process
with semigroup 
$ 
  S \colon \angle
  \rightarrow L( \hat{H}, \check{H} )
$,
mild drift 
$ 
  Y \colon \mathbbm{I} \times
  \Omega \rightarrow \hat{H} 
$
and mild 
diffusion
$ 
  Z \colon \mathbbm{I} \times
  \Omega \rightarrow 
  HS(U_0,\hat{H}) 
$
and 
let 
$ 
  \mathcal{X} \colon 
  \mathbbm{I} \times \Omega
  \rightarrow \tilde{\mathcal{H}} 
$ 
be a mild It{\^o} process
with semigroup 
$ 
  \mathcal{S} \colon \angle
  \rightarrow 
  L( 
    \hat{\mathcal{H}},
    \check{\mathcal{H}}
  )
$,
mild drift 
$ 
  \mathcal{Y} 
  \colon \mathbbm{I} \times
  \Omega \rightarrow \hat{\mathcal{H}} 
$
and mild 
diffusion
$ 
  \mathcal{Z} \colon \mathbbm{I} \times
  \Omega \rightarrow 
  HS(U_0,\hat{\mathcal{H}}) 
$.
Corollary~\ref{cor:itoauto} then shows
\begin{align}
&
  \varphi\big( 
    X_t, \mathcal{X}_t 
  \big)
  =
  \varphi\big( 
    S_{ t_0, t } X_t, 
    \mathcal{S}_{ t_0, t } \mathcal{X}_t 
  \big)
  +
  \int_{ t_0 }^t
  \varphi\big(
    S_{ s, t } Y_s, 
    \mathcal{S}_{ s, t } \mathcal{X}_s
  \big) 
  \, ds
  +
  \int_{ t_0 }^t
  \varphi\big(
    S_{ s, t } X_s, 
    \mathcal{S}_{ s, t } \mathcal{Y}_s
  \big)
  \,
  ds
\\ & +
  \int_{ t_0 }^t
  \varphi\big(
    S_{ s, t } Z_s( \cdot ), 
    \mathcal{S}_{ s, t } \mathcal{X}_s
  \big) 
  \, dW_s
  +
  \int_{ t_0 }^t
  \varphi\big(
    S_{ s, t } X_s, 
    \mathcal{S}_{ s, t } 
    \mathcal{Z}_s( \cdot )
  \big) 
  \,
  dW_s
  +  
  \sum_{ j \in \mathcal{J} }
  \int_{ t_0 }^t
  \varphi\big( 
    S_{ s, t } Z_s g_j,
    \mathcal{S}_{ s, t } \mathcal{Z}_s 
    g_j
  \big)
  \,
  ds
\nonumber
\end{align}
$ \mathbb{P} $-a.s.\ for
all $ t_0, t \in \mathbbm{I} $
with $ t_0 < t $
and all bounded bilinear mappings
$ 
  \varphi \colon
  \check{H} \times \check{\mathcal{H}}
  \rightarrow
  V
$.
\end{example}

\begin{example}[Mild chain rule]
\label{ex:2}
Let 
$ 
  S \colon 
  \angle
  \rightarrow 
  L( \hat{H}, \check{H} )
$
be an
$ 
\mathcal{B}( \angle ) 
$/$
\mathcal{S}( \hat{H}, \check{H} ) 
$-measurable
mapping satisfying
$
  S_{ t_2, t_3 }
  S_{ t_1, t_2 }
=
  S_{ t_1, t_3 }
$
for all $ t_1, t_2, t_3
\in \mathbbm{I} $
with $ t_1 < t_2 < t_3 $
and 
let 
$ 
  x \colon \mathbbm{I}
  \rightarrow \tilde{H} 
$
and
$ 
  y \colon \mathbbm{I}
  \rightarrow \hat{H} 
$
be two Borel measurable
functions
with
$
    \int_{ \tau }^{ t }
    \left\| S_{ s, t } y_s
    \right\|_{ \tilde{H} }
    ds < \infty
$
and
$
  x_t 
= 
  S_{ \tau, t } \,
  x_{ \tau }
  +
  \int_{ \tau }^t
    S_{ s, t } \,
    y_s
  \, ds
$
for all $ t \in \mathbbm{I} $.
Corollary~\ref{cor:itoauto} 
then shows
\begin{equation}
\label{eq:mildchain}
  \varphi( x_t )
=
  \varphi( S_{ t_0, t} x_{ t_0 } )
  +
  \int_{ t_0 }^t
  \varphi'( S_{ s, t } x_s ) \,
  S_{ s, t } \,
  y_s \, ds
\end{equation}
for all $ t_0, t \in \mathbbm{I} $
with $ t_0 < t $
and all 
$ 
  \varphi \in 
  C^2( \check{H}, V ) 
$.
Equation~\eqref{eq:mildchain}
is somehow a mild
chain rule for
the mild process
$
  x \colon \mathbbm{I} \rightarrow
  \tilde{H}
$.
\end{example}

Let us now concentrate on proofs
of the mild It\^{o} 
formula~\eqref{eq:itoformel_start}.
A central difficulty in order 
to establish an It{\^o} formula 
for the stochastic process 
$ 
  X \colon \mathbbm{I} 
  \times \Omega \rightarrow 
  \tilde{H}
$ 
is that this stochastic process 
is, in general, 
not a standard It\^{o} process
to which 
the standard
It{\^o} formula 
(see, e.g.,
Theorem~4.17 in Section~4.5 in 
Da Prato \citationand\ 
Zabczyk~\cite{dz92}) could be 
applied. 
(Here and below the terminology
``standard It\^{o} formula'' instead
of simply ``It\^{o} formula'' is used
in order to distinguish more clearly
from the above mild It\^{o} formula.)
The stochastic process 
$ 
  X \colon \mathbbm{I} \times 
  \Omega \rightarrow \tilde{H} 
$ 
is, in general, not a standard
It\^{o} process
since it satisfies the 
It{\^o}-Volterra type 
equation~\eqref{eq:mildito}
in which the integrand processes 
$ S_{ s, t } \, Y_s $, 
$ s \in [\tau, t] $, 
and $ S_{ s, t } \, Z_s $, 
$ s \in [\tau, t] $, 
depend explicitly 
on $ t \in \mathbbm{I} $ too
(this was the reason for introducing
the notion of a 
mild It\^{o} process;
see Definition~\ref{propdef}).
Below we present two proofs 
which overcome
this difficulty and which establish
the mild It\^{o} formula~\eqref{eq:itoformel_start}.
Both proofs consider
appropriate transformations
of the mild It\^{o} process 
$ X \colon \mathbbm{I} \times \Omega
\rightarrow \tilde{H} $.
The transformed stochastic processes
are then standard It\^{o} processes
to which the standard 
It\^{o} formula can be applied.
Relating then the transformed
stochastic processes in a
suitable way to the original
mild It\^{o} process 
$ 
  X \colon \mathbbm{I} \times
  \Omega \rightarrow \tilde{H}
$
finally results 
in the mild It\^{o} 
formula~\eqref{eq:itoformel_start}.
The main difference of the two
proofs is the type of transformation
applied to the mild It\^{o} 
process
$
  X \colon \mathbbm{I} \times \Omega
  \rightarrow \tilde{H} 
$.

The first proof makes use of
a transformation in 
Teichmann~\cite{t09}
and
Filipovi{\'c}, Tappe 
\citationand\ 
Teichmann~\cite{ftt10}
(see equations (1.3)
and (1.4) in Teichmann~\cite{t09}
and Section~8 in
Filipovi{\'c}, Tappe 
\citationand\ 
Teichmann~\cite{ftt10}
and see also 
Hausenblas 
\citationand\  
Seidler~\cite{hs01,hs08}).
The first proof does not show
Theorem~\ref{thm:ito} in the
general case but in the case
in which the semigroup of the
mild It\^{o} process is 
pseudo-contractive
(see below for the precise
description of
the used assumptions).
Under this additional assumption,
the semigroup
$ 
  ( S_{ t_1, t_2 } )_{
    (t_1, t_2) \in \angle
  }
$
on the Hilbert space $ \hat{H} $
can be dilated to a group 
$ 
  ( \mathcal{U}_t )_{ t \in \mathbb{R} }
$
on
a larger Hilbert space
(see, e.g.,
Sz\"{o}kefalvi-Nagy~\cite{na53,na54} 
and Theorem~I.81 in 
Sz\"{o}kefalvi-Nagy 
\citationand\ 
Foia{\lfhook{s}}~\cite{naf70}
for the so-called dilations of the unitary theorem).
On this larger Hilbert space,
the mild It\^{o} process~\eqref{eq:mildito} can
be transformed into a
standard It\^{o} process
by -- roughly speaking -- multiplying with
$ \mathcal{U}_{ - t } $
for $ t \in \mathbbm{I} $.
Next the standard It\^{o} formula
can be applied to the transformed
standard It\^{o} process.
Relating this transformed 
standard It\^{o}
process then in a suitable way
to the original mild It\^{o} process
finally results
in the mild It\^{o} 
formula~\eqref{eq:itoformel_start}.

The second proof 
establishes
Theorem~\ref{thm:ito} in the general
case.
It makes use of an idea
in Conus 
\citationand\ 
Dalang~\cite{cd08}
and
Conus~\cite{c08}
(see Section~6 in 
Conus \citationand\ 
Dalang~\cite{cd08}
and 
equations~(1.7)
and (7.6) in 
Conus~\cite{c08}
and see also 
Section~3 in 
Debussche 
\citationand\  
Printems~\cite{dp09},
Theorem~4 in 
Lindner \citationand\ 
Schilling~\cite{ls10}
and Theorem~3.1 in
Kov{\'a}cs, Larsson
\citationand\  
Lindgren~\cite{kll11})
and exploits a more elementary
transformation.
Roughly speaking, the
mild It\^{o} process
$ X \colon \mathbbm{I} \times \Omega
\rightarrow \tilde{H} $
is transformed in the second
proof by multiplying
with $ S_{ t, T } $ for 
$ t \in [\tau, T) $
with a fixed $ T \in \mathbbm{I} $
(compare that 
the transformation in first proof
is based on multiplying with the group
at the negative time value $ - t $).
Since
$ T - t > 0 $, the transformed
process of the 
$ \tilde{H} $-valued process
$ X \colon \mathbbm{I} \times \Omega
\rightarrow \tilde{H} $
enjoys values in 
$ \tilde{H} $ too
(this is in contrast to the first
proof where the transformed process
of $ X \colon \mathbbm{I} \times
\Omega \rightarrow \tilde{H} $ 
takes values in a larger
Hilbert space in which $ \tilde{H} $
is continuously embedded).
Nonetheless, as in the first proof,
the transformed 
stochastic process is 
a standard It\^{o} process
to which the standard It\^{o} formula
can be applied.
Relating the transformed
standard It\^{o} process
in a suitable way
to the original mild It\^{o}
process then again results 
in the mild It\^{o} 
formula~\eqref{eq:itoformel_start}.

Both proofs thus essentially consist
of three steps: a 
{\it transformation},
an {\it application of 
the standard
It\^{o} formula} 
and the use of a
suitable {\it relation} 
of the transformed 
standard It\^{o}
process and the original 
mild It\^{o} process.
The second proof also uses 
the following simple result.

\begin{lemma}
\label{lem:easy}
Let $ Y, Z \colon \mathbbm{I}
\times \Omega \rightarrow [0,\infty) $
be two product measurable
stochastic processes with
$
  \mathbb{P}\big[
    Y_t = Z_t
  \big] = 1
$
for all $ t \in \mathbbm{I} $
and with
$
  \mathbb{P}\big[
    \int_{ \mathbbm{I} } Y_s \, ds < \infty
  \big] = 1
$.
Then
$
  \mathbb{P}\big[
    \int_{ \mathbbm{I} } Z_s \, ds < \infty
  \big] = 1
$.
\end{lemma}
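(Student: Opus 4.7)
The proof proposal is a straightforward Tonelli/Fubini argument, turning the pointwise equality in probability into an almost sure equality of the integrals.

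The plan is to consider the set $N := \{(t,\omega) \in \mathbbm{I} \times \Omega : Y_t(\omega) \neq Z_t(\omega)\}$. Since $Y$ and $Z$ are product measurable, so is the map $(t,\omega) \mapsto Y_t(\omega) - Z_t(\omega)$ from $\mathbbm{I}\times\Omega$ to $\mathbb{R}$, and hence $N \in \mathcal{B}(\mathbbm{I}) \otimes \mathcal{F}$. The function $\mathbbm{1}_N$ is therefore nonnegative and jointly measurable, so Tonelli's theorem applies to the product measure $\lambda \otimes \mathbb{P}$, where $\lambda$ denotes Lebesgue measure on $\mathbbm{I}$.

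First I would use the hypothesis $\mathbb{P}[Y_t = Z_t] = 1$ for every $t \in \mathbbm{I}$ to rewrite
\begin{equation*}
  \int_{\mathbbm{I}} \mathbb{P}\!\left[ Y_t \neq Z_t \right] dt
  = \int_{\mathbbm{I}} 0 \, dt = 0 .
\end{equation*}
By Tonelli, this equals $\mathbb{E}\!\left[ \int_{\mathbbm{I}} \mathbbm{1}_{\{Y_t \neq Z_t\}} \, dt \right]$, so the inner integral must vanish $\mathbb{P}$-a.s. Thus, on a set $\Omega_0 \in \mathcal{F}$ with $\mathbb{P}[\Omega_0] = 1$, the set $\{t \in \mathbbm{I} : Y_t(\omega) \neq Z_t(\omega)\}$ has Lebesgue measure zero for every $\omega \in \Omega_0$.

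Next I would combine this with the hypothesis $\mathbb{P}[\int_{\mathbbm{I}} Y_s \, ds < \infty] = 1$. On the intersection $\Omega_0 \cap \{\int_{\mathbbm{I}} Y_s \, ds < \infty\}$, which still has full $\mathbb{P}$-measure, the functions $s \mapsto Y_s(\omega)$ and $s \mapsto Z_s(\omega)$ agree Lebesgue-a.e., so $\int_{\mathbbm{I}} Z_s(\omega) \, ds = \int_{\mathbbm{I}} Y_s(\omega) \, ds < \infty$. This yields $\mathbb{P}[\int_{\mathbbm{I}} Z_s \, ds < \infty] = 1$.

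I do not anticipate a genuine obstacle: the only point requiring care is ensuring product measurability of the exceptional set $N$ so that Tonelli can be invoked, and this is immediate from the hypothesis that $Y$ and $Z$ are product measurable. (A similar argument would equally show $\int_{\mathbbm{I}} Y_s \, ds = \int_{\mathbbm{I}} Z_s \, ds$ $\mathbb{P}$-a.s., which is slightly stronger than what is asserted.)
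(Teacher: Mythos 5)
Your proof is correct: the Tonelli argument (joint measurability of the exceptional set $N$, integrating $\mathbb{P}[Y_t\neq Z_t]=0$ over $\mathbbm{I}$ to conclude $(\lambda\otimes\mathbb{P})(N)=0$, then passing to $\omega$-sections) is exactly the standard route, and it works even when $\mathbbm{I}$ has infinite Lebesgue measure since the product measure is $\sigma$-finite. The paper itself omits the proof as ``clear,'' so there is nothing to compare against; your write-up simply supplies the argument the authors had in mind.
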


The proof of Lemma~\ref{lem:easy}
is clear and therefore omitted.
Instead the first proof
of Theorem~\ref{thm:ito}
in the special case of
a pseudo-contractive semigroup
is now presented.

\begin{proof}
{\it Proof
of Theorem~\ref{thm:ito}
in the case 
where
the partial Fr\'{e}chet 
derivatives
$ \partial_1 \varphi $,
$ \partial_2 \varphi $
and
$ \partial_2^2 \varphi $
of $ \varphi $
are globally bounded,
where
$ Y \colon \mathbbm{I} \times \Omega
\rightarrow \hat{H} 
$ 
and 
$ Z \colon \mathbbm{I} \times \Omega
\rightarrow HS( U_0, \hat{H} ) 
$ 
have
continuous sample paths,
where
$ 
  ( 
    \check{H}, 
    \left< \cdot , \cdot \right>_{ \check{H} }, 
    \left\| \cdot \right\|_{ \check{H} }
  ) 
  =
  ( 
    \tilde{H}, 
    \left< \cdot , \cdot \right>_{ \tilde{H} }, 
    \left\| \cdot \right\|_{ \tilde{H} }
  ) 
  =
  ( 
    \hat{H}, 
    \left< \cdot , \cdot \right>_{ \hat{H} }, 
    \left\| \cdot \right\|_{ \hat{H} }
  ) 
$,
where 
$ U_t \in L( \tilde{H} ) $, 
$ t \in [0, \infty) $, 
is a strongly continuous pseudo-contractive semigroup 
on $ \tilde{H} $
and where
$ 
  S_{ t_1, t_2 } = U_{ ( t_2 - t_1 ) } 
  \in L( \tilde{H} ) 
$
for all $ (t_1, t_2) \in \angle $.}
First, observe that,
under these additional
assumptions,
\eqref{eq:well1}
and \eqref{eq:well2}
are obviously fulfilled.
Moreover, due to Proposition~8.7 
in \cite{ftt10}, 
there exists
a separable $ \mathbb{R} $-Hilbert 
space $ \left( \mathcal{H}, \left< \cdot, \cdot \right>_{ \mathcal{H} }, \left\| \cdot \right\|_{ \mathcal{H} } \right) $
with $ \tilde{H} \subset \mathcal{H} $ and $ \left\| v \right\|_{ \tilde{H} } = \left\| v \right\|_{ \mathcal{H} } $ for
all $ v \in \tilde{H} $ and 
a strongly continuous group 
$ \mathcal{U}_t \in L( \mathcal{H} ) $, 
$ t \in \mathbb{R} $, such that
\begin{equation}
\label{eq:put}
  U_t( v )
  =
  P\big( 
    \mathcal{U}_t( v ) 
  \big)
\end{equation}
for all 
$ v \in \tilde{H} \subset \mathcal{H} $ 
and all $ t \in [0,\infty) $ 
where
$ 
  P \colon 
  \mathcal{H} \rightarrow \tilde{H} 
$ 
is the orthogonal projection from $ \mathcal{H} $ to $ \tilde{H} $.
In this first proof
the mild It\^{o} process
$
  X \colon \mathbbm{I} \times \Omega
  \rightarrow \tilde{H}
$
is now transformed into
a standard It\^{o} process
by - roughly speaking - multiplying
with $ \mathcal{U}_{ - t } $
for $ t \in \mathbbm{I} $.
In a more concrete setting
this transformation
has been proposed 
in Teichmann~\cite{t09} and 
Filipovi{\'c}, Tappe 
\citationand\ 
Teichmann~\cite{ftt10};
see equations~(1.3) and (1.4) in
Teichmann~\cite{t09} and
Section~8 in Filipovi{\'c}, 
Tappe 
\citationand\ 
Teichmann~\cite{ftt10}
and see also 
Hausenblas \citationand\ 
Seidler~\cite{hs01,hs08}.
Let us now go into details.
Let
$ 
  \bar{X} \colon \mathbbm{I} \times 
  \Omega \rightarrow \mathcal{H} 
$ 
be the up to
indistinguishability 
unique adapted stochastic process with continuous
sample paths satisfying
\begin{equation}
\label{eq:barX}
  \bar{X}_t
  =
  X_{ \tau }
  +
  \int_{ \tau }^t
  \mathcal{U}_{-s} \, Y_s \, ds
  +
  \int_{ \tau }^t
  \mathcal{U}_{-s} \, Z_s \, dW_s
\end{equation}
$ \mathbb{P} 
$-a.s.\ for 
all $ t \in \mathbbm{I} $ 
({\it Transformation};
see also equation~(1.4)
in \cite{t09}
and equation~(8.6)
in \cite{ftt10}).
Next observe that
the identity
$ 
  X_t =
  \mathcal{P}\big(
    \mathcal{U}_t(
      \bar{X}_t
    )
  \big)
$
$ \mathbb{P} $-a.s.\ (see 
also 
Theorem~8.8 in 
\cite{ftt10})
and
the standard
It{\^o} formula 
in infinite dimensions (see 
Theorem~2.4 in Brze\'{z}niak,
Van Neerven, Veraar 
\citationand\ 
Weis~\cite{bvvw08}) 
applied to the test function
$ \varphi\big( s,
  P\big( \mathcal{U}_t( v ) \big) 
\big) $,
$ s \in [t_0,t] $,
$ v \in \mathcal{H} $,
give
\begin{equation}
\label{eq:important}
\begin{split}
&
  \varphi( t,
    X_t  
  )
=
  \varphi\big( t,
    P\big( \mathcal{U}_t( \bar{X}_t ) \big) 
  \big)
=
  \varphi\big( t_0,
    P\big( \mathcal{U}_t( \bar{X}_{ t_0 } ) 
    \big) 
  \big)
  +
  \int_{t_0}^t
  (\partial_1 \varphi)\big( s, 
    P\big( \mathcal{U}_t( \bar{X}_{ s } ) 
    \big) 
  \big)
  \,
  ds
\\ & \quad
  +
  \int_{t_0}^t
  (\partial_2 \varphi)\big( s,
    P\big( \mathcal{U}_t( \bar{X}_{ s } ) 
    \big) 
  \big)
  \,
  P \; 
  \mathcal{U}_{(t-s)} \,
  Y_s \, ds
  +
  \int_{t_0}^t
  (\partial_2 \varphi)\big( s,
    P\big( \mathcal{U}_t( \bar{X}_{ s } ) 
    \big) 
  \big)
  \,
  P \; 
  \mathcal{U}_{(t-s)} \,
  Z_s \, dW_s
\\ & \quad
  +
  \frac{1}{2}
  \sum_{ j \in \mathcal{J} }
  \int_{t_0}^t
  (\partial_2^2 \varphi)\big( s,
    P\big( \mathcal{U}_t( \bar{X}_{ s } ) 
    \big) 
  \big) \!
  \left( 
    P \, \mathcal{U}_{(t-s)} Z_s g_j, 
    P \, \mathcal{U}_{(t-s)} Z_s g_j 
  \right)  ds
\end{split}
\end{equation}
$ \mathbb{P} $-a.s.\ for 
all $ t_0, t \in \mathbbm{I} $
with $ t_0 \leq t $
and all
$
  \varphi \in
  C^{ 1, 2 }(
    \mathbbm{I} \times \tilde{H}, V
  )
$
({\it Application of the
standard It\^{o} formula}).
Next note that
equation~\eqref{eq:put} gives
\begin{equation}
\label{eq:backtransform0}
\begin{split}
  P\big( \mathcal{U}_t( \bar{X}_{ s } ) 
  \big) 
& =
  P\big( \mathcal{U}_t( X_{ \tau } ) 
  \big) 
  +
  \int_{ \tau }^s
  P \;
  \mathcal{U}_{(t-u)} \, Y_u \, du
  +
  \int_{ \tau }^s
  P \;
  \mathcal{U}_{(t-u)} \, Z_u \, 
  dW_u
\\ & =
  S_{\tau, t} \, X_{ \tau }  
  +
  \int_{ \tau }^s
  S_{u,t} \, Y_u \, du
  +
  \int_{ \tau }^s
  S_{ u, t } \, Z_u \, 
  dW_u
\\ & =
  S_{ s, t }
  \left(
    S_{ \tau, s } \, X_{ \tau }
  +
    \int_{ \tau }^s
    S_{ u, s } \,
    Y_u \, du
  +
    \int_{ \tau }^s
    S_{ u, s } \,
    Z_u \, dW_u
  \right)
  =
  S_{ s, t } \, X_s
\end{split}
\end{equation}
$ \mathbb{P} $-a.s.\ for
all $ s, t \in \mathbbm{I} $
with $ s \leq t $
({\it Relation} of the transformed
standard It\^{o} process 
$ 
  \bar{X} 
  \colon \mathbbm{I} \times \Omega
  \rightarrow \mathcal{H}
$
and the original mild It\^{o}
process 
$ 
  X \colon \mathbbm{I} \times
  \Omega \rightarrow \tilde{H}
$). 
Using
\eqref{eq:put}
and
\eqref{eq:backtransform0}
in
\eqref{eq:important}
finally shows 
\eqref{eq:itoformel_start}.
The proof 
is thus completed. 
\end{proof}

In the next step 
the proof
of Theorem~\ref{thm:ito}
in the general case is 
given.
Above an outline of this second proof
is given.

\begin{proof}[Proof
of Theorem~\ref{thm:ito}]
In this second proof
the time variable $ t \in \mathbbm{I} $
within the integrand processes
in \eqref{eq:mildito} is fixed
and then, the standard It\^{o} formula
is applied to the resulting 
standard It\^{o} process.
In a more concrete setting
this trick has been
proposed in
Conus \citationand\ 
Dalang~\cite{cd08}
and 
Conus~\cite{c08};
see Section~6 in 
Conus \citationand\ Dalang~\cite{cd08}
and
equations~(1.7) and (7.6) in 
Conus~\cite{c08} and see
also 
Section~5 in 
Lindner \citationand\ 
Schilling~\cite{ls10} and
Section~3 in
Kovacs, Larsson \citationand\ 
Lindgren~\cite{kll11}.
Another related result can 
be found in Section~3
in Debussche \citationand\  
Printemps~\cite{dp09}.
Let us now go into details.
Let 
$ 
  \bar{X}^{t} \colon 
  [ \tau, t ] \times \Omega 
  \rightarrow 
  \check{H} 
$,
$
  t \in 
  \mathbbm{I} \cap
  ( \tau, \infty )
$,
be a family of adapted
stochastic processes 
with continuous sample paths
given by
\begin{equation}
\label{eq:semiX}
  \bar{X}_u^t
  =
  S_{ \tau, t } \, X_{ \tau }
  +
  \int_{ \tau }^u
  S_{ s, t } \, Y_s \, ds
  +
  \int_{ \tau }^u
  S_{ s, t } \, Z_s \, dW_s
\end{equation}
$ \mathbb{P} $-a.s.~for 
all $ u \in [\tau, t] $
and all
$ 
  t \in 
  \mathbbm{I} \cap
  ( \tau, \infty )
$
({\it Transformation};
see also 
Section~6 in \cite{cd08},
Section~7 in \cite{c08},
Section~5 in \cite{ls10} and
Section~3 in \cite{kll11}).
Note that 
the assumptions
$
  \mathbb{P}\big[
    \int_{ \tau }^t
    \left\| S_{ s, t } Y_s \right\|_{ 
      \check{H} 
    } 
    ds < \infty
  \big] = 1
$
and
$
  \mathbb{P}\big[
    \int_{ \tau }^t
    \left\| S_{ s, t } Z_s \right\|_{ 
      HS( U_0, \check{H} )
    }^2 
    ds < \infty
  \big] = 1
$
for all 
$ 
  t \in 
  \mathbbm{I} 
$
(see Definition~\ref{propdef})
ensure that
$ 
  \bar{X}^t \colon
  [\tau,t] \times \Omega
  \rightarrow \check{H} 
$,
$ 
  t \in \mathbbm{I} \cap
  ( \tau, \infty )
$,
in \eqref{eq:semiX}
are indeed
well defined 
adapted
stochastic processes
with continuous sample paths.
In the next step 
the 
continuity
of the partial derivatives
of
$ 
  \varphi \colon 
  \mathbbm{I} \times
  \check{H}
  \rightarrow V
$,
the continuity
of the sample paths
of 
$
  \bar{X}^t
  \colon [\tau,t]
  \times \Omega
  \rightarrow \check{H}
$
and again
the assumptions
$
  \mathbb{P}\big[
    \int_{ \tau }^t
    \left\| S_{ s, t } Y_s \right\|_{ 
      \check{H} 
    } 
    ds < \infty
  \big] = 1
$
and
$
  \mathbb{P}\big[
    \int_{ \tau }^t
    \left\| S_{ s, t } Z_s \right\|_{ 
      HS( U_0, \check{H} )
    }^2 
    ds < \infty
  \big] = 1
$
in Definition~\ref{propdef}
imply
\begin{equation}
\label{eq:well1c}
  \mathbb{P}\!\left[
    \int_{ t_0 }^t
    \left\|
      (\partial_2 \varphi)( s, \bar{X}^t_s )
      S_{ s, t } Y_s
    \right\|_V
    + 
    \left\|
      (\partial_2 \varphi)( s, \bar{X}^t_s )
      S_{ s, t } Z_s
    \right\|_{ HS(U_0, V ) }^2 
    ds
    < \infty 
  \right] = 1
\end{equation}
and
\begin{equation}
\label{eq:well2c}
  \mathbb{P}\!\left[
    \int_{ t_0 }^t
    \left\|
      (\partial_1 \varphi)( s, \bar{X}^t_s )
    \right\|_V
    +
    \left\|
      (\partial_2^2 \varphi)( s, \bar{X}^t_s )
    \right\|_{ L^{(2)}( \check{H}, V ) }
    \left\|
      S_{ s, t } Z_s
    \right\|_{ HS(U_0, \check{H} ) }^2 
    ds
    < \infty 
  \right] = 1 
\end{equation}
for all
$ 
  t_0 \in [ \tau, t ] 
$,
$ 
  t \in 
  \mathbbm{I} \cap 
  ( \tau, \infty ) 
$
and all
$
  \varphi \in C^{1,2}(
    \mathbbm{I} \times \check{H}, V
  )
$.
Moreover, 
the identity
$
  X_t =
  \bar{X}_t^t
$
$ \mathbb{P} $-a.s.\ and
the standard
It{\^o} formula 
(see
Theorem~2.4 in Brze\'{z}niak,
Van Neerven, Veraar 
\citationand\ 
Weis~\cite{bvvw08})
give
\begin{equation}
\label{eq:itoformel2inProof}
\begin{split}
&
  \varphi( t, X_{ t } )
=
  \varphi( t, \bar{X}_t^{ t } )
=
  \varphi( t_0, \bar{X}_{ t_0 }^{ t } )
  +
  \int_{t_0}^t
  (\partial_1 \varphi)( s, \bar{X}_{ s }^{ t } )     
  \, ds
  +
  \int_{t_0}^t
  (\partial_2 \varphi)( s, \bar{X}_{ s }^{ t } ) \,
  S_{ s, t } \, Y_s \, ds
\\&\quad+
  \int_{t_0}^t
  (\partial_2 
  \varphi)( s, \bar{X}_{ s }^{ t } ) \,
  S_{ s, t } \, Z_s \, dW_s
  +
  \frac{1}{2}
  \sum_{ j \in \mathcal{J} }
  \int_{t_0}^t
  (\partial_2^2 \varphi)( s,
    \bar{X}_{ s }^{ t }  
  )
  \left(
    S_{ s, t } \, Z_s \, g_j,
    S_{ s, t } \, Z_s \, g_j
  \right) ds
\end{split}
\end{equation}
$ \mathbb{P} $-a.s.\ for all
$ 
  t_0, t \in \mathbbm{I}
$
with $ t_0 < t $
and all
$
  \varphi \in C^{1,2}(
    \mathbbm{I} \times \check{H}, V
  )
$
({\it Application 
of the 
standard It\^{o} formula};
see also Section~6
in \cite{cd08},
equations~(1.7) and (7.6) 
in \cite{c08},
Theorem~4 in \cite{ls10}
and Theorem~3.1 in
\cite{kll11}).
Equation~\eqref{eq:itoformel2inProof} 
is an expansion formula 
for the stochastic processes
$ \varphi( t, X_t ) $,
$ 
  t \in \mathbbm{I} \cap
  ( \tau, \infty )
$,
for
$
  \varphi \in C^{1,2}(
    \mathbbm{I} \times \check{H}, V
  )
$.
Nevertheless, this formula 
seems to be of limited use 
since the integrands 
in~\eqref{eq:itoformel2inProof} contain
the transformed stochastic processes 
$ \bar{X}_s^t $,
$ s \in [t_0,t] $,
$ t_0, t \in \mathbbm{I} $,
$ t_0 < t $,
instead of 
the mild It\^{o} process
$ X_s $, 
the mild drift 
$ Y_s $
and the mild diffusion 
$ Z_s $
for $ s \in [t_0,t] $,
$ t_0, t \in \mathbbm{I} $,
$ t_0 < t $,
only. 
However, a key 
observation 
here is to 
exploit the elementary
identity
\begin{equation}
\label{eq:fact}
\begin{split}
  \bar{X}^{ t }_s
&=
    S_{ \tau, t } \, X_{ \tau }
  +
    \int_{ \tau }^s
    S_{ u, t } \,
    Y_u \, du
  +
    \int_{ \tau }^s
    S_{ u, t } \,
    Z_u \, dW_u
\\&=
  S_{ s, t }
  \left(
    S_{ \tau, s } \, X_{ \tau }
  +
    \int_{ \tau }^s
    S_{ u, s } \,
    Y_u \, du
  +
    \int_{ \tau }^s
    S_{ u, s } \,
    Z_u \, dW_u
  \right)
  =
  S_{ s, t } \,
  X_s
\end{split}
\end{equation}
$ \mathbb{P} $-a.s.\ for 
all $ s, t \in \mathbbm{I} $
with $ s < t $ 
in 
equation~\eqref{eq:itoformel2inProof}
({\it Relation} of the transformed
standard It\^{o} processes
$ 
  \bar{X}^t
  \colon [\tau,t] \times \Omega
  \rightarrow \mathcal{H}
$,
$ 
  t \in \mathbbm{I} 
  \cap ( \tau, \infty )
$,
and the original mild It\^{o}
process 
$ 
  X \colon \mathbbm{I} \times
  \Omega \rightarrow \tilde{H}
$). 
This enables us to obtain a
closed formula for the stochastic
processes 
$ \varphi( t, X_t ) $,
$ 
  t \in \mathbbm{I} \cap
  ( \tau, \infty )
$,
for
$
  \varphi \in C^{1,2}(
    \mathbbm{I} \times \check{H}, V
  )
$.
More precisely,
\eqref{eq:fact},
\eqref{eq:well1c},
\eqref{eq:well2c} 
and Lemma~\ref{lem:easy}
imply
\eqref{eq:well1} and \eqref{eq:well2}.
Putting~\eqref{eq:fact} into~\eqref{eq:itoformel2inProof} 
then gives
\eqref{eq:itoformel_start}.
The proof of Theorem~\ref{thm:ito}
is thus completed.
\end{proof}

Let us close this section on mild
stochastic calculus with a remark
on possible generalizations.
\begin{remark}
\label{rem:moregeneral}
Note that here
mild It\^{o} processes,
mild drifts and mild diffusions
with values in 
separable Hilbert spaces
are considered. Instead one could
develop the above notions and
the above mild It\^{o} formula for 
stochastic processes with values 
in an appropriate class
of possibly non-separable Banach spaces too.
Indeed, the standard It\^{o} formula
also holds for stochastic processes
with values in UMD Banach 
spaces (see Theorem~2.4 in 
Brze\'{z}niak, Van Neerven, Veraar
\citationand\  
Weis~\cite{bvvw08}).
Details on the stochastic 
integration
in UMD Banach spaces 
can be found in
Van Neerven, 
Veraar \citationand\ 
Weis~\cite{vvw07,vvw08}
and in the references therein. 
Another possible 
generalization
is to consider more general integrators
than the cylindrical Wiener process
$ ( W_t )_{ t \in \mathbbm{I} } $.
This leads to the concept
of a {\it mild semimartingale} instead
of a mild It\^{o} process
in Definition~\ref{propdef}.
In particular, the fourth 
integral in the
mild It\^{o} formula~\eqref{eq:itoformel_start}
then needs to be replaced by
an integral involving the
quadratic
variation of the
driving noise process.
\end{remark}

\section{Stochastic
partial differential 
equations (SPDEs)}
\label{sec:appl}

\subsection{Setting and
assumptions}
\label{sec:setting}

Throughout this section
suppose that the
following setting and the following assumptions are fulfilled.
Let $ T \in (0,\infty) $
be a real number,
let 
$ \left( \Omega, \mathcal{F}, \mathbb{P} \right) $ 
be a probability space 
with a normal filtration
$ ( \mathcal{F}_t )_{ t \in [0,\infty) } $
and let
$ 
  \left( 
    H, 
    \left< \cdot , \cdot \right>_H, 
    \left\| \cdot \right\|_H
  \right) 
$,
$ 
  \left( 
    U, 
    \left< \cdot , \cdot \right>_U, 
    \left\| \cdot \right\|_U 
  \right) 
$ 
and
$
  \left( 
    V, 
    \left< \cdot , \cdot \right>_V, 
    \left\| \cdot \right\|_V 
  \right) 
$
be separable
$ \mathbb{R} $-Hilbert spaces.
In addition, let 
$ Q \colon U \rightarrow U $ 
be 
a bounded nonnegative symmetric linear
operator and 
let $ ( W_t )_{ t \in [0,\infty) } $
be a cylindrical $ Q $-Wiener 
process with respect 
to $ ( \mathcal{F}_t )_{ t \in [0,\infty) } $.
Moreover, by
$ 
  \left( 
    U_0, 
    \left< \cdot , \cdot \right>_{ U_0 }, 
    \left\| \cdot \right\|_{ U_0 }
  \right) 
$ 
the separable
$\mathbb{R}$-Hilbert space 
with
$ U_0 = Q^{ 1/2 }( U ) $
and
$ 
  \| u \|_{ U_0 } 
  = \| Q^{ - 1/2 }( u ) \|_U 
$ 
for all $ u \in U_0 $
is denoted.
\begin{assumption}[Linear operator A]\label{semigroup}
Let $ A \colon D(A) \subset H
\rightarrow H $ be a generator
of a strongly continuous analytic
semigroup 
$ e^{ A t } \in L(H) $,
$ t \in [0,\infty) $.
\end{assumption}
Let $ \eta \in [0,\infty) $ be a nonnegative real number such that
$
  \sigma(A)
  \subset
  \{ 
    \lambda \in \mathbb{C}
    \colon
    \text{Re}( \lambda ) < \eta
  \}
$ where 
$ \sigma(A) \subset \mathbb{C} 
$ denotes 
as usual the spectrum
of the linear
operator 
$ A \colon D(A) \subset
H \rightarrow H $. 
Such a real number exists
since $ A $ is assumed to be a generator
of a strongly continuous semigroup
(see Assumption~\ref{semigroup}).
By 
$ 
  H_r := 
  D\!\left( 
    \left( \eta - A \right)^r 
  \right) 
$
equipped with 
the norm
$ 
  \left\| v \right\|_{ H_r }
  := 
  \left\| 
    \left( \eta - A \right)^r \! v 
  \right\|_H 
$
for all $ v \in H_r $ 
and all $ r \in \mathbb{R} $, 
the $ \mathbb{R} $-Hilbert spaces
of domains of fractional powers 
of the linear operator 
$ \eta - A \colon D(A) \subset H
\rightarrow H $ are denoted
(see, e.g., Subsection~11.4.2
in Renardy \citationand\ 
Roggers~\cite{rr93}).
\begin{assumption}[Drift term $F$]\label{drift}
Let $ \alpha, \gamma \in \mathbb{R} $
be real numbers with 
$ \gamma - \alpha < 1 $
and let
$ F \colon H_{ \gamma } 
\rightarrow H_{ \alpha } $ 
be globally
Lipschitz continuous.
\end{assumption}
\begin{assumption}[Diffusion term $B$]\label{diffusion}
Let $ \beta \in \mathbb{R} $ be a
real number with 
$ \gamma - \beta < \frac{1}{2} $
and 
let $ B \colon H_{ \gamma } 
\rightarrow HS(U_0,H_{ \beta }) $ 
be globally Lipschitz continuous.
\end{assumption}
\begin{assumption}[Initial value $\xi$]\label{initial}
Let 
$ p \in [2,\infty) $
be a real number and
let 
$ 
  \xi \colon \Omega 
  \rightarrow H_{ \gamma } 
$ 
be an
$ \mathcal{F}_{0} 
$/$ \mathcal{B}\left( 
H_{ \gamma } \right) 
$-measurable 
mapping
with 
$ 
\mathbb{E}\big[ 
  \| \xi \|^p_{ H_{ \gamma } } 
\big] < \infty 
$.
\end{assumption}
%
%
%
Furthermore, 
similar as
in Section~\ref{sec:mildcalc},
let
$ \angle \subset [0,T]^2 $
be defined
through
$
  \angle :=
  \left\{ 
    (t_1, t_2) \in [0,T]^2 \colon
    t_1 < t_2
  \right\}
$.
In addition to the above assumptions,
the following notations will be used
in the remainder of this
article.
For two 
$ \mathbb{R} $-Banach spaces
$ ( V_1, \left\| \cdot \right\|_{ V_1 } ) $
and
$ 
  ( V_2, \left\| \cdot \right\|_{ V_2 } ) 
$
and real numbers
$ 
  n \in \{ 0, 1, 2, \ldots \} 
$
and 
$ q \in [0,\infty) $
define
$
  \left\| v 
  \right\|_{ L^{ (0) }( V_1, V_2 ) }
  :=
  \left\| v 
  \right\|_{ V_2 }
$
for every
$ v \in V_1 $,
define
\begin{equation}
  \| \varphi \|_{
    G^n_q( V_1, V_2 )
  }
  :=
  \sum_{ i = 0 }^{ n - 1 }
  \| \varphi^{(i)}(0) \|_{
    L^{(i)}( V_1, V_2 )
  }
  +
  \sup_{ v \in V_1 }
  \left(
  \frac{
    \| \varphi^{(n)}(v) \|_{ 
      L^{(n)}( V_1, V_2 )
    }
  }{
    \left(
      1 + \| v \|_{ V_1 }
    \right)^q
  }
  \right)
  \in [0,\infty] ,
\end{equation}
\begin{equation}
  \| \varphi \|_{
    \text{Lip}^{ n + 1 }_q( V_1, V_2 )
  }
  :=
  \sum_{ i = 0 }^{ n }
  \| \varphi^{(i)}(0) \|_{
    L^{(i)}( V_1, V_2 )
  }
  +
  \sup_{ 
    \substack{
      v, w \in V_1 
    \\
      v \neq w
    }
  }
  \left(
  \frac{
    \| 
      \varphi^{(n)}(v) -
      \varphi^{(n)}(w)
    \|_{ 
      L^{(n)}( V_1, V_2 )
    }
  }{
    \left(
      1 + 
      \max( 
        \| v \|_{ V_1 }, 
        \| w \|_{ V_1 }
      )
    \right)^q
    \left\|
      v - w
    \right\|_{ V_1 }
  }
  \right)
  \in [0,\infty] 
\end{equation}
and
\begin{equation}
\label{eq:defCnLip}
  \| \varphi \|_{
    C^n_{ Lip }( V_1, V_2 )
  }
  :=
  \| \varphi(0) \|_{ V_2 }
  +
  \sum_{ i = 1 }^{ n }
  \| \varphi^{(i)} \|_{ 
    L^{ \infty }( V_1, L^{(i)}(V_1,V_2) ) 
  }
  +
  \sup_{ 
    \substack{
      v, w \in V_1 
    \\
      v \neq w
    }
  }
  \left(
  \frac{
    \| 
      \varphi^{ (n) }(v) -
      \varphi^{ (n) }(w)
    \|_{ 
      L^{(n)}( V_1, V_2 )
    }
  }{
    \left\|
      v - w
    \right\|_{ V_1 }
  }
  \right)
  \in [0,\infty] 
\end{equation}
for every 
$   
  \varphi \in C^n( V_1, V_2 )
$,
define
$
  G^n_q( V_1, V_2 )
  :=
  \{ 
    \varphi \in C^n( V_1, V_2 ) \colon
    \| \varphi \|_{ 
      G^n_{q}( V_1, V_2 ) 
    }
    < \infty 
  \}
$,
define
$
  \text{Lip}^{ n + 1 }_q( V_1, V_2 )
$
$
  :=
  \{ 
    \varphi \in C^n( V_1, V_2 ) \colon
    \| \varphi \|_{ 
      \text{Lip}^{ n + 1 }_{ q }( V_1, V_2 ) 
    }
    < \infty 
  \}
$
and define
$
  C_{ Lip }^n( V_1, V_2 )
  :=
  \{ 
    \varphi \in C^n( V_1, V_2 ) \colon
    \| \varphi \|_{ C^n_{ Lip }( V_1, V_2 ) }
    < \infty 
  \}
$
and note that
$
  \left\| \varphi \right\|_{ 
    G^{ m }_q( V_1, V_2 )
  }
  =
  \left\| \varphi \right\|_{
    \text{Lip}^{ m }_q( V_1, V_2 )
  }
$
for every 
$   
  \varphi \in C^{ m }( V_1, V_2 )
$
and every
$ m \in \N $.
Let us collect a few simple properties
of the defined objects.
More precisely, observe that
\begin{align}
\label{eq:Gnq1}
  \| \varphi \|_{
    G^n_q( V_1, V_2 )
  }
  & =
  \sum\nolimits_{ i = 0 }^{ n - 1 }
  \| \varphi^{(i)}(0) \|_{
    L^{(i)}( V_1, V_2 )
  }
  +
  \| \varphi^{(n)} \|_{
    G^0_{ q }( V_1, V_2 )
  }
\\
\label{eq:Gnq2}
  \|
    \varphi
  \|_{ 
    G^{ 0 }_{ q + n }( V_1, V_2 )
  }
& \leq
  \|
    \varphi
  \|_{ 
    G^{ n - k }_{ q + k }( V_1, V_2 )
  }
  \leq
  \|
    \varphi
  \|_{ 
    G^n_{ q }( V_1, V_2 )
  } ,
\\
\label{eq:growthestimateGnq}
  \| 
    \varphi^{ ( k ) }(v)
  \|_{ 
    L^{ ( k ) }( V_1, V_2 )   
  }
& \leq
  \| 
    \varphi
  \|_{ 
    G^n_q( V_1, V_2 )
  }
  \left( 1 + \| v \|_{ V_1 } \right)^{ 
    ( q + n - k ) 
  } 
\\
\label{eq:lipestimateGnq}
  \|
    \varphi^{ (k) }(v)
    -
    \varphi^{ (k) }(w)
  \|_{ 
    L^{ (k) }( V_1, V_2 )
  }
& \leq
  \| 
    \varphi 
  \|_{
    \mathrm{Lip}^{ n + 1 }_{ q }( V_1, V_2 )
  }
  \,
  \big(
    1 + \max( \| v \|_{ V_1 },  \| w \|_{ V_1 } 
    )
  \big)^{
    ( q + n - k )
  }
  \,
  \| v - w \|_{ V_1 } 
\end{align}
for all 
$ v, w \in V_1 $,
$ \varphi \in C^n( V_1, V_2 ) $,
$ 
  k \in \{ 0, 1, \dots, n \}
$,
$ n \in \{ 0, 1, \dots \} $,
$ q \in [0,\infty) $
and all
$ \mathbb{R} $-Banach spaces
$ ( V_1, \left\| \cdot \right\|_{ V_1 } ) $
and
$ 
  ( V_2, \left\| \cdot \right\|_{ V_2 } ) 
$.
Moreover, 
note 
for all $ n \in \{ 0, 1, 2, \dots \} $,
$ q \in [0,\infty) $
and all
$ \mathbb{R} $-Banach spaces
$ ( V_1, \left\| \cdot \right\|_{ V_1 } ) $
and
$ 
  ( V_2, \left\| \cdot \right\|_{ V_2 } ) 
$
that the pairs
$
  ( 
    G^n_q(V_1, V_2) ,
    \left\| \cdot 
    \right\|_{ G^n_q( V_1, V_2 ) }
  )
$,
$
  ( 
    \text{Lip}^{n+1}_q(V_1, V_2) ,
    \left\| \cdot 
    \right\|_{ 
      \text{Lip}^{n+1}_q( V_1, V_2 ) 
    }
  )
$
and
$
  ( 
    C^n_{ Lip }(V_1, V_2) ,
$
$
    \left\| \cdot 
    \right\|_{ 
      C^{ n }_{ Lip }( V_1, V_2 ) 
    }
  ) 
$
are 
$ \mathbb{R} $-Banach
spaces
with
$
  G^{ n + 1 }_q(V_1, V_2) 
  \subset
  \text{Lip}^{n+1}_q(V_1, V_2) 
  \subset
  G^{ n }_{ q + 1 }(V_1, V_2) 
$
continuously.
More function spaces
of similar type can be found
in D\"{o}rsek \citationand\  
Teichmann~\cite{dt10}.

\subsection{Solution processes
of SPDEs}
\label{sec:solutionSPDE}

The following proposition
shows that
the setting in 
Section~\ref{sec:setting}
ensures that the
SPDE~\eqref{eq:SPDE} below
admits an up to modifications
unique mild solution process.
It is similar to special cases of 
Theorem~4.3 
in Brze\'{z}niak~\cite{b97b}
and
Theorem~6.2 in
Van Neerven, Veraar 
\citationand\ 
Weis~\cite{vvw08}. 
Its proof is clear and therefore
omitted.

\begin{prop}
\label{prop:prop}
Assume that the setting
in Section~\ref{sec:setting}
is fulfilled.
Then there exists an
up to modifications unique
predictable stochastic
process 
$
  X \colon [0,T] \times 
  \Omega
  \rightarrow H_{ \gamma }
$
which fulfills
$
  \sup_{ t \in [0,T] }
  \mathbb{E}\big[
    \| X_t \|_{ H_{ \gamma } }^p
  \big]
  < \infty
$
and
\begin{equation}
\label{eq:SPDE}
  X_t = e^{ A t } \,
  \xi
  +
  \int_{0}^t 
    e^{ A (t - s) }
    F( X_s ) \,
  ds
  +
  \int_{0}^t 
    e^{ A (t - s) }
    B( X_s ) \,
  dW_s
\end{equation}
$\mathbb{P}$-a.s.\ for 
all $ t \in [0,T] $.
In addition, we have
$
  X
  \in
  \cap_{ r \in (-\infty, \gamma] }
  \,
  C^{ \min( \gamma - r, 1/2 ) }\!\left(
    [0,T] ,
    L^p( \Omega; H_{ r } )
  \right)
$.
\end{prop}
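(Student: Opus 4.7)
The plan is to apply the Banach fixed-point theorem in a suitable space of predictable $H_\gamma$-valued processes and then derive the temporal Hölder regularity via analytic semigroup estimates and the factorization method for the stochastic convolution.

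First, I would introduce the complete metric space
$$\mathcal{B} := \Bigl\{ Y \colon [0,T] \times \Omega \to H_\gamma \text{ predictable} : \|Y\|_{\mathcal{B}} := \sup_{t \in [0,T]} \bigl(\E\|Y_t\|_{H_\gamma}^p\bigr)^{1/p} < \infty \Bigr\}$$
(with processes identified up to modifications) and define $\Phi \colon \mathcal{B} \to \mathcal{B}$ through the right-hand side of \eqref{eq:SPDE}. The key analytic inputs are the classical smoothing estimates $\|e^{As}\|_{L(H_r, H_{r'})} \leq C_{r' - r} s^{r - r'}$ for $0 < s \leq T$ and $r \leq r'$, which follow from Assumption~\ref{semigroup}, together with the sublinear growth bounds for $F$ and $B$ implied by Assumptions~\ref{drift} and~\ref{diffusion} and the Burkholder-Davis-Gundy inequality in the Hilbert space $H_\gamma$.

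Second, I would verify that $\Phi$ is a well-defined self-map and contractive in an appropriate equivalent norm. For the drift term, $\gamma - \alpha < 1$ ensures
$$\Bigl\| \int_0^t e^{A(t-s)} F(Y_s) \, ds \Bigr\|_{H_\gamma} \leq C \int_0^t (t-s)^{\alpha - \gamma} \bigl(1 + \|Y_s\|_{H_\gamma}\bigr) \, ds < \infty.$$
For the diffusion term, $\gamma - \beta < 1/2$ and BDG yield
$$\E \Bigl\| \int_0^t e^{A(t-s)} B(Y_s) \, dW_s \Bigr\|_{H_\gamma}^p \leq C_p \, \E \Bigl[ \int_0^t (t-s)^{2(\beta - \gamma)} \bigl(1 + \|Y_s\|_{H_\gamma}\bigr)^2 ds \Bigr]^{p/2} < \infty.$$
Applying these same bounds to $\Phi(Y) - \Phi(Z)$ together with the Lipschitz hypotheses gives $\|\Phi(Y) - \Phi(Z)\|_{\mathcal{B}} \leq C(T) \|Y - Z\|_{\mathcal{B}}$. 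To turn this into a strict contraction I would pass to the equivalent weighted norm $\|Y\|_{\mathcal{B}_\lambda} := \sup_{t \in [0,T]} e^{-\lambda t} (\E\|Y_t\|_{H_\gamma}^p)^{1/p}$ with $\lambda$ chosen so large that the corresponding weighted kernels become arbitrarily small. Banach's theorem then produces the unique fixed point $X \in \mathcal{B}$, and any predictable mild solution in $\mathcal{B}$ is automatically a fixed point of $\Phi$, giving uniqueness.

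Third, for the temporal Hölder regularity, I would estimate each of the three summands of $X_t - X_s$ in $L^p(\Omega; H_r)$ for arbitrary $r \leq \gamma$. Analyticity gives $\|(e^{At} - e^{As}) \xi\|_{H_r} \leq C (t-s)^{\min(\gamma - r, 1)} \|\xi\|_{H_\gamma}$, and the deterministic convolution is handled analogously by splitting the integral at $s$ and applying the semigroup smoothing bound to $e^{A(t-u)} - e^{A(s-u)}$. For the stochastic convolution, I would use the Da Prato-Kwap\'{i}en-Zabczyk factorization $e^{A(t-s)} = \tfrac{\sin(\pi \theta)}{\pi} \int_s^t (t-u)^{\theta - 1} (u-s)^{-\theta} e^{A(t-u)} e^{A(u-s)} \, du$ together with BDG and the Young/Sobolev embedding in time, which promotes the uniform $L^p$ moment bound into a Hölder estimate of exponent up to $\min(\gamma - r, 1/2)$; the ceiling of $1/2$ reflects the integrability threshold $\gamma - \beta < 1/2$ for the stochastic integral. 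The main obstacle is this last step: extracting the sharp Hölder exponent $1/2$ for the stochastic convolution requires the factorization trick rather than a direct estimate, but once it is in place the three terms combine to give the claimed regularity.
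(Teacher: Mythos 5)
Your proposal is correct and follows the standard route: the paper itself offers no proof of Proposition~\ref{prop:prop} (it declares the proof ``clear'' and defers to Theorem~4.3 in Brze\'{z}niak~\cite{b97b} and Theorem~6.2 in \cite{vvw08}), and those references argue exactly as you do, via a Banach fixed-point iteration in the space of predictable processes with $\sup_{t\in[0,T]}\E[\|Y_t\|_{H_\gamma}^p]<\infty$, using the analytic smoothing estimates together with the exponents $\gamma-\alpha<1$ and $\gamma-\beta<\tfrac12$ from Assumptions~\ref{drift} and~\ref{diffusion}. One remark on your third step: the factorization identity is not actually needed here, and the ``main obstacle'' you identify is not an obstacle. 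The claimed regularity is H\"older continuity of $t\mapsto X_t$ as a map into $L^p(\Omega;H_r)$, not pathwise H\"older continuity, so the direct splitting $\int_s^t e^{A(t-u)}B(X_u)\,dW_u+\int_0^s(e^{A(t-s)}-I)e^{A(s-u)}B(X_u)\,dW_u$ combined with the Burkholder--Davis--Gundy inequality already attains the endpoint exponent $\min(\gamma-r,\tfrac12)$: for $r\le\gamma-\tfrac12$ one has $r<\beta$ (since $\beta>\gamma-\tfrac12$), so the new-increment term contributes $(t-s)^{1/2}$ outright, and in the cross term one may take the full power $\theta=\tfrac12$ in $\|(e^{A(t-s)}-I)\|_{L(H_{r+\theta},H_r)}$ because the resulting singularity $(s-u)^{-\max(r+1/2-\beta,0)}$ is square integrable precisely when $\gamma-\beta<\tfrac12$. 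Factorization is the right tool only if one wants sample-path regularity, which the proposition does not assert.
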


Proposition~\ref{prop:prop},
in particular, ensures that
the mild solution process
$ X \colon [0,T] \times \Omega
\rightarrow H_{ \gamma } $
of the SPDE~\eqref{eq:SPDE}
is a mild It\^{o} process
with semigroup 
$ 
  e^{ A (t_2 - t_1) } 
  \in 
  L( 
    H_{ \min( \alpha, \beta. \gamma ) } ,
    H_{ \gamma }
  )
$,
$ (t_1, t_2) \in \angle $,
with mild drift 
\begin{equation}
\label{eq:milddrift}
  F( X_t ) , \;
  t \in [0,T] ,
\end{equation}
and with mild diffusion
\begin{equation}
\label{eq:milddiffusion}
  B( X_t ) , \; t \in [0,T] .
\end{equation}
This fact now enables us to apply
the mild It\^{o} formula~\eqref{eq:itoformel_start}
to the solution process $ X $
of the 
SPDE~\eqref{eq:SPDE}.
To this end let $ \mathcal{J} $ be a set 
and let
$ g_j \in U_0 $, 
$ j \in \mathcal{J} $,
be an arbitrary orthonormal basis
of the $ \mathbb{R} $-Hilbert
space
$ 
  ( 
  U_0, 
  \left< \cdot, \cdot 
  \right>_{ U_0 }, 
  \left\| \cdot
  \right\|_{ U_0 } 
  ) 
$.
A direct consequence
of Theorem~\ref{thm:ito}
and Corollary~\ref{cor:itoauto}
is the next corollary.

\begin{cor}[A new - 
somehow mild - 
It{\^o} formula for solutions
of SPDEs]
\label{cor:ito}
Assume that
the setting in Section~\ref{sec:setting}
is fulfilled.
Then
\begin{equation}
\label{eq:well1d}
  \mathbb{P}\!\left[
    \int_{ t_0 }^t
    \big\|
      \varphi'( e^{ A(t-s) } X_s ) \,
      e^{ A(t-s) }	 F( X_s )
    \big\|_V \,
    ds
    < \infty 
  \right] = 1,
\end{equation}
\begin{equation}
\label{eq:well2d}
  \mathbb{P}\!\left[
    \int_{ t_0 }^t
    \big\|
      \varphi'( e^{ A(t-s) } X_s ) \,
      e^{ A(t-s) } B( X_s )
    \big\|_{ HS(U_0, V ) }^2 
    \,
    ds
    < \infty 
  \right] = 1,
\end{equation}
\begin{equation}
\label{eq:well3d}
  \mathbb{P}\!\left[
    \int_{ t_0 }^t
    \big\|
      \varphi''( e^{ A( t- s) } X_s )
    \big\|_{ L^{(2)}( H_{ r }, V ) }
    \,
    \big\|
      e^{ A (t - s) } B( X_s )
    \big\|_{ HS(U_0, H_{ r }) }^2
    \, 
    ds
    < \infty 
  \right] = 1
\end{equation}
and
\begin{align}
\label{eq:itoformel}
  \varphi( X_t )
 &=
  \varphi( e^{ A( t - t_0 ) } X_{ t_0 } )
  +
  \int_{ t_0 }^t
  \varphi'( e^{ A( t - s ) } X_s ) \,
  e^{ A( t - s ) }
  F( X_s ) \, ds
\nonumber
\\&\quad+
  \int_{ t_0 }^t
  \varphi'( e^{ A( t - s ) } X_s ) \,
  e^{ A( t - s ) }
  B( X_s ) \, dW_s
\\&\quad+
\nonumber
  \frac{1}{2}
  \sum_{ j \in \mathcal{J} }
  \int_{ t_0 }^t
  \varphi''( e^{ A( t - s ) } X_s )
  \left(
    e^{ A( t - s ) }
    B( X_s ) g_j,
    e^{ A( t - s ) }
    B( X_s ) g_j
  \right) ds
\end{align}
$ \mathbb{P} $-a.s.\ for 
all $ t_0, t \in [0,T] $
with $ t_0 < t $,
all
$ 
  \varphi \in
  C^2( H_{ r }, V ) 
$
and all
$
  r \in
  ( - \infty,
    \min( \alpha + 1, \beta + \frac{ 1 }{ 2 } )
  )
$.
\end{cor}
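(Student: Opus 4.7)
The plan is to recognize the mild solution $X$ of the SPDE~\eqref{eq:SPDE} as a mild It\^{o} process in the sense of Definition~\ref{propdef} and then invoke Corollary~\ref{cor:itoauto} with a carefully chosen triple of Hilbert spaces. Take semigroup $S_{t_1,t_2} := e^{A(t_2 - t_1)}$ (the required composition law is immediate from the semigroup property of $e^{A\cdot}$), mild drift $Y_s := F(X_s)$ and mild diffusion $Z_s := B(X_s)$. For the ambient spaces set $\tilde{H} := H_\gamma$ and $\hat{H} := H_{\min(\alpha,\beta,\gamma)}$, so that Assumptions~\ref{drift} and~\ref{diffusion} guarantee $Y_s \in H_\alpha \subset \hat{H}$ and $Z_s \in HS(U_0, H_\beta) \subset HS(U_0, \hat{H})$; for the test-function space, given $r \in (-\infty, \min(\alpha+1, \beta+1/2))$, set $\check{H} := H_{\max(r,\gamma)}$, which yields the required dense continuous inclusions $\check{H} \subset \tilde{H} \subset \hat{H}$.

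The only non-routine verification is of the two integrability conditions in Definition~\ref{propdef}. Using the analytic smoothing bound $\|(\eta-A)^\rho e^{A\tau}\|_{L(H)} \leq C_\rho \, \tau^{-\rho}$ for $\rho \geq 0$, $\tau > 0$, the Lipschitz estimates $\|F(v)\|_{H_\alpha} + \|B(v)\|_{HS(U_0, H_\beta)} \leq C(1 + \|v\|_{H_\gamma})$, and the moment bound $\sup_{s \in [0,T]} \mathbb{E}[\|X_s\|^p_{H_\gamma}] < \infty$ from Proposition~\ref{prop:prop}, Fubini gives
\begin{equation*}
\mathbb{E}\left[\int_0^t \| e^{A(t-s)} F(X_s) \|_{H_{\max(r,\gamma)}} \, ds \right] \leq C \int_0^t (t-s)^{-\max(\max(r,\gamma)-\alpha, 0)} \bigl(1 + \mathbb{E}[\|X_s\|_{H_\gamma}]\bigr) \, ds < \infty,
\end{equation*}
since $\max(r,\gamma) - \alpha < 1$; the analogous estimate for the diffusion integrand, after factoring $(\eta-A)^{\max(r,\gamma)} e^{A(t-s)} B(X_s) = \bigl[(\eta-A)^{\max(r,\gamma)-\beta} e^{A(t-s)}\bigr] \bigl[(\eta-A)^\beta B(X_s)\bigr]$ and using submultiplicativity of the operator norm against the Hilbert--Schmidt norm, yields
\begin{equation*}
\mathbb{E}\left[\int_0^t \| e^{A(t-s)} B(X_s) \|^2_{HS(U_0, H_{\max(r,\gamma)})} \, ds \right] \leq C \int_0^t (t-s)^{-2 \max(\max(r,\gamma)-\beta, 0)} \bigl(1 + \mathbb{E}[\|X_s\|^2_{H_\gamma}]\bigr) \, ds < \infty,
\end{equation*}
since $\max(r,\gamma) - \beta < 1/2$. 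Both integrals are therefore $\mathbb{P}$-a.s.\ finite.

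With the mild It\^{o} process structure of $X$ established and the integrability conditions verified, Corollary~\ref{cor:itoauto} applies directly with $\check{H} = H_{\max(r,\gamma)}$ and yields the formula~\eqref{eq:itoformel}; its accompanying integrability statements \eqref{eq:well1NON} and \eqref{eq:well2NON} specialize to \eqref{eq:well1d}--\eqref{eq:well3d}, in the case $r < \gamma$ after using the continuous embedding $H_\gamma \hookrightarrow H_r$ to interpret $\varphi \in C^2(H_r, V)$ as an element of $C^2(H_\gamma, V)$ with the same derivatives. The main obstacle I anticipate is precisely the sharp Hilbert--Schmidt estimate above: it is the factorization through $(\eta-A)^{\max(r,\gamma)-\beta} e^{A(t-s)}$ that pins down the threshold $r < \beta + 1/2$ and relies critically on the analyticity of the semigroup; outside that regime the second-derivative term in the formula would cease to be well posed in the sense of \eqref{eq:well3d}.
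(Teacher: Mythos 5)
Your proposal is correct and follows essentially the same route as the paper: identify $X$ as a mild It\^{o} process with semigroup $e^{A(t_2-t_1)}$, mild drift $F(X_t)$ and mild diffusion $B(X_t)$, and invoke Corollary~\ref{cor:itoauto}. You are in fact somewhat more explicit than the paper (which simply declares the corollary a direct consequence) in choosing $\check{H}=H_{\max(r,\gamma)}$ to cover the full range $r<\min(\alpha+1,\beta+\frac12)$ and in verifying the two integrability conditions of Definition~\ref{propdef} via the analytic smoothing estimates.
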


First, observe that the
possibly infinite sum
and all integrals in 
\eqref{eq:itoformel} are
well defined due to
\eqref{eq:well1d}--\eqref{eq:well3d}.
Next define mappings 
$
  K_0
  \colon
  \cup_{ r \in \mathbb{R} } 
  C( H_{ r }, V )
  \rightarrow 
  \cup_{ r \in \mathbb{R} }
  C( H_{ r }, V )
$
and
$
  K_t 
  \colon
  \cup_{ r \in \mathbb{R} } 
  C( H_{ r }, V )
  \rightarrow 
  C( H_{ \min(\alpha,\beta,\gamma) }, V )
$,
$ t \in (0,\infty) $,
through
$ K_0( \varphi ) := \varphi $
and
\begin{equation}
\label{eq:defK}
  \big( K_t \varphi \big)(x)
:=
  \varphi( e^{ A t } x )
\end{equation}
for all
$ x \in H_{ \min( \alpha, \beta, \gamma) } $,
$ 
  \varphi \in 
  \cup_{ r \in \mathbb{R} }
  C( H_{ r }, V )
$
and all
$ t \in (0,\infty) $.
Note that
$ 
  K_{ t_1 } \circ K_{ t_2 } =
  K_{ t_1 + t_2 }
$
for all $ t_1, t_2 \in [0,\infty) $.
In addition, define 
linear operators
$ 
  L^{(0)} \colon
  C^2( 
    H_{ \min( \alpha, \beta, \gamma ) },
    V
  )
  \rightarrow
  C( 
    H_{ \gamma },
    V
  )
$ 
and
$ 
  L^{(1)} \colon
  C^1( 
    H_{ \min( \beta, \gamma ) },
    V
  )
  \rightarrow
  C( 
    H_{ \gamma },
    V
  )
$ 
through
\begin{equation}
\label{eq:defL0}
\begin{split}
  ( L^{ (0) } \varphi )(x)
& :=
  \varphi'(x) F(x)
+
  \frac{1}{2}
  \sum_{ j \in \mathcal{J} }
  \varphi''(x) \big(
    B(x) g_j, B(x) g_j
  \big)
\\ & =
  \varphi'(x) F(x)
+
  \frac{1}{2}
  \text{Tr}\Big(
    \big( B(x) \big)^{ * }
    \varphi''(x) \,
    B(x)
  \Big)
\end{split}
\end{equation}
for all 
$
  x \in H_{ \gamma }
$,
$ 
  \varphi \in 
  C^{2}( 
    H_{ \min( \alpha, \beta, \gamma ) }, 
    V
  ) 
$
and through
$
  ( L^{ (1) } \varphi
  )(x)
:=
  \varphi'(x) B(x)
$
for all 
$
  x \in H_{ \gamma }
$,
$ 
  \varphi \in 
  C^{1}( 
    H_{ \min( \beta, \gamma ) }, 
    V
  ) 
$.
Furthermore,
define mappings
$
  L_t^{(0)} \colon 
  \cup_{ r \in \mathbb{R} }
  C^{ 2 }( H_{ r }, V )
  \rightarrow
  C( H_{ \gamma }, V )
$,
$ t \in (0,\infty) $,
and
$
  L_t^{(1)} \colon 
  \cup_{ r \in \mathbb{R} }
  C^{ 1 }( H_{ r }, V )
  \rightarrow
  C( H_{ \gamma }, HS( U_0, V) )
$,
$ t \in (0,\infty) $,
through
$
  L_{ t }^{(0)}( \varphi )
  :=
  L^{ (0) }( K_t( \varphi ) )
$
for all
$ 
  \varphi \in
  \cup_{ r \in \mathbb{R} }
  C^2( H_r , V )
$ 
and through
$
  L_{ t }^{(1)}( \varphi )
  :=
  L^{ (1) }( K_t( \varphi ) )
$
for all
$ 
  \varphi \in
  \cup_{ r \in \mathbb{R} }
  C^1( H_r , V )
$.
Note that these definitions imply
\begin{equation}
\label{eq:defL0t}
\begin{split}
  \big( 
    L_{ t }^{(0)} \varphi
  \big)( x )
& =
  \varphi'( e^{ A t } x )
  \, e^{ A t } F(x)
+
  \frac{ 1 }{ 2 }
  \sum_{ j \in \mathcal{J} }
  \varphi''( e^{ A t } x )\big(
    e^{ A t } B(x) g_j,
    e^{ A t } B(x) g_j
  \big)
\\ & =
  \varphi'( e^{ A t } x )
  \, e^{ A t } F(x)
+
  \frac{ 1 }{ 2 }
  \text{Tr}\Big(
  \big( e^{ A t } B(x) \big)^{ \! * }
  \varphi''( e^{ A t } x ) \,
  e^{ A t } B(x)
  \Big)
\end{split}
\end{equation}
for all
$ x \in H_{ \gamma } $,
$ 
  \varphi \in 
  \cup_{ r \in \mathbb{R} }
  C^2( H_{ r }, V ) 
$, 
$ t \in (0,\infty) $
and 
\begin{equation}
\label{eq:defL1}
  \big( 
    L_{ t }^{(1)} \varphi
  \big)( x )
=
  \varphi'( e^{ A t } x )
  \, e^{ A t } B(x)
\end{equation}
for all
$ x \in H_{ \gamma } $,
$ 
  \varphi \in 
  \cup_{ r \in \mathbb{R} }
  C^1( H_{ r }, V ) 
$,
$ t \in (0,\infty) $.
The mild It\^{o} 
formula~\eqref{eq:itoformel} can
thus be written as
\begin{equation}
\label{eq:operatorIto}
\begin{split}
  \varphi( X_t )
& =
  \varphi( e^{ A (t - t_0) } X_{ t_0 } )
+
  \int_{ t_0 }^t
  \!
  \big( 
    L_{ ( t - s ) }^{(0)} \varphi
  \big) ( X_s )
  \, ds
+
  \int_{ t_0 }^t
  \!
  \big( 
    L_{ ( t - s ) }^{(1)} \varphi
  \big) ( X_s )
  \, dW_s
\\ & =
  \big(
    K_{ ( t - t_0 ) } \varphi
  \big)( X_{ t_0 } )
+
  \int_{ t_0 }^t
  \!
  \big( 
    L_{ ( t - s ) }^{(0)} \varphi
  \big) ( X_s )
  \, ds
+
  \int_{ t_0 }^t
  \!
  \big( 
    L_{ ( t - s ) }^{(1)} \varphi
  \big) ( X_s )
  \, dW_s
\\ & =
  \big(
    K_{ ( t - t_0 ) } \varphi
  \big)( X_{ t_0 } )
+
  \int_{ t_0 }^t
  \!
  \big( 
    L^{(0)} K_{(t-s)} \varphi
  \big) ( X_s )
  \, ds
+
  \int_{ t_0 }^t
  \!
  \big( 
    L^{(1)} K_{(t-s)} \varphi
  \big) ( X_s )
  \, dW_s
\end{split}
\end{equation}
$ \mathbb{P} $-a.s.\ for all 
$ t_0, t \in [0,T] $ with $ t_0 < t $
and all
$ 
  \varphi \in 
  \cup_{ 
    r < 
    \min( \alpha + 1, \beta + \frac{ 1 }{ 2 } )
  }
  C^2( H_r, V ) 
$.
Moreover, taking 
expectations on both
side of \eqref{eq:operatorIto}
gives
\begin{equation}
\label{eq:operatorIto2}
\begin{split}
  \mathbb{E}\Big[
    \varphi( X_t )
  \Big]
& =
  \mathbb{E}\Big[
    \big(
      K_{ (t - t_0) }
      \varphi 
    \big)( X_{ t_0 } )
  \Big]
+
  \int_{ t_0 }^t
  \mathbb{E}\Big[
  \big( 
    L_{ ( t - s ) }^{(0)} \varphi
  \big) ( X_s )
  \Big]
  \, ds
\\ & =
  \mathbb{E}\Big[
  \big(
    K_{ ( t - t_0 ) } \varphi
  \big)( X_{ t_0 } ) \,
  \Big]
+
  \int_{ t_0 }^t
  \mathbb{E}\Big[
  \big( 
    L^{(0)} K_{(t-s)} \varphi
  \big) ( X_s ) \,
  \Big]
  \, ds
\end{split}
\end{equation}
for all $ t_0, t \in [0,T] $
with $ t_0 < t $
and all
$ 
  \varphi \in
  \cup_{ 
    r < 
    \min( \alpha + 1, \beta + \frac{ 1 }{ 2 } )
  } 
  G^2_0( H_r, V )
$.
Based on \eqref{eq:operatorIto2}
a mild Kolmogorov
backward equation is derived
in Subsection~\ref{sec:kolmogorov} below.
Other kinds of It{\^o} type formulas
for solutions of SPDEs can 
be found 
in \cite{c07,dz92,gnt05,gk81,gk8182,kr79,l07,l09b,lt08,op89,p72,p87,PrevotRoeckner2007,rrw07,z06}.
In the next step
Corollary~\ref{cor:ito}
is illustrated by two simple 
examples.

%

\begin{example}[Identity]
Assume that the setting
in Section~\ref{sec:setting}
is fulfilled, 
let
$ V = H_{ \gamma } $, 
let 
$ \left\| v \right\|_V
= \left\| v \right\|_{ H_\gamma } $ 
for all 
$ v \in H_{ \gamma } $
and
let $ \varphi \colon H_{\gamma} 
\rightarrow H_{\gamma} $
be the identity on 
$ H_{\gamma} $, 
i.e., $ \varphi(v) = v $
for all $ v \in H_{\gamma} $.
The mild It\^{o} formula~\eqref{eq:itoformel}
in Corollary~\ref{cor:ito} 
then reduces to
\begin{equation}
  X_t
  =
  e^{ A( t - t_0 ) } X_{ t_0 }
  +
  \int_{ t_0 }^t
    e^{ A( t - s ) } 
    F( X_s ) \,
  ds
  +
  \int_{ t_0 }^t
    e^{ A( t - s ) } B( X_s ) \, 
  dW_s
\end{equation}
$ \mathbb{P} $-a.s.\ for
all $ t_0, t \in [0,T] $
with $ t_0 \leq t $. 
This is nothing else but
the mild
formulation of 
the SPDE~\eqref{eq:SPDE}.
In this sense, the 
formula~\eqref{eq:itoformel}
is somehow a mild 
It{\^o} formula for SPDEs.
\end{example}
\begin{example}[Squared norm]
Assume that the setting
in Section~\ref{sec:setting}
is fulfilled,
let $ V = \mathbb{R} $,
$ 
  \left\| v \right\|_V = 
  \left| v \right| 
$ 
for all 
$ v \in V = \mathbb{R} $,
assume 
$ 
  \min( \alpha + 1, \beta + \frac{ 1 }{ 2 } ) 
  > 0 
$ 
and
let 
$ 
  \varphi \colon H 
  \rightarrow V 
$ 
be given 
by 
$ 
  \varphi( v ) = \left\| v \right\|_H^2 
$ 
for all 
$ v \in H 
$. 
The mild It\^{o} formula~\eqref{eq:itoformel}
in Corollary~\ref{cor:ito} 
then reduces to
\begin{equation}
\label{eq:squarenorm}
\begin{split}
  \left\| X_t \right\|^2_H
 &=
  \big\|
    e^{ A( t - t_0 ) } X_{ t_0 }
  \big\|_H^2
  +
  2 \int_{ t_0 }^t
  \left<
    e^{ A( t - s ) } X_s,
    e^{ A( t - s ) } F( X_s )
  \right>_{ \! H } ds
\\&\quad+
  2 \int_{ t_0 }^t
  \left<
    e^{ A( t - s ) } X_s,
    e^{ A( t - s ) } B( X_s ) \, dW_s
  \right>_{ \! H }
  +
  \int_{ t_0 }^t
  \big\|
    e^{ A( t - s ) } B( X_s )
  \big\|_{ HS( U_0, H ) }^2
  \, ds
\end{split}
\end{equation}
$ \mathbb{P} $-a.s.\ for
all $ t_0, t \in [0,T] $
with $ t_0 < t $
(see also Example~\ref{ex:1}
above).
We refer to
\cite{gk81,gk8182,kr79,op89,p72,PrevotRoeckner2007,rrw07}
for other It{\^o} type 
formulas
with the particular test 
function
$ 
  \varphi(v) = 
  \left\| v \right\|_H^2 
$,
$ v \in H $.
If 
$ X_0 = 0 $
and
$ F(v) = 0 $ 
for all $ v \in H $
in addition to 
the above assumptions,
then \eqref{eq:squarenorm}
simplifies to
\begin{equation}
\label{eq:squarenorm2}
  \left\| X_t \right\|^2_H
 =
  2 \int_{ 0 }^t
  \left<
    e^{ A( t - s ) } X_s,
    e^{ A( t - s ) } B( X_s ) \, dW_s
  \right>_{ \! H }
  +
  \int_{ 0 }^t
  \big\|
    e^{ A( t - s ) } B( X_s )
  \big\|_{ HS( U_0, H ) }^2 
  \, ds
\end{equation}
$ \mathbb{P} $-a.s.\ for
all $ t \in [0,T] $ and this,
in particular, gives
\begin{equation}
\label{eq:squarenorm3}
  \mathbb{E}\!\left[
    \left\| X_t \right\|^2_H
  \right]
 =
  \int_{ 0 }^t
  \mathbb{E}\!\left[
  \big\|
    e^{ A( t - s ) } B( X_s )
  \big\|_{ HS( U_0, H ) }^2 
  \right] 
  ds
\end{equation}
for all $ t \in [0,T] $.
Clearly, 
equation~\eqref{eq:squarenorm3}
is nothing else but
a special case of It\^{o}'s 
isometry.
\end{example}

\subsubsection{SPDEs
with time dependent
coefficients}

In addition to the setting
in Section~\ref{sec:setting}
assume in this subsection
that
$ 
  \tilde{F} \colon
  [0,\infty) \times 
  H_{ \gamma }
  \rightarrow
  H_{ \alpha }
$ 
and 
$ 
  \tilde{B} \colon
  [0,\infty) \times H_{ \gamma }
  \rightarrow
  HS( U_0, H_{ \beta } ) 
$
are two globally Lipschitz 
continuous mappings.
Then there exists an up to
modifications unique predictable
stochastic process
$
  \tilde{X} \colon [0,\infty) \times
  \Omega \rightarrow H_{ \gamma } 
  \in
  C( [0,\infty), 
    L^p( \Omega; H_{ \gamma } ) 
  )
$
which fulfills
\begin{equation}
  \tilde{X}_t
=
  e^{ A t } \xi
+
  \int_0^t
  e^{ A (t - s) } 
  \tilde{F}( s, \tilde{X}_s ) \, ds
+
  \int_0^t
  e^{ A (t - s) } 
  \tilde{B}( s, \tilde{X}_s ) \, dW_s
\end{equation}
$ \mathbb{P} $-a.s.\ for
all $ t \in [0,\infty) $.
Next define mappings
$
  L^{ (0) }_{ s, t }
  \colon
  \cup_{ r \in \mathbb{R} }
  C^2( H_r, V )
  \rightarrow
  C( H_{ \gamma }, V )
$,
$
  s, t \in [0,\infty)
$,
$
  s < t 
$,
and
$
  L^{ (1) }_{ s, t }
  \colon
  \cup_{ r \in \mathbb{R} }
  C^1( H_r, V )
  \rightarrow
  C( H_{ \gamma }, V )
$,
$
  s, t \in [0,\infty)
$,
$
  s < t 
$,
through
\begin{equation}
  \big( 
    L^{(0)}_{ s, t } \varphi 
  \big)( x ) 
:=
  \varphi'( e^{ A (t - s) } x ) 
  \,
  e^{ A (t - s) }
  \tilde{F}(s, x)
+
  \frac{ 1 }{ 2 }
  \sum_{ 
    j \in \mathcal{J} 
  }
  \varphi''( e^{ A (t - s) } x )
  \big(
    e^{ A (t - s) } \tilde{B}( s, x ) g_j,
    e^{ A (t - s) } \tilde{B}( s, x ) g_j
  \big)
\end{equation}
for all
$ x \in H_{ \gamma } 
$,
$
  \varphi \in 
  \cup_{ r \in \mathbb{R} }
  C^2( H_{ r }, V )
$
and all
$
  s, t \in [0,\infty)
$
with $ s < t $
and through
\begin{equation}
  \big( 
    L^{ (1) }_{ s, t } \varphi 
  \big)( x )
:=
  \varphi'( e^{ A (t - s) } x ) \,
  e^{ A (t - s) } 
  \tilde{B}(s,x)
\end{equation}
for all
$ x \in H_{ \gamma } $,
$
  \varphi \in 
  \cup_{ r \in \mathbb{R} }
  C^1( H_{ r }, V )
$
and all
$
  s, t \in [0,\infty)
$
with $ s < t $.
Corollary~\ref{cor:itoauto}
then implies
\begin{equation}
\label{eq:timedependent}
  \varphi( \tilde{X}_t )
=
  \big(
    K_{ ( t - t_0 ) } \varphi
  \big)( \tilde{X}_{ t_0 } )
+
  \int_{ t_0 }^t
  \!
  \big( 
    L^{ (0) }_{ s, t } \varphi
  \big) ( \tilde{X}_s )
  \, ds
+
  \int_{ t_0 }^t
  \!
  \big( 
    L^{(1)}_{ s, t } \varphi
  \big) ( \tilde{X}_s )
  \, dW_s
\end{equation}
for all 
$ t_0, t \in [0,\infty) $
with $ t_0 < t $
and all
$ 
  \varphi \in 
  \cup_{ 
    r < 
    \min( \alpha + 1, \beta + \frac{ 1 }{ 2 } )
  }
  C^2( H_r, V )
$.
The mild It\^{o} 
formula~\eqref{eq:timedependent}
is nothing else but the 
counterpart of
\eqref{eq:itoformel}
for SPDEs with
time dependent coefficients.

\subsubsection{Mild Kolmogorov 
backward equation for 
SPDEs}
\label{sec:kolmogorov}

Based on \eqref{eq:operatorIto2}
a mild Kolmogorov
backward equation is derived 
in this subsection.
Proposition~\ref{prop:prop}
implies the existence of
predictable stochastic processes
$
  X^x \colon [0,\infty) \times \Omega
  \rightarrow H_{ \gamma }
  \in
  \cap_{ q \in [1,\infty) }
  C( 
    [0,\infty), 
    L^q( \Omega; H_{ \gamma } )
  )
$,
$ x \in H_{ \gamma } 
$,
such that
\begin{equation}
\label{eq:SPDE00}
  X_t^x = e^{ A t } x 
  +
  \int_0^t e^{ A (t - s) } 
  F( X_s^x ) \, ds
  +
  \int_0^t e^{ A (t - s) } 
  B( X_s^x ) \, dW_s
\end{equation}
$ \mathbb{P} $-a.s.\ for all
$ t \in [0,\infty) $ and all
$ x \in H_{ \gamma } $.
Proposition~\ref{prop:prop}
also implies that
$
  \P\big[
    X^x_t \in
    H_r
  \big] = 1
$
for all $ t \in (0,\infty) $
and all
$ 
  r \in 
  ( - \infty, 
    \min( \alpha + 1 , 
    \beta + \frac{ 1 }{ 2 } ) 
  ) 
$.
Then define mappings
$ 
  P_t \colon
  \cup_{ q \in [0,\infty) }
  \cup_{ 
    r \in ( - \infty,
      \min( \alpha + 1, \beta + \frac{ 1 }{ 2 } 
      )
    ) 
  }
  G^0_q( H_{ r }, V )
  \rightarrow
  \cup_{ q \in [0,\infty) }
  \cup_{ 
    r \in ( - \infty,
      \min( \alpha + 1, \beta + \frac{ 1 }{ 2 } 
      )
    ) 
  }
  G^0_q( H_{ r }, V )
$,
$ t \in [0,\infty) $, 
through
$
  P_0( \varphi ) := \varphi
$
for all
$
  \varphi \in
  \cup_{ q \in [0,\infty) }
  \cup_{ 
    r \in ( - \infty,
      \min( \alpha + 1, \beta + \frac{ 1 }{ 2 } 
      )
    ) 
  }
$
and through
$ 
  P_t( \varphi ) \in 
  \cup_{ q \in [0,\infty) }
  G^0_q( H_{ \gamma }, V )
$
and
\begin{equation}
\label{eq:defPt}
  ( 
    P_t \varphi 
  )( x )
:=
  \mathbb{E}\big[
    \varphi( X^{ x }_t )
  \big]
\end{equation}
for all 
$ x \in H_{ \gamma } $,
$ 
  \varphi \in
  \cup_{ q \in [0,\infty) }
  \cup_{ 
    r \in 
    ( - \infty,
      \min( \alpha + 1, \beta + \frac{ 1 }{ 2 } 
      )
    ) 
  }
$
and all
$ t \in (0,\infty) $.
Note that
$
  P_t\big(
    G^0_q( H_r, V )
  \big)
  \subset
  G^0_q( H_{ \gamma }, V )
$
for all
$ t \in (0,\infty) $,
$ 
  r \in 
  ( - \infty, 
    \min( \alpha + 1,
    \beta + \frac{ 1 }{ 2 }
    )
  )
$
and all
$
  q \in [0,\infty)
$.
The next lemma collect a few
simple properties of
the linear operators 
$ P_t $, $ t \in [0,\infty) $.

\begin{lemma}[Properties
of $ P_t $, $ t \in [0,\infty) $]
\label{lem:Pt}
Assume that the setting in
Section~\ref{sec:setting}
is fulfilled. Then
$ 
  P_t \in   
  L( G^0_q( H_{ \gamma }, V ) )
$
and
$
  \sup_{ s \in [0,t] }
  \| P_s \|_{
    L( G^0_q( H_{ \gamma }, V ) )
  }
  < \infty
$
for all $ t, q \in [0,\infty) $
and it holds for every
$ q \in [0,\infty) $
that the function 
$
  [0,\infty) \ni t
  \mapsto
  P_t \in
  L( 
    Lip^1_{ q }( H_{ \gamma }, V), 
    G^0_{ q + 1 }( H_{ \gamma }, V)
  )
$
is locally
$ \frac{ 1 }{ 2 } $-H\"{o}lder
continuous.
\end{lemma}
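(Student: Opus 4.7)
My plan is to split the lemma into two independent claims: (i) boundedness $P_t \in L(G^0_q(H_\gamma, V))$ with $\sup_{s \in [0,t]} \|P_s\|_{L(G^0_q(H_\gamma, V))} < \infty$, and (ii) local $\tfrac{1}{2}$-Hölder continuity of $t \mapsto P_t$ into $L(\text{Lip}^1_q(H_\gamma, V), G^0_{q+1}(H_\gamma, V))$. Both rest on the same toolkit: the mild formulation~\eqref{eq:SPDE00}, standard moment bounds on $X^x$ in $H_\gamma$, and the Markov property coming from pathwise uniqueness.

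For (i), the direct estimate $\|(P_t \varphi)(x)\|_V \le \|\varphi\|_{G^0_q(H_\gamma, V)} \, \mathbb{E}\bigl[(1 + \|X_t^x\|_{H_\gamma})^q\bigr]$ reduces the claim to the moment bound $\sup_{t \in [0,T]} \mathbb{E}\bigl[(1 + \|X_t^x\|_{H_\gamma})^q\bigr] \le C(T,q)(1 + \|x\|_{H_\gamma})^q$. I would obtain this from~\eqref{eq:SPDE00} by combining (a) the locally uniform bound $\sup_{t \in [0,T]} \|e^{At}\|_{L(H_\gamma)} < \infty$, (b) the smoothing estimates $\|e^{At}\|_{L(H_\alpha, H_\gamma)} \le C\, t^{-(\gamma-\alpha)}$ and $\|e^{At}\|_{L(H_\beta, H_\gamma)} \le C\, t^{-(\gamma-\beta)}$ (integrable against $du$ and $du^2$ respectively by Assumptions~\ref{drift} and~\ref{diffusion}), (c) the Burkholder-Davis-Gundy inequality applied to the stochastic convolution in $H_\gamma$, (d) the global Lipschitz continuity of $F$ and $B$, and (e) Gronwall's lemma. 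Dividing by $(1 + \|x\|_{H_\gamma})^q$ and taking the supremum in $x$ and $t \in [0,T]$ then gives the uniform operator-norm bound.

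For (ii), I would first invoke the Markov property $(P_t \varphi - P_s \varphi)(x) = \mathbb{E}\bigl[(P_{t-s} \varphi - \varphi)(X_s^x)\bigr]$ for $0 \le s \le t$, which combined with the moment bound from (i) reduces the claim to the one-point estimate $\|P_r \varphi - \varphi\|_{G^0_{q+1}(H_\gamma, V)} \le C(T)\, r^{1/2}\, \|\varphi\|_{\text{Lip}^1_q(H_\gamma, V)}$ for $\varphi \in \text{Lip}^1_q(H_\gamma, V)$ and $r \in [0,T]$. The Lipschitz estimate for $\varphi$ gives $\|(P_r \varphi)(x) - \varphi(x)\|_V \le \|\varphi\|_{\text{Lip}^1_q} \, \mathbb{E}\bigl[(1 + \|X_r^x\|_{H_\gamma} + \|x\|_{H_\gamma})^q\, \|X_r^x - x\|_{H_\gamma}\bigr]$, and Cauchy-Schwarz together with (i) further reduces the task to proving the $L^2$-temporal bound $\bigl(\mathbb{E}[\|X_r^x - x\|_{H_\gamma}^2]\bigr)^{1/2} \le C(T)\, r^{1/2}\, (1 + \|x\|_{H_\gamma})$.

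The main obstacle is this last $L^2$-temporal estimate in the endpoint norm $H_\gamma$: splitting $X_r^x - x = (e^{Ar} - I) x + \int_0^r e^{A(r-u)} F(X_u^x)\, du + \int_0^r e^{A(r-u)} B(X_u^x)\, dW_u$, the two convolution integrals contribute rates $r^{1-(\gamma-\alpha)}$ and $r^{1/2-(\gamma-\beta)}$ respectively, directly from the smoothing estimates combined with Bochner/Itô isometry and the moment bound on $F$ and $B$ along $X$. The delicate piece is the deterministic semigroup increment $(e^{Ar} - I)x$, which is only continuous (not Hölder) in $r$ on $H_\gamma$ for general $x \in H_\gamma$; this is precisely where the extra factor of $(1 + \|x\|_{H_\gamma})$ afforded by the target $G^0_{q+1}$ (rather than $G^0_q$) must be exploited. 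I expect to close this by interpolating the trivial bound $\|(e^{Ar} - I)x\|_{H_\gamma} \le (1+M)\|x\|_{H_\gamma}$ against the $\tfrac{1}{2}$-Hölder temporal regularity available in the weaker norm $H_{\gamma - 1/2}$ from Proposition~\ref{prop:prop}, absorbing the singular factor into the $(1 + \|x\|_{H_\gamma})$ to obtain the required $r^{1/2}(1 + \|x\|_{H_\gamma})$ bound.
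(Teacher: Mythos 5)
Your part (i) is the standard argument and is fine. The genuine gap is in part (ii), at exactly the step you flag as delicate, and the fix you propose does not close it. The extra power of $(1+\|x\|_{H_\gamma})$ gained by landing in $G^0_{q+1}$ instead of $G^0_q$ is a weight in the \emph{spatial} variable and cannot be traded for temporal regularity of $r \mapsto e^{Ar}x$. The two bounds you propose to interpolate are $\|(e^{Ar}-I)x\|_{H_\gamma}\le C\|x\|_{H_\gamma}$ and $\|(e^{Ar}-I)x\|_{H_{\gamma-1/2}}\le C r^{1/2}\|x\|_{H_\gamma}$; interpolation of fractional-power norms only yields $\|(e^{Ar}-I)x\|_{H_{\gamma-s}}\le C r^{s}\|x\|_{H_\gamma}$ for $s\in[0,\tfrac12]$, i.e.\ the $r^{1/2}$ gain lives exclusively in the strictly weaker norm $H_{\gamma-1/2}$, which a function that is merely Lipschitz with respect to $\|\cdot\|_{H_\gamma}$ (all that $\|\varphi\|_{Lip^1_q(H_\gamma,V)}<\infty$ provides) cannot see. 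There is no estimate of the form $\|(e^{Ar}-I)x\|_{H_\gamma}\le C r^{1/2}(1+\|x\|_{H_\gamma})^m$ for any $m$: test it on normalized eigenvectors $x=e_n$ of a self-adjoint $A$ with eigenvalues $-\lambda_n\to-\infty$ and $r\sim\lambda_n^{-1}$. In fact the one-point estimate $\|P_r\varphi-\varphi\|_{G^0_{q+1}(H_\gamma,V)}\le C r^{1/2}\|\varphi\|_{Lip^1_q(H_\gamma,V)}$ to which your Markov-property reduction leads is false in general near $t=0$: take $F=B=0$, $\gamma=\eta=0$ and $\varphi(v)=\|v\|_{H}\in Lip^1_0(H,\R)$, so that $(P_r\varphi)(x)=\|e^{Ar}x\|_H$ and $\sup_{x}\|(P_r\varphi)(x)-\varphi(x)\|/(1+\|x\|_{H})\ge\sup_{n}(1-e^{-\lambda_n r})/2$, which does not even tend to $0$ as $r\downarrow 0$.

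Two further points. First, the convolution rates $r^{1-(\gamma-\alpha)}$ and $r^{1/2-(\gamma-\beta)}$ that you correctly compute are only guaranteed to be positive under Assumptions~\ref{drift} and~\ref{diffusion}; they need not reach $\tfrac12$, so even those terms do not deliver the claimed exponent without the additional hypotheses $\gamma-\alpha\le\tfrac12$ and $\beta\ge\gamma$. Second, for comparison, the paper omits the proof entirely, declaring it a straightforward consequence of~\eqref{eq:lipestimateGnq}; the version of that route that actually closes gives local $\tfrac12$-H\"older continuity of $t\mapsto P_t$ only when the test functions are Lipschitz with respect to the weaker norm $\|\cdot\|_{H_{\gamma-1/2}}$ (so that the $C^{1/2}([0,T],L^p(\Omega;H_{\gamma-1/2}))$ regularity of Proposition~\ref{prop:prop} applies), or else a smaller exponent and a singular prefactor at $t=0$ as in Corollary~\ref{cor:temporal}. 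Your skeleton (Markov property, moment bounds, reduction to a one-point increment estimate) is the right one, but the concluding interpolation step is where the argument, as written, fails.
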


The proof of 
Lemma~\ref{lem:Pt}
is a straightforward
consequence of
inequality~\eqref{eq:lipestimateGnq}
and is therefore omitted.
Next the
mild It\^{o} formula
in \eqref{eq:operatorIto}
implies
\begin{equation}
\label{eq:mildP2}
\begin{split}
  \big( 
    P_t \varphi 
  \big)( x )
& =
  \mathbb{E}\Big[
    \big(
      K_{ (t - t_0) }
      \varphi 
    \big)( X_{ t_0 }^x )
  \Big]
+
  \int_{ t_0 }^t
  \mathbb{E}\Big[
  \big( 
    L_{ ( t - s ) }^{(0)} \varphi
  \big) ( X_s^x )
  \Big]
  \, ds
\\ & =
  \big(
    P_{ t_0 } K_{ ( t - t_0 ) } 
    \varphi 
  \big)(x)
+
  \int_{ t_0 }^t
  \big( 
    P_s
    L_{ ( t - s ) }^{(0)} \varphi
  \big) ( x )
  \, ds
\\ & =
  \big(
    P_{ t_0 } K_{ ( t - t_0 ) } 
    \varphi 
  \big)(x)
+
  \int_{ t_0 }^t
  \big( 
    P_s \,
    L^{(0)} K_{ ( t - s ) } \varphi
  \big) ( x )
  \, ds
\end{split}
\end{equation}
for all $ t_0, t \in [0,\infty) $
with $ t_0 < t $,
$ x \in H_{ \gamma } $
and all 
$ 
  \varphi \in
  \cup_{ q \in [0,\infty) }
  \cup_{ 
    r \in 
    ( - \infty,
      \min( \alpha + 1, \beta + \frac{ 1 }{ 2 } 
      )
    ) 
  }
$.
In the next subsection 
we will use \eqref{eq:mildP2} to
study regularity properties
of solutions of SPDEs.
In this regularity analysis
we also use the following 
two lemmas.

\begin{lemma}[Estimates
for $ K_t $, $ t \in (0,\infty) $]
\label{lem:Ktbound}
Assume that the setting
in Section~\ref{sec:setting}
is fulfilled. Then 
the function
$
  (0,\infty) \ni t
  \mapsto
  K_t \in
    L\!\left(
      Lip^{ n + 1 }_q( H_{ r_1 }, V ) 
      ,
      G^n_{ q + 1 }( 
        H_{ r_2 
        }, 
        V 
      )
    \right)
$
is continuous
for every $ r_1, r_2 \in \R $,
$ q \in [0,\infty) $,
$ n \in \{ 0, 1, \dots \} $
and it holds that
$
  K_t \in
    L(
      Lip^n_q( H_{ r_1 }, V ) 
      ,
      Lip^n_{ q }( 
        H_{ r_2 
        }, 
        V 
      )
    )
$
and that
\begin{align}
\label{eq:normuse1}
  \| 
    K_t( \varphi_0 )
  \|_{ 
    G^0_q( H_{ r_1 }, V )
  }
& \leq
  \max\!\big( 
    1, 
    \| 
      e^{ A t } 
    \|_{ 
      L( H_{ r_2 }, H_{ r_1 } ) 
    }^q 
  \big)
  \,
  \|
    \varphi_0
  \|_{
    G^0_q( H_{ r_2 }, V )
  },
\\
\label{eq:normuse2}
  \| 
    ( K_t \varphi_1 )'
  \|_{ 
    G^0_{q}( H_{ \gamma }, 
      L( H_{ \alpha }, V) 
    )
  }
& \leq
  \max\!\big( 
    1, \| e^{ A t } \|_{ L(H) }^q 
  \big)
  \,
  \|
    e^{ A t }
  \|_{ L(H_{\alpha},H_{\gamma}) }
  \,
  \|
    \varphi_1'
  \|_{
    G^0_{q}( 
      H_{ \gamma }, L( H_{\gamma}, V) 
    )
  } ,
\\
\label{eq:normuse3}
  \| 
    ( K_t \varphi_2 )''
  \|_{ 
    G^0_{q}( H_{ \gamma }, 
      L^{(2)}( H_{ \beta }, V) 
    )
  }
& \leq
  \max\!\big( 1, \| e^{ A t } \|_{ L(H) }^q 
  \big)
  \,
  \|
    e^{ A t }
  \|_{ L(H_{\beta},H_{\gamma}) }^2
  \,
  \|
    \varphi_2''
  \|_{
    G^0_{q}( 
      H_{ \gamma }, 
      L^{(2)}( H_{\gamma}, V) 
    )
  } 
\end{align}
for all 
$
  \varphi_0 \in
  C( H_{ r_2 }, V )
$,
$
  \varphi_1 \in
  C^1( H_{ \gamma }, V )
$,
$
  \varphi_2 \in
  C^2( H_{ \gamma }, V )
$,
$ q \in [0,\infty) $,
$ n \in \N $,
$ r_1, r_2 \in \R $
and all
$
  t \in (0,\infty)
$.
\end{lemma}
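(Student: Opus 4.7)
The plan is to reduce every assertion to the chain-rule identity
\[
(K_t\varphi)^{(k)}(x)(h_1,\dots,h_k) \,=\, \varphi^{(k)}(e^{At}x)\big(e^{At}h_1,\dots,e^{At}h_k\big)
\]
(valid for every $k\in\{0,\dots,n\}$, $x,h_1,\dots,h_k\in H_{r_1}$ and $\varphi\in C^n(H_{r_2},V)$), and then combine it with the growth and Lipschitz estimates \eqref{eq:growthestimateGnq}--\eqref{eq:lipestimateGnq} and with the smoothing/norm-continuity properties of the analytic semigroup $(e^{At})_{t\geq 0}$; concretely, for every $t>0$ and all $r,s\in\R$ one has $e^{At}\in L(H_r,H_s)$ and the map $(0,\infty)\ni t\mapsto e^{At}\in L(H_r,H_s)$ is continuous, which is a classical consequence of Assumption~\ref{semigroup}.

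First I would establish the three norm bounds \eqref{eq:normuse1}--\eqref{eq:normuse3}. Inserting the chain-rule formula for $k=0,1,2$ into the respective $G^0_q$-norms and applying \eqref{eq:growthestimateGnq} to $\varphi_0$, $\varphi_1'$, $\varphi_2''$ yields the factor $\max(1,\|e^{At}\|^q_{L(H_{r_1},H_{r_2})})$ coming from the polynomial-growth pre-factor $(1+\|e^{At}x\|_{H_{r_2}})^q$, while the inner $e^{At}$'s in the slots of the derivative contribute the factors $\|e^{At}\|_{L(H_\alpha,H_\gamma)}$ and $\|e^{At}\|^2_{L(H_\beta,H_\gamma)}$ in \eqref{eq:normuse2} and \eqref{eq:normuse3}, respectively, via the multilinear operator norms on $L^{(1)}$ and $L^{(2)}$. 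The $\mathrm{Lip}^n_q$-boundedness claim $K_t\in L(\mathrm{Lip}^n_q(H_{r_1},V),\mathrm{Lip}^n_q(H_{r_2},V))$ follows by the same recipe with \eqref{eq:growthestimateGnq} replaced by \eqref{eq:lipestimateGnq}, rewriting $e^{At}v-e^{At}w=e^{At}(v-w)$ to pull an operator-norm factor out of the difference.

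For the continuity claim, I fix $t_0\in(0,\infty)$ and $\varphi$ with $\|\varphi\|_{\mathrm{Lip}^{n+1}_q(H_{r_1},V)}\le 1$, and expand
\[
(K_t\varphi)^{(n)}(x)(h_1,\dots,h_n)-(K_{t_0}\varphi)^{(n)}(x)(h_1,\dots,h_n)
\]
as a telescoping sum in which exactly one occurrence of $e^{At}$ (either in the base point or in one of the $n$ derivative slots) is replaced by $e^{At_0}$ at a time. The base-point term, of the form $[\varphi^{(n)}(e^{At}x)-\varphi^{(n)}(e^{At_0}x)](e^{At}h_1,\dots)$, is controlled by \eqref{eq:lipestimateGnq} applied to $\varphi^{(n)}$; this is precisely why the stronger assumption $\varphi\in\mathrm{Lip}^{n+1}_q$ is needed. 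The remaining $n$ terms are controlled by \eqref{eq:growthestimateGnq} for $\varphi^{(n)}$ combined with $\|(e^{At}-e^{At_0})h_i\|_{H_{r_2}}\le\|e^{At}-e^{At_0}\|_{L(H_{r_1},H_{r_2})}\|h_i\|_{H_{r_1}}$. In every summand a factor $\|e^{At}-e^{At_0}\|$ in an appropriate operator norm appears, and this tends to $0$ as $t\to t_0$ by analyticity of the semigroup. The Lipschitz term additionally brings in one extra power of $\|x\|_{H_{r_1}}$, which is exactly absorbed by the $(1+\|x\|_{H_{r_1}})^{q+1}$ denominator in the $G^n_{q+1}$-norm; this accounts for the loss of one derivative order and one polynomial-growth power between $\mathrm{Lip}^{n+1}_q$ on $H_{r_1}$ and $G^n_{q+1}$ on $H_{r_2}$. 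The main obstacle is the careful book-keeping of the many operator-norm factors appearing in the telescoping expansion, and verifying that they assemble into a bound that is both locally bounded in $t$ and vanishes as $t\to t_0$, uniformly over the unit ball of $\mathrm{Lip}^{n+1}_q(H_{r_1},V)$.
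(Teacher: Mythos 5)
Your proposal is correct and follows exactly the route the paper intends: the paper omits the proof as ``straightforward'' but notes that it relies on inequality~\eqref{eq:lipestimateGnq}, which is precisely the key ingredient in your telescoping continuity argument, while the three norm bounds follow from the chain rule and \eqref{eq:growthestimateGnq} just as you describe. No gaps.
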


\begin{lemma}[Estimates
for $ L^{(0)} $]
\label{kolm:bound0}
Assume that the setting
in Section~\ref{sec:setting}
is fulfilled. Then
it holds that
$
  L^{ (0) }
  \in 
  L( 
    G^2_{ q }( 
      H_{ \min(\alpha,\beta, \gamma) 
      }, 
      V 
    ),
    G^0_{ q + 2 }( H_{ \gamma }, V )
  )
$,
that
$
  L^{ (0) }
  \in 
  L( 
    Lip^3_{ q }( 
      H_{ \min(\alpha,\beta, \gamma) 
      }, 
      V 
    ),
    \text{Lip}^1_{ q + 2 
    }( H_{ \gamma }, V )
  )
$
and that
\begin{equation}
\label{eq:estimateL0}
\begin{split}
&
  \| 
    L^{(0)}( \varphi )
  \|_{
    G^0_{ q + 2 }( H_{ r }, V )
  }
\\ & \leq
  \| F \|_{ 
    G_1^0( H_{ r }, H_{ \alpha } )
  }
  \,
  \|
    \varphi'
  \|_{ 
    G_{ q + 1 }^0( H_{ r },
      L( H_{ \alpha }, V )  
    )
  }
  +
  \tfrac{ 1 }{ 2 } \,
  \| B \|_{ 
    G_1^0( H_{ r }, 
    HS( U_0, H_{ \beta } ) )
  }^2
  \,
  \|
    \varphi''
  \|_{ 
    G_{ q }^0( H_{ r },
      L^{(2)}( H_{ \beta }, V )  
    )
  } ,
\\ & \leq
  \max\!\big(
    \| F \|_{ 
      G_1^0( H_{ r }, H_{ \alpha } )
    },
    \| B \|_{ 
      G_1^0( H_{ r }, 
      HS( U_0, H_{ \beta } ) )
    }^2
  \big)
  \big(
  \|
    \varphi'
  \|_{ 
    G_{ q + 1 }^0( H_{ r },
      L( H_{ \alpha }, V )  
    )
  }
  +
  \|
    \varphi''
  \|_{ 
    G_{ q }^0( H_{ r },
      L^{(2)}( H_{ \beta }, V )  
    )
  } 
  \big)
\end{split}
\end{equation}
for all 
$ r \in [ \gamma, \infty ) 
$,
$ 
  \varphi \in C^{2}( 
    H_{ \min( \alpha, \beta, \gamma ) }, 
  V) 
$
and all $ q \in [0,\infty) $.
\end{lemma}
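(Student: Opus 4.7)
The plan is to derive the pointwise bound \eqref{eq:estimateL0} by a direct termwise estimate of the defining expression \eqref{eq:defL0} and then deduce the two boundedness statements as immediate consequences, using the growth estimate \eqref{eq:growthestimateGnq} for the first and additionally the Lipschitz estimate \eqref{eq:lipestimateGnq} for the second.

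First I would fix $r \in [\gamma, \infty)$, $\varphi \in C^2(H_{\min(\alpha,\beta,\gamma)}, V)$ and $x \in H_r$ and apply submultiplicativity of operator norms to obtain $\|\varphi'(x) F(x)\|_V \leq \|\varphi'(x)\|_{L(H_\alpha, V)} \|F(x)\|_{H_\alpha}$ together with the Hilbert--Schmidt bound
\begin{equation*}
\left\| \sum_{ j \in \mathcal{J} } \varphi''(x)\big( B(x) g_j, B(x) g_j \big) \right\|_V \leq \|\varphi''(x)\|_{L^{(2)}(H_\beta, V)} \, \|B(x)\|_{HS(U_0, H_\beta)}^2
\end{equation*}
(the latter being a routine consequence of expanding in the orthonormal basis $(g_j)_{j \in \mathcal{J}}$). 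Then \eqref{eq:growthestimateGnq} applied to $F$ and $B$ gives $\|F(x)\|_{H_\alpha} \leq \|F\|_{G^0_1(H_r, H_\alpha)} (1 + \|x\|_{H_r})$ and $\|B(x)\|_{HS(U_0, H_\beta)} \leq \|B\|_{G^0_1(H_r, HS(U_0, H_\beta))} (1 + \|x\|_{H_r})$, while the same inequality applied to $\varphi'$ and $\varphi''$ yields $\|\varphi'(x)\|_{L(H_\alpha, V)} \leq \|\varphi'\|_{G^0_{q+1}(H_r, L(H_\alpha, V))} (1 + \|x\|_{H_r})^{q+1}$ and $\|\varphi''(x)\|_{L^{(2)}(H_\beta, V)} \leq \|\varphi''\|_{G^0_q(H_r, L^{(2)}(H_\beta, V))} (1 + \|x\|_{H_r})^q$. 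Dividing by $(1 + \|x\|_{H_r})^{q+2}$ and taking the supremum over $x \in H_r$ produces the first inequality of \eqref{eq:estimateL0}, and the second inequality follows by trivial majorization.

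To deduce the first mapping property $L^{(0)} \in L(G^2_q(H_{\min(\alpha,\beta,\gamma)}, V), G^0_{q+2}(H_\gamma, V))$, I would specialize the established bound to $r = \gamma$, observe via \eqref{eq:Gnq1}--\eqref{eq:Gnq2} that $\|\varphi'\|_{G^0_{q+1}(H_\gamma, \cdot)}$ and $\|\varphi''\|_{G^0_q(H_\gamma, \cdot)}$ are both dominated by $\|\varphi\|_{G^2_q(H_{\min(\alpha,\beta,\gamma)}, V)}$, and note that $F$ and $B$ being globally Lipschitz immediately give $F \in G^0_1(H_\gamma, H_\alpha)$ and $B \in G^0_1(H_\gamma, HS(U_0, H_\beta))$.

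For the second mapping property $L^{(0)} \in L(\textnormal{Lip}^3_q, \textnormal{Lip}^1_{q+2})$, I would additionally estimate the relevant Lipschitz seminorm of $L^{(0)}\varphi$ by splitting
\begin{equation*}
\varphi'(x) F(x) - \varphi'(y) F(y) = \big[\varphi'(x) - \varphi'(y)\big] F(x) + \varphi'(y) \big[F(x) - F(y)\big]
\end{equation*}
and handling the $\varphi''$-term analogously via a telescoping decomposition in the two $B(\cdot)$-slots and in $\varphi''$. Applying \eqref{eq:lipestimateGnq} to the increments of $\varphi'$, $\varphi''$, $F$, and $B$ and \eqref{eq:growthestimateGnq} to the remaining factors produces in each summand exactly the weight $(1 + \max(\|x\|_{H_\gamma}, \|y\|_{H_\gamma}))^{q+2} \|x - y\|_{H_\gamma}$, which is the defining quantity of the $\textnormal{Lip}^1_{q+2}$-seminorm. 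The only real obstacle is bookkeeping: one must verify that every term in every decomposition accumulates the polynomial weight $q+2$ with no more and no less, which is purely mechanical once the exponents are tracked carefully.
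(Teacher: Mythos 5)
Your argument is correct and is exactly the direct termwise estimate the paper has in mind (the paper omits the proof of Lemma~\ref{kolm:bound0} as straightforward): bound $\varphi'(x)F(x)$ and the trace term via submultiplicativity and the Hilbert--Schmidt identity $\sum_j\|B(x)g_j\|_{H_\beta}^2=\|B(x)\|_{HS(U_0,H_\beta)}^2$, insert the $G^0$-growth bounds so the exponents $1+(q+1)$ and $2+q$ both give $q+2$, and for the $\mathrm{Lip}$-statement telescope the increments. The only cosmetic remark is that for the increments of $F$ and $B$ one invokes the global Lipschitz continuity from Assumptions~\ref{drift} and~\ref{diffusion} directly rather than \eqref{eq:lipestimateGnq}, which changes nothing.
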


The proofs of 
Lemma~\ref{lem:Ktbound}
and Lemma~\ref{kolm:bound0}
are straightforward 
and therefore omitted.
The proof of Lemma~\ref{lem:Ktbound}
makes use of
inequality~\eqref{eq:lipestimateGnq}
above.
The next corollary follows
from 
Lemmas~\ref{lem:Pt}--\ref{kolm:bound0}.

\begin{cor}
\label{cor:continuity}
Assume that the setting in
Section~\ref{sec:setting} is fulfilled.
Then 
the function
$
  (t_0, t) \ni
  s \mapsto
  P_s \, L^{(0)} K_{ t - s } 
  \in
  L( 
    Lip^3_q( H_{ \gamma }, V ), 
    G^0_{ q + 3 }( H_{ \gamma }, V )
  )
$
is continuous and satisfies
$
  \int_{ t_0 }^t
  \| 
    P_s \, L^{ (0) } K_{ t - s }
  \|_{
    L( 
      Lip^3_q( H_{ \gamma }, V ), 
      G^0_{ q + 3 }( H_{ \gamma }, V )
    )
  } 
  \, ds
  < \infty
$
for every $ t_0, t \in [0,\infty) $
with $ t_0 < t $
and every 
$ q \in [0,\infty) $.
\end{cor}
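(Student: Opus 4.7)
The plan is to read the corollary off from the three preceding lemmas by viewing $s \mapsto P_s \, L^{(0)} K_{t-s}$ as a composition of three operator-valued functions and tracking norms through each factor. Concretely, for each $s \in (t_0, t)$ I would use the chain
\begin{equation*}
  \mathrm{Lip}^3_q( H_{ \gamma }, V )
  \xrightarrow{\; K_{ t - s } \;}
  C^2( H_{ \min( \alpha, \beta, \gamma ) }, V )
  \xrightarrow{\; L^{(0)} \;}
  \mathrm{Lip}^1_{ q + 2 }( H_{ \gamma }, V )
  \subset G^0_{ q + 3 }( H_{ \gamma }, V )
  \xrightarrow{\; P_s \;}
  G^0_{ q + 3 }( H_{ \gamma }, V ) ,
\end{equation*}
where the membership of $L^{(0)} K_{t-s} \varphi$ in $\mathrm{Lip}^1_{q+2}$ comes from Lemma~\ref{kolm:bound0} applied with $r = \gamma$, the continuous inclusion $\mathrm{Lip}^1_{q+2} \subset G^0_{q+3}$ is the embedding recorded right after \eqref{eq:defCnLip}, and the last arrow is Lemma~\ref{lem:Pt}.

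For continuity in $s$, I would invoke (i) the continuity of $s \mapsto K_{t-s}$ in the operator norm from $\mathrm{Lip}^3_q( H_{ \gamma }, V )$ to $G^2_{ q + 1 }( H_{ \min( \alpha, \beta, \gamma ) }, V )$, which is exactly the first assertion of Lemma~\ref{lem:Ktbound}; (ii) the fact that $L^{(0)}$ is a time-independent bounded linear operator by Lemma~\ref{kolm:bound0}; and (iii) the local H\"older continuity and uniform boundedness of $s \mapsto P_s$ from Lemma~\ref{lem:Pt}. A standard triangle-inequality splitting of $P_s L^{(0)} K_{t-s} - P_{s'} L^{(0)} K_{t-s'}$ into two pieces, one controlled by the continuity of $K_{t-\cdot}$ and one by that of $P_{\cdot}$, then yields continuity of the composition.

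For integrability I would plug the estimates \eqref{eq:normuse2} and \eqref{eq:normuse3} into the first inequality of Lemma~\ref{kolm:bound0} to obtain, for all $\varphi \in \mathrm{Lip}^3_q( H_{ \gamma }, V )$,
\begin{equation*}
  \| L^{(0)} K_{ t - s } \varphi \|_{ G^0_{ q + 2 }( H_{ \gamma }, V ) }
  \lesssim
  \bigl(
    \| e^{ A (t-s) } \|_{ L( H_{ \alpha }, H_{ \gamma } ) }
    +
    \| e^{ A (t-s) } \|_{ L( H_{ \beta }, H_{ \gamma } ) }^{ 2 }
  \bigr)
  \,
  \| \varphi \|_{ \mathrm{Lip}^3_q( H_{ \gamma }, V ) } ,
\end{equation*}
the implicit constant depending only on $\| F \|_{ G^0_1 }$, $\| B \|_{ G^0_1 }^2$, $q$ and $\sup_{ u \in [ 0, T ] } \| e^{ A u } \|_{ L( H ) }^{ q + 2 }$. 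Multiplying by $\sup_{ s \in [ 0, T ] } \| P_s \|_{ L( G^0_{ q + 3 }( H_{ \gamma }, V ) ) } < \infty$ (Lemma~\ref{lem:Pt}) and integrating in $s$, the standard analytic-semigroup estimates $\| e^{ A ( t - s ) } \|_{ L( H_{ \alpha }, H_{ \gamma } ) } \lesssim ( t - s )^{ \alpha - \gamma }$ and $\| e^{ A ( t - s ) } \|_{ L( H_{ \beta }, H_{ \gamma } ) }^{ 2 } \lesssim ( t - s )^{ 2 ( \beta - \gamma ) }$ combined with the assumptions $\gamma - \alpha < 1$ and $\gamma - \beta < \tfrac{ 1 }{ 2 }$ (Assumptions~\ref{drift}--\ref{diffusion}) make both exponents strictly greater than $-1$, so the singularity at $s = t$ is integrable.

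The main obstacle is purely bookkeeping: aligning the polynomial growth indices $q+1$, $q+2$, $q+3$ across the three factor spaces so that the chain actually closes, and verifying that $\mathrm{Lip}^3_q$-regularity of $\varphi$ is strong enough for the derivatives of $K_{t-s}\varphi$ to feed into Lemma~\ref{kolm:bound0} with the correct norms (here one uses \eqref{eq:growthestimateGnq}--\eqref{eq:lipestimateGnq} to pass from Lipschitz-type to growth-type norms at the cost of raising the growth exponent by one). Beyond this indexing care, no analytic difficulty remains once the three lemmas are in hand.
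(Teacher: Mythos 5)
Your argument is correct and follows essentially the same route as the paper: the same factorization through $K_{t-s}$, $L^{(0)}$ and $P_s$ with the same index bookkeeping ($q \to q+1 \to q+3$), and the same triangle-inequality splitting into a piece controlled by the continuity of $s \mapsto K_{t-s}$ (Lemma~\ref{lem:Ktbound}) and a piece controlled by the H\"older continuity of $s \mapsto P_s$ (Lemma~\ref{lem:Pt}), with $L^{(0)}$ bounded via Lemma~\ref{kolm:bound0}. Your integrability step, reducing the singularity at $s = t$ to the exponents $\min(\alpha-\gamma,0) > -1$ and $2\min(\beta-\gamma,0) > -1$ guaranteed by Assumptions~\ref{drift} and \ref{diffusion}, is exactly what the paper's terse final sentence relies on.
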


\begin{proof}[Proof
of Corollary~\ref{cor:continuity}]
The triangle inequality implies that
\begin{equation}
\label{eq:continuity}
\begin{split}
&
  \big\|
    P_{ s } L^{(0)} K_{ t - s } 
    -
    P_{ s_0 } L^{(0)} K_{ t - s_0 } 
  \big\|_{ 
    L( 
      Lip^3_q( H_{ \gamma }, V ), 
      G^0_{ q + 3 }( H_{ \gamma }, V )
    )
  }
\\ & \leq
  \big\|
    P_{ s } L^{(0)} K_{ t - s } 
    -
    P_{ s } L^{(0)} K_{ t - s_0 } 
  \big\|_{ 
    L( 
      Lip^3_q( H_{ \gamma }, V ), 
      G^0_{ q + 3 }( H_{ \gamma }, V )
    )
  }
\\ & 
  +
  \big\|
    P_{ s } L^{(0)} K_{ t - s_0 } 
    -
    P_{ s_0 } L^{(0)} K_{ t - s_0 } 
  \big\|_{ 
    L( 
      Lip^3_q( H_{ \gamma }, V ), 
      G^0_{ q + 3 }( H_{ \gamma }, V )
    )
  }
\\ & \leq
  \| P_{ s } \|_{       
    L(
      G^0_{ q + 3 }( H_{ \gamma }, V )
    )
 }
  \,
  \| L^{ (0) } \|_{ 
    L( 
      G^2_{ q + 1 }( 
        H_{ 
          \min( \alpha, \beta, \gamma)
        }, 
        V 
      ),
      G^0_{ q + 3 }( H_{ \gamma }, V )
    )
  }
  \\ & \quad \cdot
  \|
    K_{ t - s } 
    -
    K_{ t - s_0 } 
  \|_{ 
    L( 
      Lip^3_q( H_{ \gamma }, V ), 
      G^2_{ q + 1 }(
        H_{ 
          \min( \alpha, \beta, \gamma)
        }, V
      )
    )
  }
\\ & 
  +
  \big\|
    P_{ s } 
    -
    P_{ s_0 }
  \big\|_{ 
    L( 
      Lip^1_{ q + 2 }( H_{ \gamma }, V ), 
      G^0_{ q + 3 }( H_{ \gamma }, V )
    )
  }
  \,
  \| L^{ (0) } \|_{ 
    L( 
      Lip^3_q( 
        H_{ \min( \alpha, \beta, \gamma ) 
        }, V 
      ) ,
      Lip^1_{ q + 2 }( H_{ \gamma }, V )
    )
  }
  \\ & \quad \cdot
  \|
    K_{ t - s_0 }
  \|_{
    L( 
      Lip^3_q( H_{ \gamma }, V ) ,
      Lip^3_q( 
        H_{ \min( \alpha, \beta, \gamma ) 
        }, V 
      ) 
    )
  }
\\ & \leq
  \underbrace{
  \left[
  \| L^{ (0) } \|_{ 
    L( 
      G^2_{ q + 1 }( 
        H_{ 
          \min( \alpha, \beta, \gamma)
        }, 
        V 
      ),
      G^0_{ q + 3 }( H_{ \gamma }, V )
    )
  }
  \cdot
  \sup_{ u \in [t_0,t] }
  \| P_{ u } \|_{       
    L(
      G^0_{ q + 3 }( H_{ \gamma }, V )
    )
  }
  \right]
  }_{ < \infty 
    \text{ due to 
    Lemmas~\ref{kolm:bound0}
    and \ref{lem:Pt}}
  }
\\ & \quad \cdot
  \|
    K_{ t - s } 
    -
    K_{ t - s_0 } 
  \|_{ 
    L( 
      Lip^3_q( H_{ \gamma }, V ), 
      G^2_{ q + 1 }(
        H_{ 
          \min( \alpha, \beta, \gamma)
        }, V
      )
    )
  }
\\ & 
  +
  \underbrace{
  \left[
  \| L^{ (0) } \|_{ 
    L( 
      Lip^3_q( 
        H_{ \min( \alpha, \beta, \gamma ) 
        }, V 
      ) ,
      Lip^1_{ q + 2 }( H_{ \gamma }, V )
    )
  }
  \cdot
  \|
    K_{ t - s_0 }
  \|_{
    L( 
      Lip^3_q( H_{ \gamma }, V ) ,
      Lip^3_q( 
        H_{ \min( \alpha, \beta, \gamma ) 
        }, V 
      ) 
    )
  }
  \right]
  }_{ < \infty 
    \text{ due to 
    Lemmas~\ref{kolm:bound0}
    and \ref{lem:Ktbound}}
  }
\\ & \quad \cdot
  \|
    P_{ s } 
    -
    P_{ s_0 }
  \|_{ 
    L( 
      Lip^1_{ q + 2 }( H_{ \gamma }, V ), 
      G^0_{ q + 3 }( H_{ \gamma }, V )
    )
  }
\end{split}
\end{equation}
for all 
$ s_0, s \in (t_0, t) $,
$ q \in [0,\infty) $
and
$ t_0, t \in [0,\infty) $
with $ t_0 \leq t $.
Combining \eqref{eq:continuity}
with Lemma~\ref{lem:Pt} 
and Lemma~\ref{lem:Ktbound}
shows that
the function
$
  (t_0, t) \ni
  s \mapsto
  P_s \, L^{(0)} K_{ t - s } 
  \in
  L( 
    Lip^3_q( H_{ \gamma }, V ), 
    G^0_{ q + 3 }( H_{ \gamma }, V )
  )
$
is continuous 
for every $ t_0, t \in [0,\infty) $
with $ t_0 < t $
and every 
$ q \in [0,\infty) $.
Combining this 
with 
Lemmas~\ref{lem:Pt}--\ref{kolm:bound0}
completes the proof
of Corollary~\ref{cor:continuity}.
\end{proof}

In the following we reformulate
\eqref{eq:mildP2} in a suitable
abstract way
by using Corollary~\ref{cor:continuity}.
More precisely, 
combining Corollary~\ref{cor:continuity}
with equation~\eqref{eq:mildP2}
shows that
\begin{equation}
\label{eq:mildP}
\begin{split}
  P_t( \varphi )
& =
  P_{ t_0 }\!\left(
    K_{ ( t - t_0 ) }( \varphi ) 
  \right)
+
  \int_{ t_0 }^{ t }
  P_{ s }\big(  
    L^{ (0) }_{ ( t - s ) }( \varphi ) 
  \big)
  \, 
  ds
\\ & =
  P_{ t_0 }\!\left(
    K_{ ( t - t_0 ) }( \varphi ) 
  \right)
+
  \int_{ t_0 }^{ t }
  P_{ s }\big(  
  L^{ (0) }\big( K_{ ( t - s ) }( \varphi ) \big)
  \big)
  \, 
  ds
\end{split}
\end{equation}
in
$
  G^0_{ q + 3 }( H_{ \gamma }, V ) 
$
for all $ t_0, t \in [0,\infty) $
with $ t_0 \leq t $,
$ 
  \varphi \in 
  Lip^3_{ q }( H_{ \gamma }, V )
$
and all
$ q \in [0,\infty) $
where the integrals in \eqref{eq:mildP}
are Bochner integrals
in $ \mathbb{R} $-Banach
space 
$ 
  G^0_{ q + 3 }( H_{ \gamma }, V ) 
$.
According to
Corollary~\ref{cor:continuity},
these Bochner integrals
are indeed well defined.
We would like to add 
to the mild Kolmogorov 
backward equation~\eqref{eq:mildP}
that 
the mild Kolmogorov operators
$ L^{(0)}_t $, $ t \in (0,\infty) $,
appearing in \eqref{eq:mildP}
do, in general, not commute with
the semigroup operators
$ P_t $, $ t \in [0,\infty) $,
i.e, we do, in general, not have
that
$
  ( P_t L^{ (0) }_s )( \varphi )
  =
  ( L^{ (0) }_s P_t )( \varphi )
$
for all $ s, t \in (0,\infty) $
and all 
$ 
  \varphi \in G^2_0( H_{ \gamma }, V ) 
$.
This is in contrast
to the standard Kolmogorov backward
equation where the semigroup and
the Kolmogorov operator do commute
(see, e.g., Section~8.1
in Oeksendal~\cite{Oeksendal2000}).
In the
next step 
let
$
  \mathcal{K}_t 
  \colon
      L( 
        G^2_{ 1 }( 
          H_{ \min( \alpha, \beta, \gamma ) 
          }, 
        V ),
        G^0_{ 3 }( H_{ \gamma }, V )
      ) 
  \rightarrow
  L( 
    Lip^3_0( 
      H_{ \gamma },
      V 
    ),
    G^0_3( H_{ \gamma }, V )
  )
$,
$
  t \in (0,\infty) 
$,
let
$ 
  \mathcal{K}_0
  \colon
  L( 
    G^0_3( H_{ \gamma }, V )
  )
  \rightarrow
  L( 
    Lip^3_{ 0 }( H_{ \gamma }, V ),
    G^0_{ 3 }( H_{ \gamma }, V )
  )
$
and let
$
  \mathcal{L} 
  \colon
  L( 
    G^0_{ 3 }( H_{ \gamma }, V )
  ) 
  \rightarrow
      L( 
        G^2_{ 1 }( 
          H_{ \min( \alpha, \beta, \gamma ) 
          }, 
        V ),
        G^0_{ 3 }( H_{ \gamma }, V )
      ) 
$
be bounded linear
operators defined through
\begin{equation}
\label{eq:defLK2}
  ( \mathcal{K}_t \Phi )( \varphi )
:=
  \Phi\!\left( K_t( \varphi ) 
  \right)
\end{equation}
for all 
$ t \in (0,\infty) $,
$ 
  \varphi \in 
  Lip^3_0( 
    H_{ \gamma
    }, V 
  ) 
$
and all
$ 
  \Phi \in 
      L( 
        G^2_{ 1 }( 
          H_{ \min( \alpha, \beta, \gamma ) 
          }, 
        V ),
        G^0_{ 3 }( H_{ \gamma }, V )
      ) 
$,
through
$
  ( \mathcal{K}_0 \Phi )( \varphi )
  := \Phi( \varphi )
$
for all 
$
  \varphi \in
  Lip^3_{ 0 }( H_{ \gamma }, V )
$
and all
$
  \Phi \in
  L( 
    G^0_{ 3 }( H_{ \gamma }, V )
  ) 
$
and through
\begin{equation}
\label{eq:defLK}
    (
      \mathcal{L}
      \Phi
    )( \varphi )
:=
  \Phi\big( L^{ (0) }( \varphi ) \big)
\end{equation}
for all 
$
  \varphi \in
        G^2_{ 1 }( 
          H_{ \min( \alpha, \beta, \gamma ) 
          }
$
and all
$ 
  \Phi \in 
  L( 
    G^0_{ 3 }( H_{ \gamma }, V )
  ) 
$.
Lemmas~\ref{lem:Ktbound}
and \ref{kolm:bound0} ensure
that 
$ \mathcal{K}_t $, $ t \in [0,\infty) $,
and
$ \mathcal{L} $
are indeed well defined
bounded linear operators.
The next corollary follows
from
Lemmas~\ref{lem:Pt}--\ref{kolm:bound0}.

\begin{cor}
\label{cor:welldefinedMKB}
Assume that the setting
in Section~\ref{sec:setting}
is fulfilled. 
Then
the function
$
  (t_0, t) \ni s
  \mapsto
  \mathcal{K}_{ t - s }
  (
  \mathcal{L}(
  P_s )
  )
  \in
  L( 
    Lip^3_{ 0 }( H_{ \gamma }, V ),
    G^0_{ 3 }( H_{ \gamma }, V )
  )
$
is continuous and satisfies
$
  \int_{ t_0 }^t
  \|
  \mathcal{K}_{ t - s }
  (
  \mathcal{L}(
  P_s )
  )
  \|_{
    L( 
      Lip^3_{ 0 }( H_{ \gamma }, V ),
      G^0_{ 3 }( H_{ \gamma }, V )
    )
  }
  \, ds
  < \infty
$
for every $ t_0, t \in [0,\infty) $
with $ t_0 < t $.
\end{cor}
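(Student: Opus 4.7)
The plan is to observe that, when applied to any $\varphi\in\mathrm{Lip}^3_0(H_{\gamma},V)$, the operator $\mathcal{K}_{t-s}(\mathcal{L}(P_s))$ is nothing else but the composition $P_s\,L^{(0)}\,K_{t-s}$ appearing in Corollary~\ref{cor:continuity}, so the statement reduces to the case $q=0$ of that corollary. Concretely, I would first check that the composition is well defined: by Lemma~\ref{lem:Ktbound} (with $n=2$, $q=0$, $r_1=\gamma$, $r_2=\min(\alpha,\beta,\gamma)$) one has $K_{t-s}\in L(\mathrm{Lip}^3_0(H_{\gamma},V),G^2_1(H_{\min(\alpha,\beta,\gamma)},V))$; by Lemma~\ref{kolm:bound0} with $q=1$, $L^{(0)}\in L(G^2_1(H_{\min(\alpha,\beta,\gamma)},V),G^0_3(H_{\gamma},V))$; and Lemma~\ref{lem:Pt} yields $P_s\in L(G^0_3(H_{\gamma},V))$. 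Hence $\mathcal{L}(P_s)\in L(G^2_1(H_{\min(\alpha,\beta,\gamma)},V),G^0_3(H_{\gamma},V))$ and $\mathcal{K}_{t-s}(\mathcal{L}(P_s))\in L(\mathrm{Lip}^3_0(H_{\gamma},V),G^0_3(H_{\gamma},V))$.

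Next, unwinding the definitions~\eqref{eq:defLK2} and \eqref{eq:defLK}, I would compute for every $\varphi\in\mathrm{Lip}^3_0(H_{\gamma},V)$ and every $s\in(t_0,t)$ that
\begin{equation*}
\bigl(\mathcal{K}_{t-s}(\mathcal{L}(P_s))\bigr)(\varphi)
=\bigl(\mathcal{L}(P_s)\bigr)\bigl(K_{t-s}(\varphi)\bigr)
=P_s\bigl(L^{(0)}(K_{t-s}(\varphi))\bigr)
=\bigl(P_s\,L^{(0)}\,K_{t-s}\bigr)(\varphi).
\end{equation*}
Therefore the two operators coincide in $L(\mathrm{Lip}^3_0(H_{\gamma},V),G^0_3(H_{\gamma},V))$, and in particular their operator norms agree.

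Having established this identification, the continuity of $s\mapsto\mathcal{K}_{t-s}(\mathcal{L}(P_s))$ and the finiteness of $\int_{t_0}^t\|\mathcal{K}_{t-s}(\mathcal{L}(P_s))\|\,ds$ in the norm of $L(\mathrm{Lip}^3_0(H_{\gamma},V),G^0_3(H_{\gamma},V))$ follow immediately from Corollary~\ref{cor:continuity} specialized to $q=0$. Since there is no genuine obstacle beyond bookkeeping with the function spaces, the main thing to watch is to match the domain/codomain spaces correctly across Lemmas~\ref{lem:Pt}, \ref{lem:Ktbound}, and \ref{kolm:bound0} so that the composition lands in the right Banach space; this is precisely what the choice $q=0$ in Corollary~\ref{cor:continuity} guarantees.
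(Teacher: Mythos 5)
Your proof is correct, and it takes a genuinely different (and more economical) route than the paper's. You observe that, by the definitions~\eqref{eq:defLK2} and \eqref{eq:defLK}, the operator $\mathcal{K}_{t-s}(\mathcal{L}(P_s))$ acts on every $\varphi\in Lip^3_0(H_{\gamma},V)$ as $P_s\bigl(L^{(0)}(K_{t-s}(\varphi))\bigr)$, i.e.\ it coincides, as an element of $L(Lip^3_0(H_{\gamma},V),G^0_3(H_{\gamma},V))$, with the operator $P_s\,L^{(0)}K_{t-s}$ of Corollary~\ref{cor:continuity}; the claim is then literally the $q=0$ case of that corollary. The paper instead gives a self-contained direct argument: it applies the triangle inequality to $\mathcal{K}_{t-s}(\mathcal{L}(P_s))-\mathcal{K}_{t-s_0}(\mathcal{L}(P_{s_0}))$, splitting into a term where only $\mathcal{K}$ varies and a term where only $P$ varies, and bounds these via the boundedness of $\mathcal{L}$ (Lemma~\ref{kolm:bound0}), the local boundedness and H\"older continuity of $s\mapsto P_s$ (Lemma~\ref{lem:Pt}) and the continuity of $t\mapsto K_t$ (Lemma~\ref{lem:Ktbound}) --- in effect replaying, in the lifted operator calculus, the same estimate that proves Corollary~\ref{cor:continuity}. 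Your reduction buys brevity and makes transparent that Corollaries~\ref{cor:continuity} and \ref{cor:welldefinedMKB} are one statement in two notations; the paper's direct proof buys logical independence from Corollary~\ref{cor:continuity} and keeps the whole argument inside the $\mathcal{K},\mathcal{L}$ framework that is subsequently used for \eqref{eq:kolm1} and \eqref{eq:kolm2}. Your bookkeeping of the intermediate spaces ($K_{t-s}\colon Lip^3_0(H_{\gamma},V)\to G^2_1(H_{\min(\alpha,\beta,\gamma)},V)$ by Lemma~\ref{lem:Ktbound} with $n=2$, $q=0$; $L^{(0)}$ into $G^0_3(H_{\gamma},V)$ by Lemma~\ref{kolm:bound0} with $q=1$; $P_s\in L(G^0_3(H_{\gamma},V))$ by Lemma~\ref{lem:Pt}) matches the spaces in which $\mathcal{K}_{t-s}$ and $\mathcal{L}$ are defined, so the identification is legitimate.
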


\begin{proof}[Proof
of Corollary~\ref{cor:welldefinedMKB}]
Note that the triangle
inequality implies that
\begin{equation}
\label{eq:continuity2}
\begin{split}
&
  \big\|
    \mathcal{K}_{ t - s }
    (
      \mathcal{L}(
      P_{ s } )
    )
    -
    \mathcal{K}_{ t - s_0 }
    (
      \mathcal{L}(
      P_{ s_0 } )
    )
  \big\|_{
    L( 
      Lip^3_{ 0 }( H_{ \gamma }, V ),
      G^0_{ 3 }( H_{ \gamma }, V )
    )
  }
\\ & \leq
  \big\|
    \mathcal{K}_{ t - s }
    (
      \mathcal{L}(
      P_{ s } )
    )
    -
    \mathcal{K}_{ t - s_0 }
    (
      \mathcal{L}(
      P_{ s } )
    )
  \big\|_{
    L( 
      Lip^3_{ 0 }( H_{ \gamma }, V ),
      G^0_{ 3 }( H_{ \gamma }, V )
    )
  }
\\ & +
  \big\|
    \mathcal{K}_{ t - s_0 }
    (
      \mathcal{L}(
      P_{ s } )
    )
    -
    \mathcal{K}_{ t - s_0 }
    (
      \mathcal{L}(
      P_{ s_0 } )
    )
  \big\|_{
    L( 
      Lip^3_{ 0 }( H_{ \gamma }, V ),
      G^0_{ 3 }( H_{ \gamma }, V )
    )
  }
\\ & \leq
  \|
    \mathcal{K}_{ t - s }
    -
    \mathcal{K}_{ t - s_0 }
  \|_{
    L\left(
      L( 
        G^2_{ 1 }( 
          H_{ \min( \alpha, \beta, \gamma ) 
          }, 
        V ),
        G^0_{ 3 }( H_{ \gamma }, V )
      ) ,
      L( 
        Lip^3_{ 0 }( H_{ \gamma }, V ),
        G^0_{ 3 }( H_{ \gamma }, V )
      ) 
    \right)
  } 
\\ & \cdot
  \underbrace{
  \|
    \mathcal{L}
  \|_{ 
    L\left( 
      L( 
        G^0_{ 3 }( 
          H_{ \gamma 
          }, 
        V )
      ),
      L( 
        G^2_{ 1 }( 
          H_{ \min( \alpha, \beta, \gamma ) 
          }, 
        V ),
        G^0_{ 3 }( H_{ \gamma }, V )
      ) 
    \right) 
  }
  }_{ 
    < \infty 
    \text{ due to     
    Lemma~\ref{kolm:bound0}}
  }
  \cdot
  \underbrace{
  \left[
  \sup_{ u \in [t_0,t] }
  \|
    P_{ u } 
  \|_{
    L(       
      G^0_{ 3 }( H_{ \gamma }, V )
    )
  }
  \right]
  }_{ 
    < \infty
    \text{ due to
    Lemma~\ref{lem:Pt}}
  }
\\ & +
  \underbrace{
  \left[
  \sup_{
    \substack{  
        \Phi \in 
        L( 
          G^0_{ 3 }( H_{ \gamma }, V )
        )
      \\
        \| \Phi \|_{
          L( 
            Lip^1_{ 2 }( H_{ \gamma }, V ), 
            G^0_{ 3 }( H_{ \gamma }, V )
          )
        } \leq 1 
    }
  }
  \|
    \mathcal{K}_{ t - s_0 }(
      \mathcal{L}( \Phi )
    )
  \|_{
    L( 
      Lip^3_{ 0 }( 
        H_{ \gamma 
        }, 
      V ),
      G^0_{ 3 }( H_{ \gamma }, V )
    ) 
  }
  \right]
  }_{ < \infty 
    \text{ due to     
    Lemmas~\ref{lem:Ktbound}
    and \ref{kolm:bound0}}
  }
  \cdot
  \,
  \|
    P_{ s } 
    -
    P_{ s_0 }
  \|_{ 
    L( 
      Lip^1_{ 2 }( H_{ \gamma }, V ), 
      G^0_{ 3 }( H_{ \gamma }, V )
    )
  }
\end{split}
\end{equation}
for all 
$ s_0, s \in (t_0, t) $
and all
$ t_0, t \in [0,\infty) $
with $ t_0 \leq t $.
Combining \eqref{eq:continuity2}
with Lemma~\ref{lem:Ktbound}
and Lemma~\ref{lem:Pt}
completes the proof
of Corollary~\ref{cor:welldefinedMKB}.
\end{proof}

We now use
Corollary~\ref{cor:welldefinedMKB}
to reformulate 
equation~\eqref{eq:mildP}.
More precisely,
combining 
Corollary~\ref{cor:welldefinedMKB},
equation~\eqref{eq:mildP},
definition~\eqref{eq:defLK} 
and 
definition~\eqref{eq:defLK2}
shows that
\begin{equation}
\label{eq:kolm1}
  P_t
=
  \mathcal{K}_{ (t - t_0) }(
  P_{ t_0 } )
+
  \int_{ t_0 }^{ t }
  \mathcal{K}_{ (t-s) }
  \big(
  \mathcal{L}(
  P_s )
  \big)
  \, ds
\end{equation}
in 
$ 
  L( 
    Lip^3_{ 0 }( H_{ \gamma }, V ),
    G^0_{ 3 }( H_{ \gamma }, V )
  )
$
for all $ t_0, t \in [0,\infty) $
with $ t_0 \leq t $
and this, in particular, implies that
\begin{equation}
\label{eq:kolm2}
  P_t
=
  \mathcal{K}_{ t }(
  P_{ 0 } )
+
  \int_{ 0 }^{ t }
  \mathcal{K}_{ (t - s) }\big(
  \mathcal{L}(
  P_s )
  \big)
  \, ds
\end{equation}
in
$ 
  L( 
    Lip^3_{ 0 }( H_{ \gamma }, V ),
    G^0_{ 3 }( H_{ \gamma }, V )
  )
$
for all $ t \in [0,\infty) $
where the integrals in 
\eqref{eq:kolm1} and
\eqref{eq:kolm2}
are understood to be 
Bochner integrals in
$ 
  L( 
    G^3_{ 0 }( H_{ \gamma }, V ),
    G^0_{ 3 }( H_{ \gamma }, V )
  )
$.
According to
Corollary~\ref{cor:welldefinedMKB},
these Bochner integrals are
indeed well defined.
Equation~\eqref{eq:kolm2}
and equation~\eqref{eq:kolm1}
are somehow 
{\it mild Kolmogorov
backward equations}
for the $ P_t $, $ t \in [0,\infty) $,
(see \eqref{eq:defPt})
associated to the
SPDE~\eqref{eq:SPDE00}.

\subsubsection{Weak regularity 
for solutions of SPDEs}
\label{sec:weakregularity}

Another consequence
of the mild It\^{o} 
formula~\eqref{eq:itoformel}
is to study weak regularity of
solutions of SPDEs. 
To be more
precise, in this subsection 
regularity 
of the 
probability
measures
$
  \mathbb{P}_{ X_t }
$,
$ 
  t \in (0,T] 
$,
of the solution process
$
  X_t
$,
$ 
  t \in [0,T]
$,
of the SPDE~\eqref{eq:SPDE} 
are studied by using the mild
Kolmogorov backward 
equation~\eqref{eq:mildP2} above.
Below (see the illustrations
below Lemma~\ref{lem:embedding})
we also describe in more detail
what we understand by regularity
of a probability measure.
While strong regularity
of solutions
of SPDEs have been 
intensively analyzed
in the literature (see, e.g.,
Da Prato \citationand\ 
Zabczyk~\cite{dz92, DaPratoZabczyk1996},
Brze\'{z}niak~\cite{b97b}, 
Brze\'{z}niak,
Van Neerven, Veraar 
\citationand\ Weis~\cite{bvvw08},
Van Neerven, Veraar 
\citationand\ 
Weis~\cite{vvw07,vvw08,vvw11},
Jentzen \citationand\ 
R\"{o}ckner~\cite{jr12},
Kruse \citationand\ 
Larsson~\cite{KruseLarsson2011}
and the references therein), 
weak regularity
for solutions of SPDEs
seem to be much less investigated.

Let us now go into details.
An important ingredient in our
analysis on weak regularity
of solutions of SPDEs are
the following mappings.
Let
$
  \left\| \cdot \right\|_{ 
    t, q 
  }^{
    \delta, \rho
  }
  \colon
  G^2_{ q }( 
    H_{ \rho }, 
    V 
  )
  \rightarrow
  [0,\infty)
$,
$ t \in (0,\infty) $,
$ q \in [0,\infty) $,
$ \delta \in (\rho - 1,\infty) $,
$ 
  \rho \in \R
$,
be a family of functions
defined through
\begin{align}
\label{eq:supernorm}
&
  \| \varphi \|_{ t, q }^{ \delta, \rho }
\\ &
  :=
  \| 
    K_t( \varphi )
  \|_{ 
    G^0_{ q + 2 }( H_{ \delta }, V )
  }
  +
  \int_0^t
  \left( t - s \right)^{ 
    \min( \delta - \rho, 0 )
  }
  \left(
  \|
    ( K_s \varphi )'
  \|_{ 
    G_{ q + 1 }^0( H_{ \rho }, 
      L( H_{ \alpha }, V )
    )
  }
  +
  \|
    ( K_s \varphi )''
  \|_{ 
    G_{ q }^0( H_{ \rho }, 
      L^{(2)}( H_{ \beta }, V )
    )
  }
  \right)
  ds
\nonumber
\end{align}
for all 
$ t \in (0,\infty) $,
$
  \varphi \in 
  G^2_{ q }( H_{ \rho }, V )
$,
$ \delta \in (\rho - 1,\infty) 
$,
$ q \in [0,\infty) $
and all
$
  \rho \in 
  \R
$.
Please note that the integrand
in \eqref{eq:supernorm}
is indeed Borel measurable
in $ s \in [0,\infty) $
since $ H_{ \rho } $ is
separable
for every $ \rho \in \R $.
The next lemma collects
some properties of the
functions
$
  \left\| \cdot 
  \right\|_{ t, q }^{
    \delta, \rho
  }
  \colon
  G^2_{ q }( H_{ \rho }, V )
  \rightarrow
  [0,\infty)
$,
$ t \in (0,\infty) $,
$ q \in [0,\infty) $,
$ \delta \in (\rho - 1,\infty) 
$,
$
  \rho \in 
  \R
$.

\begin{lemma}[Properties
of 
$
  \left\| \cdot \right\|_{ t, q 
  }^{ \delta, \rho }
$,
$ t \in (0,\infty) $,
$ q \in [0,\infty) $,
$ \delta \in (\rho - 1,\infty) $,
$
  \rho \in 
  \R
$]
\label{lem:norm}
Assume that the setting
in Section~\ref{sec:setting}
is fulfilled and let
$
  \left\| \cdot \right\|_{ t, q 
  }^{ \delta, \rho }
$,
$ t \in (0,\infty) $,
$ q \in [0,\infty) $,
$ \delta \in (\rho - 1,\infty) $,
$
  \rho \in 
  \R
$,
be defined through
\eqref{eq:supernorm}. Then 
\begin{equation}
\label{eq:norm}
\begin{split}
  \left\| \varphi 
  \right\|_{ t, q }^{ \delta, \rho }
& \leq  
  \| \varphi \|_{ 
    G^2_{ q }( H_{ \rho }, V )
  }
  \,
  \Bigg(
  \max\!\big( 
    1, 
    \| 
      e^{ A t } 
    \|_{ 
      L( H_{ \rho }, H_{ \delta } ) 
    }^{ ( q + 2 ) } 
  \big)
\\ & \quad
  + 
  \max\!\big( 1, 
    \| e^{ A t } \|_{ L(H) }^{ (q + 1) } 
  \big)
    \int_0^t
      \left( t - s \right)^{
        \min( \delta - \rho, 0 )
      }
      \left[
      \|
        e^{ A s }
      \|_{ L(H_{ \alpha },H_{ \rho } ) }
      +
      \|
        e^{ A s }
      \|_{ L(H_{ \beta }, H_{ \rho }) }^2
      \right]
    ds
  \Bigg)
  < \infty
\end{split}
\end{equation}
for all 
$
  \varphi \in
  G^2_{ q }( H_{ \rho }, V )
$,
$ q \in [0,\infty) $,
$
  t \in (0,\infty)
$,
$
  \delta \in ( \rho - 1, \infty )
$,
$ 
  \rho \in 
  ( - \infty, 
    \min( 
      \alpha + 1 , 
      \beta + \frac{ 1 }{ 2 } 
    ) 
  ) 
$
and it holds
for every
$ t \in (0,\infty) $,
$ q \in [0,\infty) $,
$ 
  \rho \in 
  ( - \infty, 
    \min( 
      \alpha + 1 , 
      \beta + \frac{ 1 }{ 2 } 
    ) 
  ) 
$,
$ 
  \delta \in (\rho - 1,\infty) 
$
that the mapping
$
  \left\| \cdot \right\|_{ t, q }^{ 
    \delta, \rho
  }
  \colon
  G^2_{ q }( H_{ \rho }, V )
  \rightarrow
  [0,\infty)
$
is a norm on
$
  G^2_{ q }( H_{ \rho }, V )
$.
\end{lemma}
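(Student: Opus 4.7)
The plan is to prove the bound \eqref{eq:norm} by a term-by-term application of Lemma~\ref{lem:Ktbound}, and then to verify the norm axioms directly from the definition \eqref{eq:supernorm}, with the only nontrivial point being positive definiteness.

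For the inequality, I would bound the three summands in \eqref{eq:supernorm} separately. Applied with exponent $q+2$, inequality \eqref{eq:normuse1} yields $\| K_t(\varphi) \|_{G^0_{q+2}(H_{\delta}, V)} \le \max(1, \| e^{At} \|_{L(H_{\rho}, H_{\delta})}^{q+2}) \, \| \varphi \|_{G^0_{q+2}(H_{\rho}, V)}$, and the right-hand factor is controlled by $\| \varphi \|_{G^2_{q}(H_{\rho}, V)}$ through \eqref{eq:Gnq2}. Inequalities \eqref{eq:normuse2} and \eqref{eq:normuse3} handle the two integrands, producing factors $\| e^{As} \|_{L(H_{\alpha}, H_{\rho})}$ and $\| e^{As} \|_{L(H_{\beta}, H_{\rho})}^2$ respectively, each multiplied by $\max(1, \| e^{As} \|_{L(H)}^{q+1})$ and by a derivative norm of $\varphi$ which is itself dominated by $\| \varphi \|_{G^2_{q}(H_{\rho}, V)}$ via \eqref{eq:Gnq2}. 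Summing the three terms and factoring out $\| \varphi \|_{G^2_{q}(H_{\rho}, V)}$ gives the right-hand side of \eqref{eq:norm}. Finiteness of the integral reduces to integrability at the two endpoints: near $s = t$, the weight $(t-s)^{\min(\delta-\rho,0)}$ is integrable precisely because $\delta > \rho - 1$; near $s = 0$, the standard analytic-semigroup bound $\| e^{As} \|_{L(H_{r}, H_{r'})} \lesssim s^{r - r'}$ for $r' > r$ gives behaviour like $s^{\min(\alpha-\rho, 0)} + s^{\min(2\beta-2\rho, 0)}$, which is integrable exactly under the hypotheses $\rho < \alpha + 1$ and $\rho < \beta + \tfrac{1}{2}$.

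For the norm axioms, absolute homogeneity and the triangle inequality are immediate from linearity of the operator $\varphi \mapsto K_s \varphi$, linearity of Fr\'echet differentiation, and the fact that each $\| \cdot \|_{G^0_q(\cdot,\cdot)}$ is a seminorm, so that $\| \cdot \|_{t,q}^{\delta,\rho}$ is a sum of seminorms in $\varphi$. The only nontrivial axiom is positive definiteness. If $\| \varphi \|_{t,q}^{\delta,\rho} = 0$, then the first summand vanishes, which forces $\varphi(e^{At} x) = 0$ for every $x \in H_{\delta}$. It remains to show that $e^{At}(H_{\delta})$ is dense in $H_{\rho}$, since then the continuity of $\varphi$ on $H_{\rho}$ (which holds because $G^2_{q}(H_{\rho}, V) \subset C(H_{\rho}, V)$) gives $\varphi \equiv 0$.

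The density step is the main technical point. Since $A$ extends/restricts to the generator of an analytic semigroup on every $H_r$, the operator $e^{At}: H_r \to H_r$ is injective for each $t > 0$ by the identity principle applied to the holomorphic extension $z \mapsto e^{Az} x$, and the standard adjoint argument on the Hilbert space $H_r$ then shows that $e^{At}(H_r)$ is dense in $H_r$. When $\delta \ge \rho$, the continuous dense embedding $H_{\delta} \hookrightarrow H_{\rho}$ together with boundedness of $e^{At}$ on $H_{\rho}$ implies that $e^{At}(H_{\delta})$ is dense in $e^{At}(H_{\rho})$ in the $H_{\rho}$-topology, hence dense in $H_{\rho}$. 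When $\rho - 1 < \delta < \rho$, one has $H_{\rho} \subset H_{\delta}$, so $e^{At}(H_{\delta}) \supset e^{At}(H_{\rho})$, which is already dense in $H_{\rho}$. I expect this density verification to be the main obstacle, uniformly across the allowed range of $\delta$ and $\rho$; the remaining work is routine bookkeeping with Lemma~\ref{lem:Ktbound} and the semigroup smoothing estimates.
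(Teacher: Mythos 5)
Your proposal is correct and follows essentially the same route as the paper: the bound \eqref{eq:norm} is obtained by combining \eqref{eq:normuse1}--\eqref{eq:normuse3} with \eqref{eq:Gnq1}--\eqref{eq:Gnq2}, and positive definiteness is reduced to the density of $e^{At}(H_\delta)$ in $H_\rho$ together with continuity of $\varphi$. The only difference is that you actually justify that density (injectivity of $e^{At}$ via analyticity plus the adjoint/range argument, split over the cases $\delta\ge\rho$ and $\rho-1<\delta<\rho$), whereas the paper simply asserts it as a known fact.
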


\begin{proof}[Proof
of Lemma~\ref{lem:norm}]
Combining 
\eqref{eq:normuse1}--\eqref{eq:normuse3},
\eqref{eq:Gnq1}
and \eqref{eq:Gnq2}
shows
\eqref{eq:norm}.
Next observe 
for every
$ t \in (0,\infty) $,
$ q \in [0,\infty) $,
$
  \rho \in 
  ( - \infty, 
    \min( 
      \alpha + 1 , 
      \beta + \frac{ 1 }{ 2 } 
    ) 
  ) 
$
and every
$
  \delta \in ( \rho - 1, \infty )
$
that
$
  \left\| \cdot \right\|_{ t, q }^{
    \delta, \rho 
  }
  \colon
  G^2_{ q }( H_{ \rho }, V )
  \rightarrow
  [0,\infty)
$
is a semi-norm on
$
  G^2_{ q }( H_{ \rho }, V )
$.
In addition, note  
for every 
$ \rho, \delta \in \R $,
$ q \in [0,\infty) $,
$ t \in (0,\infty) $
and every
$
  \varphi \in
  G^0_{ q }( H_{ \rho }, V )
$
that if
$
  \| 
    K_t( \varphi ) 
  \|_{
    G^0_q( H_{ \delta }, V )
  }
  = 0
$,
then
$
  \sup_{ 
    x \in e^{ A t }( H_{ \delta } ) 
  }
  \|
    \varphi(x)
  \|_V
$ = 0.
The fact that
for every 
$ \rho, \delta \in \R $
the set
$ 
  e^{ A t }( H_{ \delta } )
$
is dense in
$
  H_{ \rho }
$
therefore shows 
for every
$ t \in (0,\infty) $,
$ q \in [0,\infty) $,
$ 
  \rho \in 
  ( - \infty, 
    \min( 
      \alpha + 1 , 
      \beta + \frac{ 1 }{ 2 } 
    ) 
  ) 
$
and every
$
  \delta \in ( \rho - 1, \infty )
$
that
$
  \left\| \cdot \right\|_{ t, q }^{    
    \delta, \rho
  }
  \colon
  G^2_{ q }( H_{ \rho }, V )
  \rightarrow
  [0,\infty)
$
is indeed a norm on
$
  G^2_{ q }( H_{ \rho }, V )
$.
The proof of 
Lemma~\ref{lem:norm}
is thus completed.
\end{proof}

In the next step 
we denote 
for every
$ t \in (0,\infty ) $,
$ q \in [0,\infty) $,
$ 
  \rho \in 
  ( - \infty, 
    \min( 
      \alpha + 1 , 
      \beta + \frac{ 1 }{ 2 } 
    ) 
  ) 
$,
$ 
  \delta \in ( \rho - 1 , \infty )
$ 
by
$
  (
    \mathcal{G}^{ 2, \delta }_{ t, q }( 
    H_{ \rho }, V ),
    \left\| \cdot 
    \right\|_{ t, q }^{ 
      \delta, \rho
    }
  )
$
the completion of the normed
$ \mathbb{R} $-vector space
$
  (
    G^2_{ q }( H_{ \rho }, V ),
$
$
    \left\| \cdot 
    \right\|_{ t, q }^{ 
      \delta, \rho
    }
  )
$.
The pairs
$
  (
    \mathcal{G}^{ 2, \delta }_{ t, q
    }( H_{ \rho }, V ),
    \left\| \cdot \right\|_{ t, q }^{ 
      \delta, \rho
    }
  )
$
for 
$ t \in (0,\infty ) $,
$ q \in [0,\infty) $,
$ \delta \in ( \rho - 1, \infty ) $
and
$ 
  \rho \in 
  ( - \infty, 
    \min( 
      \alpha + 1 , 
      \beta + \frac{ 1 }{ 2 } 
    ) 
  ) 
$
are thus 
$ \mathbb{R} $-Banach
spaces.

\begin{theorem}[Weak regularity
for $ P_t $, $ t \in (0,\infty) $]
\label{thm:continuity}
Assume that the setting in
Section~\ref{sec:setting}
is fulfilled.
Then
$
  P_t
  \in
  L( 
    \mathcal{G}^{ 2, \delta }_{ t, q - 2
    }( H_{ \rho }, V ) ,
    G^0_{ q }( H_{ \delta }, V )
  )
$
and
\begin{align}
&
  \| 
    P_t
  \|_{ 
    L\left( 
      \mathcal{G}^{ 2, \delta 
      }_{ t, q - 2
      }( H_{ \rho }, V ) ,
      G^0_{ q }( H_{ \delta }, V )
    \right)
  }
\\ & \leq
\nonumber
  \max\!\Big(
    1,
    \| F \|_{ 
      G^0_1( H_{ \rho }, H_{ \alpha } )
    }
    ,
    \| B \|_{ 
      G^0_1( H_{ \rho }, 
      HS( U_0, H_{ \beta } ) )
    }^2
  \Big)
  \max\!
  \left( 
  1,
  \sup_{ s \in (0, t) }
  \left[
  s^{ \max( \rho - \delta, 0 ) }
  \|
    P_s
  \|_{
    L( 
      G^0_{ q }( H_{ \rho }, V ) ,
      G^0_{ q }( H_{ \delta }, V ) 
    )
  }
  \right]
  \right)
  < \infty
\end{align}
for all 
$ t \in (0,\infty) $,
$
  \delta \in [ \gamma, \infty )
  \cap
  ( \rho - 1 , \infty )
$,
$ q \in [2,\infty) $
and all
$
  \rho \in 
  [ 
    \gamma, 
    \min( 
      \alpha + 1, 
      \beta + \frac{ 1 }{ 2 } 
    )
  )
$.
\end{theorem}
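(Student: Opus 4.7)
The plan is to take $G^0_q(H_\delta, V)$-norms on both sides of the mild Kolmogorov backward equation~\eqref{eq:mildP2}, namely
\begin{equation*}
  (P_t\varphi)(x) = (K_t\varphi)(x) + \int_0^t \bigl(P_s L^{(0)} K_{t-s}\varphi\bigr)(x) \, ds,
\end{equation*}
first for $\varphi \in G^2_{q-2}(H_\rho, V)$ and then to extend to the completion $\mathcal{G}^{2,\delta}_{t,q-2}(H_\rho, V)$ by density. The first term on the right contributes exactly $\|K_t\varphi\|_{G^0_q(H_\delta, V)}$, which coincides with the first summand in the definition~\eqref{eq:supernorm} of $\|\varphi\|_{t,q-2}^{\delta,\rho}$.

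For the integral term I would factor through the intermediate space $G^0_q(H_\rho, V)$ by writing
\begin{equation*}
  \|P_s L^{(0)}(K_{t-s}\varphi)\|_{G^0_q(H_\delta, V)} \leq \|P_s\|_{L(G^0_q(H_\rho,V),\, G^0_q(H_\delta,V))} \cdot \|L^{(0)}(K_{t-s}\varphi)\|_{G^0_q(H_\rho, V)},
\end{equation*}
and then applying Lemma~\ref{kolm:bound0}, with its index $q$ replaced by $q-2$ and with $r = \rho \in [\gamma,\infty)$, to bound the last factor by
\begin{equation*}
  \max\bigl(\|F\|_{G^0_1(H_\rho, H_\alpha)},\, \|B\|_{G^0_1(H_\rho, HS(U_0,H_\beta))}^2\bigr) \cdot \bigl[\|(K_{t-s}\varphi)'\|_{G^0_{q-1}(H_\rho,L(H_\alpha,V))} + \|(K_{t-s}\varphi)''\|_{G^0_{q-2}(H_\rho,L^{(2)}(H_\beta,V))}\bigr].
\end{equation*}
A substitution $s \mapsto t-s$ then puts the weight $(t-s)^{\min(\delta-\rho,0)}$ of the norm onto the factor $\|P_{t-s}\|$, so that pulling out $\sup_{u \in (0,t)} u^{\max(\rho-\delta,0)} \|P_u\|_{L(G^0_q(H_\rho,V),\, G^0_q(H_\delta,V))}$ leaves the remaining $ds$-integral exactly equal to the second summand of $\|\varphi\|_{t,q-2}^{\delta,\rho}$. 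Combining the two estimates yields the claimed operator inequality.

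The step I expect to be the main technical obstacle is verifying that the supremum $\sup_{u \in (0,t)} u^{\max(\rho-\delta,0)} \|P_u\|_{L(G^0_q(H_\rho,V),\, G^0_q(H_\delta,V))}$ is finite. I would derive this from the representation $(P_s\psi)(x) = \mathbb{E}[\psi(X_s^x)]$ together with the smoothing bound $\|e^{As}\|_{L(H_\delta, H_\rho)} \lesssim s^{-\max(\rho-\delta,0)}$ applied to the mild formulation~\eqref{eq:SPDE00} of $X^x$, combined with moment estimates for the drift and stochastic-convolution pieces; the hypothesis $\rho < \min(\alpha+1, \beta+\tfrac{1}{2})$ is precisely what makes the associated time integrals converge and hence controls $\mathbb{E}[(1+\|X_s^x\|_{H_\rho})^q]$ by a constant multiple of $s^{-q\max(\rho-\delta,0)}(1+\|x\|_{H_\delta})^q$ (after carefully tracking the $q$-th powers). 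Once this supremum is controlled, the extension from $G^2_{q-2}(H_\rho, V)$ to $\mathcal{G}^{2,\delta}_{t,q-2}(H_\rho, V)$ is immediate, since the estimate depends on $\varphi$ only through $\|\varphi\|_{t,q-2}^{\delta,\rho}$.
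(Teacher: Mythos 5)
Your proposal is correct and follows essentially the same route as the paper: take $G^0_q(H_\delta,V)$-norms in the mild Kolmogorov backward equation~\eqref{eq:mildP2}, factor the integrand through $G^0_q(H_\rho,V)$, pull out $\sup_{s\in(0,t)} s^{\max(\rho-\delta,0)}\|P_s\|_{L(G^0_q(H_\rho,V),G^0_q(H_\delta,V))}$, and apply Lemma~\ref{kolm:bound0} so that the remaining expression is exactly $\|\varphi\|_{t,q-2}^{\delta,\rho}$, then extend to the completion by density. Your additional sketch of why the weighted supremum of $\|P_s\|$ is finite (via the smoothing estimate for $e^{As}$ in the mild formulation of $X^x$) is a reasonable way to supply a step the paper leaves implicit.
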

\begin{proof}[Proof
of Theorem~\ref{thm:continuity}]
Equation~\eqref{eq:mildP2}
implies
\begin{equation}
\label{eq:use1}
\begin{split}
&
  \| 
    P_t( \varphi )
  \|_{ 
    G^0_{ q }( H_{ \delta }, V )
  }
\\ & \leq
  \| 
    K_t( \varphi )
  \|_{ 
    G^0_{ q }( H_{ \delta }, V )
  }
\\ & \quad
  +
  \left(
  \sup_{ s \in (0,t) }
  \left[
  s^{ \max( \rho - \delta, 0 ) }
  \|
    P_s
  \|_{
    L( 
      G^0_{ q }( H_{ \rho }, V ) ,
      G^0_{ q }( H_{ \delta }, V ) 
    )
  }
  \right]
  \right)
  \left(
  \int_0^t
  \left( t - s 
  \right)^{ - \max( \rho - \delta, 0 ) }
  \|
    L^{ (0) }_s( \varphi )
  \|_{ 
    G_{ q }^0( H_{ \rho }, V )
  }
  \,
  ds
  \right)
\\ & \leq
  \max\!\left(
  1 ,
  \sup_{ s \in (0,t) }
  \left[
  s^{ \max( \rho - \delta, 0 ) }
  \|
    P_s
  \|_{
    L( 
      G^0_{ q }( H_{ \rho }, V ) ,
      G^0_{ q }( H_{ \delta }, V ) 
    )
  }
  \right]
  \right)
\\ & \quad
  \cdot
  \left(
  \| 
    K_t( \varphi )
  \|_{ 
    G^0_{ q }( H_{ \delta }, V )
  }
  +
  \int_0^t
  \left( t - s 
  \right)^{ \min( \delta - \rho, 0 ) }
  \|
    L^{ (0) }( K_s( \varphi ) )
  \|_{ 
    G_{ q }^0( H_{ \rho }, V )
  }
  \,
  ds
  \right)
\end{split}
\end{equation}
for all 
$ t \in (0,\infty) $,
$
  \varphi \in 
  G^2_{ q }( H_{ \rho }, V )
$,
$
  \delta \in [\gamma,\infty)
  \cap
  ( \rho - 1, \infty )
$,
$ 
  q \in [2,\infty) 
$
and all
$
  \rho \in 
  [ 
    \gamma, 
    \min( 
      \alpha + 1, 
      \beta + \frac{ 1 }{ 2 } 
    )
  )
$.
Inequality~\eqref{eq:use1}
and Lemma~\ref{kolm:bound0}
then complete
the proof of 
Theorem~\ref{thm:continuity}.
\end{proof}

Below we illustrate 
Theorem~\ref{thm:continuity}
through some consequences.
To do so, we need the following
elementary lemma for probability
measures on separable 
Hilbert spaces.

\begin{lemma}[An embedding
for probability measures]
\label{lem:embedding}
Let 
$ 
  ( 
    H, 
    \left\| \cdot \right\|_H,
    \left< \cdot, \cdot \right>_H
  )
$
and
$ 
  ( 
    V, 
    \left\| \cdot \right\|_V,
    \left< \cdot, \cdot \right>_V
  )
$
be separable real
Hilbert spaces
with $ V \neq \{ 0 \} $
and let
$ 
  \mu_1, \mu_2 \colon
  \mathcal{B}(H)
  \to [0,1]
$
be two probability measures
with
$
  \int_{ H }
  \varphi( x ) \, \mu_1( dx )
  =
  \int_{ H }
  \varphi( x ) \, \mu_2( dx )
$
for all infinitely often
Fr\'{e}chet differentiable 
functions
$ 
  \varphi \colon H \to V 
$
with compact support.
Then $ \mu_1 = \mu_2 $.
\end{lemma}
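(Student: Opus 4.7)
The strategy is to reduce to the scalar case and then identify $\mu_1$ and $\mu_2$ through equality of their characteristic functions, approximating $e^{i\langle \xi,\cdot\rangle_H}$ by smooth compactly supported functions via a radial cutoff. First, since $V \neq \{0\}$, pick $v_0 \in V$ with $\|v_0\|_V = 1$: for every scalar $\psi \in C^\infty(H, \R)$ with compact support the map $\varphi := \psi \cdot v_0 : H \to V$ is infinitely often Fr\'{e}chet differentiable with compact support, and pairing the hypothesis with $v_0$ gives $\int_H \psi \, d\mu_1 = \int_H \psi \, d\mu_2$. It therefore suffices to prove the scalar statement that two Borel probability measures on $H$ agreeing on all smooth compactly supported $\psi \colon H \to \R$ must coincide.

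For each $R \in (0, \infty)$, fix a smooth cutoff $\eta_R \colon \R \to [0, 1]$ with $\eta_R \equiv 1$ on $(-\infty, R^2]$ and $\eta_R \equiv 0$ on $[(R+1)^2, \infty)$ and set $\chi_R(x) := \eta_R(\|x\|_H^2)$ for $x \in H$; since $x \mapsto \|x\|_H^2$ is infinitely often Fr\'{e}chet differentiable on $H$, $\chi_R$ is smooth with support in the closed ball $\{x \in H \colon \|x\|_H \leq R + 1\}$. For every bounded infinitely Fr\'{e}chet differentiable $h \colon H \to \R$ the product $h \chi_R$ therefore lies in the scalar test class, so the reduction above yields $\int_H h \chi_R \, d\mu_1 = \int_H h \chi_R \, d\mu_2$; since $|h \chi_R| \leq \|h\|_{L^\infty}$ and $\chi_R(x) \to 1$ as $R \to \infty$ for every $x \in H$, dominated convergence gives $\int_H h \, d\mu_1 = \int_H h \, d\mu_2$. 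Specializing to $h(x) = \cos\langle\xi, x\rangle_H$ and $h(x) = \sin\langle\xi, x\rangle_H$ for arbitrary $\xi \in H$ shows that the characteristic functions of $\mu_1$ and $\mu_2$ agree on all of $H$, whence $\mu_1 = \mu_2$ by the classical uniqueness of characteristic functions on a separable Hilbert space (which itself reduces, by pushing forward along finite-rank orthogonal projections of $H$, to Bochner's uniqueness theorem on $\R^N$).

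The main point requiring care is the interpretation of ``compact support'' when $H$ is infinite-dimensional: in that case every nonempty open subset of $H$ fails to be relatively compact, so a Fr\'{e}chet continuous function on $H$ with literally compact support must be identically zero. The natural reading, consistent with the subsequent use of this lemma to embed probability measures into $(C^2_{Lip}(H, \R))'$, is that the support of $\varphi$ is contained in some norm-bounded subset of $H$; under this reading the radial cutoff above produces enough test functions and the argument goes through without further obstacle. If instead one insists on literal compact support and $H$ is finite-dimensional, the same cutoff gives support in a compact ball and the proof is unchanged.
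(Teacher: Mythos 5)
Your proof is correct and reaches the conclusion by a genuinely different route from the paper. The paper also first fixes $v_0 \in V$ with $\|v_0\|_V=1$, but then builds, for each finite family of closed balls $B_{r_1}(x_1),\dots,B_{r_N}(x_N)$, the test functions $\varphi_k(x) = v_0\prod_{n=1}^N \psi_k(\|x-x_n\|_H^2/r_n^2)$ with shrinking smooth cutoffs $\psi_k$, passes to the limit $k\to\infty$ by dominated convergence to obtain $\mu_1\bigl(\cap_n B_{r_n}(x_n)\bigr)=\mu_2\bigl(\cap_n B_{r_n}(x_n)\bigr)$, and concludes via the uniqueness theorem for measures on the $\cap$-stable generator of $\mathcal{B}(H)$ consisting of finite intersections of closed balls. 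You instead use a single radial cutoff to upgrade the hypothesis to equality on \emph{all} bounded smooth scalar functions and then invoke uniqueness of characteristic functions on a separable Hilbert space. The paper's argument is more self-contained (it needs only the elementary $\pi$-system uniqueness theorem, cited as Lemma~1.42 in Klenke), while yours is shorter at the cost of importing the Fourier-analytic uniqueness result, which you correctly reduce to the finite-dimensional Bochner--L\'evy theorem via finite-rank projections. Your closing remark about ``compact support'' is well taken and applies equally to the paper's own proof: the functions $\varphi_k$ constructed there have support in a closed bounded ball, which is not compact when $\dim H = \infty$ (indeed, a continuous function on an infinite-dimensional normed space with literally compact support vanishes identically, so the lemma's hypothesis would be vacuous and the conclusion false). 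Both proofs therefore tacitly require the bounded-support reading, and it is a merit of your write-up that you make this explicit rather than leaving it implicit as the paper does.
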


\begin{proof}[Proof
of Lemma~\ref{lem:embedding}]
First of all, 
we denote throughout
this proof
for every $ x \in H $
and every $ r \in (0,\infty) $
by
$
  B_r( x )
:=
  \{
    y \in H
    \colon
    \left\| x - y \right\|_H \leq r
  \}
$
the ball in $ H $ 
on $ x $ with radius $ r $.
Next let $ v_0 \in V $
be a vector 
which satisfies 
$ \left\| v_0 \right\|_V = 1 $.
Such a vector does indeed
since we assumed that
$ V \neq \{ 0 \} $.
Furthermore, let
$ 
  \psi_k \colon \R \to [0,1]
$,
$ k \in \N $,
be a sequence of infinitely
often differentiable functions
with
$
  \psi_k(x) = 1
$
for all $ x \in [-1,1] $
and all $ k \in \N $
and with
$
  \psi_k(x) = 0
$
for all 
$ 
  x \in 
  (- \infty, - 1 - \frac{ 1 }{ k } ) 
  \cup 
  (1 + \frac{ 1 }{ k }, \infty) 
$
and all $ k \in \N $.
Moreover, let
$ N \in \N $,
let
$ x_1, \dots, x_N \in H $
and let
$
  r_1, \dots, r_N \in (0,\infty)
$.
In the next step we 
define a sequence
$ \varphi_k \colon H \to V $,
$ k \in \N $,
of functions
by
\begin{equation}
  \varphi_k(x)
:=
  v_0 
  \cdot
  \prod_{ n = 1 }^N
  \psi_k\!\left(
    \tfrac{ 
      \left\| x - x_n \right\|_H^2 
    }{ 
      \left( r_n \right)^2
    }
  \right)
\end{equation}
for all $ x \in H $
and all $ k \in \N $.
Note for every $ k \in \N $
that $ \varphi_k $ is infinitely
often Fr\'{e}chet differentiable 
with a compact support.
Therefore, we obtain
\begin{equation}
\label{eq:mu1mu2}
  \int_{ H } \varphi_k(x) \, \mu_1( dx )
  =
  \int_{ H } \varphi_k(x) \, \mu_2( dx )
\end{equation}
for all $ k \in \N $.
In the next step observe that 
$
  \varphi_k(x) = v_0
$
for all 
$ 
  x \in     
  \cap_{ n = 1 }^N
  B_{ r_n }( v_n )
$
and all
$ k \in \N $,
that
$
  \sup_{ k \in \N }
  \sup_{ x \in H }
  \left\| \varphi_k(x) \right\|_V
  \leq 1
$
and that
\begin{equation}
  \lim_{ k \to \infty }
  \varphi_k(x)
  = 
  \begin{cases}
      1 & 
      \colon 
      x \in
      \cap_{ n = 1 }^N
      B_{ r_n }( v_n )
    \\
      0 & 
      \colon 
      x \in
      H \backslash
      \left(
      \cap_{ n = 1 }^N
      B_{ r_n }( v_n )
      \right)
  \end{cases}
\end{equation}
for all $ x \in H $.
Combining this and
\eqref{eq:mu1mu2}
with Lebesgue's theorem
on dominated convergence
then proves that
$
  \mu_1\big(
    \cap_{ n = 1 }^N
    B_{ r_n }( x_n )
  \big)
  =
  \mu_2\big(
    \cap_{ n = 1 }^N
    B_{ r_n }( x_n )
  \big)
$.
Combining this, the fact that the set
\begin{equation}
  \cup_{ M \in \N }
  \left\{
    \cap_{ m = 1 }^M
    B_{ s_m }( y_m )
    \subset H
    \colon
    s_1, \dots, s_M \in (0,\infty),
    y_1, \dots, y_M \in H
  \right\}
\end{equation}
is a $ \cap $-stable
generator of the Borel 
sigma-algebra
$ \mathcal{B}(H) $
and the uniqueness theorem
for measures (see, e.g., 
Lemma~1.42 in Klenke~\cite{k08b})
then completes the proof
of Lemma~\ref{lem:embedding}.
\end{proof}

Let us now illustrate 
Theorem~\ref{thm:continuity}
by a simple application.
First, we denote by
$
  G^2_{ 0 }( 
    H_{ \gamma }, 
    \R 
  )'
  :=
  L(
    G^2_{ 0 }( 
      H_{ \gamma }, 
      \R 
    ),
    \R
  )
$
and
$
  \mathcal{G}^{ 2, \gamma }_{ t, 0
  }( 
    H_{ \gamma }, 
    \R 
  )'
  :=
  L(
    \mathcal{G}^{ 2, \gamma 
    }_{ t, 0 }( 
      H_{ \gamma }, 
      \R 
    ),
    \mathbb{R}
  )
$
for
$ t \in (0,\infty) $
the topological
dual spaces of
$
  G^2_{ 0 }( 
    H_{ \gamma }, 
    \R
  )
$
and
$
  \mathcal{G}^{ 2, \gamma }_{ t, 0
  }( 
    H_{ \gamma }, 
    \R 
  )
$
for $ t \in (0,\infty) $
respectively.
Moreover, we denote by
$
  \mathcal{M}_2( H_{ \gamma } )
$
the set of all
probability measures
$
  \mu \colon 
  \mathcal{B}( H_{ \gamma } )
  \to [0,1]
$
which satisfy
$
  \int_{ H_{ \gamma } } 
  \left\| x \right\|_{ 
    H_{ \gamma } 
  }^2
  \mu( dx ) < \infty
$
and we consider
the mapping
$
  \mathcal{I} \colon
  \mathcal{M}_2( H_{ \gamma } )
  \to
  G^2_{ 0 }( 
    H_{ \gamma }, 
    \R
  )'
$
given by
$
  ( \mathcal{I} \mu)( \varphi )
  =
  \int_{ 
    H_{ \gamma }
  } 
  \varphi( x ) \, \mu( dx )
$
for all
$ 
  \varphi \in G^2_0( H_{ \gamma }, \R )
$
and all 
$
  \mu \in
  \mathcal{M}_2( H_{ \gamma } )
$.
Lemma~\ref{lem:embedding}
then proves that
$ \mathcal{I} $ is injective
and through $ \mathcal{I} $
we can thus identify the
probability measures
$ \mathcal{M}_2( H_{ \gamma } ) $
with finite second moment
as a subset of linear forms in
$
  G^2_{ 0 }( 
    H_{ \gamma }, 
    \R
  )'
$.
Next note 
that
Proposition~\ref{prop:prop}
proves 
that
the probability 
measure
$
  \mathbb{P}_{ X_t }
$
of the solution process
of the SPDE~\eqref{eq:SPDE}
at every time $ t \in (0,T] $
has a finite second moment and
is thus in 
$ \mathcal{M}_2( H_{ \gamma } ) $.
Hence,
the linear form
$
  \mathcal{I}( 
    \mathbb{P}_{ X_t } 
  )
  =
  \int_{ H_{ \gamma } } 
  ( \cdot ) \,
  d\mathbb{P}_{ X_t }
  \in
  G^2_{ 0 }( 
    H_{ \gamma }, 
    \R
  )'
$
corresponding to
the probability
measure
$
  \mathbb{P}_{ X_t }
$
of the solution of the SPDE~\eqref{eq:SPDE}
at time 
$   
  t \in (0,T]
$ 
is in
$
  G^2_{ 0 }( 
    H_{ \gamma }, 
    V 
  )'
$.
In addition, observe that
Theorem~\ref{thm:continuity},
in particular,
implies that
$
  \int_{ H_{ \gamma } } 
  ( \cdot ) \,
  d\mathbb{P}_{ X_t }
  \in
  \mathcal{G}^{ 2, \gamma }_{ t, 0 }( 
    H_{ \gamma }, 
    V 
  )'
$
for all $ t \in (0,T] $.
Moreover, 
note that
Lemma~\ref{lem:norm} implies
that
$
  \mathcal{G}^{ 2, \gamma 
  }_{ t, 0 }( 
    H_{ \gamma }, 
    V 
  )'
  \subset
  G^2_{ 0 }( 
    H_{ \gamma }, 
    V 
  )'
$
continuously
for all
$ t \in (0,\infty) $.
Theorem~\ref{thm:continuity}
thus proves
for every $ t \in (0,T] $
that
$
  \mathcal{I}( 
    \mathbb{P}_{ X_t } 
  )
  =
  \int_{ H_{ \gamma } } 
  ( \cdot ) \,
  d\mathbb{P}_{ X_t }
$
does not only lie in
$
  G^2_{ 0 }( 
    H_{ \gamma }, 
    V 
  )'
$
but also
in the smaller space
$
  \mathcal{G}^{ 2, \gamma 
  }_{ t, 0 }( 
    H_{ \gamma }, 
    V 
  )'
$
too.
In this sense 
Theorem~\ref{thm:continuity}
proves more regularity
of the probability measures
$
  \mathbb{P}_{ X_t } 
$,
$ t \in (0,T] 
$,
of the solution of the 
SPDE~\eqref{eq:SPDE}.
It thus establishes ``weak
regularity'' for the solution
of the SPDE~\eqref{eq:SPDE}.
In the remainder
of this subsection some further
consequences of
Theorem~\ref{thm:continuity}
are derived.

\begin{cor}
\label{cor:weakest}
Assume that the setting
in Section~\ref{sec:setting}
is fulfilled,
assume 
$ \alpha \leq \gamma $,
assume
$ \beta \leq \gamma $,
let
$
  \rho \in 
  [ 
    \gamma, 
    \min( 
      \alpha + 1, 
      \beta + \frac{ 1 }{ 2 } 
    )
  )
$
be a real number,
let
$
  (
    \tilde{H}, 
    \left< \cdot , \cdot \right>_{ \tilde{H} },  
    \left\| \cdot \right\|_{ \tilde{H} }
  )
$
be a separable $ \mathbb{R} $-Hilbert
space, let 
$ 
  R, \tilde{R} \in
  L( H_{ \rho }, \tilde{H} )
$, let
$
  \varphi \in
  C^2_{ Lip }( \tilde{H}, V )
$
and let
$
  \psi \colon H_{ \rho }
  \rightarrow
  \tilde{H}
$
be given by
$
  \psi(x)
  =
  \varphi( R x )
  -
  \varphi( \tilde{R} x )
$
for all $ x \in H_{ \rho } $.
Then
\begin{equation}
\label{eq:weak_crucial}
\begin{split}
&
  \|
    P_t( \psi )
  \|_{
    G^0_{ q }( H_{ \delta }, V )
  }
\\ & \leq
  \max\!\left(
    1,
    \| F \|_{ 
      G^0_1( H_{ \rho }, H_{ \alpha } )
    }
    ,
    \| B \|_{ 
      G^0_1( H_{ \rho }, 
      HS( U_0, H_{ \beta } ) )
    }^2
  \right)
  \max\!
  \left( 
  1,
  \sup_{ s \in (0, t) }
  \left[
  s^{ \max( \rho - \delta, 0 ) }
  \|
    P_s
  \|_{
    L( 
      G^0_{ q }( H_{ \rho }, V ) ,
      G^0_{ q }( H_{ \delta }, V ) 
    )
  }
  \right]
  \right)
\\ & \quad
  \cdot
  \| \varphi \|_{
    C^2_{ Lip }( \tilde{H}, V )
  }
  \,
  \frac{
    \max( t, 1 ) 
  }{
    t^{ \max( r + \rho - \delta, 0 ) }
  }
  \,
  \| 
    R - \tilde{R} 
  \|_{
    L( H_{ \rho + r }, \tilde{H} )
  }
  \left[
    \sup_{ 
      u \in [ \rho - \delta, 1] 
      \cup [0,1]
    }
    \sup_{ s \in (0,t] }
    \left(
    s^{ \max( u, 0 ) }
    \|
      e^{ A s }
    \|_{ 
      L( H, H_{ u } )
    }
    \right)
  \right]^3
\\ & \quad
  \cdot
  \big[
    1 +
    \| R \|_{ L( H_{ \rho }, \tilde{H} ) 
    } +
    \| \tilde{R} 
    \|_{ L( H_{ \rho }, \tilde{H} ) 
    }
  \big]^2
  \left(
    1
    +
    \int_0^1
    \left( 1 - s \right)^{
      \min( \delta - \rho, 0 )
    }
    s^{
      \left[
        \min\left(
          \alpha - \rho ,
          2 \beta - 2 \rho 
        \right) - r 
      \right]
    }
    \, ds
  \right)
  < \infty
\end{split}
\end{equation}
for all 
$ t \in (0,\infty) $,
$ q \in [3,\infty) $,
$ 
  \delta \in [\gamma, \infty) 
$
and all
$
  r \in  
  [ 
    0,
    \min( 1 + \alpha - \rho,
      1 + 2 \beta - 2 \rho
    )
  )
$.
In particular,
we have
\begin{equation}
\label{eq:weak_crucialB}
\begin{split}
&
  \sup_{ 
    \Phi \in 
    C^2_{ Lip }( H_{ \rho }, V ) 
    \backslash \{ 0 \}
  }
  \sup_{ 
    S \in L( H_{ \rho } ) 
  }
  \sup_{ 
    t \in (0,T] 
  }
  \left(
  \frac{
    t^{ \max( r + \rho - \delta, 0 ) }
    \,
    \|
      P_t( \Phi ) - 
      P_t\big( \Phi( S( \cdot ) ) \big)
    \|_{
      G^0_{ 3 }( H_{ \delta }, V )
    }
  }{
    \| 
      I - S 
    \|_{
      L( H_{ \rho + r }, H_{ \rho } )
    }
    \,
    \big(
      1 +
      \| S \|_{ L( H_{ \rho } ) 
      }^2 
    \big) 
    \,
    \| \varphi \|_{
      C^2_{ Lip }( H_{ \rho }, V )
    }
  }
  \right)
  < \infty
\end{split}
\end{equation}
for all 
$ 
  \delta \in [\gamma, \infty) 
$
and all
$
  r \in  
  [ 
    0,
    \min( 1 + \alpha - \rho,
      1 + 2 \beta - 2 \rho
    )
  )
$.
\end{cor}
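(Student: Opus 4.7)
The plan is to reduce Corollary~\ref{cor:weakest} to an application of Theorem~\ref{thm:continuity} to the specific test function $\psi$, and then to work out the norm $\|\psi\|_{t,q-2}^{\delta,\rho}$ explicitly by exploiting the bilinear/multilinear difference structure of $\psi$. More concretely, I would first observe that the assumption $\varphi\in C^2_{Lip}(\tilde H,V)$ together with $R,\tilde R\in L(H_\rho,\tilde H)$ guarantees $\psi\in C^2_{Lip}(H_\rho,V)\subset G^2_0(H_\rho,V)\subset G^2_{q-2}(H_\rho,V)$ for every $q\ge 3$, so Theorem~\ref{thm:continuity} applies and yields
\begin{equation*}
  \|P_t(\psi)\|_{G^0_q(H_\delta,V)}\le \max(1,\|F\|_{G^0_1},\|B\|_{G^0_1}^2)\,\max\!\Big(1,\sup_{s\in(0,t)}s^{\max(\rho-\delta,0)}\|P_s\|_{L(G^0_q(H_\rho,V),G^0_q(H_\delta,V))}\Big)\,\|\psi\|_{t,q-2}^{\delta,\rho}.
\end{equation*}
It then remains to estimate $\|\psi\|_{t,q-2}^{\delta,\rho}$ by the quantity in square brackets on the right-hand side of \eqref{eq:weak_crucial}.

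For this bound I would split $\|\psi\|_{t,q-2}^{\delta,\rho}$ according to its definition \eqref{eq:supernorm} into a pointwise part $\|K_t\psi\|_{G^0_q(H_\delta,V)}$ and two integral parts coming from $(K_s\psi)'$ and $(K_s\psi)''$. On each of these I would employ the standard add-and-subtract decomposition: for the first derivative
\begin{equation*}
  (K_s\psi)'(x)=\bigl[\varphi'(Re^{As}x)-\varphi'(\tilde R e^{As}x)\bigr]Re^{As}+\varphi'(\tilde R e^{As}x)(R-\tilde R)e^{As},
\end{equation*}
so that the first summand is controlled by $\|\varphi\|_{C^2_{Lip}}\,\|R-\tilde R\|\,\|R\|$ (Lipschitz continuity of $\varphi'$) and the second by $\|\varphi\|_{C^2_{Lip}}\,\|R-\tilde R\|$. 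The analogous polarization-style decomposition for $(K_s\psi)''$ produces three differences, each of which contains exactly one factor of either $R-\tilde R$ or $[\varphi^{(k)}(Re^{As}x)-\varphi^{(k)}(\tilde R e^{As}x)]$, i.e.\ again exactly one factor that can be made small. The crucial gain of regularity is the identity $(R-\tilde R)e^{As}=(R-\tilde R)(\eta-A)^{-(\rho+r)}\,(\eta-A)^{\rho+r}e^{As}$, which extracts $\|R-\tilde R\|_{L(H_{\rho+r},\tilde H)}$ at the cost of the operator norm $\|e^{As}\|_{L(H,H_{\rho+r})}$ that blows up like $s^{-\max(\rho+r,0)}$ but remains integrable thanks to $r<\min(1+\alpha-\rho,1+2\beta-2\rho)$.

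Next I would insert the smoothing estimates $\|e^{As}\|_{L(H_\alpha,H_\rho)}\le C\,s^{-\max(\rho-\alpha,0)}$ and $\|e^{As}\|_{L(H_\beta,H_\rho)}^2\le C\,s^{-2\max(\rho-\beta,0)}$ into the integral in \eqref{eq:supernorm} and collect all powers of $s$ together with the weight $(t-s)^{\min(\delta-\rho,0)}$. After substituting $s=tu$ the integrand takes the Beta-integral form $\int_0^1(1-u)^{\min(\delta-\rho,0)}u^{\min(\alpha-\rho,2\beta-2\rho)-r}\,du$ times a power of $t$; this power is precisely $t^{-\max(r+\rho-\delta,0)}\cdot\max(t,1)$ after bookkeeping, which matches the prefactor appearing in \eqref{eq:weak_crucial}. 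Finiteness of the Beta-type integral is exactly the range condition on $r$ and on $\delta>\rho-1$, and the polynomial factors $(1+\|R\|+\|\tilde R\|)^2$ arise naturally from the three-factor decomposition of $(K_s\psi)''$. Estimate \eqref{eq:weak_crucialB} is then an immediate specialization: take $\tilde H=H_\rho$, $R=I$, $\tilde R=S$, and $\Phi=\varphi$, and take the supremum over $t\in(0,T]$, $S\in L(H_\rho)$, and $\Phi\in C^2_{Lip}(H_\rho,V)$.

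The main obstacle I expect is not the application of Theorem~\ref{thm:continuity} itself, which is essentially automatic, but rather the bookkeeping in step two: one has to carry out the add-and-subtract decomposition of $(K_s\psi)''$ carefully so that every resulting term has either a factor of $\|R-\tilde R\|_{L(H_{\rho+r},\tilde H)}$ or a factor that can be bounded by Lipschitz continuity of $\varphi''$, and one must then match all the resulting powers of $s$ and $(t-s)$ to obtain the clean prefactor in \eqref{eq:weak_crucial}. No single step is deep, but the $L^{(2)}$-bilinear argument forces one to track operator norms in three different topologies ($L(H_\alpha,\tilde H)$, $L(H_\beta,\tilde H)$, and $L(H_{\rho+r},\tilde H)$) simultaneously, and keeping these straight through the Beta-integral computation is where mistakes are most likely.
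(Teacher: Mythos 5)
Your proposal is correct and follows essentially the same route as the paper: apply Theorem~\ref{thm:continuity} to $\psi$, then bound $\|\psi\|_{t,q-2}^{\delta,\rho}$ via the add-and-subtract decompositions of $(K_s\psi)'$ and $(K_s\psi)''$ (each term carrying one factor of $R-\tilde R$ measured in $L(H_{\rho+r},\tilde H)$ or one Lipschitz difference of $\varphi'$, $\varphi''$), the analytic-semigroup smoothing estimates, and the Beta-integral computation after the substitution $s=tu$; the specialization $R=I$, $\tilde R=S$ gives \eqref{eq:weak_crucialB}. The only cosmetic difference is the order of the two main steps, and a slight imprecision in which operator norm of $e^{As}$ carries the blow-up (it is $\|e^{As}\|_{L(H_\alpha,H_{\rho+r})}\lesssim s^{\alpha-\rho-r}$ rather than $\|e^{As}\|_{L(H,H_{\rho+r})}$), which does not affect the argument.
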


\begin{proof}[Proof
of Corollary~\ref{cor:weakest}]
Throughout this the proof
the real numbers
$ \kappa_{ r, t } \in [1,\infty) $,
$ r \in [0,\infty) $,
$ t \in (0,\infty) $,
defined by
\begin{equation}
  \kappa_{ r, t }
:=
    \sup_{ u \in [-r,1] }
    \sup_{ s \in (0,t] }
    \left(
    s^{ \max( u, 0 ) }
    \|
      e^{ A s }
    \|_{ 
      L( H, H_{ u } )
    }
    \right)
  < \infty
\end{equation}
for all $ r \in [0,\infty) $ 
are used.
The quantities 
$ \kappa_{ r, t } $, 
$ r \in [0,\infty) $,
$ t \in (0,\infty) $,
are indeed
finite since $ e^{ A t } $,
$ t \in [0,\infty) $,
is an analytic semigroup.
The estimate
\begin{equation}
  \| 
    e^{ A t } 
  \|_{ L( H_{ a }, H_{ b } ) }
  =
  \| 
    e^{ A t } 
  \|_{ L( H, H_{ ( b - a ) } ) }
  \leq
  \kappa_{ 
    \max( a - b, 0 ), t
  }
  \,
  t^{ 
    \min\left( a - b, 0\right)
  }
\end{equation}
for all 
$ t \in (0,\infty) $
and all $ a, b \in \R $
with $ b - a \leq 1 $ then 
shows 
\begin{equation}
\label{eq:weakest1}
\begin{split}
&
  \|
    K_t( \psi )
  \|_{ 
    G_{q}^0( H_{ \delta }, V )
  }
\leq
  \| \varphi' \|_{
    L^{ \infty }( \tilde{H}, 
      L( \tilde{H}, V) 
    )
  }
  \,
  \| R - \tilde{R} \|_{
    L( H_{ r }, \tilde{H} )
  }
  \,
  \| e^{ A t } \|_{
    L( H_{ \delta }, H_r )
  } 
\\ & \leq
  \| \varphi \|_{
    C^2_{ Lip }( \tilde{H}, V )
  }
  \,
  \| R - \tilde{R} \|_{
    L( H_{ r }, \tilde{H} )
  }
  \,
  \kappa_{
    \max( \delta - r, 0 ), t
  }
  \,
  t^{ \min( \delta - r, 0 ) }
\end{split}
\end{equation}
and
\begin{equation}
\begin{split}
&
  \|
    ( K_t \psi )'
  \|_{ 
    G_{ q - 1 }^0( H_{ \rho }, 
      L( H_{ \alpha }, V )
    )
  }
\\ & \leq
  \sup_{ x \in H_{ \rho } }
  \frac{
    \|
      ( \varphi'( R e^{ A t } x ) - 
      \varphi'( \tilde{R} e^{ A t } x ) )
      R e^{ A t } 
    \|_{ 
      L( H_{ \alpha }, V )
    }
  }{
    \big(   
      1 + 
      \| x \|_{ H_{ \rho } } 
    \big)^{ (q - 1) }
  }
\\ & \quad +
  \sup_{ x \in H_{ \rho } }
  \frac{
    \|
      \varphi'( \tilde{R} e^{ A t } x )
        (R - \tilde{R}) 
        e^{ A t } 
    \|_{ 
      L( H_{ \alpha }, V )
    }
  }{
    \big(   
      1 + 
      \| x \|_{ H_{ \rho } }
    \big)^{ (q-1) }
  }
\\ & \leq
  \| \varphi'' \|_{
    L^{ \infty }( \tilde{H},
      L^{ (2) }( \tilde{H}, V )
    )
  }
  \,
  \| R \|_{
    L( H_{ \rho }, \tilde{H} )
  }
  \,
  \| R - \tilde{R} \|_{
    L( H_{ r }, \tilde{H} )
  }
  \,
  \underbrace{
  \| e^{ A t } \|_{
    L( H_{ \alpha }, H_{ \rho } )
  }
  \,
  \| e^{ A t } \|_{
    L( H_{ \rho }, H_r )
  }
  }_{
    \leq
    \left(
      \kappa_{ 0, t } 
    \right)^2
    t^{
      \left(
        \alpha - r
      \right)
    }
  }
\\ & \quad +
  \| \varphi' \|_{
    L^{ \infty }( \tilde{H},
      L( \tilde{H}, V )
    )
  }
  \,
  \| R - \tilde{R} \|_{
    L( H_{ r }, \tilde{H} )
  }
  \,
  \underbrace{
  \| e^{ A t } \|_{
    L( H_{ \alpha }, H_{ r } )
  }
  }_{
    \leq
    \kappa_{ 0, t } \,
    t^{
      \left(
        \alpha - r
      \right)
    }
  }
\end{split}
\end{equation}
and
\begin{equation}
\label{eq:weakest3}
\begin{split}
&
  \|
    ( K_t \psi )''
  \|_{ 
    G_{q-2}^0( H_{ \rho }, 
      L^{(2)}( H_{ \beta }, V )
    )
  }
\\ & \leq
  \sup_{ x \in H_{ \rho } }
  \sup_{ 
    \substack{
    \| v \|_{ H_{ \beta } } = 
    \| w \|_{ H_{ \beta } } = 1
    }
  }
  \frac{
    \big\|
      \big( \varphi''( R e^{ A t } x ) - 
      \varphi''( \tilde{R} e^{ A t } x ) \big)( 
        R e^{ A t } v,
        R e^{ A t } w 
      )
    \big\|_{ 
      V
    }
  }{
    \big(   
      1 + 
      \| x \|_{ H_{ \rho } } 
    \big)^{ (q-2) }
  }
\\ & \quad +
  \sup_{ x \in H_{ \rho } }
  \sup_{ 
    \substack{
    \| v \|_{ H_{ \beta } } = 
    \| w \|_{ H_{ \beta } } = 1
    }
  }
  \frac{
    \big\|
      \varphi''( \tilde{R} e^{ A t } x )\big( 
        (R - \tilde{R}) 
        e^{ A t } v,
        R e^{ A t } w 
      \big)
    \big\|_{ 
      V
    }
  }{
    \big(   
      1 + 
      \| x \|_{ H_{ \rho } } 
    \big)^{ (q - 2) }
  }
\\ & \quad +
  \sup_{ x \in H_{ \rho } }
  \sup_{ 
    \substack{
    \| v \|_{ H_{ \beta } } = 
    \| w \|_{ H_{ \beta } } = 1
    }
  }
  \frac{
    \big\|
      \varphi''( \tilde{R} e^{ A t } x )\big( 
        \tilde{R} e^{ A t } v ,
        (R - \tilde{R}) 
        e^{ A t } w
      \big)
    \big\|_{ 
      V
    }
  }{
    \big(   
      1 + 
      \| x \|_{ H_{ \rho } } 
    \big)^{ (q - 2) }
  }
\\ & \leq
  \left[
    \sup_{
      \substack{
        x, y \in \tilde{H}
      \\
        x \neq y
      }
    }
    \frac{ 
      \| \varphi''(x) - \varphi''(y) \|_{ 
        L^{ (2) }( \tilde{H}, V )
      }
    }{
      \| x - y \|_{ \tilde{H} }
    }
  \right]
  \| R \|_{
    L( H_{ \rho }, \tilde{H} )
  }^2
  \,
  \| R - \tilde{R} \|_{
    L( H_{ r }, \tilde{H} )
  }
  \,
  \underbrace{
  \| e^{ A t } \|_{
    L( H_{ \beta }, H_{ \rho } )
  }^2
  \,
  \| e^{ A t } \|_{
    L( H_{ \rho }, H_r )
  }
  }_{
    \leq
    \left(
      \kappa_{ 
        0, t
      } 
    \right)^3
    t^{
      \left(
        2 \beta - \rho - r 
      \right)
    }
  }
\\ & \quad +
  \| \varphi'' \|_{
    L^{ \infty }( \tilde{H},
      L^{ (2) }( \tilde{H}, V )
    )
  }
  \left[
    \| R \|_{
      L( H_{ \rho }, \tilde{H} )
    }
    +
    \| \tilde{R} \|_{
      L( H_{ \rho }, \tilde{H} )
    }
  \right]
  \| R - \tilde{R} \|_{
    L( H_{ r }, \tilde{H} )
  }
  \,
  \underbrace{
    \| e^{ A t } \|_{
      L( H_{ \beta }, H_{ \rho } )
    }
    \,
    \| e^{ A t } \|_{
      L( H_{ \beta }, H_{ r } )
    }
  }_{
    \leq
    \left(
      \kappa_{ 
        0, t
      } 
    \right)^2
    t^{
      \left(
        2 \beta - \rho - r 
      \right)
    }
  }
\end{split}
\end{equation}
for all $ q \in [3,\infty) $,
$ t \in (0,T] $,
$
  \delta \in [\gamma,\infty) 
$
and all
$
  r \in  
  [ 
    \rho,
    \min( 1 + \alpha ,
      1 + 2 \beta - \rho
    )
  )
$.
Combining
\eqref{eq:weakest1}--\eqref{eq:weakest3}
implies
\begin{equation}
\label{eq:specialnorm_est}
\begin{split}
&
  \left\|
    \psi
  \right\|_{
    t, q - 2
  }^{
    \delta, \rho
  }
  \leq
  \| \varphi \|_{
    C^2_{ Lip }( \tilde{H}, V )
  }
  \,
  \| R - \tilde{R} \|_{
    L( H_{ r }, \tilde{H} )
  }
  \left[
    \kappa_{ 
      \max( \delta - r, 0 ), t
    } 
  \right]^3
  \big[
    1 +
    \| R \|_{ L( H_{ \rho }, \tilde{H} ) 
    } +
    \| \tilde{R} 
    \|_{ L( H_{ \rho }, \tilde{H} ) 
    }
  \big]^2
\\ & \quad
  \cdot
  \left(
    t^{ \min( \delta - r, 0 ) }
    +
    \int_0^t
    \left( t - s \right)^{
      \min( \delta - \rho, 0 )
    }
    \max\!\left(
    s^{
      \left(
        \alpha - r 
      \right)
    }
    ,
    s^{
      \left(
        2 \beta - \rho - r 
      \right)
    }
    \right)
    ds
  \right)
\\ & \leq
  \| \varphi \|_{
    C^2_{ Lip }( \tilde{H}, V )
  }
  \,
  \| R - \tilde{R} \|_{
    L( H_{ r }, \tilde{H} )
  }
  \left[
    \kappa_{ 
      \max( \delta - r, 0 ), t
    } 
  \right]^3
  \big[
    1 +
    \| R \|_{ L( H_{ \rho }, \tilde{H} ) 
    } +
    \| \tilde{R} 
    \|_{ L( H_{ \rho }, \tilde{H} ) 
    }
  \big]^2
    \max\!\big( 
      1, t^{
        | 2 \beta - \rho - \alpha |
      }
    \big)  
\\ & \quad
  \cdot
  \left(
    t^{ \min( \delta - r, 0 ) }
    +
    t^{
      \left[
        \min( \delta - \rho, 0 )
        +
        \min\left(
          \alpha , 
          2 \beta - \rho  
        \right)
        + 1 - r
      \right]
    }
    \int_0^1
    \left( 1 - s \right)^{
      \min( \delta - \rho, 0 )
    }
    s^{
      \min\left(
        \alpha - r ,
        2 \beta - \rho - r 
      \right)
    }
    \, ds
  \right)
\\ & \leq
  \| \varphi \|_{
    C^2_{ Lip }( \tilde{H}, V )
  }
  \,
  \| R - \tilde{R} \|_{
    L( H_{ r }, \tilde{H} )
  }
  \left[
    \kappa_{ 
      \max( \delta - r, 0 )
    } 
  \right]^3
  \big[
    1 +
    \| R \|_{ L( H_{ \rho }, \tilde{H} ) 
    } +
    \| \tilde{R} 
    \|_{ L( H_{ \rho }, \tilde{H} ) 
    }
  \big]^2
  \max( t, 1 ) 
\\ & \quad
  \cdot
  t^{ \min( \delta - r, 0 ) }
  \left(
    1
    +
    \int_0^1
    \left( 1 - s \right)^{
      \min( \delta - \rho, 0 )
    }
    s^{
      \min\left(
        \alpha - r ,
        2 \beta - \rho - r 
      \right)
    }
    \, ds
  \right)
\end{split}
\end{equation}
for all 
$ t \in (0,T] $,
$ q \in [3,\infty) $,
$ 
  \delta \in [\gamma, \infty) 
$
and all
$
  r \in  
  [ 
    \rho,
    \min( 1 + \alpha ,
      1 + 2 \beta - \rho
    )
  )
$.
Next observe that
Theorem~\ref{thm:continuity}
implies that
\begin{equation}
\label{eq:theorem_reg_cons}
\begin{split}
  \|
    P_t( \psi )
  \|_{
    G^0_{ q }( H_{ \delta }, V )
  }
& \leq
  \max\!\left(
    1,
    \| F \|_{ 
      G^0_1( H_{ \rho }, H_{ \alpha } )
    }
    ,
    \| B \|_{ 
      G^0_1( H_{ \rho }, 
      HS( U_0, H_{ \beta } ) )
    }^2
  \right)
\\ & \quad
  \cdot
  \max\!
  \left( 
  1,
  \sup_{ s \in (0, t) }
  \left[
  s^{ \max( \rho - \delta, 0 ) }
  \|
    P_s
  \|_{
    L( 
      G^0_{ q }( H_{ \rho }, V ) ,
      G^0_{ q }( H_{ \delta }, V ) 
    )
  }
  \right]
  \right)
  \cdot
  \left\|
    \psi
  \right\|_{
    t, q - 2
  }^{
    \delta, \rho
  }
\end{split}
\end{equation}
for all 
$ t \in (0,T] $,
$ q \in [3,\infty) $
and all
$ 
  \delta \in [\gamma, \infty) 
$.
Combining
\eqref{eq:specialnorm_est}
and \eqref{eq:theorem_reg_cons}
then shows \eqref{eq:weak_crucial}.
Inequality~\eqref{eq:weak_crucial}
implies \eqref{eq:weak_crucialB}.
This completes the proof
of Corollary~\ref{cor:weakest}.
\end{proof}

In the remainder of this subsection,
Corollary~\ref{cor:weakest}
is illustrated
by three simple consequences
(Corollary~\ref{cor:spatial},
Corollary~\ref{cor:temporal}
and 
Corollary~\ref{cor:galerkin}).
Corollary~\ref{cor:spatial}
follows immediately
from inequality~\eqref{eq:weak_crucialB}
in Corollary~\ref{cor:weakest}
and its proof is therefore
omitted.

\begin{cor}[Spatial
weak semigroup regularity]
\label{cor:spatial}
Assume that the setting
in Section~\ref{sec:setting}
is fulfilled and
assume
$ \alpha \leq \gamma $
and
$ \beta \leq \gamma $.
Then
\begin{equation}
\label{eq:weakspatialsemigroup1}
  \sup_{
    \substack{
      \varphi \in 
      C^2_{ Lip }( H_{ \rho }, V )
      \backslash \{ 0 \}
    }
  }
  \sup_{ 
    t \in (0,T]
  }
  \sup_{
    h \in (0,T]
  }
  \left(
  \frac{
    t^{ 
      \max( \rho - \delta + r, 0 ) 
    } \,
    \|
      P_{ t }( K_h( \varphi ) ) 
      - 
      P_{ t }( \varphi )
    \|_{ 
      G^0_3( H_{ \delta }, V )
    }
  }{
    h^{ r } \,
    \| \varphi \|_{ 
      C^2_{ Lip }( H_{ \rho }, V )
    }
  } 
  \right)
  < \infty
\end{equation}
for all
$ 
  \delta \in [\gamma, \infty)
$,
$ 
  r \in 
  [0,1+ \alpha - \rho) \cap
  [0,1+2 \beta - 2 \rho) 
$
and all
$
  \rho \in 
  [
    \gamma,
    \alpha + 1
  )
  \cap
  [
    \gamma,
    \beta + \frac{ 1 }{ 2 }
  )
$.
In particular, if 
the real number 
$ p \in [2,\infty) $
in Assumption~\ref{initial}
satisfies
$ p \geq 3 $,
then
\begin{equation}
  \sup_{
    \substack{
      \varphi \in 
      C^2_{ Lip }( H_{ \gamma }, V )
      \backslash \{ 0 \}
    }
  }
  \sup_{
    t \in [0,T]
  }
  \sup_{ 
    h \in (0,T]
  }
  \left(
  \frac{
    t^{ r } \,
    \big\|
    \mathbb{E}\big[ 
      \varphi( e^{ A h } X_{ t } )
    \big]
    -
    \mathbb{E}\big[ 
      \varphi( X_{ t } )
    \big]
    \big\|_V
  }{
    h^r \,
    \| \varphi \|_{ 
      C^2_{ Lip }( H_{ \gamma }, V )
    }
  } 
  \right)
  < \infty
\end{equation}
for all 
$ 
  r \in 
  [0,1+ \alpha - \gamma) \cap
  [0,1+2 \beta - 2 \gamma) 
$.
\end{cor}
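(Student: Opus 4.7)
The plan is to derive~\eqref{eq:weakspatialsemigroup1} by specializing the bound~\eqref{eq:weak_crucialB} from Corollary~\ref{cor:weakest} to the operator $S := e^{Ah} \in L(H_{\rho})$. Since $\Phi(S(x)) = \Phi(e^{Ah} x) = (K_h \Phi)(x)$ for every $\Phi \in C^2_{Lip}(H_{\rho}, V)$ and every $x \in H_{\rho}$, the numerator on the left-hand side of~\eqref{eq:weak_crucialB} evaluated at $S = e^{Ah}$ is exactly $t^{\max(r + \rho - \delta, 0)} \| P_t(\Phi) - P_t(K_h \Phi) \|_{G^0_3(H_{\delta}, V)}$, which is the object we wish to control in~\eqref{eq:weakspatialsemigroup1}.

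It therefore remains to check that the $h$-dependent quantities in the denominator of~\eqref{eq:weak_crucialB}, namely $\|I - e^{Ah}\|_{L(H_{\rho + r}, H_{\rho})}$ and $1 + \|e^{Ah}\|_{L(H_{\rho})}^2$, decay like $h^r$ and remain bounded respectively, uniformly for $h \in (0, T]$. The hypotheses $\alpha \leq \gamma$ and $\beta \leq \gamma$ together with $\rho \geq \gamma$ force $\min(1 + \alpha - \rho, 1 + 2 \beta - 2 \rho) \leq 1$, so the admissible range is in fact $r \in [0, 1)$. The standard analytic-semigroup estimate then furnishes a constant $c_r < \infty$ with $\|I - e^{Ah}\|_{L(H_{\rho + r}, H_{\rho})} = \|(I - e^{Ah})(\eta - A)^{-r}\|_{L(H)} \leq c_r h^r$ for all $h \in (0, T]$, while strong continuity of $e^{A \cdot}$ yields $\sup_{h \in [0,T]} \|e^{Ah}\|_{L(H_\rho)} = \sup_{h \in [0,T]} \|e^{Ah}\|_{L(H)} < \infty$. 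Inserting these into~\eqref{eq:weak_crucialB} and taking suprema produces~\eqref{eq:weakspatialsemigroup1}.

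For the ``in particular'' statement, I would combine~\eqref{eq:weakspatialsemigroup1} at $\delta = \rho = \gamma$ with the Markov property of the mild solution, which -- by conditioning on $\mathcal{F}_0$ and invoking uniqueness in Proposition~\ref{prop:prop} -- gives $\mathbb{E}[\varphi(e^{Ah} X_t)] = \mathbb{E}[(P_t K_h \varphi)(\xi)]$ and $\mathbb{E}[\varphi(X_t)] = \mathbb{E}[(P_t \varphi)(\xi)]$. The very definition of the $G^0_3(H_\gamma, V)$-norm then yields
\begin{equation*}
\big\| \mathbb{E}[\varphi(e^{Ah} X_t)] - \mathbb{E}[\varphi(X_t)] \big\|_V \leq \mathbb{E}\!\left[ (1 + \|\xi\|_{H_\gamma})^3 \right] \cdot \| P_t(K_h \varphi) - P_t(\varphi) \|_{G^0_3(H_\gamma, V)},
\end{equation*}
and the first factor is finite thanks to Assumption~\ref{initial} and the hypothesis $p \geq 3$. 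The boundary case $t = 0$ is handled separately: the numerator $t^r$ vanishes when $r > 0$, and for $r = 0$ the direct Lipschitz estimate $\|\varphi(e^{Ah}\xi) - \varphi(\xi)\|_V \leq \|\varphi\|_{C^2_{Lip}(H_\gamma, V)} \|(e^{Ah} - I)\xi\|_{H_\gamma}$ combined with uniform boundedness of $\|e^{Ah}\|_{L(H_\gamma)}$ closes the argument. Since everything is a direct reduction to Corollary~\ref{cor:weakest} together with a classical fractional-power semigroup bound, no substantive obstacle is anticipated; the only point requiring attention is the range condition $r < 1$, which is guaranteed by the assumptions $\alpha, \beta \leq \gamma \leq \rho$.
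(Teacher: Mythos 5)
Your proposal is correct and follows exactly the route the paper intends: the paper states that Corollary~\ref{cor:spatial} ``follows immediately from inequality~\eqref{eq:weak_crucialB} in Corollary~\ref{cor:weakest}'' and omits the details, which you supply by taking $S = e^{Ah}$, using the analytic-semigroup bound $\|I - e^{Ah}\|_{L(H_{\rho+r},H_{\rho})} \leq c_r h^r$ (valid since your observation $\alpha \leq \gamma \leq \rho$ forces $r < 1$), and then passing to the ``in particular'' statement via $\mathbb{E}[\varphi(X_t)] = \mathbb{E}[(P_t\varphi)(\xi)]$ together with the third-moment bound from $p \geq 3$. No gaps.
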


\begin{cor}[Temporal
weak regularity]
\label{cor:temporal}
Assume that the setting
in Section~\ref{sec:setting}
is fulfilled and
assume
$ \alpha \leq \gamma $
and
$ \beta \leq \gamma $.
Then
\begin{equation}
  \sup_{ 
    \substack{
      t_1, t_2 \in (0,T]
    \\
      t_1 < t_2
    }
  }
  \left(
  \frac{
    | t_1 |^{
      \max( \rho - \delta + r, 0 )
    }
    \,
    \|
      P_{ t_2 }
      - 
      P_{ t_1 }
    \|_{ 
      L( C^2_{ Lip }( H_{ \rho }, V ),
      G^0_3( H_{ \delta }, V ) )
    }
  }{
    | t_2 - t_1 |^r
  } 
  \right)
  < \infty
\end{equation}
for all
$ 
  \delta \in [\gamma, \infty)
$,
$ 
  r \in 
  [0,1+ \alpha - \rho) \cap
  [0,1+2 \beta - 2 \rho) 
$
and all
$
  \rho \in 
  [
    \gamma,
    \alpha + 1
  )
  \cap
  [
    \gamma,
    \beta + \frac{ 1 }{ 2 }
  )
$.
In particular, if 
the real number 
$ p \in [2,\infty) $
in Assumption~\ref{initial}
satisfies
$ p \geq 3 $,
then
\begin{equation}
  \sup_{
    \substack{
      \varphi \in 
      C^2_{ Lip }( H_{ \gamma }, V )
      \backslash \{ 0 \}
    }
  }
  \sup_{ 
    \substack{
      t_1, t_2 \in [0,T]
    \\
      t_1 \neq t_2
    }
  }
  \left(
  \frac{
    |t_1|^{ r } \,
    \big\|
    \mathbb{E}\big[ 
      \varphi( X_{ t_2 } )
    \big]
    -
    \mathbb{E}\big[ 
      \varphi( X_{ t_1 } )
    \big]
    \big\|_V
  }{
    | t_2 - t_1 |^r \,
    \| \varphi \|_{ 
      C^2_{ Lip }( H_{ \gamma }, V )
    }
  } 
  \right)
  < \infty
\end{equation}
for all 
$ 
  r \in 
  [0,1+ \alpha - \gamma) \cap
  [0,1+2 \beta - 2 \gamma) 
$.
\end{cor}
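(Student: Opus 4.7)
}
The plan is to combine the mild Kolmogorov backward equation~\eqref{eq:mildP} with the spatial weak regularity estimate in Corollary~\ref{cor:spatial} and the mapping properties of the operators $K_t$ and $L^{(0)}$ recorded in Lemma~\ref{lem:Ktbound} and Lemma~\ref{kolm:bound0}. Concretely, fix $0 < t_1 < t_2 \leq T$ and $\varphi \in C^2_{ Lip }( H_{ \rho }, V )$ with the constraints on $\rho$, $\delta$, $r$ as in the statement, apply equation~\eqref{eq:mildP} with $t_0 = t_1$ and $t = t_2$ (and use that $K_0 \varphi = \varphi$) to obtain the decomposition
\begin{equation*}
  P_{ t_2 }( \varphi ) - P_{ t_1 }( \varphi )
  =
  \bigl[ P_{ t_1 }( K_{ t_2 - t_1 } \varphi ) - P_{ t_1 }( \varphi ) \bigr]
  +
  \int_{ t_1 }^{ t_2 }
  P_{ s }\bigl( L^{ (0) }( K_{ t_2 - s } \varphi ) \bigr)
  \, ds.
\end{equation*}

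The first bracket is precisely the quantity controlled by the spatial weak semigroup regularity estimate~\eqref{eq:weakspatialsemigroup1} in Corollary~\ref{cor:spatial}, applied with increment $h = t_2 - t_1 \in (0,T]$, and therefore its $G^0_3( H_{ \delta }, V )$-norm is bounded by $C \, t_1^{ - \max( \rho - \delta + r, 0 ) } \, ( t_2 - t_1 )^r \, \| \varphi \|_{ C^2_{ Lip }( H_{ \rho }, V ) }$.

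For the integral term I would argue as follows. Using Lemma~\ref{lem:Ktbound} applied to $\varphi$ and its first two Fr\'echet derivatives one estimates, for $u \in (0,T]$, the derivatives of $K_u \varphi$ on $H_{ \min(\alpha,\beta,\gamma) }$ by $\| \varphi \|_{ C^2_{ Lip }( H_{ \rho }, V ) }$ times the appropriate semigroup smoothing factors $\| e^{ A u } \|_{ L( H_{ \alpha }, H_{ \rho } ) } \lesssim u^{ - \max( \rho - \alpha, 0 ) }$ and $\| e^{ A u } \|_{ L( H_{ \beta }, H_{ \rho } ) }^2 \lesssim u^{ - 2 \max( \rho - \beta, 0 ) }$. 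Substituting these into the estimate of Lemma~\ref{kolm:bound0} for $L^{(0)}$ yields
\begin{equation*}
  \| L^{ (0) }( K_{ t_2 - s } \varphi ) \|_{ G^0_{ 2 }( H_{ \gamma }, V ) }
  \leq
  C \, ( t_2 - s )^{ - \max( \rho - \alpha, 2\rho - 2\beta, 0 ) }
  \,
  \| \varphi \|_{ C^2_{ Lip }( H_{ \rho }, V ) } .
\end{equation*}
Combining this with the bound $\| P_s \|_{ L( G^0_{ 3 }( H_{ \gamma }, V ), G^0_{ 3 }( H_{ \delta }, V ) ) } \lesssim s^{ - \max( \rho - \delta, 0 ) }$ that follows from Lemma~\ref{lem:Pt} and the natural embedding $G^0_{ 2 }( H_{ \gamma }, V ) \hookrightarrow G^0_{ 3 }( H_{ \delta }, V )$ (valid since $\delta \geq \gamma$), using $s \geq t_1$ on the interval of integration, and integrating the resulting $(t_2 - s)^{ - \max( \rho - \alpha, 2\rho - 2\beta, 0 ) }$ singularity over $[t_1, t_2]$, one obtains an upper bound of the form
\begin{equation*}
  C \, t_1^{ - \max( \rho - \delta, 0 ) }
  \, ( t_2 - t_1 )^{ 1 - \max( \rho - \alpha, 2\rho - 2\beta, 0 ) }
  \, \| \varphi \|_{ C^2_{ Lip }( H_{ \rho }, V ) } ,
\end{equation*}
which, using $t_2 - t_1 \leq T$ to split off the factor $( t_2 - t_1 )^r$ and absorbing powers of $T$ and $t_1 / T$ into the constant, dominates $C_T \, t_1^{ - \max( \rho - \delta + r, 0 ) } \, ( t_2 - t_1 )^r \, \| \varphi \|_{ C^2_{ Lip }( H_{ \rho }, V ) }$ exactly when $r < 1 - \max( \rho - \alpha, 2\rho - 2\beta, 0 )$, which is precisely the hypothesis $r \in [0, 1 + \alpha - \rho ) \cap [0, 1 + 2\beta - 2\rho )$.

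Adding the two contributions yields the operator-norm bound asserted in the first display, and the "in particular" statement is then immediate from $\mathbb{E}[ \varphi( X_{ t_i } ) ] = P_{ t_i }( \varphi )( \xi )$ together with $\mathbb{E}[ \| \xi \|_{ H_{ \gamma } }^3 ] < \infty$ (since $p \geq 3$) to convert the pointwise $G^0_3( H_{ \gamma }, V )$-bound into the desired estimate after specializing to $\delta = \gamma$ and taking expectation in the initial condition. I expect the main obstacle to be purely bookkeeping: carefully tracking the three possible regimes of the maxima appearing in the exponents of $t_1$ and $( t_2 - s )$ and verifying that the integrability threshold $1 - \max( \rho - \alpha, 2\rho - 2\beta, 0 )$ indeed matches the two separate strict inequalities defining the admissible range of $r$.
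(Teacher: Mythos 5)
Your proposal is correct and follows essentially the same route as the paper: the same decomposition of $P_{t_2}\varphi - P_{t_1}\varphi$ via the mild Kolmogorov backward equation with $t_0 = t_1$, the same use of Corollary~\ref{cor:spatial} for the increment $P_{t_1}(K_{t_2-t_1}\varphi) - P_{t_1}(\varphi)$, and the same control of the integral term through Lemma~\ref{kolm:bound0} combined with the analytic-semigroup smoothing of $K_{t_2-s}$, yielding the identical integrability threshold $r < \min(1+\alpha-\rho,\,1+2\beta-2\rho)$. The only cosmetic difference is that you integrate the $(t_2-s)$-singularity directly while the paper pulls a weighted supremum out of the integral; note also that the smoothing bound $s^{\max(\rho-\delta,0)}\|P_s\|_{L(G^0_3(H_\rho,V),G^0_3(H_\delta,V))}<\infty$ you invoke is the quantity appearing in Theorem~\ref{thm:continuity} rather than a consequence of Lemma~\ref{lem:Pt} alone, but this matches the paper's own usage.
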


\begin{proof}[Proof
of Corollary~\ref{cor:temporal}]
First, define real
numbers
$ 
  c_{ \rho, \delta, r } \in [0,\infty) 
$, 
$ 
  r \in 
  [0,1+ \alpha - \rho) \cap
  [0,1+2 \beta - 2 \rho) 
$,
$
  \delta \in [\gamma, \infty)
$,
$
  \rho \in 
  [
    \gamma,
    \alpha + 1
  )
  \cap
  [
    \gamma,
    \beta + \frac{ 1 }{ 2 }
  )
$,
through
\begin{equation}
  c_{ \rho, \delta, r }
:=
  \sup_{
    \substack{
      \varphi \in 
      C^2_{ Lip }( H_{ \rho }, V )
      \backslash \{ 0 \}
    }
  }
  \sup_{ 
    t \in (0,T]
  }
  \sup_{
    h \in (0,T]
  }
  \left(
  \frac{
    t^{ 
      \max( \rho - \delta + r, 0 ) 
    } \,
    \|
      P_{ t }( K_h( \varphi ) ) 
      - 
      P_{ t }( \varphi )
    \|_{ 
      G^0_3( H_{ \delta }, V )
    }
  }{
    h^{ r } \,
    \| \varphi \|_{ 
      C^2_{ Lip }( H_{ \rho }, V )
    }
  } 
  \right)
  < \infty
\end{equation}
for all
$ 
  r \in 
  [0,1+ \alpha - \rho) \cap
  [0,1+2 \beta - 2 \rho) 
$,
$ 
  \delta \in [\gamma, \infty)
$
and all
$
  \rho \in 
  [
    \gamma,
    \alpha + 1
  )
  \cap
  [
    \gamma,
    \beta + \frac{ 1 }{ 2 }
  )
$.
Corollary~\ref{cor:spatial}
shows that these real numbers
are indeed finite.
In the next step we combine
\eqref{eq:mildP2}
and the definition
of  
$ 
  c_{ \rho, \delta, r } \in [0,\infty) 
$
to obtain that
\begin{equation}
\label{eq:esttemporal}
\begin{split}
&
  \frac{
    | t_1 
    |^{
      \max\left(
        \rho - \delta + r, 0
      \right)
    }
  \left\|
  ( 
    P_{ t_2 } \varphi 
  )( x )
  -
  ( 
    P_{ t_1 } \varphi 
  )( x )
  \right\|_{ V }
  }{
    | t_2 - t_1 |^r 
  }
\\ & \leq
  \frac{
    | t_1 
    |^{
      \max\left(
        \rho - \delta + r, 0
      \right)
    }
    \left\|
    (
      P_{ t_1 } K_{ ( t_2 - t_1 ) } 
      \varphi 
    )(x)
    -
    ( 
      P_{ t_1 } \varphi 
    )( x )
    \right\|_V
  }{
    | t_2 - t_1 |^r 
  }
+
  \int_{ t_1 }^{ t_2 }
  \left[
  \frac{
    s^{
      \max\left(
        \rho - \delta + r, 0
      \right)
    }
    \,
    \|
      ( 
        P_s \,
        L^{ (0) }_{ ( t_2 - s ) } \varphi
      ) ( x )
    \|_V
  }{
    | t_2 - t_1 |^r 
  }
  \right]
  ds
\\ & \leq
    c_{ \rho, \delta, r }
    \left[
      1 + \| x \|_{ H_{ \delta } }
    \right]^{ 3 }
    \|
      \varphi
    \|_{ 
      C^2_{ Lip }( H_{ \rho }, V )
    }
+ 
  \left( 1 + T \right)
  \int_{ t_1 }^{ t_2 }
  \left[
  \frac{
    s^{
      \max\left(
        \rho - \delta, 0
      \right)
    }
    \,
    \|
      ( 
        P_s \,
        L^{ (0) }_{ ( t_2 - s ) } \varphi
      ) ( x )
    \|_V
  }{
    | t_2 - t_1 |^{ 
      \min( 
        1 + \alpha - \rho, 
        1 + 2 \beta - 2 \rho 
      )
    } 
  }
  \right]
  ds
\\ & \leq
    c_{ \rho, \delta, r }
    \left[
      1 + \| x \|_{ H_{ \delta } }
    \right]^{ 3 }
    \|
      \varphi
    \|_{ 
      C^2_{ Lip }( H_{ \rho }, V )
    }
\\ & \quad + 
  \left( 1 + T \right)
  \left[
    \sup_{
      t \in (0,T]
    }
    \sup_{
      s \in (0, t)
    }
    \left(
    s^{
      \max\left(
        \rho - \delta, 0
      \right)
    }
    \,
    | t - s |^{ 
      \max( 
        \rho - \alpha, 
        2 \rho - 2 \beta 
      )
    } 
    \,
    \|
      ( 
        P_s \,
        L^{ (0) }_{ t - s } \varphi
      ) ( x )
    \|_V
    \right)
  \right]
\end{split}
\end{equation}
for all $ t_1, t_2 \in (0,T] $
with $ t_1 < t_2 $,
$ 
  x \in H_{ \delta } 
$,
$ 
  r \in 
  [0,1 + \alpha - \rho) \cap
  [0,1 + 2 \beta - 2 \rho) 
$,
$ 
  \delta \in [\gamma, \infty)
$,
$
  \varphi \in 
  C^2_{ Lip }( H_{ \rho }, V )
$
and all
$
  \rho \in 
  [
    \gamma,
    \alpha + 1
  )
  \cap
  [
    \gamma,
    \beta + \frac{ 1 }{ 2 }
  )
$.
Furthermore, observe that
Lemma~\ref{kolm:bound0} 
shows that
\begin{equation}
\label{eq:esttemporal2}
\begin{split}
&
  \sup_{ s \in (0,t) }
  \left[
  s^{ 
    \max( \rho - \delta , 0 ) 
  }
  \left( t - s \right)^{ 
    \max( \rho - \alpha, 2 \rho - 2 \beta )
  }
  \,
  \|
    P_{ s } (
    L^{ (0) }_{ t - s }( \varphi ) )
  \|_{
    G^0_3( H_{ \delta }, V )
  }
  \right]
\\ & \leq
  \sup_{ s \in (0, t) }
  \left[
  s^{ 
    \max( \rho - \delta , 0 ) 
  }
  \|
    P_{ s }
  \|_{
    L( 
      G^0_3( H_{ \rho }, V ) ,
      G^0_3( H_{ \delta }, V ) 
    )
  }
  \left( t - s \right)^{ 
    \max( \rho - \alpha, 2 \rho - 2 \beta )
  }
  \|
    L^{ (0) }_{ t - s }( \varphi ) )
  \|_{
    G^0_3( H_{ \delta }, V ) 
  }
  \right]
\\ & \leq
  \left[
    \sup_{ s \in (0, t) }
    s^{ 
      \max( \rho - \delta , 0 ) 
    }
    \|
      P_{ s }
    \|_{
      L( 
        G^0_3( H_{ \rho }, V ) ,
        G^0_3( H_{ \delta }, V ) 
      )
    }
  \right]
  \max\!\big(
    \| F \|_{ 
      G^0_1( H_{ \rho }, H_{ \alpha } )
    }
    ,
    \| B \|_{ 
      G^0_1( H_{ \rho }, 
      HS( U_0, H_{ \beta } ) )
    }^2
  \big)
\\ & \quad
  \cdot
  \left[
  \sup_{ s \in (0,t) }
  s^{ 
    \max( \rho - \alpha, 2 \rho - 2 \beta )
  }
  \Big(
  \|
    ( K_s \varphi )'
  \|_{ 
    G_{ 2 }^0( H_{ \rho },
      L( H_{ \alpha }, V )  
    )
  }
  +
  \|
    ( K_s \varphi )''
  \|_{ 
    G_{ 1 }^0( H_{ \rho },
      L^{(2)}( H_{ \beta }, V )  
    )
  } 
  \Big)
  \right]
\\ & \leq
  \left[
    \sup_{ s \in (0, T] }
    s^{ 
      \max( \rho - \delta , 0 ) 
    }
    \|
      P_{ s }
    \|_{
      L( 
        G^0_3( H_{ \rho }, V ) ,
        G^0_3( H_{ \delta }, V ) 
      )
    }
  \right]
  \max\!\big(
    \| F \|_{ 
      G^0_1( H_{ \rho }, H_{ \alpha } )
    }
    ,
    \| B \|_{ 
      G^0_1( H_{ \rho }, 
      HS( U_0, H_{ \beta } ) )
    }^2
  \big)
\\ & \quad
  \cdot
  \max\!\left( 
    1, 
    \sup_{ s \in [0,T] }
    \| e^{ A s } \|_{ L(H) }^2 
  \right)
\\ & \quad
  \cdot
  \left[
    \sup_{ s \in (0,T] }
    s^{ 
      \max( \rho - \alpha, 2 \rho - 2 \beta )
    }
    \left(
    \|
      e^{ A s }
    \|_{ L(H_{ \alpha }, H_{ \rho }) }
    \,
    \|
      \varphi'
    \|_{
      G^0_2( 
        H_{ \rho }, L( H_{ \rho }, V) 
      )
    } 
    +
    \|
      e^{ A s }
    \|_{ L( H_{ \beta }, H_{ \rho } ) 
    }^2
    \,
    \|
      \varphi''
    \|_{
      G^0_1( 
        H_{ \rho }, 
        L^{(2)}( H_{ \rho }, V) 
      )
    } 
    \right)
  \right]
\\ & \leq
  \left[
    \sup_{ s \in (0, T] }
    s^{ 
      \max( \rho - \delta , 0 ) 
    }
    \,
    \|
      P_{ s }
    \|_{
      L( 
        G^0_3( H_{ \rho }, V ) ,
        G^0_3( H_{ \delta }, V ) 
      )
    }
  \right]
  \max\!\big(
    \| F \|_{ 
      G^0_1( H_{ \rho }, H_{ \alpha } )
    }
    ,
    \| B \|_{ 
      G^0_1( H_{ \rho }, 
      HS( U_0, H_{ \beta } ) )
    }^2
  \big)
\\ & \quad
  \cdot
  \max\!\left( 
    1, 
    \sup_{ s \in [0,T] }
    \| e^{ A s } \|_{ L(H) }^2 
  \right)
  \left(
    \|
      \varphi'
    \|_{
      G^0_2( 
        H_{ \rho }, L( H_{ \rho }, V) 
      )
    } 
    +
    \|
      \varphi''
    \|_{
      G^0_1( 
        H_{ \rho }, 
        L^{(2)}( H_{ \rho }, V) 
      )
    } 
  \right)
\\ & \quad 
  \cdot
  \left[
    \sup_{ s \in (0,T] }
    s^{ 
      \max( \rho - \alpha, 2 \rho - 2 \beta )
    }
  \max\!
  \big(
    \|
      e^{ A s }
    \|_{ 
      L( H_{ \alpha }, H_{ \rho } ) 
    }
    ,
    \|
      e^{ A s }
    \|_{ 
      L( H_{ \beta }, H_{ \rho } ) 
    }^2
  \big)
  \right]
\end{split}
\end{equation}
for all 
$
  t \in (0,T]
$,
$ 
  \delta \in [\gamma, \infty)
$,
$ 
  \varphi \in 
  G^2_{ 1 }( H_{ \rho }, V )
$
and all
$
  \rho \in 
  [
    \gamma,
    \alpha + 1
  )
  \cap
  [
    \gamma,
    \beta + \frac{ 1 }{ 2 }
  )
$.
Combining \eqref{eq:esttemporal}
and \eqref{eq:esttemporal2}
completes the proof of 
Corollary~\ref{cor:temporal}.
\end{proof}

\begin{cor}[Galerkin
approximations]
\label{cor:galerkin}
Assume that the setting
in Section~\ref{sec:setting}
is fulfilled,
assume
$ \alpha \leq \gamma $
and
$ \beta \leq \gamma $,
let
$
  \rho \in 
  [
    \gamma,
    \alpha + 1
  )
  \cap
  [
    \gamma,
    \beta + \frac{ 1 }{ 2 }
  )
$
and
$ 
  \delta \in [\gamma, \infty)
$
be real numbers
and let
$ P_N \in L(H_{ \rho }) $,
$ N \in \N $,
be a sequence of bounded
linear operators
with
$
  \sup_{ N \in \N } 
  \| P_N \|_{ L( H_{ \rho } ) }
  < \infty
$.
Then
\begin{equation}
  \sup_{
    \substack{
      \varphi \in 
      C^2_{ Lip }( H_{ \rho }, V )
      \backslash \{ 0 \}
    }
  }
  \sup_{ 
    N \in \mathbb{N}
  }
  \sup_{
    t \in (0,T]
  }
  \left(
  \frac{
    t^{ 
      \max( \rho - \delta + r, 0 ) 
    } \,
    \|
      P_{ t }( \varphi ) -
      P_{ t }( \varphi( P_N( \cdot ) ) ) 
    \|_{ 
      G^0_3( H_{ \delta }, V )
    }
  }{
    \| I - P_N \|_{
      L( H_{ \rho + r }, H_{ \rho } )
    }
    \,
    \| \varphi \|_{ 
      C^2_{ Lip }( H_{ \rho }, V )
    }
  } 
  \right)
  < \infty
\end{equation}
for all
$ 
  r \in 
  [0,1+ \alpha - \rho) \cap
  [0,1+2 \beta - 2 \rho) 
$.
In particular, if
$ 
  ( \lambda_n )_{ n \in \mathbb{N} } 
  \subset (0,\infty) 
$
is a non-decreasing 
sequence
of real numbers, 
if
$
  ( e_n )_{ n \in \mathbb{N} }
  \subset H
$
is an orthonormal basis
of $ H $ with
$
  D(A) = \{ 
    v \in H \colon
    \sum_{ n = 1 }^{ \infty }
    | \lambda_n |^2
    | \left< e_n, v \right>_H |^2
    < \infty
  \}
$
and 
$ 
  A v = 
  \sum_{ n = 1 }^{ \infty }
  - \lambda_n \left< e_n, v \right>_H
  e_n
$
for all $ v \in D(A) $,
if $ \rho = \gamma = 0 $
and $ p \geq 3 $
and if 
$
  P_N( v )
  =
  \sum_{ n = 1 }^N
  \left< e_n, v \right>_H
  e_n
$
for all $ v \in H $,
$ N \in \mathbb{N} $,
then
\begin{equation}
  \sup_{
    \substack{
      \varphi \in 
      C^2_{ Lip }( H, V )
      \backslash \{ 0 \}
    }
  }
  \sup_{ 
    N \in \mathbb{N}
  }
  \sup_{
    t \in (0,T]
  }
  \left(
  \frac{
    t^{ 
      \max( r - \delta, 0 ) 
    } \,
    \|
      P_{ t }( \varphi ) -
      P_{ t }( \varphi( P_N( \cdot ) ) ) 
    \|_{ 
      G^0_3( H_{ \delta }, V )
    }
  }{
    ( \lambda_N )^{ - r } 
    \,
    \| \varphi \|_{ 
      C^2_{ Lip }( H, V )
    }
  } 
  \right)
  < \infty
\end{equation}
and
\begin{equation}
  \sup_{
    \substack{
      \varphi \in 
      C^2_{ Lip }( H, V )
      \backslash \{ 0 \}
    }
  }
  \sup_{ 
    N \in \mathbb{N}
  }
  \sup_{
    t \in (0,T]
  }
  \left(
  \frac{
    t^r \,
    \big\|
    \mathbb{E}\big[ 
      \varphi( X_t )
    \big]
    -
    \mathbb{E}\big[ 
      \varphi( P_N( X_t ) )
    \big]
    \big\|_V
  }{
    ( \lambda_N )^{-r} \,
    \| \varphi \|_{ 
      C^2_{ Lip }( H, V )
    }
  } 
  \right)
  < \infty
\end{equation}
for all
$ 
  r \in 
  [0,1+ \alpha) \cap
  [0,1+2 \beta) 
$.
\end{cor}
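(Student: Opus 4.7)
\textbf{Proof proposal for Corollary~\ref{cor:galerkin}.}  The plan is to deduce all three assertions directly from inequality~\eqref{eq:weak_crucialB} in Corollary~\ref{cor:weakest}, using the given family $\{P_N\}_{N \in \mathbb{N}}$ as the perturbation operator. First I would apply~\eqref{eq:weak_crucialB} with $\Phi = \varphi$ and $S = P_N$, which immediately yields a finite constant $C = C_{r, \rho, \delta, T} \in [0,\infty)$ such that
\begin{equation*}
  t^{\max(r + \rho - \delta,\, 0)}
  \, \| P_t(\varphi) - P_t(\varphi(P_N(\cdot))) \|_{G^0_3(H_\delta, V)}
  \leq
  C \, \| I - P_N \|_{L(H_{\rho + r}, H_\rho)} \,
  \big(1 + \| P_N \|_{L(H_\rho)}^2\big) \,
  \| \varphi \|_{C^2_{Lip}(H_\rho, V)}
\end{equation*}
uniformly over $t \in (0, T]$, $N \in \mathbb{N}$ and $\varphi \in C^2_{Lip}(H_\rho, V) \setminus \{0\}$. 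Since by hypothesis $\sup_{N \in \mathbb{N}} \| P_N \|_{L(H_\rho)} < \infty$, the factor $(1 + \| P_N \|_{L(H_\rho)}^2)$ is uniformly bounded in $N$ and can be absorbed into the constant; this delivers the first assertion.

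For the first ``in particular'' claim I would specialize to $\rho = \gamma = 0$ and to the spectral Galerkin projections $P_N v = \sum_{n = 1}^{N} \langle e_n, v \rangle_H \, e_n$. The operators $P_N$ are orthogonal projections on $H$, so $\| P_N \|_{L(H)} \leq 1$ automatically, and since $(e_n)_{n \in \mathbb{N}}$ diagonalizes $\eta - A$ with eigenvalues $(\eta + \lambda_n)_{n \in \mathbb{N}}$, we have $(I - P_N) v = \sum_{n > N} \langle e_n, v \rangle_H \, e_n$ and $\| v \|_{H_r}^2 = \sum_{n \in \mathbb{N}} (\eta + \lambda_n)^{2r} |\langle e_n, v \rangle_H|^2$ for every $v \in H_r$. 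A short computation using the monotonicity of $(\lambda_n)$ then gives
\begin{equation*}
  \| I - P_N \|_{L(H_r, H)}
  \leq (\eta + \lambda_{N+1})^{-r}
  \leq (\lambda_N)^{-r}
\end{equation*}
for every $N \in \mathbb{N}$ and every $r \in [0,\infty)$, and inserting this bound into the first assertion produces the second statement.

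The third assertion requires bridging from the deterministic operator estimate on $P_t(\varphi)$ to the expectation with random initial value $\xi$. I would combine the standard Markov identity $\mathbb{E}[\varphi(X_t)] = \mathbb{E}[(P_t \varphi)(\xi)]$, which holds since $\xi$ is $\mathcal{F}_0$-measurable and independent of $(W_t - W_0)_{t \in [0,\infty)}$, with the analogous identity for $\mathbb{E}[\varphi(P_N(X_t))]$ obtained by replacing the test function $\varphi$ with $\varphi \circ P_N$. The pointwise growth estimate~\eqref{eq:growthestimateGnq} applied to the difference $P_t(\varphi) - P_t(\varphi(P_N(\cdot)))$ then yields
\begin{equation*}
  \big\| \mathbb{E}[\varphi(X_t)] - \mathbb{E}[\varphi(P_N(X_t))] \big\|_V
  \leq
  \mathbb{E}\big[ (1 + \| \xi \|_H)^3 \big] \cdot
  \| P_t(\varphi) - P_t(\varphi(P_N(\cdot))) \|_{G^0_3(H, V)},
\end{equation*}
and the hypothesis $p \geq 3$ together with Assumption~\ref{initial} (and $\gamma = 0$) ensures $\mathbb{E}[(1 + \| \xi \|_H)^3] < \infty$. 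Combining this with the second assertion closes the argument.

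The only step that is not entirely mechanical is the Markov-style reduction in the third assertion, namely passing from the family $\{X^x\}_{x \in H_\gamma}$ (deterministic initial value) to the process $X$ started from the random $\xi$; this is standard for mild solutions of~\eqref{eq:SPDE} under the setting of Section~\ref{sec:setting} but requires choosing a jointly measurable modification of $(x, \omega) \mapsto X^x_t(\omega)$ compatible with $\xi$. Once that is in place, the remainder is purely bookkeeping: the quantitative work has already been done in Corollary~\ref{cor:weakest}, so the proof reduces to inserting the spectral estimate $\| I - P_N \|_{L(H_r, H)} \leq (\lambda_N)^{-r}$ and the moment bound on $\xi$ into the chain of inequalities.
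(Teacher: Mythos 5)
Your proposal is correct and follows exactly the route the paper intends: the authors state that Corollary~\ref{cor:galerkin} ``follows directly from Corollary~\ref{cor:weakest}'' and omit the proof, and you supply precisely the missing bookkeeping — applying \eqref{eq:weak_crucialB} with $S = P_N$, the spectral estimate $\| I - P_N \|_{L(H_r,H)} \leq (\eta+\lambda_{N+1})^{-r} \leq (\lambda_N)^{-r}$, and the reduction $\mathbb{E}[\varphi(X_t)] = \mathbb{E}[(P_t\varphi)(\xi)]$ together with the $G^0_3$-growth bound and $p \geq 3$. All three steps check out.
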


Corollary~\ref{cor:galerkin}
follows directly from 
Corollary~\ref{cor:weakest}
and its proof 
is therefore
omitted.
Corollary~\ref{cor:galerkin}
is a certain spatial weak numerical
approximation result for SPDEs.
Further weak numerical
approximation results
for SPDEs
can be found in 
\cite{h03b,dd06,dp09,gkl09,h10c,ls10,d11,kll11,KovacsLarssonLindgren2012,Brehier2012,Kruse2012}.

\subsubsection{Stochastic Taylor
expansions for solutions of SPDEs}
\label{sec:tay}

A further application of the mild
It\^{o} formula~\eqref{eq:itoformel}
is the derivation of stochastic
Taylor expansions for solutions
of stochastic partial differential 
equations.
In Kloeden \citationand\ 
Platen~\cite{kp92}
stochastic Taylor expansions
are derived for solutions of
finite dimensional
stochastic ordinary differential
equations by an iterated application
of the standard It\^{o} formula.
Clearly, this strategy can not be 
accomplished in the infinite dimensional
SPDE setting since the standard It\^{o}
formula can, in general, 
not be applied to
the solution process of an SPDE. 
However, by using the 
mild It\^{o} 
formula~\eqref{eq:itoformel}
instead of the standard It\^{o}
formula, this approach can
be generalized to solutions
of SPDEs in a 
straightforward way. 
The main difference 
to the finite dimensional setting in 
Kloeden \citationand\ 
Platen~\cite{kp92}
is that the linear
operators 
$ L^{ (0) }_t $, $ t \in (0,T] $,
and
$ L^{ (1) }_t $, $ t \in (0,T] $,
in \eqref{eq:defL0}
and \eqref{eq:defL1}
here
depend explicitly 
on the time variable
$ t \in (0,T] $ too 
(compare \eqref{eq:defL0}
and \eqref{eq:defL1} here
with (1.13) and (1.14)
in Chapter~5 in \cite{kp92};
see also 
Theorem~\ref{thm:strongtay}
and Theorem~\ref{thm:weaktay}
below for more details).
Similar and related 
stochastic Taylor expansions for SPDEs
can be found 
in Buckdahn \citationand\ 
Ma~\cite{bm02}, 
Bayer \citationand\ 
Teichmann~\cite{bt08},
Conus~\cite{c08}, 
Jentzen \citationand\  
Kloeden~\cite{jk09c},
Buckdahn, 
Bulla \citationand\ 
Ma~\cite{bbm10}
and
Jentzen~\cite{j10}.

For formulating the
stochastic
Taylor expansions below
some notations 
are introduced 
(see also Chapter~5
in \cite{kp92}).
By
$ 
  \mathcal{M} 
  := \{ \emptyset \}
  \cup 
  \left(
  \cup_{ n = 1 }^{ \infty }
  \{ 0, 1 \}^n
  \right)
$ 
the set of
multi-indices is denoted.
Moreover, define
two functions
$ 
  \left| \cdot \right| 
  \colon \mathcal{M}
  \rightarrow \{ 0, 1, 2, \dots \} 
$
and
$ 
  -( \cdot ) \colon 
  \mathcal{M} \backslash
  \{ \emptyset \} 
  \rightarrow \mathcal{M} 
$
by 
$ 
  | \emptyset | := 0 
$,
by
$ 
  | (\alpha_1, \alpha_2,\dots, \alpha_n) |
  := n 
$
and by
$ 
  - (\alpha_1, \alpha_2, \dots, \alpha_n) 
  := 
  (\alpha_2, \alpha_3, \dots, \alpha_n) 
$
for all
$ 
  \alpha_1, \alpha_2, \dots, \alpha_n \in 
  \{ 0, 1 \} 
$
and all
$ 
  n \in \mathbb{N} 
$.
Thus note that
$ 
  | \alpha | \geq 1
$
and 
$ 
  \alpha_1, \dots ,
  \alpha_{ | \alpha | } 
  \in \{ 0, 1 \} 
$
for all 
$ 
  \alpha \in 
  \mathcal{A} 
  \backslash
  \{ \emptyset \} 
$.
Furthermore, a finite 
nonempty 
subset $ \mathcal{A}
\subset \mathcal{M} $
of $ \mathcal{M} $
is called {\it hierarchical set}
if $ - \alpha \in \mathcal{A} $
for all $ \alpha \in \mathcal{A}
\backslash \{ \emptyset \} $.
Next define a function 
$ 
  \mathbb{B} \colon 
  \mathcal{P}( \mathcal{M} )
  \rightarrow
  \mathcal{P}( \mathcal{M} )
$
by
$ 
  \mathbb{B}(\mathcal{A})
  := 
  \{ 
    \alpha \in \mathcal{M} 
    \backslash \mathcal{A} 
    \colon 
    -\alpha \in \mathcal{A} 
  \} 
$ for all
$ \mathcal{A} \subset \mathcal{M} $.
Finally, 
let 
$ 
  W^0 \colon [0,T] 
  \rightarrow \mathbb{R} 
$
be a function 
and let
$ 
  ( W^1_t )_{ t \in [0,T] } 
$
be a cylindrical $ Q $-Wiener process
defined by
$
  W^0(t) := t
$
and
$
  W^1_t
  := 
  W_t
$
for all $ t \in [0,T] $.
Using this notation
the mild It\^{o} 
formula~\eqref{eq:operatorIto}
can be written as
\begin{equation}
\label{eq:mildito3}
  \varphi( X_t )
  =
  \varphi( e^{ A (t - t_0) } X_{ t_0 } )
+
  \sum_{ i = 0 }^{ 1 }
  \int_{ t_0 }^t
  \!
  \big( 
    L_{ ( t - s ) }^{(i)} \varphi
  \big) ( X_s )
  \, dW^i_s
\end{equation}
$ \mathbb{P} $-a.s.\ for 
all $ t_0, t \in [0,T] $
with $ t_0 \leq t $
and all 
$ 
  \varphi \in 
  C^2( H_{ \gamma }, V ) 
$.
Moreover, for two normed
$ \mathbb{R} $-vector spaces
and 
$ n \in \{ 0, 1, 2, \dots \} $
we define 
$
  C_b^n( V_1, V_2 )
:=
  \left\{
    \varphi \in C^n( V_1, V_2 )
    \colon
      \| \varphi 
      \|_{ 
        L^{ \infty }(
          V_1, V_2 
        ) 
      }
      +
      \sum_{ k = 1 }^n
      \| \varphi^{ (k) } 
      \|_{ 
        L^{ \infty }(
          V_1, L^{ (k) }(V_1, V_2) 
        ) 
      }
      < \infty
  \right\}
$,
$
  C_b( V_1, V_2 )
  :=
  C_b^0( V_1, V_2 )
$
and
$
  C^{ \infty }_b( V_1, V_2 )
  :=
  \cap_{ k \in \mathbb{N} }
  C^k_b( V_1, V_2 )
$.
We are now ready to present the
stochastic Taylor expansions
based on the mild It\^{o} 
formula~\eqref{eq:mildito3}.

\begin{theorem}[Strong 
stochastic Taylor
expansions]
\label{thm:strongtay}
Assume that the setting
in Section~\ref{sec:setting}
is fulfilled, 
assume 
$ 
  F \in 
  C^{ \infty }_b( 
    H_{ \gamma },
    H_{ \alpha } 
  ) 
$,
assume 
$ 
  B \in 
  C^{ \infty }_b( 
    H_{ \gamma },
    HS( U_0, H_{ \beta } ) 
  ) 
$
and let
$ 
  \varphi 
  \in C^{ \infty }_b( H_{ \gamma }, V )
$.
Then
\begin{align}
\label{eq:strongtay}
&
  \varphi( X_{ t } ) 
= 
  \varphi\big( e^{ A( t - t_0) } X_{ t_0 } \big) 
\\&
+
  \sum_{ 
    \substack{
      \alpha 
      \in \mathcal{A}
      \\
      \alpha \neq \emptyset
    } 
  }
  \int_{ t_0 }^{ t }
  \int_{ t_0 }^{ s_{ |\alpha| } }
  \dots
  \int_{ t_0 }^{ s_2 }
  \Big( 
    L_{ 
      (
        s_2 - s_1 
      ) 
    }^{ 
      ( \alpha_1 )
    }
    \dots
    L_{ 
      (
        s_{ |\alpha| } - s_{ |\alpha| - 1}
      ) 
    }^{ 
      ( \alpha_{ |\alpha| - 1 } )
    } \,
    L_{ 
      (
        t - s_{ |\alpha| }
      ) 
    }^{ 
      ( \alpha_{ |\alpha| } ) 
    } \,
    \varphi 
  \Big)\big( 
    e^{ 
      A ( s_1 - t_0 ) 
    } 
    X_{ t_0 } 
  \big) \,
  dW^{ \alpha_1 }_{ s_1 } \,
  dW^{ \alpha_2 }_{ s_2 }
  \dots
  dW^{ \alpha_{ |\alpha| } }_{
    s_{ |\alpha| }
  }
\nonumber
\\&+
  \sum_{ 
    \alpha 
    \in \mathbb{B}(\mathcal{A})
  }
  \int_{ t_0 }^{ t }
  \int_{ t_0 }^{ s_{ |\alpha| } }
  \dots
  \int_{ t_0 }^{ s_2 }
  \Big( 
    L_{ 
      (
        s_2 - s_1 
      ) 
    }^{ 
      ( \alpha_1 )
    }
    \dots
    L_{ 
      (
        s_{ |\alpha| } - s_{ |\alpha| - 1}
      ) 
    }^{ 
      ( \alpha_{ |\alpha| - 1 } )
    } \,
    L_{ 
      (
        t - s_{ |\alpha| }
      ) 
    }^{ 
      ( \alpha_{ |\alpha| } ) 
    } \,
    \varphi 
  \Big)\big( 
    X_{ s_1 } 
  \big) \,
  dW^{ \alpha_1 }_{ s_1 } \,
  dW^{ \alpha_2 }_{ s_2 }
  \dots
  dW^{ \alpha_{ |\alpha| } }_{
    s_{ |\alpha| }
  }
\nonumber
\end{align}
for all $ t_0, t \in [0,T] $
with $ t_0 \leq t $ and
all hierarchical sets
$ \mathcal{A} \subset \mathcal{M} $.
\end{theorem}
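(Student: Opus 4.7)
The plan is to proceed by induction on the cardinality of the hierarchical set $\mathcal{A}\subset\mathcal{M}$, using the mild It\^{o} formula in the compact form~\eqref{eq:mildito3} as the sole analytic input. The base case is $\mathcal{A}=\{\emptyset\}$. Here the first sum in \eqref{eq:strongtay} is empty, and $\mathbb{B}(\mathcal{A})=\{(0),(1)\}$, so the claimed identity reduces verbatim to the mild It\^{o} formula \eqref{eq:mildito3} applied to the test function $\varphi\in C^\infty_b(H_{\gamma},V)$ on the interval $[t_0,t]$, which is available by Corollary~\ref{cor:ito} since $F,B,\varphi$ are smooth enough.

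For the inductive step, assume the expansion holds for some hierarchical set $\mathcal{A}$, pick any $\gamma=(\gamma_1,\dots,\gamma_m)\in\mathbb{B}(\mathcal{A})$, and set $\mathcal{A}'=\mathcal{A}\cup\{\gamma\}$. Note $\mathcal{A}'$ is again hierarchical because $-\gamma\in\mathcal{A}$, and a direct set-theoretic check gives $\mathbb{B}(\mathcal{A}')=\bigl(\mathbb{B}(\mathcal{A})\setminus\{\gamma\}\bigr)\cup\{(0,\gamma_1,\dots,\gamma_m),(1,\gamma_1,\dots,\gamma_m)\}$. Hence passing from the $\mathcal{A}$-expansion to the $\mathcal{A}'$-expansion amounts to rewriting the single remainder term attached to $\gamma$ as the sum of its ``frozen-initial-value'' counterpart (which enters the first sum of the $\mathcal{A}'$-expansion) plus the two new remainder terms indexed by $(0,\gamma)$ and $(1,\gamma)$. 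Writing the remainder attached to $\gamma$ as
\[
R_\gamma^{\mathcal{A}} \;=\;
\int_{t_0}^{t}\!\!\int_{t_0}^{s_m}\!\!\cdots\!\int_{t_0}^{s_2}
\Psi_{s_1,\dots,s_m}(X_{s_1})\,dW^{\gamma_1}_{s_1}\cdots dW^{\gamma_m}_{s_m},
\qquad
\Psi_{s_1,\dots,s_m}\;:=\;L^{(\gamma_1)}_{s_2-s_1}\cdots L^{(\gamma_m)}_{t-s_m}\varphi,
\]
the key analytic step is to apply the mild It\^{o} formula \eqref{eq:mildito3} to the test function $\Psi_{s_1,\dots,s_m}\in C^\infty_b(H_{\gamma},V)$ on the interval $[t_0,s_1]$, for each fixed $(s_2,\dots,s_m)$ and each fixed $s_1$. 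This yields
\[
\Psi_{s_1,\dots,s_m}(X_{s_1})
=\Psi_{s_1,\dots,s_m}\!\bigl(e^{A(s_1-t_0)}X_{t_0}\bigr)
+\sum_{j\in\{0,1\}}\int_{t_0}^{s_1}\!
\bigl(L^{(j)}_{s_1-r}\Psi_{s_1,\dots,s_m}\bigr)(X_r)\,dW^{j}_{r}.
\]
Plugging this identity into $R_\gamma^{\mathcal{A}}$ and invoking a stochastic Fubini theorem to interchange the order of the $dW^{j}_r$ integration with the outer $dW^{\gamma_1}_{s_1}\cdots dW^{\gamma_m}_{s_m}$ integrations gives precisely the new first-sum contribution for $\gamma$ together with the new second-sum contributions for $(0,\gamma)$ and $(1,\gamma)$, as desired. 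All other remainder terms in the $\mathcal{A}$-expansion carry over unchanged.

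Two technical points deserve verification. First, one needs the iterated application of the operators $L^{(i)}_t$ to preserve the $C^\infty_b$-type regularity of the test function: this follows from $F,B\in C^\infty_b$ together with the formulas \eqref{eq:defL0t}-\eqref{eq:defL1}, since each $L^{(i)}_t$ differentiates twice at most and multiplies by $e^{At}$-conjugated smooth coefficients, so by a straightforward induction $\Psi_{s_1,\dots,s_m}\in C^\infty_b(H_{\gamma},V)$ uniformly over compact parameter ranges. This legitimises the use of Corollary~\ref{cor:ito} and guarantees finiteness of the relevant $L^p$-norms needed for Fubini. Second, one must check integrability of the iterated integrands; this is a routine consequence of the smoothness of $\Psi_{s_1,\dots,s_m}$ and the moment bound $\sup_{s\in[0,T]}\mathbb{E}[\|X_s\|_{H_\gamma}^p]<\infty$ from Proposition~\ref{prop:prop}, combined with Burkholder-Davis-Gundy inequalities for the multiple stochastic integrals.

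The main obstacle in this argument is the stochastic Fubini step: one is exchanging the order of an It\^{o} integral against $dW^{j}_r$ with a finite sequence of It\^{o}/Lebesgue integrals against $dW^{\gamma_k}_{s_k}$, whose region of integration is the simplex $t_0<s_1<\dots<s_m<t$ while the inner variable $r$ ranges over $[t_0,s_1]$. A clean way to handle this is to apply a stochastic Fubini theorem for Hilbert-space-valued cylindrical Wiener integrals (e.g.\ in the form available in Da Prato \& Zabczyk~\cite{dz92}) inductively, one integration at a time, each time verifying the square-integrability hypothesis using the smoothness of $\Psi_{s_1,\dots,s_m}$ and the a priori moment bounds on $X$. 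Once this is set up, the inductive step closes and one obtains the $\mathcal{A}'$-expansion, completing the proof.
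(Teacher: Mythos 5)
Your proposal is correct and follows the same route as the paper, which proves the theorem by iterated application of the mild It\^{o} formula~\eqref{eq:mildito3}; your induction over hierarchical sets (adding one boundary multi-index $\gamma\in\mathbb{B}(\mathcal{A})$ at a time and expanding the innermost integrand $\Psi_{s_1,\dots,s_m}(X_{s_1})$ with \eqref{eq:mildito3} on $[t_0,s_1]$ for frozen $s_1,\dots,s_m$) is exactly the fleshed-out version of that argument, and your identification of $\mathbb{B}(\mathcal{A}\cup\{\gamma\})$ is right. One remark: the ``main obstacle'' you identify is not actually there --- the new integration variable $r\in[t_0,s_1]$ becomes the \emph{innermost} variable of the enlarged simplex, so after relabelling no interchange of integration order (hence no stochastic Fubini theorem) is required, only linearity of the iterated integrals together with the routine selection of jointly measurable, predictable versions of the parameter-dependent stochastic integrals.
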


\begin{proof}[Proof
of Theorem~\ref{thm:strongtay}]
Theorem~\ref{thm:strongtay}
immediately follows from an
iterated application of the
mild It\^{o} formula~\eqref{eq:mildito3}.
\end{proof}

The term 
$ 
  \varphi\big( 
    e^{ A (t-t_0) } X_{ t_0 } 
  \big) 
  +
  \sum_{
  \alpha \in \mathcal{A},
  \alpha \neq \emptyset  
  }
  \dots
$,
$ t \in [t_0,T] $,
on the left hand side of
\eqref{eq:strongtay}
is referred as {\it strong stochastic 
Taylor approximation} 
(or truncated strong stochastic 
Taylor expansion) of
$ \varphi( X_t ) $,
$ t \in [t_0,T] $,
corresponding to the
hierarchical set 
$ \mathcal{A}
\subset \mathcal{M} $
for $ t_0 \in [0,T] $.
The expression 
$ 
  \sum_{
  \alpha \in \mathbb{B}(\mathcal{A})
  }
  \dots
$,
$ t \in [t_0,T] $,
on the left hand side of
\eqref{eq:strongtay}
is called {\it remainder term}
of the strong stochastic 
Taylor expansions of
$ \varphi( X_t ) $,
$ t \in [t_0,T] $,
corresponding to the
hierarchical set 
$ \mathcal{A}
\subset \mathcal{M} $
for $ t_0 \in [0,T] $.
Next observe that,
in the case 
$ H = \mathbb{R}^d $
with $ d \in \mathbb{N} $
and $ A = 0 $,
Theorem~\ref{thm:strongtay}
essentially reduces to
Theorem~5.5.1
in 
Kloeden \citationand\ 
Platen~\cite{kp92}.
Let us also add the following remark
on possible generalizations
of Theorem~\ref{thm:strongtay}.

\begin{remark}
The assumption 
in Theorem~\ref{thm:strongtay}
that $ F $, $ B $
and $ \varphi $ are infinitely
often Fr\'{e}chet differentiable
can be relaxed.
To be more precise, 
to obtain \eqref{eq:strongtay}
for a given hierarchical
set 
$ 
  \mathcal{A} \subset \mathcal{M} 
$,
it is sufficient to assume 
that
$ 
  F \in C_b( H_{ \gamma }, H_{ \alpha } )
$ 
is
$ 
  \max_{ 
    \alpha \in \mathbb{B}( \mathcal{A} ),
    \alpha_1 = 0
  } 
  \min\big\{
    2 k - 2 - 
    \sum_{ i = 1 }^{ k - 1 }
    \alpha_i
    \colon
    k \in \{ 1, \dots, |\alpha|\},
    \alpha_{ k + 1 } = 
    \ldots =
    \alpha_{ | \alpha | } =
    1
  \big\}
$--times, 
that
$ 
  B \in 
  C_b( 
    H_{ \gamma },
    HS( U_0, H_{ \beta } ) 
  )
$ 
is
$ 
  \max_{ 
    \alpha \in \mathbb{B}( \mathcal{A} )
  }
  (
  2 | \alpha | - 2
  - 
  \sum_{ i = 1 }^{ | \alpha | - 1 }
  \alpha_i
  )
$--times
and that
$ 
  \varphi \in
  C_b( 
    H_{ \gamma },
    V
  )
$
is
$
    \max_{ 
      \alpha \in \mathbb{B}( \mathcal{A} )
    }
    (
    2 | \alpha | 
    - 
    \sum_{ i = 1 }^{ | \alpha | }
    \alpha_i
    )
$--times
continuously
Fr\'{e}chet 
differentiable
with globally
bounded 
Fr\'{e}chet derivatives.
Moreover,
the boundedness
assumptions on $ F, B $
and $ \varphi $ and its derivatives
can be reduced
if $ p \in [2,\infty) $ 
in Assumption~\ref{initial}
is assumed
to be sufficiently large.
\end{remark}

In the next step 
Theorem~\ref{thm:strongtay}
is illustrated with two possible
examples.
First, in the case of the
hierarchical set 
$ \mathcal{A} = \{ \emptyset \} $,
equation~\eqref{eq:strongtay}
reduces to
\eqref{eq:mildito3}, i.e., we have
\begin{equation}
  \varphi( X_t )
  =
  \underbrace{
  \varphi( e^{ A (t - t_0) } X_{ t_0 } )
  }_{
    \substack{
    \text{strong stochastic Taylor}
    \\
    \text{approximation corresponding}
    \\
    \text{to the hierarchical set }
    \mathcal{A} = \{ \emptyset \}
    }
  }
+
  \underbrace{
  \sum_{ i = 0 }^{ 1 }
  \int_{ t_0 }^t
  \!
  \big( 
    L_{ ( t - s ) }^{(i)} \varphi
  \big) ( X_s )
  \, dW^i_s
  }_{
    \substack{
    \text{remainder term corresponding}
    \\
    \text{to the hierarchical set }
    \mathcal{A} = \{ \emptyset \}
    }
  }
\end{equation}
$ \mathbb{P} $-a.s.\ for 
all $ t_0, t \in [0,T] $
with $ t_0 \leq t $
and all 
$ 
  \varphi \in 
  C^{ \infty }_b( H_{ \gamma }, V ) 
$. 
Second, in the case of the
hierarchical set
$ 
  \mathcal{A} 
  = 
  \{ 
    \emptyset,
    (1) 
  \} 
$,
equation~\eqref{eq:strongtay}
simplifies to
\begin{equation}
\begin{split}
  \varphi( X_t )
& =
  \underbrace{
  \varphi( e^{ A (t - t_0) } X_{ t_0 } )
  +
  \int_{ t_0 }^t
  \varphi'( 
    e^{ A (t - t_0) } X_{ t_0 } 
  )
  \, e^{ A (t - s) }  
  B(
    e^{ A (s - t_0) } X_{ t_0 } 
  )
  \, dW_s
  }_{
    \text{strong stochastic Taylor
    approximation corresponding
    to }
    \mathcal{A} = \{ \emptyset, (1) \}
  }
\\ & \quad
  \underbrace{
  \begin{split}
  &
  +
  \int_{ t_0 }^t
  \big( 
    L_{ ( t - s ) }^{(0)} \varphi
  \big) ( X_s )
  \,
  ds
  +
  \int_{ t_0 }^t
  \int_{ t_0 }^s
  \big( 
    L_{ ( s - u ) }^{(0)}
    L_{ ( t - s ) }^{(1)} \varphi
  \big) ( X_u )
  \,
  du
  \,
  dW_s
  \\&
  +
  \int_{ t_0 }^t
  \int_{ t_0 }^s
  \big( 
    L_{ ( s - u ) }^{(1)}
    L_{ ( t - s ) }^{(1)} \varphi
  \big) ( X_u )
  \,
  dW_u
  \,
  dW_s
  \end{split}
  }_{
    \text{remainder term corresponding
    to }
    \mathcal{A} = \{ \emptyset, (1) \}
  }
\end{split}
\end{equation}
$ \mathbb{P} $-a.s.\ for 
all $ t_0, t \in [0,T] $
with $ t_0 \leq t $
and all 
$ 
  \varphi \in 
  C^{ \infty }_b( H_{ \gamma }, V ) 
$. 
After having presented strong 
stochastic Taylor expansions
in Theorem~\ref{thm:strongtay},
we now formula the corresponding
weak stochastic Taylor expansions
based on the mild It\^{o} 
formula~\eqref{eq:mildito3}.

\begin{theorem}[Weak 
stochastic Taylor
expansions]
\label{thm:weaktay}
Assume that the setting
in Section~\ref{sec:setting}
is fulfilled, 
let 
$ n \in \mathbb{N} $,
assume 
$ 
  F \in 
  C_b^{ (2n - 2) }( H_{ \gamma },
  H_{ \alpha } ) 
$,
assume 
$ 
  B \in 
  C_b^{ (2n - 2) }( H_{ \gamma },
  HS( U_0, H_{ \beta } ) ) 
$
and let
$ 
  \varphi \in 
  C_b^{ 2n }( H_{ \gamma }, V )
$.
Then
\begin{align}
\label{eq:weaktay}
&
\nonumber
  \mathbb{E}\big[ \varphi( X_{ t } ) \big]
= 
  \mathbb{E}\big[ 
    \varphi\big( e^{ A (t - t_0) } X_{ t_0 } 
    \big) 
  \big]
\\ &
\nonumber
  +
  \sum_{ 
    k = 1
  }^{ n - 1 }
  \int_{ t_0 }^{ t }
  \int_{ t_0 }^{ s_{ k } }
  \dots
  \int_{ t_0 }^{ s_2 }
  \mathbb{E}\!\left[
  \big( 
    L_{ 
      (
        s_2 - s_1 
      ) 
    }^{ 
      ( 0 )
    }
    \dots
    L_{ 
      (
        s_{ k } - s_{ k - 1}
      ) 
    }^{ 
      ( 0 )
    } \,
    L_{ 
      (
        t - s_{ k }
      ) 
    }^{ 
      ( 0 ) 
    } \,
    \varphi 
  \big)\!\left( 
    e^{ 
      A ( s_1 - t_0 ) 
    } 
    X_{ t_0 } 
  \right)
  \right] 
  ds_1 \,
  ds_2
  \dots
  ds_{ k }
\\ & +
  \int_{ t_0 }^{ t }
  \int_{ t_0 }^{ s_{ n } }
  \dots
  \int_{ t_0 }^{ s_2 }
  \mathbb{E}\!\left[
  \big( 
    L_{ 
      (
        s_2 - s_1 
      ) 
    }^{ 
      ( 0 )
    }
    \dots
    L_{ 
      (
        s_{ n } - s_{ n - 1}
      ) 
    }^{ 
      ( 0 )
    } \,
    L_{ 
      (
        t - s_{ n }
      ) 
    }^{ 
      ( 0 ) 
    } \,
    \varphi 
  \big)\!\left( 
    X_{ s_1 } 
  \right)
  \right] 
  ds_1 \,
  ds_2
  \dots
  ds_{ n }
\end{align}
for all $ t_0, t \in [0,T] $
with $ t_0 \leq t $.
\end{theorem}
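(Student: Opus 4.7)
The plan is to prove \eqref{eq:weaktay} by induction on $n \in \mathbb{N}$, iteratively applying the mild It\^{o} formula \eqref{eq:operatorIto2} (which follows from Corollary~\ref{cor:ito} after taking expectations, using that the stochastic integral in \eqref{eq:itoformel} has vanishing mean under the boundedness and moment hypotheses). This mirrors the classical derivation of stochastic Taylor expansions in Chapter~5 of Kloeden \citationand\ Platen~\cite{kp92} but with the time-dependent operators $L^{(0)}_t$, $t \in (0,T]$, in place of the autonomous Kolmogorov operators used in the finite-dimensional setting.

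\emph{Base case $n=1$.} With $n=1$ the sum $\sum_{k=1}^{n-1}$ is empty, and the right-hand side of \eqref{eq:weaktay} reduces to
\[
\mathbb{E}\big[\varphi(e^{A(t-t_0)}X_{t_0})\big] + \int_{t_0}^{t}\mathbb{E}\big[(L^{(0)}_{t-s_1}\varphi)(X_{s_1})\big]\,ds_1,
\]
which is exactly \eqref{eq:operatorIto2} applied to $\varphi$. The conditions $\varphi \in C_b^{2}$ and $F,B \in C_b^{0}$ required by \eqref{eq:operatorIto2} are included in the hypotheses.

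\emph{Inductive step.} Given the expansion at level $n$, it suffices to show that $R_n = T_n + R_{n+1}$, where $T_n$ denotes the $k=n$ summand in \eqref{eq:weaktay} at level $n+1$ and $R_{n+1}$ the corresponding remainder. Fix $t_0 < s_1 < s_2 < \dots < s_n \leq t$ and consider the deterministic test function
\[
H_{s_2,\dots,s_n}(x) \;:=\; \big(L^{(0)}_{s_3-s_2}\,L^{(0)}_{s_4-s_3}\cdots L^{(0)}_{s_n-s_{n-1}}\,L^{(0)}_{t-s_n}\varphi\big)(x)\,,
\]
which depends on $s_2,\dots,s_n$ but not on $s_1$. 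Under the regularity hypotheses of the theorem with $n$ replaced by $n+1$, a repeated application of Lemma~\ref{kolm:bound0} and Lemma~\ref{lem:Ktbound} shows that $H_{s_2,\dots,s_n} \in C_b^{2}(H_\gamma, V)$ with bounds uniform in $s_2,\dots,s_n$ varying in compact subsets of the simplex. Applying \eqref{eq:operatorIto2} to this function on the interval $[t_0, s_2]$ gives
\[
\int_{t_0}^{s_2}\mathbb{E}\big[(L^{(0)}_{s_2-s_1}H_{s_2,\dots,s_n})(X_{s_1})\big]\,ds_1 \;=\; \mathbb{E}\big[H_{s_2,\dots,s_n}(X_{s_2})\big] - \mathbb{E}\big[H_{s_2,\dots,s_n}(e^{A(s_2-t_0)}X_{t_0})\big]\,.
\]
The left-hand side is precisely the innermost integral appearing in $R_n$. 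Substituting this identity into $R_n$, integrating over the remaining variables $s_2,\dots,s_n$, and iterating the same expansion once more with $H_{s_2,\dots,s_n}(X_{s_2})$ in the role of the original quantity $\varphi(X_t)$ (equivalently: applying the mild It\^{o} formula a second time to decompose $\mathbb{E}[H_{s_2,\dots,s_n}(X_{s_2})]$) produces, after relabeling of the dummy variables, the desired split $R_n = T_n + R_{n+1}$. The boundary contribution $\mathbb{E}[H_{s_2,\dots,s_n}(e^{A(s_2-t_0)}X_{t_0})]$ accumulates over the iterations to give $T_n$, while the last remaining unresolved integral becomes $R_{n+1}$.

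\emph{Main obstacle.} The central technical point is a careful regularity bookkeeping: verifying that at every step of the induction the iterated composition $L^{(0)}_{s_2-s_1}\cdots L^{(0)}_{t-s_n}\varphi$ still lies in $C_b^{2}$ with estimates sufficiently uniform in $(s_1,\dots,s_n)$ to justify (i)~the pointwise application of \eqref{eq:operatorIto2}, (ii)~the vanishing of the stochastic integral in expectation, and (iii)~Fubini's theorem for reordering the iterated time integrals. Each application of $L^{(0)}$ consumes exactly two derivatives of the test function and two derivatives of the coefficients, so the regularity budget $\varphi \in C_b^{2n}$ together with $F,B \in C_b^{2n-2}$ is precisely what is required to reach level $n$; the composition with the contractive operators $K_t$ preserves boundedness because $(e^{At})_{t\in[0,T]}$ is uniformly bounded on each $H_r$. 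Combined with the moment bound from Assumption~\ref{initial}, this regularity propagation makes all integrands in the expansion uniformly integrable on $[0,T]^n\times\Omega$ and completes the argument.
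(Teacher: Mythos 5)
Your argument is correct in substance, but it is organized differently from the paper's. The paper obtains Theorem~\ref{thm:weaktay} in one line: it takes expectations on both sides of the \emph{strong} stochastic Taylor expansion \eqref{eq:strongtay} of Theorem~\ref{thm:strongtay} for the hierarchical set consisting of the all--zero multi-indices, so that every iterated integral whose multi-index contains a $1$ disappears in expectation and only the purely deterministic iterated integrals survive. You instead run a self-contained induction on $n$ directly at the level of expectations, iterating \eqref{eq:operatorIto2} (the expectation form of the mild It\^{o} formula). The underlying mechanism is identical --- both proofs are ``iterate the mild It\^{o} formula~\eqref{eq:mildito3}'' --- but your route avoids the multi-index bookkeeping and the separate verification that all stochastic-integral terms of the pathwise expansion are centred, at the cost of having to carry the regularity propagation ($L^{(0)}$ consuming two derivatives per step, $K_u$ restoring the spatial regularity with an integrable singularity as $u\downarrow 0$) explicitly through the induction; the paper's route concentrates all of that into Theorem~\ref{thm:strongtay} and gets the weak statement for free. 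Your accounting of the derivative budget ($\varphi\in C^{2n}_b$, $F,B\in C^{2n-2}_b$) and your appeal to Lemmas~\ref{lem:Ktbound} and~\ref{kolm:bound0} for the composed test functions match what the paper's machinery provides.

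One presentational point: your inductive step as written already closes itself before the sentence ``iterating the same expansion once more \dots''. Substituting the identity
\begin{equation*}
  \int_{t_0}^{s_2}\E\big[(L^{(0)}_{s_2-s_1}H_{s_2,\dots,s_n})(X_{s_1})\big]\,ds_1
  =\E\big[H_{s_2,\dots,s_n}(X_{s_2})\big]-\E\big[H_{s_2,\dots,s_n}(e^{A(s_2-t_0)}X_{t_0})\big]
\end{equation*}
into the $n$-fold remainder and relabelling $(s_2,\dots,s_n)\mapsto(s_1,\dots,s_{n-1})$ yields exactly $R_{n-1}=T_{n-1}+R_n$, which is the required recursion (up to an index shift relative to how you announced it). The additional ``second application'' to $\E[H_{s_2,\dots,s_n}(X_{s_2})]$ on $[t_0,s_2]$ would merely undo the first one; delete that clause and state the step as $R_{m}=T_{m}+R_{m+1}$ obtained by applying \eqref{eq:operatorIto2} to the innermost expectation for each fixed point of the simplex, and the proof is clean.
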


\begin{proof}[Proof of Theorem~\ref{thm:weaktay}]
Equation~\eqref{eq:weaktay}
immediately follows by
taking expectations on both
sides of 
equation~\eqref{eq:strongtay}
with the hierarchical set
$ 
  \mathcal{A} = 
  \{ 
    \alpha \in \mathcal{M}
    \colon
    | \alpha | \leq n, 
    \sum_{ i = 1 }^{ |\alpha| }    
    \alpha_i = 0
  \}
$.
\end{proof}

Using definition~\eqref{eq:defPt},
the weak stochastic Taylor expansions 
in Theorem~\ref{thm:weaktay}
can also be written in the following form.

\begin{cor}
\label{cor:weaktay}
Assume that the setting
in Section~\ref{sec:setting}
is fulfilled, 
let 
$ n \in \mathbb{N} $,
assume 
$ 
  F \in 
  C_b^{ (2n - 2) }( H_{ \gamma },
  H_{ \alpha } ) 
$,
assume 
$ 
  B \in 
  C_b^{ (2n - 2) }( H_{ \gamma },
  HS( U_0, H_{ \beta } ) ) 
$
and let
$ 
  \varphi \in 
  C_b^{ 2n }( H_{ \gamma }, V )
$.
Then
\begin{equation}
\label{eq:weaktay2}
\begin{split}
  \big( P_t \varphi \big)( x )
&= 
  \big( K_t \varphi 
  \big)(x)
\\ & \quad
  +
  \sum_{ 
    k = 1
  }^{ n - 1 }
  \int_{ 0 }^{ t }
  \int_{ 0 }^{ s_{ k } }
  \dots
  \int_{ 0 }^{ s_2 }
  \big( 
    K_{ s_1 } \,
    L_{ 
      (
        s_2 - s_1 
      ) 
    }^{ 
      ( 0 )
    }
    \dots
    L_{ 
      (
        s_{ k } - s_{ k - 1}
      ) 
    }^{ 
      ( 0 )
    } \,
    L_{ 
      (
        t - s_{ k }
      ) 
    }^{ 
      ( 0 ) 
    } \,
    \varphi 
  \big)\!\left( 
    x 
  \right)
  ds_1 \,
  ds_2
  \dots
  ds_{ k }
\\ & \quad +
  \int_{ 0 }^{ t }
  \int_{ 0 }^{ s_{ n } }
  \dots
  \int_{ 0 }^{ s_2 }
  \big( 
    P_{ s_1 } \,
    L_{ 
      (
        s_2 - s_1 
      ) 
    }^{ 
      ( 0 )
    }
    \dots
    L_{ 
      (
        s_{ n } - s_{ n - 1}
      ) 
    }^{ 
      ( 0 )
    } \,
    L_{ 
      (
        t - s_{ n }
      ) 
    }^{ 
      ( 0 ) 
    } \,
    \varphi 
  \big)( 
    x 
  ) \,
  ds_1 \,
  ds_2
  \dots
  ds_{ n }
\end{split}
\end{equation}
for all 
$ x \in H_{ \gamma } $
and all
$ t \in [0,\infty) $.
\end{cor}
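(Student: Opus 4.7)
The plan is to specialize Theorem~\ref{thm:weaktay} to the family of mild solution processes $X^x \colon [0,\infty) \times \Omega \to H_\gamma$, $x \in H_\gamma$, from \eqref{eq:SPDE00} and take initial time $t_0 = 0$. Since each $X^x$ solves the SPDE~\eqref{eq:SPDE} with the deterministic initial value $x \in H_\gamma$, and since $F$, $B$, $\varphi$ satisfy the regularity hypotheses required by Theorem~\ref{thm:weaktay}, the weak stochastic Taylor expansion~\eqref{eq:weaktay} is directly applicable.

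Next I would identify each term on the right-hand side of \eqref{eq:weaktay} (with $X$ replaced by $X^x$ and $t_0 = 0$) with the corresponding term of \eqref{eq:weaktay2}. For the leading term: because $X^x_0 = x$ $\mathbb{P}$-a.s.~by \eqref{eq:SPDE00}, we have $\mathbb{E}\big[\varphi(e^{At}X^x_0)\big] = \varphi(e^{At}x) = (K_t\varphi)(x)$ by definition~\eqref{eq:defK}. For the middle sum ($k = 1, \dots, n-1$), the argument $e^{A(s_1 - 0)} X^x_0 = e^{As_1}x$ is deterministic, so the expectation is trivial and applying \eqref{eq:defK} for $K_{s_1}$ gives exactly $\big(K_{s_1} L^{(0)}_{(s_2-s_1)} \cdots L^{(0)}_{(s_k - s_{k-1})} L^{(0)}_{(t - s_k)}\varphi\big)(x)$. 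For the remainder term, the integrand is evaluated at the genuine random variable $X^x_{s_1}$, and applying Fubini's theorem to interchange the expectation with the iterated time integrals over the simplex $\{0 \le s_1 \le \dots \le s_n \le t\}$ produces $\mathbb{E}\big[\big(L^{(0)}_{(s_2 - s_1)} \cdots L^{(0)}_{(t - s_n)}\varphi\big)(X^x_{s_1})\big] = \big(P_{s_1}\,L^{(0)}_{(s_2 - s_1)} \cdots L^{(0)}_{(t - s_n)}\varphi\big)(x)$ via definition~\eqref{eq:defPt}.

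The main obstacle is justifying the Fubini interchange in the remainder term, since both the chain of $L^{(0)}$-operators and the expectation need to be applied to a jointly measurable, absolutely integrable function on the simplex $\times\, \Omega$. This reduces to controlling
\[
  \mathbb{E}\!\left[\int_0^t \!\! \int_0^{s_n} \!\! \cdots \int_0^{s_2} \big\|\big(L^{(0)}_{(s_2 - s_1)} \cdots L^{(0)}_{(t - s_n)}\varphi\big)(X^x_{s_1})\big\|_V \, ds_1 \cdots ds_n\right],
\]
which follows from iterating Lemma~\ref{kolm:bound0} together with the norm estimates of Lemma~\ref{lem:Ktbound} to obtain polynomial growth bounds of the iterated operator applied to $\varphi$ in a suitable $G^0_q$-norm, and then using Proposition~\ref{prop:prop} to bound $\mathbb{E}\big[(1 + \|X^x_{s_1}\|_{H_\gamma})^q\big]$ uniformly in $s_1 \in [0, t]$. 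The time-singularities introduced by the $L^{(0)}_{(\cdot)}$-operators remain integrable on the simplex because each $L^{(0)}_{t-s}\varphi$ is controlled via \eqref{eq:defL0t} and the bounds $\|e^{A(t-s)}\|_{L(H_\alpha, H_\gamma)}$, $\|e^{A(t-s)}\|^2_{L(H_\beta, H_\gamma)}$, which are integrable near $s = t$ since $\gamma - \alpha < 1$ and $\gamma - \beta < \tfrac{1}{2}$ (Assumptions~\ref{drift} and \ref{diffusion}).

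Once Fubini is justified, the identification of the three groups of terms is immediate from the definitions of $K_t$ and $P_t$, and no further argument is needed. Thus the proof of Corollary~\ref{cor:weaktay} is essentially a pointwise-in-$x$ rewriting of Theorem~\ref{thm:weaktay}, where all technical work has been pre-packaged into Lemmas~\ref{lem:Pt}, \ref{lem:Ktbound}, \ref{kolm:bound0} and Proposition~\ref{prop:prop}.
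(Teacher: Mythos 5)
Your proposal is correct and matches the paper's (essentially unstated) argument: the paper simply observes that \eqref{eq:weaktay2} is Theorem~\ref{thm:weaktay} applied to $X^x$ with $t_0=0$, rewritten via the definitions of $K_t$ and $P_t$. One small remark: the Fubini interchange you worry about is not actually needed here, since in \eqref{eq:weaktay} the expectation already sits inside the iterated time integrals, so the remainder term follows pointwise in $(s_1,\dots,s_n)$ directly from definition~\eqref{eq:defPt}.
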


\subsubsection{Further mild
It\^{o} formulas for 
solutions of SPDEs}

This subsection presents
two slightly different
variants 
(Corollary~\ref{cor:anotherito1}
and Proposition~\ref{prop:secondito})
of the mild It\^{o} formula
in Corollary~\ref{cor:ito}.
Both variants assume that
the test function $ \varphi $ in
Corollary~\ref{cor:ito}
fulfills additional regularity.
The first variant 
(see Corollary~\ref{cor:anotherito1}
below)
is a direct consequence of
Corollary~\ref{cor:ito}.

\begin{cor}[Another - 
somehow mild - 
It{\^o} type formula
for solutions of SPDEs]
\label{cor:anotherito1}
Assume that
the setting in Section~\ref{sec:setting}
is fulfilled.
Then
\begin{equation}
\label{eq:welldefinedOTHER1}
  \mathbb{P}\!\left[
    \int_{ t_0 }^t
    \big\|
      \varphi'\big( 
        (
          I +
          e^{ A (t - t_0) } -
          e^{ A (t - s) } 
        )
        X_{ t_0 }  
    \big)  
  \big\|_{
    L( H_r, V )
  }
    \,
    \big\|
      A \, e^{ A( t - s ) } \,
      X_{ t_0 } 
    \big\|_{
      H_r
    }
    \,
    ds
    < \infty 
  \right] = 1 
\end{equation}
and
\begin{equation}
\label{eq:OTHERITO1}
\begin{split}
  \varphi( X_t )
 &=
  \varphi( X_{ t_0 } ) 
  +
  \int_{ t_0 }^t
  \varphi'\big( 
    (
      I +
      e^{ A (t - t_0) } -
      e^{ A (t - s) } 
    )
    X_{ t_0 }  
  \big) 
  A \, e^{ A( t - s ) } \,
  X_{ t_0 } \, ds
\\ & \quad
  +
  \int_{ t_0 }^t
  \varphi'( e^{ A( t - s ) } X_s ) \,
  e^{ A( t - s ) }
  F( X_s ) \, ds
  +
  \int_{ t_0 }^t
  \varphi'( e^{ A( t - s ) } X_s ) \,
  e^{ A( t - s ) }
  B( X_s ) \, dW_s
\\&\quad+
  \frac{1}{2}
  \sum_{ j \in \mathcal{J} }
  \int_{ t_0 }^t
  \varphi''( e^{ A( t - s ) } X_s )
  \big(
    e^{ A( t - s ) }
    B( X_s ) g_j,
    e^{ A( t - s ) }
    B( X_s ) g_j
  \big) \, ds
\end{split}
\end{equation}
$ \mathbb{P} $-a.s.\ for 
all
$ t_0, t \in [ 0, T ] $
with $ t_0 \leq t $,
$ 
  \varphi \in 
  C^2( H_{ r }, V ) 
$
and all
$
  r \in (-\infty, \gamma ) 
$.
\end{cor}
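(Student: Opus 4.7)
The plan is to combine Corollary~\ref{cor:ito} with a deterministic chain-rule argument applied along a suitably chosen path interpolating between $X_{t_0}$ and $e^{A(t-t_0)} X_{t_0}$ in $H_r$. First, I would apply Corollary~\ref{cor:ito} on the interval $[t_0,t]$ to the test function $\varphi \in C^2(H_r, V)$, which, since $r < \gamma \leq \min(\alpha+1, \beta+\tfrac{1}{2})$ by Assumptions~\ref{drift}--\ref{diffusion}, produces exactly the three integral terms on the right-hand side of \eqref{eq:OTHERITO1} together with the expression $\varphi(e^{A(t-t_0)} X_{t_0})$. It thus remains to rewrite the difference $\varphi(e^{A(t-t_0)} X_{t_0}) - \varphi(X_{t_0})$ as the first integral on the right-hand side of \eqref{eq:OTHERITO1}.

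To this end I would introduce, for each fixed $\omega$ and each $t_0 \leq t$ in $[0,T]$, the deterministic path
\[
  \gamma(s) := \big( I + e^{A(t-t_0)} - e^{A(t-s)} \big) X_{t_0}, \qquad s \in [t_0, t],
\]
and observe that $\gamma(t_0) = X_{t_0}$ and $\gamma(t) = e^{A(t-t_0)} X_{t_0}$. A direct differentiation in $s$ gives $\gamma'(s) = A e^{A(t-s)} X_{t_0}$, and the analytic-semigroup estimate $\| A e^{A(t-s)} \|_{L(H_\gamma, H_r)} \lesssim (t-s)^{\gamma - r - 1}$ (valid for $r < \gamma$) combined with $\| X_{t_0} \|_{H_\gamma} < \infty$ $\mathbb{P}$-a.s.\ (Proposition~\ref{prop:prop}) shows that $\gamma'$ is Bochner-integrable in $H_r$ on $[t_0,t]$. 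Hence $\gamma$ is an absolutely continuous curve in $H_r$, and the standard Banach-space chain rule applied to $\varphi \in C^2(H_r, V)$ yields, pathwise,
\[
  \varphi\big( e^{A(t-t_0)} X_{t_0} \big) - \varphi( X_{t_0} ) = \int_{t_0}^t \varphi'\big( \gamma(s) \big) \, A e^{A(t-s)} X_{t_0} \, ds.
\]
Substituting this identity into the mild Itô formula produced by Corollary~\ref{cor:ito} yields \eqref{eq:OTHERITO1}.

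The same semigroup estimate simultaneously establishes \eqref{eq:welldefinedOTHER1}: continuity of $s \mapsto \gamma(s)$ in $H_r$ keeps $\{ \gamma(s) : s \in [t_0,t] \}$ in a $\mathbb{P}$-a.s.\ bounded subset of $H_r$, so that $s \mapsto \| \varphi'(\gamma(s)) \|_{L(H_r, V)}$ is $\mathbb{P}$-a.s.\ bounded on $[t_0, t]$, while $\int_{t_0}^t \| A e^{A(t-s)} X_{t_0} \|_{H_r} \, ds$ is finite by the $L^1$-time bound above. No single step presents a serious obstacle; the only point requiring care is the interplay between the analyticity exponent and the range $r < \gamma$, which is precisely what guarantees $\int_{t_0}^t (t-s)^{\gamma - r - 1} \, ds < \infty$ and thereby makes the whole argument go through.
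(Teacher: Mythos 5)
Your proposal is correct and follows essentially the same route as the paper: the paper likewise defines the auxiliary path $\bar{X}^{t_0,t}_u = X_{t_0} + \int_{t_0}^u A\,e^{A(t-s)}X_{t_0}\,ds = (I + e^{A(t-t_0)} - e^{A(t-u)})X_{t_0}$, applies the fundamental theorem of calculus to rewrite $\varphi(e^{A(t-t_0)}X_{t_0}) - \varphi(X_{t_0})$ as the first integral in \eqref{eq:OTHERITO1}, and then substitutes this into Corollary~\ref{cor:ito}. Your additional remarks on the analytic-semigroup bound $\lVert A e^{A(t-s)} X_{t_0}\rVert_{H_r} \lesssim (t-s)^{\min(\gamma-r,0)-1+\min(\gamma-r,1)}$-type integrability and the resulting verification of \eqref{eq:welldefinedOTHER1} are consistent with (and slightly more explicit than) what the paper records.
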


\begin{proof}[Proof
of Corollary~\ref{cor:anotherito1}]
Let $ r \in (-\infty,\gamma) $
be a fixed real number
and define
a family 
$
  \bar{X}^{ t_0, t }
  \colon
  [t_0,t] \times \Omega
  \rightarrow H_{ r }
$,
$
  (t_0, t) \in \angle
$,
of adapted stochastic 
processes with continuous 
sample paths
by
\begin{equation}
\begin{split}
  \bar{X}^{ t_0, t }_u
& :=
  X_{ t_0 } +
  \int_{ t_0 }^u 
    A \, e^{ A ( t - s ) } X_{ t_0 } \, 
  ds
=
  X_{ t_0 } +
  e^{ A (t - u) }
  \left(
    e^{ A (u - t_0) } - I
  \right)
  X_{ t_0 }
\\ & =
  \left(
    I +
    e^{ A (t - t_0) } -
    e^{ A (t - u) } 
  \right)
  X_{ t_0 }
\end{split}
\end{equation}
for all $ u \in [t_0,t] $
and all $ t_0, t \in [0,T] $
with $ t_0 \leq t $.
The fundamental theorem of
calculus then implies
\begin{equation}
\label{eq:hauptsatz}
\begin{split}
  \varphi( e^{ A( t - t_0 ) } X_{ t_0 } )
& =
  \varphi\!\left( 
    X_{ t_0 } +
    \int_{ t_0 }^t 
      A \, e^{ A ( t - s ) } X_{ t_0 } \, 
    ds
  \right)
=
  \varphi( \bar{X}^{t_0,t}_t )
\\ & =
  \varphi( X_{ t_0 } )
  +
  \int_{ t_0 }^{ t }
  \varphi'( \bar{X}^{ t_0, t }_s ) \, A \,
  e^{ A (t - s) } \, X_{ t_0 }
  \, ds
\end{split}
\end{equation}
for 
all $ t_0, t \in [ 0, T ] $
with $ t_0 \leq t $
and all
$ 
  \varphi \in 
  C^2( H_{ r }, V ) 
$.
Combining \eqref{eq:hauptsatz}
and Corollary~\ref{cor:anotherito1}
then  completes the proof
of Corollary~\ref{cor:ito}.
\end{proof}

Observe
that
equations~\eqref{eq:well1d}--\eqref{eq:well3d}
and
equation~\eqref{eq:welldefinedOTHER1}
ensure that all deterministic
and stochastic integrals in
\eqref{eq:OTHERITO1}
are well defined.

\begin{prop}[A further - 
somehow mild - It{\^o} type 
formula for solutions of SPDEs]
\label{prop:secondito}
Assume that
the setting in Section~\ref{sec:setting}
is fulfilled. Then
\begin{equation}
\label{eq:thirdito_well01}
  \mathbb{P}\!\left[
  \int_{ t_0 }^t
  \big\|
  \varphi'\big( 
    X_{ t_0 }
    +
    e^{ A (t - s) }
    ( 
      X_s - X_{ t_0 }
    )  
  \big) 
  \big\|_{ L( H_r, V ) }
  \,
  \big\|
    A \, e^{ A (t - s) } X_{ t_0 }
  \big\|_{ H_r }
  \,
  ds < \infty
  \right] = 1
  ,
\end{equation}
\begin{equation}
  \mathbb{P}\!\left[
  \int_{ t_0 }^t
  \big\|
  \varphi'\big( 
    X_{ t_0 }
    +
    e^{ A (t - s) }
    ( 
      X_s - X_{ t_0 }
    )  
  \big) 
  \big\|_{ L( H_r, V ) }
  \,
  \big\|
    e^{ A( t - s ) }
    F( X_s ) 
  \big\|_{ H_r }
  \,
  ds < \infty
  \right] = 1
  ,
\end{equation}
\begin{equation}
  \mathbb{P}\!\left[
    \int_{ t_0 }^t
    \big\|
      \varphi'\big( 
        X_{ t_0 }
        +
        e^{ A (t - s) }
        ( 
          X_s - X_{ t_0 }
        )  
      \big) \,
      e^{ A(t-s) } B( X_s )
    \big\|_{ HS(U_0, V ) }^2 
    \,
    ds
    < \infty 
  \right] = 1,
\end{equation}
\begin{equation}
\label{eq:thirdito_well04}
  \mathbb{P}\!\left[
    \int_{ t_0 }^t
    \big\|
      \varphi''\big( 
        X_{ t_0 }
        +
        e^{ A (t - s) }
        ( 
          X_s - X_{ t_0 }
        )  
      \big) 
    \big\|_{ L^{(2)}( H_{ r }, V ) }
    \,
    \big\|
      e^{ A (t - s) } B( X_s )
    \big\|_{ HS(U_0, H_{ r }) }^2 
    \,
    ds
    < \infty 
  \right] = 1
\end{equation}
and
\begin{equation}
\label{eq:anotherito}
\begin{split}
  \varphi( X_t )
& =
  \varphi( X_{ t_0 } )
  +
  \int_{ t_0 }^t
  \varphi'\big( 
    X_{ t_0 }
    +
    e^{ A (t - s) }
    ( 
      X_s - X_{ t_0 }
    )  
  \big) 
  \big[
    A \, e^{ A (t - s) } X_{ t_0 }
    +
    e^{ A( t - s ) }
    F( X_s ) 
  \big] \, ds
\\ & \quad +
  \int_{ t_0 }^t
  \varphi'\big( 
    X_{ t_0 }
    +
    e^{ A (t - s) }
    ( 
      X_s - X_{ t_0 }
    )  
  \big) \,
  e^{ A( t - s ) }
  B( X_s ) \, dW_s
\\ & \quad +
  \frac{1}{2}
  \sum_{ j \in \mathcal{J} }
  \int_{ t_0 }^t
  \varphi''\big( 
    X_{ t_0 }
    +
    e^{ A (t - s) }
    ( 
      X_s - X_{ t_0 }
    )  
  \big) 
  \big(
    e^{ A( t - s ) }
    B( X_s ) g_j,
    e^{ A( t - s ) }
    B( X_s ) g_j
  \big) \, ds
\end{split}
\end{equation}
$ \mathbb{P} $-a.s.\ for 
all
$ t_0, t \in [ 0, T ] $
with $ t_0 \leq t $,
$ 
  \varphi \in 
  C^2( H_{ r }, V ) 
$
and all
$
  r \in (-\infty, \gamma ) 
$.
\end{prop}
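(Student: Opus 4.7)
The plan is to introduce, for each fixed pair $t_0, t \in [0,T]$ with $t_0 < t$, the auxiliary stochastic process
$$
  Y^{t_0,t}_s := X_{t_0} + e^{A(t-s)}\bigl(X_s - X_{t_0}\bigr), \qquad s \in [t_0,t],
$$
and to apply the standard Itô formula to $\varphi(Y^{t_0,t}_s)$. The boundary values $Y^{t_0,t}_{t_0} = X_{t_0}$ and $Y^{t_0,t}_t = X_t$ then produce the left hand side of \eqref{eq:anotherito}, while the drift/diffusion identification produces the right hand side. This is essentially the same transformation idea used in the second proof of Theorem~\ref{thm:ito} and in Corollary~\ref{cor:anotherito1}, but applied to the ``centred'' process $X_s - X_{t_0}$ instead of to $X_s$ itself.

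First I would rewrite $Y^{t_0,t}_s$ as a standard Itô process in $s$. Using Proposition~\ref{propsimple} (or equivalently the SPDE \eqref{eq:SPDE}) we have $\mathbb{P}$-a.s.\ that
$$
  e^{A(t-s)} X_s
  =
  e^{A(t-t_0)} X_{t_0}
  + \int_{t_0}^s e^{A(t-u)} F(X_u)\,du
  + \int_{t_0}^s e^{A(t-u)} B(X_u)\,dW_u,
$$
and combining this with the elementary identity $e^{A(t-t_0)} - e^{A(t-s)} = \int_{t_0}^s A\,e^{A(t-u)}\,du$ (valid in $L(H_\gamma, H_r)$ for $r<\gamma$ by analyticity) yields
$$
  Y^{t_0,t}_s
  =
  X_{t_0}
  + \int_{t_0}^s A\,e^{A(t-u)} X_{t_0}\,du
  + \int_{t_0}^s e^{A(t-u)} F(X_u)\,du
  + \int_{t_0}^s e^{A(t-u)} B(X_u)\,dW_u .
$$
Hence $Y^{t_0,t}$ is an $H_r$-valued standard Itô process with drift $s\mapsto A\,e^{A(t-s)}X_{t_0} + e^{A(t-s)} F(X_s)$ and diffusion $s\mapsto e^{A(t-s)} B(X_s)$.

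Second, I would verify the four well-definedness statements \eqref{eq:thirdito_well01}--\eqref{eq:thirdito_well04}. The three statements involving $F$, $B$, $\varphi'$ and $\varphi''$ follow directly from \eqref{eq:well1d}--\eqref{eq:well3d} in Corollary~\ref{cor:ito} once one notes that the sample paths of $s\mapsto Y^{t_0,t}_s$ are continuous in $H_r$ (so $\varphi'(Y^{t_0,t}_s)$ and $\varphi''(Y^{t_0,t}_s)$ are uniformly bounded in $s\in[t_0,t]$ $\mathbb{P}$-a.s.). For \eqref{eq:thirdito_well01} I would use analyticity of $e^{A\cdot}$ to bound
$$
  \bigl\|A\,e^{A(t-s)}X_{t_0}\bigr\|_{H_r}
  \leq C\,(t-s)^{\gamma - r - 1}\,\|X_{t_0}\|_{H_\gamma},
$$
which is integrable on $[t_0,t]$ precisely because $r<\gamma$; combined with continuity of $\varphi'$ and pathwise boundedness of $Y^{t_0,t}$, this proves the required integrability a.s.

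Third, applying the standard Itô formula (Theorem~2.4 in Brze\'{z}niak, Van Neerven, Veraar \citationand\ Weis~\cite{bvvw08}) to the $H_r$-valued standard Itô process $Y^{t_0,t}$ and the test function $\varphi \in C^2(H_r, V)$, between times $t_0$ and $t$, one obtains \eqref{eq:anotherito} directly after substituting the identities $\varphi(Y^{t_0,t}_{t_0}) = \varphi(X_{t_0})$ and $\varphi(Y^{t_0,t}_t) = \varphi(X_t)$. The main (minor) obstacle I foresee is the one already flagged: checking that the drift $A\,e^{A(t-\cdot)}X_{t_0}$ is pathwise Bochner integrable in $H_r$ with the correct integrability at the singularity $s\to t$; this is the reason for the restriction $r<\gamma$ and is exactly what makes the auxiliary process $Y^{t_0,t}$ well defined in $H_r$ (but not in $H_\gamma$), so that the standard Itô formula applies in $H_r$.
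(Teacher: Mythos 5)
Your proposal is correct and follows essentially the same route as the paper: the paper defines the same auxiliary process $\bar{X}^{t_0,t}_u = X_{t_0} + \int_{t_0}^u [A e^{A(t-s)}X_{t_0} + e^{A(t-s)}F(X_s)]\,ds + \int_{t_0}^u e^{A(t-s)}B(X_s)\,dW_s$ via its integral representation, applies the standard It\^{o} formula, and then identifies $\bar{X}^{t_0,t}_s = X_{t_0} + e^{A(t-s)}(X_s - X_{t_0})$ by factoring out the semigroup, whereas you simply run the same identification in the opposite direction, starting from the closed form. The only cosmetic difference is your explicit discussion of the well-definedness statements and the integrable singularity of $A e^{A(t-\cdot)}X_{t_0}$, which the paper leaves implicit.
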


\begin{proof}[Proof
of Proposition~\ref{prop:secondito}]
First, observe that
the well known identity
$
  e^{ A t } v =
  v +
  \int_0^t A \, e^{ A s } v \, ds
$
for all $ v \in H_{ \gamma } $ 
and all $ t \in [0,\infty) $
shows
\begin{equation}
  X_t
=
  X_{ t_0 }
+
  \int_{ t_0 }^t 
  \left[
    A \,
    e^{ A (t - s) }
    X_{ t_0 }
    +
    e^{ A (t - s) } F( X_s ) 
  \right] ds
+
  \int_{ t_0 }^t
  e^{ A (t - s) } B( X_s ) \, dW_s
\end{equation}
$ \mathbb{P} $-a.s.\ for all
$ t_0, t \in [0,T] $
with $ t_0 \leq t $.
In the next step 
let $ r \in (-\infty, \gamma) $
be a fixed real number and
let
$ 
  \bar{X}^{ t_0, t } 
  \colon [t_0,t] \times
  \Omega \rightarrow H_{ r }
$,
$ 
  (t_0, t) \in \angle
$,
be a family
of adapted
stochastic processes with
continuous sample paths
given by
\begin{equation}
\begin{split}
  \bar{X}^{t_0, t}_u
& =
  X_{ t_0 }
+
  \int_{ t_0 }^u 
    \left[
      A \,
      e^{ A (t - s) }
      X_{ t_0 }
      +
      e^{ A (t - s) } F( X_s ) 
    \right]
  ds
+
  \int_{ t_0 }^u
  e^{ A (t - s) } B( X_s ) \, dW_s
\\ & =
  X_{ t_0 }
+
  e^{ A (t - t_0) }
  X_{ t_0 }
-
  e^{ A (t - u) }
  X_{ t_0 }
  +
  \int_{ t_0 }^u 
    e^{ A (t - s) } F( X_s ) \,
  ds
+
  \int_{ t_0 }^u
  e^{ A (t - s) } B( X_s ) \, dW_s
\end{split}
\end{equation}
$ \mathbb{P} $-a.s.\ for
all $ u \in [t_0,t] $
and all
$ t_0, t \in [0,T] $
with $ t_0 \leq t $
(see also \eqref{eq:semiX} above).
The standard
It{\^o} formula in infinite 
dimensions (see
Theorem~2.4 in Brze\'{z}niak,
Van Neerven, Veraar 
\citationand\ 
Weis~\cite{bvvw08}) 
then gives
\begin{equation}
\label{eq:itoformel2}
\begin{split}
  \varphi( \bar{X}_u^{ t_0, t } )
&=
  \varphi( \bar{X}_{ t_0 }^{ t_0, t } )
  +
  \int_{t_0}^u
  \varphi'( \bar{X}_{ s }^{ t_0, t } ) 
  \left[
    A \, e^{ A (t - s) } \, X_{ t_0 }
    +
    e^{ A (t - s) } F( X_s ) 
  \right] ds
\\&\quad
  +
  \int_{t_0}^u
  \varphi'( \bar{X}_{ s }^{ t_0, t } ) \,
  e^{ A (t - s) } B( X_s ) \, dW_s
\\ & \quad +
  \frac{1}{2}
  \sum_{ j \in \mathcal{J} }
  \int_{t_0}^u
  \varphi''( \bar{X}_{ s }^{ t_0, t } )
  \left(
    e^{ A (t - s) } B( X_s ) g_j,
    e^{ A (t - s) } B( X_s ) g_j
  \right) ds
\end{split}
\end{equation}
$ \mathbb{P} $-a.s.\ for
all $ u \in [t_0,t] $,
$ t_0, t \in [0,T] $
with $ t_0 \leq t $
and all
$ \varphi \in C^2( H_{ r }, V ) $.
This, in particular, shows
\begin{equation}
\label{eq:OTHERITOitoformel2}
\begin{split}
  \varphi( \bar{X}^{ t_0, t }_t )
=
  \varphi( X_t )
&=
  \varphi( X_{ t_0 } )
  +
  \int_{t_0}^t
  \varphi'( \bar{X}_{ s }^{ t_0, t } ) 
  \left[
    A \, e^{ A (t - s) } \, X_{ t_0 }
    +
    e^{ A (t - s) } F( X_s ) 
  \right] ds
\\&\quad
  +
  \int_{t_0}^t
  \varphi'( \bar{X}_{ s }^{ t_0, t } ) \,
  e^{ A (t - s) } B( X_s ) \, dW_s
\\ & \quad +
  \frac{1}{2}
  \sum_{ j \in \mathcal{J} }
  \int_{t_0}^t
  \varphi''( \bar{X}_{ s }^{ t_0, t } )
  \left(
    e^{ A (t - s) } B( X_s ) g_j,
    e^{ A (t - s) } B( X_s ) g_j
  \right) ds
\end{split}
\end{equation}
$ \mathbb{P} $-a.s.\ for
all $ t_0, t \in [0,T] $
with $ t_0 \leq t $
and all
$ \varphi \in C^2( H_{ r }, V ) $
(see also \eqref{eq:itoformel2inProof}
above).
Putting the identity
\begin{equation}
\begin{split}
  \bar{X}^{ t_0, t }_s
& =
  X_{ t_0 }
+
  \int_{ t_0 }^s 
    \left[
      A \,
      e^{ A (t - u) }
      X_{ t_0 }
      +
      e^{ A (t - u) } F( X_u ) 
    \right]
  ds
+
  \int_{ t_0 }^s
  e^{ A (t - u) } B( X_u ) \, dW_u
\\ & =
  X_{ t_0 }
+
  e^{ A (t - s) }
  \int_{ t_0 }^s 
    \left[
      A \,
      e^{ A (s - u) }
      X_{ t_0 }
      +
      e^{ A (s - u) } F( X_u ) 
    \right]
  ds
\\ & \quad
+
  e^{ A (t - s) }
  \int_{ t_0 }^s
  e^{ A (s - u) } B( X_u ) \, dW_u
=
  X_{ t_0 }
  +
  e^{ A (t - s) }
  \left( 
    X_s - X_{ t_0 }
  \right)
\end{split}
\end{equation}
$ \mathbb{P} $-a.s.\ for 
all $ s \in [t_0,T] $
and all
$ t_0, t \in [0,T] $
with $ t_0 \leq t $
(see also \eqref{eq:fact} above)
into \eqref{eq:OTHERITOitoformel2}
finally shows \eqref{eq:anotherito}.
The proof of
Proposition~\ref{prop:secondito}
is thus completed.
\end{proof}

Note
that
equations~\eqref{eq:thirdito_well01}--\eqref{eq:thirdito_well04}
imply that all deterministic
and stochastic integrals in
\eqref{eq:anotherito}
are well defined.
Finally, observe that
the It\^{o} type formulas 
in Corollary~\ref{cor:anotherito1}
and Proposition~\ref{prop:secondito}
can be generalized to the more general
case of mild It\^{o} processes
(or mild semimartingales, cf.\ Remark~\ref{rem:moregeneral})
if additional assumptions on the
semigroup are fulfilled.

\subsection{Numerical
approximations processes
for SPDEs}
\label{sec:numerics}

This subsection demonstrates
how different types of numerical
approximation processes for SPDEs
can be formulated 
as mild It\^{o} processes.
To this end the following notation is used. 
Let
$
  \lfloor \cdot \rfloor_N 
  \colon
  [0,T] 
  \rightarrow 
  [0,T]
$,
$ N \in \mathbb{N} $,
be a sequence of mappings
given by
\begin{equation}
  \lfloor t \rfloor_N 
:=
  \max\!\Big(
    s \in 
    \left\{ 
      0, \tfrac{T}{N}, 
      \tfrac{2 T}{N}, \dots, 
      \tfrac{ (N-1) T }{ N }, T
    \right\} 
    \colon
    s \leq t
  \Big)
\end{equation}
for all $ t \in [0,T] $
and all $ N \in \mathbb{N} $.
Both Euler (see 
Subsection~\ref{sec:euler})
and
Milstein (see
Subsection~\ref{sec:mil})
type approximations for SPDEs
are formulated as mild It\^{o}
processes. We begin
with Euler type approximations
for SPDEs in 
Subsection~\ref{sec:euler}.

\subsubsection{Euler type 
approximations for SPDEs}
\label{sec:euler}

It is illustrated here how 
exponential Euler
approximations, accelerated
exponential Euler approximations,
linear implicit Euler approximations
and linear implicit Crank-Nicolson
approximations can be formulated as
mild It\^{o} processes.

\paragraph{Exponential Euler
approximations for SPDEs}

Let 
$
  Y^N
  \colon [0,T]
  \times
  \Omega
  \rightarrow H_{ \gamma }
$,
$
  N \in \mathbb{N} 
$,
be a sequence of predictable
stochastic
processes given by
\begin{equation}
\label{eq:split0}
\begin{split}
  Y^N_t
& =
  e^{ A t } \, \xi 
  +
  \int_0^t
  e^{
    A 
    \left( t - 
      \lfloor s \rfloor_N
    \right)
  }
  \,
  F\big( 
    Y^N_{
      \lfloor s \rfloor_N 
    } 
  \big)
  \,
  ds
  +
  \int_0^t
  e^{
    A 
    \left( t - 
      \lfloor s \rfloor_N
    \right)
  }
  \,
  B\big( 
    Y^N_{
      \lfloor s \rfloor_N 
    } 
  \big)
  \,
  dW_s
\\ & =
  e^{ A t } \, \xi 
  +
  \int_0^t
  e^{ A (t - s) }
  \;
  e^{
    A 
    \left( s - 
      \lfloor s \rfloor_N
    \right)
  }
  \,
  F\big( 
    Y^N_{
      \lfloor s \rfloor_N 
    } 
  \big)
  \,
  ds
  +
  \int_0^t
  e^{ A (t - s) }
  \;
  e^{
    A 
    \left( s - 
      \lfloor s \rfloor_N
    \right)
  }
  \,
  B\big( 
    Y^N_{
      \lfloor s \rfloor_N 
    } 
  \big)
  \,
  dW_s
\end{split}
\end{equation}
$ \mathbb{P} $-a.s.\ for 
all $ t \in [0,T] $
and all $ N \in \mathbb{N} $.
Observe that
for each $ N \in \mathbb{N} $
the stochastic
process
$
  Y^N \colon
  [0,T] \times \Omega
  \rightarrow H_{ \gamma }
$
is a mild It\^{o} process
with semigroup 
$ 
  e^{ A (t_2 - t_1) } \in 
  L( H_{ \min( \alpha, \beta, \gamma ) },
    H_{ \gamma }
  ) 
$, $ (t_1,t_2) \in \angle $,
with mild drift
\begin{equation}
\label{eq:split_milddrift}
  e^{
    A 
    \left( t - 
      \lfloor t \rfloor_N
    \right)
  }
  \,
  F\big( 
    Y^N_{
      \lfloor t \rfloor_N 
    } 
  \big),
\qquad
  t \in [0,T],
\end{equation}
and with mild diffusion
\begin{equation}
\label{eq:split_milddiffusion}
  e^{
    A 
    \left( t - 
      \lfloor t \rfloor_N
    \right)
  }
  \,
  B\big( 
    Y^N_{
      \lfloor t \rfloor_N 
    } 
  \big),
\qquad
  t \in [0,T] .
\end{equation}
Proposition~\ref{propsimple}
hence shows
\begin{equation}
\label{eq:propcons}
  Y^N_{ t }
 =
  e^{
    A \left( t - t_0 \right)
  }
  \,
  Y^N_{ t_0 } 
  +
  \int_{ t_0 }^{
    t
  }
  e^{
    A \left( t - \lfloor s \rfloor_N \right)
  }
  \,
      F\big( 
        Y^N_{ \lfloor s \rfloor_N } 
      \big) \,
    ds
  +
  \int_{ t_0 }^{
    t
  }
  e^{
    A \left( t - \lfloor s \rfloor_N \right)
  }
  \,
      B\big( 
        Y^N_{ \lfloor s \rfloor_N } 
      \big) \,
    dW_s
\end{equation}
$ \mathbb{P} $-a.s.\ for 
all $ t_0, t \in [0,T] $
with $ t_0 \leq t $ and
from \eqref{eq:propcons}
we conclude
\begin{equation}
\label{eq:split}
\begin{split}
  Y^N_{ 
    \frac{ (n+1) T }{ N } 
  }
& =
  e^{
    A \frac{ T }{ N }
  }
  \,
  Y^N_{ \frac{ n T }{ N } } 
  +
    \int_{ \frac{ n T }{ N } }^{
      \frac{ (n+1) T }{ N }    
    }
  e^{
    A \frac{ T }{ N }
  }
  \,
      F\big( 
        Y^N_{ \frac{ n T }{ N } } 
      \big) \,
    ds
  +
    \int_{ \frac{ n T }{ N } }^{
      \frac{ (n+1) T }{ N }    
    }
  e^{
    A \frac{ T }{ N }
  }
  \,
      B\big( 
        Y^N_{ \frac{ n T }{ N } } 
      \big) \,
    dW_s
\\ & =
  e^{
    A \frac{ T }{ N }
  }
  \left(
    Y^N_{ \frac{ n T }{ N } } 
    +
    \frac{ T }{ N } \cdot
    F\big( 
      Y^N_{ \frac{ n T }{ N } } 
    \big)
    +
    \int_{ \frac{ n T }{ N } }^{
      \frac{ (n+1) T }{ N }    
    }
      B\big( 
        Y^N_{ \frac{ n T }{ N } } 
      \big) \,
    dW_s
  \right)
\end{split}
\end{equation}
$ \mathbb{P} $-a.s.\ for 
all 
$ n \in \left\{ 0, 1, \dots, N-1
\right\} $
and all
$ N \in \mathbb{N} $.
The mild It\^{o} processes
$ Y^N $, $ N \in \mathbb{N} $,
are thus nothing else but appropriate
time continuous interpolations
of exponential Euler approximations
(a.k.a.~splitting-up 
approximations or
exponential integrator 
approximations; 
see, e.g., 
\cite{gk03a,lr04,cv10,lt11}
and the references therein).
Note that the mild 
drift~\eqref{eq:split_milddrift}
and the mild
diffusion~\eqref{eq:split_milddiffusion}
of the exponential Euler 
approximations~\eqref{eq:split}
contain the 
{\it correction
term}
$
  e^{
    A 
    \left( t - 
      \lfloor t \rfloor_N
    \right)
  }
$, $ t \in [0,T] $,
when compared to 
the mild drift~\eqref{eq:milddrift}
and the mild diffusion~\eqref{eq:milddiffusion}
of the exact solution of the
SPDE~\eqref{eq:SPDE}.

\paragraph{Accelerated 
exponential Euler
approximations for SPDEs}
This paragraph demonstrates
that accelerated
exponential Euler 
approximations 
can be written as mild It\^{o} 
processes.
Let 
$
  \tilde{Y}^N
  \colon [0,T]
  \times
  \Omega
  \rightarrow H_{ \gamma }
$,
$
  N \in \mathbb{N} 
$,
be a sequence of predictable
stochastic
processes given by
\begin{equation}
\label{eq:lin0}
\begin{split}
  \tilde{Y}^N_t
& =
  e^{ A t } \,
  \xi 
  +
  \int_0^t
  e^{
    A 
    ( t - 
      s
    )
  }
  \,
  F\big( 
    \tilde{Y}^N_{
      \lfloor s \rfloor_N 
    } 
  \big)
  \,
  ds
  +
  \int_0^t
  e^{
    A 
    ( t - 
      s
    )
  }
  \,
  B\big( 
    \tilde{Y}^N_{
      \lfloor s \rfloor_N 
    } 
  \big)
  \,
  dW_s
\end{split}
\end{equation}
$ \mathbb{P} $-a.s.\ for 
all $ t \in [0,T] $
and all $ N \in \mathbb{N} $.
Note that
for each $ N \in \mathbb{N} $
the stochastic
process
$
  \tilde{Y}^N \colon
  [0,T] \times \Omega
  \rightarrow H_{ \gamma }
$
is a mild It\^{o} process
with semigroup 
$ 
  e^{ A (t_2 - t_1) } \in 
  L( 
    H_{ \min( \alpha, \beta, \gamma ) } ,
    H_{ \gamma }
  ) 
$, $ (t_1,t_2) \in \angle $,
with mild drift
\begin{equation}
\label{eq:lin_milddrift}
  F\big( 
    \tilde{Y}^N_{
      \lfloor t \rfloor_N 
    } 
  \big),
\qquad
  t \in [0,T],
\end{equation}
and with mild diffusion
\begin{equation}
\label{eq:lin_milddiffusion}
  B\big( 
    \tilde{Y}^N_{
      \lfloor t \rfloor_N 
    } 
  \big),
\qquad
  t \in [0,T] .
\end{equation}
Proposition~\ref{propsimple}
therefore gives
\begin{equation}
\label{eq:propcons2}
  \tilde{Y}^N_{ t }
 =
  e^{
    A \left( t - t_0 \right)
  }
  \,
  \tilde{Y}^N_{ t_0 } 
  +
  \int_{ t_0 }^{
    t
  }
  e^{
    A \left( t - s \right)
  }
  \,
      F\big( 
        \tilde{Y}^N_{ \lfloor s \rfloor_N } 
      \big) \,
    ds
  +
  \int_{ t_0 }^{
    t
  }
  e^{
    A \left( t - s \right)
  }
  \,
      B\big( 
        \tilde{Y}^N_{ \lfloor s \rfloor_N } 
      \big) \,
    dW_s
\end{equation}
$ \mathbb{P} $-a.s.\ for 
all $ t_0, t \in [0,T] $
with $ t_0 \leq t $ and
this implies
\begin{equation}
\label{eq:lin}
\begin{split}
  \tilde{Y}^N_{ 
    \frac{ (n+1) T }{ N } 
  }
  =
  e^{
    A \frac{ T }{ N }
  }
  \,
  \tilde{Y}^N_{ \frac{ n T }{ N } } 
    +
    \left(
    \int_0^{ \frac{T}{N} } e^{ A s } \, ds
    \right) 
    F\big( 
      \tilde{Y}^N_{ \frac{ n T }{ N } } 
    \big)
  +
  \int_{ 
    \frac{ n T }{ N }
  }^{
      \frac{ ( n + 1 ) T }{ N }
  }
    e^{ 
      A \left(  
        \frac{ (n+1) T }{ N } - s
      \right)
    } \,
    B\big( 
      \tilde{Y}^N_{ \frac{ n T }{ N } } 
    \big) \,
  dW_s
\end{split}
\end{equation}
$ \mathbb{P} $-a.s.\ for 
all 
$ n \in \left\{ 0, 1, \dots, N-1
\right\} $
and all
$ N \in \mathbb{N} $.
In particular,
in the case of additive
noise, i.e., $ B(v) = B(0) $
for all $ v \in H_{ \gamma } $,
equation~\eqref{eq:lin}
reduces to
\begin{equation}
\label{eq:linB}
\begin{split}
&
  \tilde{Y}^N_{ 
    \frac{ (n+1) T }{ N } 
  }
=
  e^{
    A \frac{ T }{ N }
  }
  \,
  \tilde{Y}^N_{ \frac{ n T }{ N } } 
    +
    \left(
    \int_0^{ \frac{T}{N} } e^{ A s } \, ds
    \right) 
    F\big( 
      \tilde{Y}^N_{ \frac{ n T }{ N } } 
    \big)
  +
  \int_{ 
    \frac{ n T }{ N }
  }^{
      \frac{ ( n + 1 ) T }{ N }
  }
    e^{ 
      A \left(  
        \frac{ (n+1) T }{ N } - s
      \right)
    } \,
    B(0) 
    \,
  dW_s
\end{split}
\end{equation}
$ \mathbb{P} $-a.s.\ for 
all 
$ n \in \left\{ 0, 1, \dots, N-1
\right\} $
and all
$ N \in \mathbb{N} $.
The mild It\^{o} processes
$ \tilde{Y}^N $, $ N \in \mathbb{N} $,
are thus nothing else but appropriate
time continuous interpolations
of the numerical approximations
in \cite{jk09b}
in the case of additive noise
(see~(3.3) in \cite{jk09b})
and in \cite{j10}
in the general case 
(see~(21) and (50) 
in \cite{j10}).

Note that the mild 
drift~\eqref{eq:lin_milddrift}
and the 
diffusion~\eqref{eq:lin_milddiffusion}
of the approximation
processes~\eqref{eq:lin0}
do not contain the correction
term
$
  e^{
    A 
    \left( t - 
      \lfloor t \rfloor_N
    \right)
  }
$, $ t \in [0,T] $,
in mild
drift~\eqref{eq:split_milddrift}
and the mild
diffusion~\eqref{eq:split_milddiffusion}
of the exponential Euler
approximations~\eqref{eq:split0}.
The approximation 
processes~\eqref{eq:lin0}
thus seem to be more close
to the exact solution~\eqref{eq:SPDE}
than the exponential Euler
approximations~\eqref{eq:split0}
(compare the mild drifts~\eqref{eq:lin_milddrift},
\eqref{eq:split_milddrift},
\eqref{eq:milddrift}
and the mild 
diffusions~\eqref{eq:lin_milddiffusion},
\eqref{eq:split_milddiffusion},
\eqref{eq:milddiffusion}).
Indeed, under suitable assumptions, 
it has been shown 
(see \cite{jk09b,jkw11} for details)
that
$ \tilde{Y}^N $, $ N \in \mathbb{N} $,
converges
to $ X $ significantly
faster than 
$ Y^N $, 
$ N \in \mathbb{N} $.
This motivates to call approximations
of the form~\eqref{eq:lin}
and \eqref{eq:linB} as
{\it accelerated exponential
Euler approximations}.
The crucial point in
the accelerated exponential Euler
approximations is that
they contain 
the stochastic integrals
\begin{equation}
\label{eq:integrals}
  \int_{ 
    \frac{ n T }{ N }
  }^{
      \frac{ ( n + 1 ) T }{ N }
  }
    e^{ 
      A \left(  
        \frac{ (n+1) T }{ N } - s
      \right)
    } \,
    B\big( 
      \tilde{Y}^N_{ \frac{ n T }{ N } } 
    \big) \,
  dW_s
\end{equation}
for  
$ n \in \left\{ 0, 1, \dots, N-1
\right\} $
and
$ N \in \mathbb{N} $
in the scheme
instead of simply increments
of driving noise process.
This enables them to converge,
under suitable assumptions,
significantly faster to $ X $ than
schemes using only 
increments of the
driving noise process 
such as \eqref{eq:split}.
In addition, 
in the case of additive noise,
the
stochastic integrals~\eqref{eq:integrals} 
in \eqref{eq:linB}
depends linearly on the cylindrical
Wiener process $ W_t $, $ t \in [0,T] $
and are easy to simulate.
Therefore,
the accelerated exponential
Euler approximations~\eqref{eq:linB}
can in the case of
additive noise be simulated 
quite efficiently
(see Section~3 in \cite{jk09b}
and, particularly, 
see Figure~2 in \cite{jkw11}
for details).
Further investigations 
and related results on approximation
methods that make use
of stochastic integrals of the 
form \eqref{eq:integrals}
can, e.g., be found in
\cite{jk09b,jk09d,lt10,lt10b,d10,jkw11,j11,mtac11,xd11}.

\paragraph{Linear
implicit Euler
approximations for SPDEs}

Next it is shown that
linear implicit Euler approximations
can be formulated as mild It\^{o} 
processes.
For this we assume $ \eta = 0 $ 
in the following
in order to avoid trivial complications.
Moreover, let 
$
  \bar{S}^N \colon 
  \angle
  \rightarrow
  L( H_{ \gamma } )
$,
$ N \in \mathbb{N} $,
be a sequence of mappings
given by
\begin{equation}
\label{eq:lineulersg}
  \bar{S}^N_{ t_1, t_2 }
:=
  \Big(
    I - A 
    \left( t_1 - \lfloor t_1 \rfloor_N \right)
  \Big)
  \Big(
    I - A
    \left( t_2 - \lfloor t_2 \rfloor_N \right)
  \Big)^{ \! -1 }
  \Big(
    I - \tfrac{ T }{ N } A
  \Big)^{ 
    \left(
      \lfloor t_1  
      \rfloor_N
      - 
      \lfloor t_2 \rfloor_N 
    \right)
    \frac{ N }{ T }
  }
\end{equation}
for all 
$ t_1, t_2 \in [0,T] $
with $ t_1 < t_2 $
and
all $ N \in \mathbb{N} $.
Moreover,
let 
$
  \bar{Y}^N
  \colon [0,T]
  \times
  \Omega
  \rightarrow H_{ \gamma }
$,
$
  N \in \mathbb{N} 
$,
be a sequence of predictable
stochastic
processes given by
$
  \bar{Y}^N_0 = \xi
$
and
\begin{equation}
\label{eq:lineuler0}
\begin{split}
  \bar{Y}^N_t
& =
  \bar{S}^N_{ 0, t } \,
  \xi 
  +
  \int_0^t
  \bar{S}_{ \lfloor s \rfloor_N, t }^N
  \,
  F\big( 
    \bar{Y}^N_{
      \lfloor s \rfloor_N 
    } 
  \big)
  \,
  ds
  +
  \int_0^t
  \bar{S}_{ \lfloor s \rfloor_N, t }^N 
  \,
  B\big( 
    \bar{Y}^N_{
      \lfloor s \rfloor_N 
    } 
  \big)
  \,
  dW_s
\\ & =
  \bar{S}^N_{ 0, t } \,
  \xi 
  +
  \int_0^t
  \bar{S}_{ s, t }^N
  \,
  \big(
    I - 
    A 
    \left( s - 
      \lfloor s \rfloor_N
    \right)
  \big)^{ -1 }
  \,
  F\big( 
    \bar{Y}^N_{
      \lfloor s \rfloor_N 
    } 
  \big)
  \,
  ds
\\ & \quad
  +
  \int_0^t
  \bar{S}_{ s, t }^N 
  \,
  \big(
    I - 
    A 
    \left( s - 
      \lfloor s \rfloor_N
    \right)
  \big)^{ -1 }
  \,
  B\big( 
    \bar{Y}^N_{
      \lfloor s \rfloor_N 
    } 
  \big)
  \,
  dW_s
\end{split}
\end{equation}
$ \mathbb{P} $-a.s.\ for 
all $ t \in (0,T] $
and all $ N \in \mathbb{N} $.
Observe that
for each $ N \in \mathbb{N} $
the stochastic
process
$
  \bar{Y}^N \colon
  [0,T] \times \Omega
  \rightarrow H_{ \gamma }
$
is a mild It\^{o} process
with semigroup 
$ \bar{S}^N $,
with mild drift
\begin{equation}
  \mathbbm{1}_{ (0,\infty) }(
    t - \lfloor t \rfloor_N
  ) \;
  \big(
    I - 
    A 
    \left( t - 
      \lfloor t \rfloor_N
    \right)
  \big)^{ -1 }
  \,
  F\big( 
    \bar{Y}^N_{
      \lfloor t \rfloor_N 
    } 
  \big),
\qquad
  t \in [0,T],
\end{equation}
and with mild diffusion
\begin{equation}
  \mathbbm{1}_{ (0,\infty) }(
    t - \lfloor t \rfloor_N
  ) \;
  \big(
    I - 
    A 
    \left( t - 
      \lfloor t \rfloor_N
    \right)
  \big)^{ -1 }
  \,
  B\big( 
    \bar{Y}^N_{
      \lfloor t \rfloor_N 
    } 
  \big),
\qquad
  t \in [0,T] .
\end{equation}
Proposition~\ref{propsimple}
therefore implies
\begin{equation}
\label{eq:lineul}
\begin{split}
  \bar{Y}^N_{ 
    \frac{ (n+1) T }{ N } 
  }
& =
  \Big(
    I - 
    \tfrac{ T }{ N } A
  \Big)^{ \! -1 }
  \left(
    \bar{Y}^N_{ \frac{ n T }{ N } } 
    +
    \frac{ T }{ N } \cdot
    F\big( \bar{Y}^N_{ \frac{ n T }{ N } } \big)
    +
    \int_{ \frac{ n T }{ N } }^{
      \frac{ ( n + 1 ) T }{ N }
    }
      B\big( \bar{Y}^N_{ \frac{ n T }{ N } } \big) \,
    dW_s
  \right)
\end{split}
\end{equation}
$ \mathbb{P} $-a.s.\ for 
all 
$ n \in \left\{ 0, 1, \dots, N-1
\right\} $
and all
$ N \in \mathbb{N} $.
This shows that
the stochastic
processes
$
  \bar{Y}^N
$,
$ N \in \mathbb{N} $,
are nothing else but
appropriate time
continuous interpolations
of linear
implicit Euler approximations
(see, e.g., \cite{g99,h02,h03a,w05b,mrw07,mr07b,k11,cv12}
and the references therein)
for the SPDE~\eqref{eq:SPDE}.
Note that the semigroups~\eqref{eq:lineulersg}
of the linear implicit Euler 
approximations~\eqref{eq:lineuler0}
depend explicitly on both variables
$ t_1 $ and $ t_2 $
with $ 0 \leq t_1 \leq t_2 \leq T $
instead of on the difference
$ t_2 - t_1 $ only
although the semigroup 
$ e^{ A t } $, $ t \in [0,T] $,
of the underlying SPDE~\eqref{eq:SPDE}
depends on one variable only.

\paragraph{Linear
implicit Crank-Nicolson
approximations for SPDEs}

Finally, in this paragraph it is 
demonstrated 
that linear implicit Crank-Nicolson
approximations
can be formulated as mild It\^{o} 
processes too.
As in the case of the linear
implicit Euler approximations
we assume
$ \eta = 0 $
in the following in order to avoid 
trivial complications.
Let 
$
  \hat{S}^N \colon 
  \angle
  \rightarrow
  L( H_{ \gamma } )
$,
$ N \in \mathbb{N} $,
be a sequence of mappings
given by
\begin{equation}
\label{eq:lincranksg}
  \hat{S}^N_{ t_1, t_2 }
:=
  \Big(
    I - 
    A
    \tfrac{ 
      \left( t_1 - \lfloor t_1 \rfloor_N \right) 
    }{ 2 }    
  \Big)
  \Big(
    I - 
    A
    \tfrac{ 
      \left( t_2 - \lfloor t_2 \rfloor_N \right) 
    }{ 2 }    
  \Big)^{ \! -1 }
  \Big(
    I - \tfrac{ T }{ 2 N } A
  \Big)^{ 
    \left(
      \lfloor t_1 \rfloor_N 
      - 
      \lfloor t_2 \rfloor_N 
    \right)
    \frac{ N }{ T }
  }
\end{equation}
for all 
$ t_1, t_2 \in [0,T] $
with $ t_1 < t_2 $
and
all $ N \in \mathbb{N} $.
Moreover,
let 
$
  \hat{Y}^N
  \colon [0,T]
  \times
  \Omega
  \rightarrow H_{ \gamma }
$,
$
  N \in \mathbb{N} 
$,
be a sequence of predictable
stochastic
processes given by
$
  \hat{Y}^N_0 = \xi
$
and
\begin{equation}
\label{eq:crank0}
\begin{split}
  \hat{Y}^N_t
& =
 \hat{S}^N_{ 0, t } \,
  \xi 
  +
  \int_0^t
  \hat{S}_{ \lfloor s \rfloor_N, t }^N \,
  \Big(
    \tfrac{ 1 }{ 2 } A \,
      \hat{Y}^N_{
        \lfloor s \rfloor_N 
      } 
    +
    F\big( 
    \hat{Y}^N_{
      \lfloor s \rfloor_N 
    } 
  \big)
  \Big) \,
  ds
  +
  \int_0^t
  \hat{S}_{ \lfloor s \rfloor_N, t }^N 
  \,
  B\big( 
    \hat{Y}^N_{
      \lfloor s \rfloor_N 
    } 
  \big)
  \,
  dW_s
\\ & =
  \hat{S}^N_{ 0, t } \,
  \xi 
  +
  \int_0^t
  \hat{S}_{ s, t }^N
  \left(
    I - 
    A 
    \tfrac{
      \left( s - 
        \lfloor s \rfloor_N
      \right) 
    }{ 2 }
  \right)^{ \! - 1 }
  \Big(
    \tfrac{ 1 }{ 2 } A \,
      \hat{Y}^N_{
        \lfloor s \rfloor_N 
      } 
    +
    F\big( 
    \hat{Y}^N_{
      \lfloor s \rfloor_N 
    } 
  \big)
  \Big) 
  \,
  ds
\\ & \quad
  +
  \int_0^t
  \hat{S}_{ s, t }^N 
  \left(
    I - 
    A 
    \tfrac{
      \left( s - 
        \lfloor s \rfloor_N
      \right) 
    }{ 2 }
  \right)^{ \! - 1 }
  B\big( 
    \hat{Y}^N_{
      \lfloor s \rfloor_N 
    } 
  \big)
  \,
  dW_s
\end{split}
\end{equation}
$ \mathbb{P} $-a.s.\ for 
all $ t \in (0,T] $
and all $ N \in \mathbb{N} $.
Observe 
for every $ N \in \mathbb{N} $
that
the stochastic
process
$
  \hat{Y}^N \colon
  [0,T] \times \Omega
  \rightarrow H_{ \gamma }
$
is a mild It\^{o} process
with semigroup 
$ \hat{S}^N $,
with mild drift
\begin{equation}
  \mathbbm{1}_{ (0,\infty) }(
    t - \lfloor t \rfloor_N
  ) \,
  \left(
    I - 
    A 
    \tfrac{
      \left( t - 
        \lfloor t \rfloor_N
      \right) 
    }{ 2 }
  \right)^{ \! - 1 }
  \Big(
    \tfrac{ 1 }{ 2 } A \,
      \hat{Y}^N_{
        \lfloor t \rfloor_N 
      } 
    +
    F\big( 
    \hat{Y}^N_{
      \lfloor t \rfloor_N 
    } 
  \big)
  \Big) ,
  \;
  t \in [0,T],
\end{equation}
and with mild diffusion
\begin{equation}
  \mathbbm{1}_{ (0,\infty) }(
    t - \lfloor t \rfloor_N
  ) \,
  \left(
    I - 
    A 
    \tfrac{
      \left( t - 
        \lfloor t \rfloor_N
      \right) 
    }{ 2 }
  \right)^{ \! - 1 }
  B\big( 
    \hat{Y}^N_{
      \lfloor t \rfloor_N 
    } 
  \big),
  \;
  t \in [0,T] .
\end{equation}
Proposition~\ref{propsimple}
hence gives
\begin{equation}
\label{eq:lineul}
\begin{split}
  \hat{Y}^N_{ 
    \frac{ (n+1) T }{ N } 
  }
  =
  \Big(
    I - 
    \tfrac{ T }{ 2 N } A
  \Big)^{ \! -1 }
  \left(
    \Big(
      I + 
      \tfrac{ T }{ 2 N } A
    \Big) \,
    \hat{Y}^N_{ \frac{ n T }{ N } } 
    +
    \frac{ T }{ N } \cdot
    F\big( 
      \hat{Y}^N_{ \frac{ n T }{ N } } 
    \big)
    +
    \int_{ \frac{ n T }{ N } }^{
      \frac{ ( n + 1 ) T }{ N }
    }
      B\big( 
        \hat{Y}^N_{ \frac{ n T }{ N } } 
      \big) \,
    dW_s
  \right)
\end{split}
\end{equation}
$ \mathbb{P} $-a.s.\ for 
all 
$ n \in \left\{ 0, 1, \dots, N-1
\right\} $
and all
$ N \in \mathbb{N} $.
This shows that
the stochastic
processes
$
  \hat{Y}^N
$,
$ N \in \mathbb{N} $,
are nothing else but
appropriate time
continuous interpolations
of linear
implicit Crank-Nicolson 
approximations 
(see, e.g., \cite{s99,w05b,brsv08}
and the references therein)
for the 
SPDE~\eqref{eq:SPDE}.

\subsubsection{Milstein type 
approximations for SPDEs}
\label{sec:mil}

The stochastic Taylor expansions
in Subsection~\ref{sec:tay}
can be used to derive higher order
numerical approximation
methods for SPDEs. In the sequel
this is illustrated for Milstein type
approximations for SPDEs
(see \cite{gk96,ks01,ms06,lcp10,jr12,bl11a,bl11b,WangGan2012,b12}).
For these derivations we 
assume that the diffusion term
$ 
  B \colon H_{ \gamma }
  \rightarrow
  HS( U_0, H_{ \beta } )
$
is twice continuously
Fr\'{e}chet differentiable
with globally bounded derivatives
and that $ \beta = \gamma $.
First note, 
in view of the
mild It\^{o} 
formula~\eqref{eq:operatorIto},
that
equation~\eqref{eq:SPDE}
can also be written as
\begin{equation}
\label{eq:SPDE2}
  X_t
=
  e^{ A (t - t_0) } X_{ t_0 }
+
  \int_{ t_0 }^t
  \big(
    L^{ (0) }_{ (t - s) } \text{id}
  \big)( X_s ) \,
  ds
+
  \int_{ t_0 }^t
  \big(
    L^{ (1) }_{ (t - s) } \text{id}
  \big)( X_s ) \,
  dW_s
\end{equation}
$ \mathbb{P} $-a.s.\ for all
$ t_0, t \in [0,T] $
with $ t_0 \leq t $
where 
$ 
  \text{id} 
  = \text{id}_{ H_{ \gamma } }
  \colon H_{ \gamma }
  \rightarrow 
  H_{ \gamma }
$ is the identity on $ H_{ \gamma } $.
Next the mild It\^{o} formula~\eqref{eq:operatorIto}
is applied to the
test function
$
  \big(
    L^{ (1) }_{ (t - s) } \text{id}
  \big)( x ) 
  \in HS( U_0, H_{ \beta } )
$,
$ 
  x \in H_{ \gamma } 
$,
to obtain
\begin{equation}
\label{eq:Bito}
\begin{split}
&
  \big(
    L^{ (1) }_{ (t - s) } \text{id}
  \big)( X_s ) 
\\&
=
  \big(
    L^{ (1) }_{ (t - s) } \text{id}
  \big)\big( 
    e^{ A (s - t_0) } X_{ t_0 } 
  \big)
  + 
  \int_{ t_0 }^s
  \big( 
    L^{ (0) }_{ (s - u) } 
    L^{ (1) }_{ (t - s) }
    \text{id} 
  \big)( X_u )
  \, du
  + 
  \int_{ t_0 }^s
  \big( 
    L^{ (1) }_{ (s - u) } 
    L^{ (1) }_{ (t - s) }
    \text{id} 
  \big)( X_u )
  \, dW_u
\end{split}
\end{equation}
$ \mathbb{P} $-a.s.\ for all
$ t_0, s, t \in [0,T] $ with 
$ t_0 \leq s \leq t $.
Putting \eqref{eq:Bito}
into \eqref{eq:SPDE2}
then gives
\begin{align}
\label{eq:milsteintay}
  X_t
&=
  e^{ A (t - t_0) } X_{ t_0 }
+
  \int_{ t_0 }^t
  \big(
    L^{ (0) }_{ (t - s) } \text{id}
  \big)( X_s ) \,
  ds
+
  \int_{ t_0 }^t
  \big(
    L^{ (1) }_{ (t - s) } \text{id}
  \big)\big( e^{ A (s - t_0) } X_{ t_0 } 
  \big) \,
  dW_s
\nonumber
\\ & 
  +
  \int_{ t_0 }^t
  \int_{ t_0 }^s
  \big( 
    L^{ (0) }_{ (s - u) } 
    L^{ (1) }_{ (t - s) }
    \text{id} 
  \big)( X_u )
  \, du 
  \, dW_s
  + 
  \int_{ t_0 }^t
  \int_{ t_0 }^s
  \big( 
    L^{ (1) }_{ (s - u) } 
    L^{ (1) }_{ (t - s) }
    \text{id} 
  \big)( X_u )
  \, dW_u
  \, dW_s
\\ & =
\nonumber
  e^{ A (t - t_0) } X_{ t_0 }
+
  \int_{ t_0 }^t
  e^{ A (t - s) } F( X_s ) \,
  ds
+
  \int_{ t_0 }^t
  e^{ A (t - s) } 
  B\big( e^{ A (s - t_0) } X_{ t_0 } 
  \big) \,
  dW_s
\\ & 
  +
  \int_{ t_0 }^t
  \int_{ t_0 }^s
  \big( 
    L^{ (0) }_{ (s - u) } 
    L^{ (1) }_{ (t - s) }
    \text{id} 
  \big)( X_u )
  \, du 
  \, dW_s
  + 
  \int_{ t_0 }^t
  \int_{ t_0 }^s
  e^{ A (t - s) }
  B'\big(
    e^{ A (s - u) }
    X_u
  \big) \,
  e^{ A (s - u) }
  B( X_u )
  \, dW_u
  \, dW_s
\nonumber
\end{align}
$ \mathbb{P} $-a.s.\ for
all $ t_0, t \in [0,T] $
with $ t_0 \leq t $.
The identity~\eqref{eq:milsteintay}
corresponds to the strong stochastic
Taylor expansion in
Theorem~\ref{thm:strongtay}
which is
described by the hierarchical
set
$ 
  \mathcal{A} = \{ \emptyset, (1) \}
$;
see Subsection~\ref{sec:tay}
for details.
Using the approximations
$ e^{ A h } \approx I $
and
$ X_{ t_0 + h } \approx X_{ t_0 } $
for small $ h \in [0,T] $
and omitting the 
integral
$
  \int_{ t_0 }^t
  \int_{ t_0 }^s
  \big( 
    L^{ (0) }_{ (s - u) } 
    L^{ (1) }_{ (t - s) }
    \text{id} 
  \big)( X_u )
  \, du 
  \, dW_s
$
in \eqref{eq:milsteintay}
then results in the approximation
\begin{align}
\label{eq:milsteinapprox}
\nonumber
  X_t 
& \approx
  e^{ A (t - t_0) } X_{ t_0 }
+
  \int_{ t_0 }^t
  e^{ A (t - s) } F( X_{ t_0 } ) \,
  ds
+
  \int_{ t_0 }^t
  e^{ A (t - s) } 
  B( X_{ t_0 } 
  ) \,
  dW_s
\\ & \quad
  + 
  \int_{ t_0 }^t
  \int_{ t_0 }^s
  e^{ A (t - s) }
  B'(
    X_{ t_0 }
  ) \,
  e^{ A (s - u) }
  B( X_{ t_0 } )
  \, dW_u
  \, dW_s
\\ & \approx
  e^{ A (t - t_0) } 
  \left( 
    X_{ t_0 }
    +
     F( X_{ t_0 } ) 
     \cdot \left( t - t_0 \right)
    +
    \int_{ t_0 }^t
    B( X_{ t_0 } 
    ) \,
    dW_s
  + 
  \int_{ t_0 }^t
  \int_{ t_0 }^s
  B'(
    X_{ t_0 }
  ) \,
  B( X_{ t_0 } )
  \, dW_u
  \, dW_s
  \right)
\nonumber
\end{align}
for $ t_0, t \in [0,T] $
with small $ t - t_0 \geq 0 $.
The 
approximation~\eqref{eq:milsteinapprox}
can then be used to define exponential
Milstein approximations for SPDEs
(see equations~\eqref{eq:expmil}
and \eqref{eq:expmil2} below
for details).

In the following it is
demonstrated how different
types of Milstein
approximations for SPDEs
can be formulated as
mild It\^{o} processes.
To this end
we assume in the remainder
of this subsection
that 
$ B \colon H_{ \gamma }
\rightarrow HS( U_0, H_{ \beta } ) $
in Assumption~\ref{diffusion}
is once continuously Fr\'{e}chet
differentiable
and that
$ \beta = \gamma $.

\paragraph{Exponential
Milstein approximations 
for SPDEs}

This paragraph formulates
exponential Milstein
approximations as mild It\^{o} 
processes.
To be more precise, let 
$
  Z^N
  \colon [0,T]
  \times
  \Omega
  \rightarrow H_{ \gamma }
$,
$
  N \in \mathbb{N} 
$,
be a sequence of predictable
stochastic
processes given by
\begin{equation}
\label{eq:expmil}
\begin{split}
  Z^N_t
& =
  e^{ A t } \, \xi 
  +
  \int_0^t
  e^{
    A 
    \left( t - 
      \lfloor s \rfloor_N
    \right)
  }
  \,
  F\big( 
    Z^N_{
      \lfloor s \rfloor_N 
    } 
  \big)
  \,
  ds
  +
  \int_0^t
  e^{
    A 
    \left( t - 
      \lfloor s \rfloor_N
    \right)
  }
  \,
  B\big( 
    Z^N_{
      \lfloor s \rfloor_N 
    } 
  \big)
  \,
  dW_s
\\ & \quad +
  \int_0^t
  e^{
    A 
    \left( t - 
      \lfloor s \rfloor_N
    \right)
  }
  \,
  B'\big( 
    Z^N_{
      \lfloor s \rfloor_N 
    } 
  \big)
  \bigg(
    \int_{ \lfloor s \rfloor_N
    }^{ s }
    B\big( 
      Z^N_{
        \lfloor s \rfloor_N 
      } 
    \big) \,
    dW_u
  \bigg)
  \,
  dW_s
\end{split}
\end{equation}
$ \mathbb{P} $-a.s.\ for 
all $ t \in [0,T] $
and all $ N \in \mathbb{N} $.
Note 
for every $ N \in \mathbb{N} $
that the stochastic
process
$
  Z^N \colon
  [0,T] \times \Omega
  \rightarrow H_{ \gamma }
$
is a mild It\^{o} processes
with semigroup 
$ 
  e^{ A (t_2 - t_1) } \in 
  L( H_{ \min( \alpha, \beta, \gamma ) } ,
    H_{ \gamma }
  ) 
$, $ (t_1,t_2) \in \angle $,
with mild drift
\begin{equation}
  e^{
    A 
    \left( t - 
      \lfloor t \rfloor_N
    \right)
  }
  \,
  F\big( 
    Z^N_{
      \lfloor t \rfloor_N 
    } 
  \big),
  \;
  t \in [0,T],
\end{equation}
and with mild diffusion
\begin{equation}
\label{eq:expmil_milddiffusion}
  e^{
    A 
    \left( t - 
      \lfloor t \rfloor_N
    \right)
  }
  \left(
  B\big( 
    Z^N_{
      \lfloor t \rfloor_N 
    } 
  \big)
  +
  B'\big( 
    Z^N_{
      \lfloor t \rfloor_N 
    } 
  \big)
  \bigg(
    \int_{ \lfloor t \rfloor_N
    }^{ t }
    B\big( 
      Z^N_{
        \lfloor t \rfloor_N 
      } 
    \big) \,
    dW_s
  \bigg)
  \right), \;
  t \in [0,T] .
\end{equation}
Proposition~\ref{propsimple}
hence yields
\begin{equation}
\label{eq:expmil2}
\begin{split}
  Z^N_{ 
    \frac{ (n+1) T }{ N } 
  }
& =
  e^{
    A \frac{ T }{ N }
  }
  \,
  \Bigg(
    Z^N_{ \frac{ n T }{ N } } 
    +
    \frac{ T }{ N } \cdot
    F\big( 
      Z^N_{ \frac{ n T }{ N } } 
    \big)
    +
    \int_{ \frac{ n T }{ N } }^{
      \frac{ ( n + 1 ) T }{ N }
    }
      B\big( 
        Z^N_{ \frac{ n T }{ N } } 
      \big) \,
    dW_s
\\ & \quad +
    \int_{ \frac{ n T }{ N } }^{
      \frac{ ( n + 1 ) T }{ N }
    }
      B'\big( 
        Z^N_{ \frac{ n T }{ N } } 
      \big) \bigg( 
        \int_{ \frac{ n T }{ N } }^s 
        B\big( 
          Z^N_{ \frac{ n T }{ N } } 
        \big) \,
        dW_u
      \bigg) \,
    dW_s
  \Bigg)
\end{split}
\end{equation}
$ \mathbb{P} $-a.s.\ for 
all 
$ n \in \left\{ 0, 1, \dots, N-1
\right\} $
and all
$ N \in \mathbb{N} $.
The mild It\^{o} processes
$ Z^N $, $ N \in \mathbb{N} $,
are thus nothing else but appropriate
time continuous interpolations
of exponential Milstein
approximations
(see \cite{ms06,jr12,bl11b}).

\paragraph{Linear implicit
Euler-Milstein approximations 
for SPDEs}

In this paragraph 
linear implicit Euler-Milstein
approximations 
are formulated as mild It\^{o} 
processes.
Let 
$
  \bar{Z}^N
  \colon [0,T]
  \times
  \Omega
  \rightarrow H_{ \gamma }
$,
$
  N \in \mathbb{N} 
$,
be a sequence of predictable
stochastic
processes given by
$
  \bar{Z}^N_0 = \xi
$
and
\begin{equation}
\begin{split}
  \bar{Z}^N_t
& =
  \bar{S}^N_{0, t} \, \xi 
  +
  \int_0^t
  \bar{S}^N_{
    \lfloor s \rfloor_N, t
  }
  \,
  F\big( 
    \bar{Z}^N_{
      \lfloor s \rfloor_N 
    } 
  \big)
  \,
  ds
  +
  \int_0^t
  \bar{S}^N_{
    \lfloor s \rfloor_N, t
  }
  \,
  B\big( 
    \bar{Z}^N_{
      \lfloor s \rfloor_N 
    } 
  \big)
  \,
  dW_s
\\ & \quad +
  \int_0^t
  \bar{S}^N_{
    \lfloor s \rfloor_N, t
  }
  \,
  B'\big( 
    \bar{Z}^N_{
      \lfloor s \rfloor_N 
    } 
  \big)
  \bigg(
    \int_{ \lfloor s \rfloor_N
    }^{ s }
    B\big( 
      \bar{Z}^N_{
        \lfloor s \rfloor_N 
      } 
    \big) \,
    dW_u
  \bigg)
  \,
  dW_s
\end{split}
\end{equation}
$ \mathbb{P} $-a.s.\ for 
all $ t \in (0,T] $
and all $ N \in \mathbb{N} $
(see \eqref{eq:lineulersg} for the
definition of the mappings
$ \bar{S}^N \colon \angle
\rightarrow L( H_{ \gamma } ) $,
$ N \in \mathbb{N} $).
Observe 
for every $ N \in \mathbb{N} $
that the stochastic
process
$
  \bar{Z}^N \colon
  [0,T] \times \Omega
  \rightarrow H_{ \gamma }
$
is a mild It\^{o} process
with semigroup 
$ \bar{S}^N $,
with mild drift
\begin{equation}
  \mathbbm{1}_{ (0,\infty) }(
    t - \lfloor t \rfloor_N
  ) \;
  \bar{S}^N_{ \lfloor t \rfloor_N, t }
  \,
  F\big( 
    \bar{Z}^N_{
      \lfloor t \rfloor_N 
    } 
  \big),
\;\;
  t \in [0,T],
\end{equation}
and with mild diffusion
\begin{equation}
  \mathbbm{1}_{ (0,\infty) }(
    t - \lfloor t \rfloor_N
  ) \;
  \bar{S}^N_{ \lfloor t \rfloor_N, t }
  \left(
  B\big( 
    \bar{Z}^N_{
      \lfloor t \rfloor_N 
    } 
  \big)
  +
  B'\big( 
    \bar{Z}^N_{
      \lfloor t \rfloor_N 
    } 
  \big)
  \bigg(
    \int_{ \lfloor t \rfloor_N
    }^{ t }
    B\big( 
      \bar{Z}^N_{
        \lfloor t \rfloor_N 
      } 
    \big) \,
    dW_s
  \bigg)
  \right), 
\;\;
  t \in [0,T] .
\end{equation}
Proposition~\ref{propsimple}
hence gives
\begin{equation}
\begin{split}
  \bar{Z}^N_{ 
    \frac{ (n+1) T }{ N } 
  }
& =
  \Big(
    I - 
    \tfrac{ T }{ N } A
  \Big)^{ \! -1 }
  \,
  \Bigg(
    \bar{Z}^N_{ \frac{ n T }{ N } } 
    +
    \frac{ T }{ N } \cdot
    F\big( 
      \bar{Z}^N_{ \frac{ n T }{ N } } 
    \big)
    +
    \int_{ \frac{ n T }{ N } }^{
      \frac{ ( n + 1 ) T }{ N }
    }
      B\big( 
        \bar{Z}^N_{ \frac{ n T }{ N } } 
      \big) \,
    dW_s
\\ & \quad +
    \int_{ \frac{ n T }{ N } }^{
      \frac{ ( n + 1 ) T }{ N }
    }
      B'\big( 
        \bar{Z}^N_{ \frac{ n T }{ N } } 
      \big) \bigg( 
        \int_{ \frac{ n T }{ N } }^s 
        B\big( 
          \bar{Z}^N_{ \frac{ n T }{ N } } 
        \big) \,
        dW_u
      \bigg) \,
    dW_s
  \Bigg)
\end{split}
\end{equation}
$ \mathbb{P} $-a.s.\ for 
all 
$ n \in \left\{ 0, 1, \dots, N-1
\right\} $
and all
$ N \in \mathbb{N} $.
The mild It\^{o} processes
$ \bar{Z}^N $, $ N \in \mathbb{N} $,
are thus nothing else but appropriate
time continuous interpolations
of linear implicit Euler-Milstein
approximations
(see \cite{ks01,ms06,bl11b,b12}).

\paragraph{Linear implicit
Crank-Nicolson-Milstein 
Milstein approximations 
for SPDEs}

Finally, let
$
  \hat{Z}^N
  \colon [0,T]
  \times
  \Omega
  \rightarrow H_{ \gamma }
$,
$
  N \in \mathbb{N} 
$,
be a sequence of predictable
stochastic
processes given by
$
  \hat{Z}^N_0 = \xi
$
and
\begin{equation}
\begin{split}
  \hat{Z}^N_t
& =
  \hat{S}^N_{0, t} \, \xi 
  +
  \int_0^t
  \hat{S}^N_{
    \lfloor s \rfloor_N, t
  }
  \,
  \Big(
    \tfrac{ 1 }{ 2 } A \,
      \hat{Z}^N_{
        \lfloor s \rfloor_N 
      } 
    +
    F\big( 
    \hat{Z}^N_{
      \lfloor s \rfloor_N 
    } 
  \big)
  \Big) \,
  ds
  +
  \int_0^t
  \hat{S}^N_{
    \lfloor s \rfloor_N, t
  }
  \,
  B\big( 
    \hat{Z}^N_{
      \lfloor s \rfloor_N 
    } 
  \big)
  \,
  dW_s
\\ & \quad +
  \int_0^t
  \hat{S}^N_{
    \lfloor s \rfloor_N, t
  }
  \,
  B'\big( 
    \hat{Z}^N_{
      \lfloor s \rfloor_N 
    } 
  \big)
  \bigg(
    \int_{ \lfloor s \rfloor_N
    }^{ s }
    B\big( 
      \hat{Z}^N_{
        \lfloor s \rfloor_N 
      } 
    \big) \,
    dW_u
  \bigg)
  \,
  dW_s
\end{split}
\end{equation}
$ \mathbb{P} $-a.s.\ for 
all $ t \in (0,T] $
and all $ N \in \mathbb{N} $
(see \eqref{eq:lincranksg} 
for the definition
the mappings
$ \hat{S}^N \colon
\angle \rightarrow L( H_{ \gamma } ) $,
$ N \in \mathbb{N} $)
and note 
for each $ N \in \mathbb{N} $
that the stochastic
process
$
  \hat{Z}^N \colon
  [0,T] \times \Omega
  \rightarrow H_{ \gamma }
$
is a mild It\^{o} processes
with semigroup 
$ \hat{S}^N $,
with mild drift
\begin{equation}
  \mathbbm{1}_{ (0,\infty) }(
    t - \lfloor t \rfloor_N
  ) \;
  \hat{S}^N_{ \lfloor t \rfloor_N, t }
  \,
  \Big(
    \tfrac{ 1 }{ 2 } A \,
      \hat{Y}^N_{
        \lfloor t \rfloor_N 
      } 
    +
    F\big( 
    \hat{Y}^N_{
      \lfloor t \rfloor_N 
    } 
  \big)
  \Big) ,
\quad
  t \in [0,T],
\end{equation}
and with mild diffusion
\begin{equation}
  \mathbbm{1}_{ (0,\infty) }(
    t - \lfloor t \rfloor_N
  ) \;
  \hat{S}^N_{ \lfloor t \rfloor_N, t }
  \left(
  B\big( 
    \hat{Z}^N_{
      \lfloor t \rfloor_N 
    } 
  \big)
  +
  B'\big( 
    \hat{Z}^N_{
      \lfloor t \rfloor_N 
    } 
  \big)
  \bigg(
    \int_{ \lfloor t \rfloor_N
    }^{ t }
    B\big( 
      \hat{Z}^N_{
        \lfloor t \rfloor_N 
      } 
    \big) \,
    dW_s
  \bigg)
  \right), 
\quad
  t \in [0,T] .
\end{equation}
Proposition~\ref{propsimple}
therefore shows
\begin{equation}
\begin{split}
  \hat{Z}^N_{ 
    \frac{ (n+1) T }{ N } 
  }
& =
  \Big(
    I - 
    \tfrac{ T }{ 2 N } A
  \Big)^{ \! -1 }
  \,
  \Bigg(
    \Big(
      I + 
      \tfrac{ T }{ 2 N } A
    \Big) \,
    \hat{Z}^N_{ \frac{ n T }{ N } } 
    +
    \frac{ T }{ N } \cdot
    F\big( 
      \hat{Z}^N_{ \frac{ n T }{ N } } 
    \big)
\\ & \quad +
    \int_{ \frac{ n T }{ N } }^{
      \frac{ ( n + 1 ) T }{ N }
    }
      B\big( 
        \hat{Z}^N_{ \frac{ n T }{ N } } 
      \big) \,
    dW_s
  +
    \int_{ \frac{ n T }{ N } }^{
      \frac{ ( n + 1 ) T }{ N }
    }
      B'\big( 
        \hat{Z}^N_{ \frac{ n T }{ N } } 
      \big) \bigg( 
        \int_{ \frac{ n T }{ N } }^s 
        B\big( 
          \hat{Z}^N_{ \frac{ n T }{ N } } 
        \big) \,
        dW_u
      \bigg) \,
    dW_s
  \Bigg)
\end{split}
\end{equation}
$ \mathbb{P} $-a.s.\ for 
all 
$ n \in \left\{ 0, 1, \dots, N-1
\right\} $
and all
$ N \in \mathbb{N} $.
The mild It\^{o} processes
$ \hat{Z}^N $, $ N \in \mathbb{N} $,
are thus nothing else but appropriate
time continuous interpolations
of linear implicit 
Crank-Nicolson-Milstein
approximations for SPDEs
(see \cite{ms06,bl11b}).

\bibliographystyle{acm}
\bibliography{../Bib/bibfile}
\end{document}